\newtheorem{thm}{Theorem}[section]
\newtheorem{Theorem}[thm]{Theorem}
\newtheorem{Lemma}[thm]{Lemma}
\newtheorem{Proposition}[thm]{Proposition}
\newtheorem{Corollary}[thm]{Corollary}
\newtheorem{claim}[thm]{Claim}
\theoremstyle{definition}
\newtheorem{Definition}[thm]{Definition}
\newtheorem{Question}[thm]{Question}
\newtheorem{Example}[thm]{Example}
\newtheorem{Remark}[thm]{Remark}
\newcommand{\B}{\mathbb{B}}
\newcommand{\C}{\mathbb{C}}
\newcommand{\G}{\mathbb{G}}
\newcommand{\N}{\mathbb{N}}
\renewcommand{\O}{\mathcal{O}}
\renewcommand{\P}{\mathbb{P}}
\newcommand{\Q}{\mathbb{Q}}
\newcommand{\Z}{\mathbb{Z}}
\newcommand{\A}{\mathbb{A}}
\newcommand{\m}{\mathfrak{m}}
\newcommand{\coker}{\operatorname{coker}}
\newcommand{\im}{\operatorname{im}}
\newcommand{\Hom}{\operatorname{Hom}}
\newcommand{\Map}{\operatorname{Map}}
\newcommand{\End}{\operatorname{End}}
\newcommand{\Gal}{\operatorname{Gal}}
\newcommand{\GL}{\operatorname{GL}}
\newcommand{\Perf}{\operatorname{Perf}}
\newcommand{\Spa}{\operatorname{Spa}}
\newcommand{\Spf}{\operatorname{Spf}}
\newcommand{\Spec}{\operatorname{Spec}}
\newcommand{\Spd}{\operatorname{Spd}}
\newcommand{\Rlim}{\mathrm{R}\!\lim}
\newcommand{\cH}{{\ifmmode \check{H}\else{\v{C}ech}\fi}}
\newcommand{\tf}{[\tfrac{1}{p}]}
\renewcommand{\tt}{\mathrm{tt}}
\newcommand{\dR}{\mathrm{dR}}
\newcommand{\VB}{\mathrm{VB}}
\newcommand{\BdR}{{\mathbb B_{\dR}}}
\newcommand{\BdRp}{{\mathbb B^+_{\dR}}}
\newcommand{\id}{{\operatorname{id}}}
\newcommand{\cts}{{\operatorname{cts}}}
\newcommand{\lc}{{\operatorname{lc}}}
\newcommand{\an}{\mathrm{an}}
\newcommand{\hotimes}{\hat{\otimes}}
\newcommand{\et}{{\mathrm{\acute{e}t}}}
\newcommand{\proet}{\mathrm{pro\acute{e}t}}
\newcommand{\profet}{\mathrm{prof\acute{e}t}}
\newcommand{\qproet}{\mathrm{qpro\acute{e}t}}
\newcommand{\HT}{\operatorname{HT}}
\newcommand{\aeq}{\stackrel{a}{=}}
\newcommand{\wh}{\widehat}
\newcommand{\bOx}{\overline{\O}^{\times}}
\newcommand{\Pic}{\operatorname{Pic}}
\newcommand{\wt}{\widetilde}
\renewcommand{\diamond}{\diamondsuit}
\renewcommand{\lim}{\varprojlim}
\newcommand*\isomarrow{%
	\xrightarrow{\raisebox{-0.35em}{\smash{\ensuremath{\sim}}}}
}  
\tikzset{
	labelrotate/.style={anchor=south, rotate=90, inner sep=.5mm}} 
\tikzset{
	labelrotatep/.style={anchor=north, rotate=90, inner sep=.75mm}}
\begin{document}
\title{Line bundles on rigid spaces in the $v$-topology}
\author{Ben Heuer}
\date{}
\maketitle
\begin{abstract}
	For a smooth rigid space $X$ over a perfectoid field extension $K$ of $\Q_p$, we investigate how the $v$-Picard group of the associated diamond $X^\diamond$ differs from the analytic Picard group of $X$. To this end, we construct a left-exact ``Hodge--Tate logarithm'' sequence \[0\to \Pic_{\an}(X)\to \Pic_v(X^\diamond)\to H^0(X,\Omega_X^1)\{-1\}.\]
	We deduce some analyticity criteria which have applications to $p$-adic modular forms.
	
	For algebraically closed $K$, we show that the sequence is also right-exact if $X$ is proper or one-dimensional. In contrast, we show that for the affine space  $\A^n$, the image of the Hodge--Tate logarithm consists precisely of the closed differentials.
	
	It follows that up to a splitting, $v$-line bundles may be interpreted as Higgs bundles. For proper $X$, we use this to construct the $p$-adic Simpson correspondence of rank one.
\end{abstract}

\section{Introduction}
Let $p$ be a prime and let $K$ be a perfectoid field extension of $\Q_p$, e.g.\ we could take $K=\C_p$. Let $X$ be a smooth rigid space over $K$, considered as an adic space. Then there is a hierarchy of topologies on $X$:
\[X_{\an}\subseteq X_{\et}\subseteq X_{\proet},\]
where $X_{\proet}$ is the pro-\'etale site defined by Scholze in \cite[Definition~3.9]{Scholze_p-adicHodgeForRigid}.

It is a natural question whether the notions of vector bundles agree in these various topologies: To make this precise, let us denote by $\VB_{\tau}(X)$ the category of finite locally free modules over the structure sheaf where $\tau$ is any of the above topologies. Here  for the pro-\'etale topology, we use the completed structure sheaf \cite[Definition 4.1]{Scholze_p-adicHodgeForRigid}.

By a rigid version of \'etale descent (see \cite[Proposition~8.2.3]{FvdP}), the natural functor
\[ \mathrm{VB}_{\an}(X)\isomarrow \mathrm{VB}_{\et}(X)\]
is an equivalence of categories. 
One may ask if a similar result holds for
\[ \mathrm{VB}_{\et}(X)\to  \mathrm{VB}_{\proet}(X).\]

\begin{Question}\label{q:howfarequiv}
How far is this functor from being an equivalence of categories?
\end{Question}
It is easy to see that an equivalence would be too much to ask for: As has been observed in the literature \cite[before \S1.2]{CHJ}, descent of analytic vector bundles along pro-\'etale covers is in general not effective, giving rise to ``new'' vector bundles in the pro-\'etale topology. It is known that pro-\'etale vector bundles arise naturally in the context of $p$-adic modular forms, as well as in the $p$-adic Simpson correspondence \cite[\S2]{LiuZhu_RiemannHilbert}\cite[\S3]{wuerthen_vb_on_rigid_var}\cite[\S7]{MannWerner_LocSys_p-adVB}. However,  a systematic description of these  additional vector bundles has not yet been given.

A similar question arises in a slightly different setting:
Recently, Scholze has constructed the category of diamonds  \cite[\S11]{etale-cohomology-of-diamonds}, into which both semi-normal rigid spaces over $K$ as well as perfectoid spaces embed fully faithfully by way of  a diamondification functor \cite[Proposition 10.2.3]{ScholzeBerkeleyLectureNotes}
\[ X\mapsto X^{\diamondsuit}.\]
While the \'etale cohomology of diamonds has been  studied in great detail \cite{etale-cohomology-of-diamonds}, vector bundles on diamonds are less well-understood, even for diamonds associated to rigid spaces. 

The category of (locally spatial) diamonds can be equipped with three well-behaved topologies: The \'etale, quasi-pro-\'etale and $v$-topology. If $X$ is a smooth rigid space, then for the \'etale topology, there is an equivalence of sites 
$X_{\et}=X^{\diamondsuit}_{\et}$ \cite[Theorem 10.4.2]{ScholzeBerkeleyLectureNotes} that identifies the structure sheaves. We can thus extend the above hierarchy of topologies to
\[ X^{\phantom\diamondsuit}_{\an}\subseteq X^{\phantom\diamondsuit}_{\et}= X^{\diamondsuit}_{\et}\subseteq X^{\phantom\diamondsuit}_{\proet}\subseteq X^{\diamondsuit}_{\qproet}\subseteq X^{\diamondsuit}_v.\]

If $X$ is an affinoid perfectoid space over $K$, then the notions of vector bundles agree for all of these various topologies by a result of Kedlaya--Liu \cite[Theorem~3.5.8]{KedlayaLiu-II}. Since the last three of the above sites are locally affinoid perfectoid, it follows that also for smooth rigid $X$, we have
\[\VB_{\proet}(X)= \VB_{\qproet}(X^\diamondsuit)= \VB_{v}(X^\diamondsuit).\]
We may therefore in the following often use these three topologies interchangeably. 
We conclude that also in this more refined setting, the various notions of vector bundles agree, except potentially for the step from the \'etale to the pro-\'etale site.

In the case of line bundles, we can make Question~\ref{q:howfarequiv} more precise by passing to the Picard group of isomorphism classes, and consider the natural homomorphism
\begin{equation}\label{eq:intro-Pic_an->Pic_v}
	\Pic_{\an}(X)=\Pic_{\et}(X)\hookrightarrow \Pic_{\proet}(X) =\Pic_{v}(X^\diamondsuit).
\end{equation}

\begin{Question}\label{q:howfariso}
	Can we explicitly describe the cokernel of this map?
\end{Question}

\subsection{The Hodge--Tate logarithm}
The first goal of this article is to answer Question~\ref{q:howfariso} for smooth rigid spaces over $K$. For these we show that the Picard group of $v$-line bundles admits a $p$-adic Hodge-theoretical description in terms of differentials that we regard as a ``multiplicative Hodge--Tate sequence'':
\begin{Theorem}\label{t:Picv-ses}
	Let $K$ be a perfectoid field over $\Q_p$. Let $X$ be a smooth rigid space over $K$.
	\begin{enumerate}
		\item The $p$-adic logarithm defines a natural left-exact sequence, functorial in $X$,
		\begin{equation}\label{eq:MT-ses}
		0\to \Pic_{\an}(X)\to \Pic_v(X)\xrightarrow{ \mathrm{HT}\log} H^0(X,\Omega^1_X)\{-1\}.
		\end{equation}
		\item\label{enum:res}  If $K$ is algebraically closed, the sequence is right-exact in either of the following cases:
		\begin{enumerate}
			\item\label{enum:MT1-projective} $X$ is proper, or
			\item\label{enum:MT1-curves} $X$ is of pure dimension 1 and paracompact.
		\end{enumerate}
		\item If $X$ is affinoid, the sequence becomes right-exact after inverting $p$.
	\end{enumerate}
\end{Theorem}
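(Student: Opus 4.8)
The overall plan is to compare $H^1$ of the completed unit sheaf in the $v$- and the étale topology by studying the projection $\nu\colon X_v\to X_{\et}$; throughout $\widehat\O$ denotes the completed structure sheaf on $X_v$ and $\Pic_v(X)=\Pic_{\proet}(X)$ as in the introduction. The standing inputs are: the elementary identity $\nu_*\widehat\O^\times=\O^\times_X$ (a $v$-locally invertible section of $\widehat\O$ is a global unit), which together with the rigid étale descent $\Pic_{\an}=\Pic_{\et}$ recalled above identifies the first term of the Leray spectral sequence; Scholze's Hodge--Tate comparison in its sheaf form $R^i\nu_*\widehat\O\cong\Omega^i_X\{-i\}$ and the associated Hodge--Tate decomposition of $H^i_v(X,\widehat\O)$ for proper $X$; the $p$-adic logarithm, defined on the $v$-subsheaf $\widehat\O^\times_1$ of units congruent to $1$ where it converges, with kernel $\mu_{p^\infty}$; and the finite-coefficient comparison $R^i\nu_*\mu_{p^n}=0$ for $i>0$. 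Several local computations will be carried out after passing to an étale toric chart $X\to\mathbb T^n$ and then to the pullback of the perfectoid pro-finite-étale cover $\Spa K\langle T_i^{\pm1/p^\infty}\rangle\to\mathbb T^n$, with Galois group $\Delta\cong\Z_p(1)^n$ and $\Pic$-trivial total space.

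\emph{Part (1).} The five-term exact sequence of the Leray spectral sequence for $\nu$ applied to $\widehat\O^\times$ reads
\[0\to\Pic_{\an}(X)\to\Pic_v(X)\to H^0(X,R^1\nu_*\widehat\O^\times)\xrightarrow{d_2}H^2_{\et}(X,\O^\times_X),\]
so it suffices to construct a natural injection of $v$-sheaves $R^1\nu_*\widehat\O^\times\hookrightarrow\Omega^1_X\{-1\}$: then $\mathrm{HT}\log$ is the composite and left-exactness is automatic. I would build this map as a ``Higgs field'': a $v$-line bundle $L$ trivialises on a $v$-cover, and after refining one may take the transition $1$-cocycle $(g_{ij})$ to lie in $\widehat\O^\times_1$ (using that $\widehat\O^\times_1$ is $p$-divisible as a $v$-sheaf, with $\widehat\O^\times/\widehat\O^\times_1$ a ``tame'' quotient that contributes nothing to $R^{\geq1}\nu_*$); then $(\log g_{ij})$ is a $1$-cocycle for $\widehat\O$, and $\mathrm{HT}\log(L)$ is its image under $R^1\nu_*\widehat\O\cong\Omega^1_X\{-1\}$. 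Injectivity: if this form vanishes, $(g_{ij})$ is cohomologous to a $\mu_{p^\infty}$-valued cocycle, so $L$ lies in the image of $H^1_v(X,\mu_{p^\infty})=H^1_{\et}(X,\mu_{p^\infty})$, which by the Kummer sequence consists of torsion analytic line bundles; hence $\ker(\mathrm{HT}\log)=\Pic_{\an}(X)$. The delicate point is the partially-defined logarithm — separating $\mu_{p^\infty}$ from $\widehat\O^\times_1$ and verifying the tame quotient is inert for $R^1\nu_*$; an alternative, carried out over toric charts, is a $p$-adic Sen (decompletion) argument showing directly that a $v$-line bundle with vanishing Higgs field descends to a finite étale level, hence is analytic.

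\emph{Part (2).} For right-exactness with $K$ algebraically closed I treat the two cases separately. If $X$ is proper: restricted to degree-zero parts, $\mathrm{HT}\log$ induces a homomorphism of $\C_p$-vector groups $\Pic^0_v(X)/\Pic^0_{\an}(X)\to H^0(X,\Omega^1_X)\{-1\}$, injective by (1); since the Hodge--Tate decomposition $H^1_v(X,\widehat\O)\cong H^1(X,\O_X)\oplus H^0(X,\Omega^1_X)\{-1\}$ governs $\Pic^0_v(X)$ just as $H^1(X,\O_X)$ governs $\Pic^0_{\an}(X)$, both source and target have the same finite $\C_p$-dimension, so this map is an isomorphism; as moreover $\NS_v(X)=\NS_{\an}(X)$ (both injecting into $H^2_{\et}(X,\Z_p(1))$), $\mathrm{HT}\log$ is surjective. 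If $X$ is a paracompact curve: since $\dim X=1$ one has $R^i\nu_*\widehat\O=\Omega^i_X\{-i\}=0$ and $R^i\nu_*\mu_{p^n}=0$ for $i\geq2$, hence $R^i\nu_*\widehat\O^\times=0$ for $i\geq2$ and $R^1\nu_*\widehat\O^\times\cong\Omega^1_X\{-1\}$, so the Leray five-term sequence identifies $\coker(\mathrm{HT}\log)$ with a subgroup of $H^2_{\et}(X,\O^\times_X)$, which vanishes for a rigid curve over algebraically closed $K$ by a Tsen-type argument; the resulting local solutions glue by paracompactness. I expect this part — surjectivity — to be the main obstacle of the theorem: the $v$-analytic structure of $\Pic^0$ (for proper $X$), and the dimension-one collapse of the spectral sequence together with the vanishing of $H^2_{\et}(X,\O^\times_X)$ (for curves), are precisely what force the geometric hypotheses and algebraic closedness of $K$, and are why no unconditional right-exactness can hold.

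\emph{Part (3).} For $X$ smooth affinoid I argue directly, bypassing the spectral sequence. Passing if necessary to a finite rational cover, choose an étale toric chart and form the perfectoid $\Delta$-cover $\widetilde X\to X$ as above; then $\Pic_v(X)$ is computed by continuous crossed homomorphisms $c\colon\Delta\to\widehat\O(\widetilde X)^\times$ modulo coboundaries, $H^0(X,\Omega^1_X)\{-1\}=\bigoplus_i\O(X)\{-1\}$ via the classes $d\log T_i$, and $\mathrm{HT}\log(c)$ is the infinitesimal character of $c$. Given $\omega=(\omega_i)_i$, choose $N\gg0$ with every $p^N\omega_i$ topologically nilpotent (possible since $\O(X)=\O(X)^\circ\tf$); then $c(\gamma):=\prod_i\exp(p^N\omega_i\langle\gamma,e_i\rangle)$ is a genuine crossed homomorphism — indeed a character valued in the $\Delta$-invariants $\O(X)^\times$, because $\exp$ is additive on topologically nilpotent elements — defining a $v$-line bundle $L$ with $\mathrm{HT}\log(L)=p^N\omega$, so $p^{-N}[L]\mapsto\omega$ after inverting $p$; the gluing over the rational cover is a routine \v{C}ech argument. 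The scaling by $p^N$, forced by the convergence radius of $\exp$, is exactly why one must invert $p$, and the integral statement genuinely fails for affinoids of dimension $\geq2$.
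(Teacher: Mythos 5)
Your part (1) and your curve case (2b) take essentially the paper's route: the Leray five-term sequence for $\nu\colon X_v\to X_{\et}$ applied to $\O^\times$, reduction modulo the principal units, the $p$-adic logarithm, and (for curves) the vanishing of $H^2_{\et}(X,\O^\times)$ for paracompact one-dimensional $X$ over algebraically closed $K$. The point you flag as ``delicate'' in (1) --- that the quotient $\O^\times/(1+\m\O^+)$ contributes nothing to $R^{\geq 1}\nu_*$ --- is indeed the paper's key technical input; it is proved via an approximation lemma (Lemma~\ref{l:limit-of-bOx}) showing that this quotient sheaf on the pro-\'etale site is pulled back from $X_{\et}$, a density argument for units that genuinely uses the flattened pro-\'etale site. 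Your outline is compatible with this, but that step cannot be treated as routine.

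The genuine gap is the proper case (2a). Your argument presupposes that $\Pic^0_v(X)/\Pic^0_{\an}(X)$ is a finite-dimensional $\C_p$-vector space of the same dimension as $H^0(X,\Omega^1_X)$, and then deduces surjectivity from injectivity. Neither step is available: the decomposition of $\Pic$ into $\Pic^0$ and $\NS$, and the statement that $H^1_v(X,\O)$ ``governs'' $\Pic^0_v$ as a Lie algebra governs a group, require representability of the ($v$-)Picard functor, which is not known for general proper rigid spaces (Remark~\ref{r:pullback-of-differentials} notes that even the reduction to abelian varieties via the Albanese fails outside the projective case); moreover $\mathrm{HT}\log$ is only a group homomorphism, and an injective group homomorphism between objects of the same dimension need not be surjective without linearity (compare $\O_K\hookrightarrow K$). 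The assertion $\NS_v(X)=\NS_{\an}(X)$ is likewise unproved. The paper instead argues by explicit cocycles: the universal pro-finite-\'etale cover $\wt X\to X$ of Definition~\ref{d:universal-cover} satisfies $\O(\wt X)=K$ and $H^1_v(\wt X,\O)=0$ (Proposition~\ref{p:universal-cover}), so Cartan--Leray identifies $H^1_v(X,\O)$ with $\Hom_{\cts}(\pi_1(X,x),K)$ and embeds $\Hom_{\cts}(\pi_1(X,x),K^\times)$ into $\Pic_v(X)$; degeneration of the Hodge--Tate spectral sequence makes $\HT$ surjective, and one concludes because every continuous character $\pi_1(X,x)\to K$ factors over a finite free $\Z_p$-module and hence lifts along $\log\colon 1+\m\to K$. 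Some such mechanism is needed and is absent from your proposal. Finally, in part (3) your local construction over a toric chart is correct, but the ``routine \v{C}ech gluing'' hides the actual content: the obstruction to gluing the local classes is exactly the $d_2$-differential $H^0(X,R^1\nu_*\O^\times)\to H^2_{\et}(X,\O^\times)$, which the paper kills after inverting $p$ by comparing with the exponential and using $H^2_{\et}(X,\O)=H^2_{\an}(X,\O)=0$ for affinoid $X$; without an argument of this kind you have only produced the required classes locally.
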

\begin{Remark}
The $\{-1\}$ in Theorem~\ref{t:Picv-ses} is a Breuil--Kisin--Fargues twist (see Definition~\ref{d:BK-twist}) that can be identified with a Tate twist $(-1)$ if $K$ contains all $p$-power unit roots. In any case, one can always choose a distinguished element for $K$ to fix an isomorphism $\Omega^1_X\{-1\}\cong \Omega^1_X$.

We note that if $K$ is not perfectoid, already $\Pic_v(\Spa(K))$ is in general very large.
\end{Remark}

As an application of Theorem~\ref{t:Picv-ses}.1, we get some useful criteria for telling whether a $v$-line bundle is analytic:
For example, these can be applied to see that the sheaf of overconvergent modular forms of Chojecki--Hansen--Johansson \cite{CHJ} is analytic (see Example~\ref{exmp:CHJ}).
\begin{Corollary}\label{c:analytic-on-Zar-dense-implies-analytic-intro}
	 Let $L$ be a $v$-line bundle on $X$. Let $V\subseteq X$ be any Zariski-dense analytic open subspace. Then $L$ is analytic (i.e.\ a line bundle on $X_{\an}$) if and only if $L|_V$ is analytic.
\end{Corollary}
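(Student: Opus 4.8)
The forward implication is trivial (restriction of an analytic line bundle is analytic), so the content is the converse. The plan is to exploit the left-exact sequence \eqref{eq:MT-ses} from Theorem~\ref{t:Picv-ses}.1, which is functorial in $X$. Restriction along $V \hookrightarrow X$ gives a commutative diagram with rows \eqref{eq:MT-ses} for $X$ and for $V$; the vertical maps are the restriction homomorphisms on $\Pic_{\an}$, on $\Pic_v$, and on global differentials (twisted). A $v$-line bundle $L$ on $X$ is analytic precisely when $\mathrm{HT}\log(L) = 0$ in $H^0(X,\Omega^1_X)\{-1\}$, by exactness of \eqref{eq:MT-ses} at $\Pic_v(X)$. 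By functoriality, $\mathrm{HT}\log(L|_V) = \mathrm{HT}\log(L)|_V$, so it suffices to show that the restriction map $H^0(X,\Omega^1_X)\{-1\} \to H^0(V,\Omega^1_V)\{-1\}$ is \emph{injective}; then $\mathrm{HT}\log(L)|_V = 0$ forces $\mathrm{HT}\log(L) = 0$, whence $L$ is analytic.

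So the whole corollary reduces to the following purely rigid-analytic statement: if $X$ is a smooth rigid space over $K$ and $V \subseteq X$ is a Zariski-dense analytic open, then the restriction map $H^0(X,\Omega^1_X) \to H^0(V,\Omega^1_V)$ is injective (the Breuil--Kisin--Fargues twist $\{-1\}$ is an invertible twist of $K$-vector spaces, so it does not affect injectivity). This is where the real work is, though it is standard: a global section $\omega$ of the locally free sheaf $\Omega^1_X$ that vanishes on $V$ vanishes on the Zariski closure of $V$, which is all of $X$. Concretely, $\Omega^1_X$ is a coherent sheaf, $\omega$ defines a coherent subsheaf (its annihilator ideal, or the support of $\omega$), and the locus where $\omega$ vanishes is a Zariski-closed analytic subset of $X$ containing $V$; since $V$ is Zariski-dense this locus is all of $X$, and as $X$ is reduced (being smooth), $\omega = 0$. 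One should phrase this carefully: work affinoid-locally on $X = \Spa(A)$, where $\Omega^1_A$ is a finite projective $A$-module; if $\omega \in \Omega^1_A$ restricts to $0$ on a Zariski-dense open $V$, then for each of finitely many local coordinates the corresponding coefficient function lies in every prime of $A$ meeting $V$, hence (Zariski density) in every prime, hence is nilpotent, hence $0$ since $A$ is reduced. Gluing over an affinoid cover of $X$ finishes it.

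The main obstacle is not mathematical depth but making sure the reduction is clean: one must check that $\mathrm{HT}\log$ genuinely commutes with restriction to an \emph{analytic} open (this is the functoriality asserted in Theorem~\ref{t:Picv-ses}.1, applied to the open immersion $V \hookrightarrow X$, noting $V^\diamond$ is the open subdiamond of $X^\diamond$), and that ``analytic'' for a $v$-line bundle is detected by the vanishing of $\mathrm{HT}\log$, which is exactly left-exactness of \eqref{eq:MT-ses}. Given those, the corollary follows formally from injectivity of $H^0(X,\Omega^1_X) \to H^0(V,\Omega^1_V)$, which I would isolate as a short lemma on rigid spaces. No properness or one-dimensionality is needed here — only part (1) of Theorem~\ref{t:Picv-ses} — so the argument applies to an arbitrary smooth rigid $X$ over an arbitrary perfectoid $K$.
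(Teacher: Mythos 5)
Your proposal is correct and follows essentially the same route as the paper: the paper's proof of Corollary~\ref{c:analytic-on-Zar-dense-implies-analytic} likewise reduces, via left-exactness and functoriality of $\HT\log$ from Theorem~\ref{t:Picv-ses}.1, to the injectivity of $H^0(X,\Omega^1)\to H^0(V,\Omega^1)$ for a Zariski-dense open $V$, checked affinoid-locally. Your added detail on why that restriction map is injective (vanishing locus is Zariski-closed, $X$ reduced) is exactly the step the paper leaves implicit.
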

	\begin{Corollary}\label{c:non-trivial-section-implies-analytic-intro}
		Assume that $X$ is connected
	and let $L$ be a $v$-line bundle on $X$. If we have $H^0(X,L)\neq 0$, then $L$ is analytic.
\end{Corollary}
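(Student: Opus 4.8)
The plan is to deduce the statement from Corollary~\ref{c:analytic-on-Zar-dense-implies-analytic-intro}: I will show that a nonzero global section of $L$ trivializes $L$ over a Zariski-dense analytic open $U\subseteq X$, so that $L|_U$ is analytic, and then apply Corollary~\ref{c:analytic-on-Zar-dense-implies-analytic-intro} with $V=U$ to conclude that $L$ is analytic.

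First I would record the relevant geometry: a smooth rigid space is reduced, and a smooth \emph{connected} rigid space is irreducible, since a smooth rigid space is locally normal and the connected components of a normal rigid space are irreducible. In particular, every nonempty Zariski-open subspace of $X$ is Zariski-dense. Now fix $0\neq s\in H^0(X,L)$. The key step is to make sense of the vanishing locus of $s$ as a Zariski-closed subspace of $X$ itself, rather than of a cover. To this end, choose a $v$-cover $\{Y_i\to X\}$ by affinoid perfectoid spaces on which $L$ becomes trivial (such a cover exists because $L$ is $v$-locally free), and transport $s|_{Y_i}$ to a function $f_i\in\O(Y_i)$. Passing to another trivialization multiplies $f_i$ by a unit, and likewise the two pullbacks of $f_i$ to $Y_i\times_X Y_j$ differ by the invertible transition cocycle of $L$; hence the Zariski-open loci $\{f_i\neq 0\}\subseteq Y_i$ are compatible and descend to a Zariski-open subspace $U\subseteq X$, independent of all choices. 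Since the $Y_i$ are reduced, $s\neq 0$ forces $f_i\neq 0$ for some $i$, so $U\neq\emptyset$; by the previous paragraph, $U$ is Zariski-dense.

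It remains to check that $L|_U$ is trivial, hence analytic. By construction the morphism of $v$-sheaves $s|_U\colon\O_U\to L|_U$ pulls back, over $Y_i\times_X U=\{f_i\neq 0\}$, to multiplication by $f_i$, which is a unit there; so $s|_U$ becomes an isomorphism after a $v$-cover and is therefore an isomorphism. Thus $L|_U\cong\O_U$ is analytic, and Corollary~\ref{c:analytic-on-Zar-dense-implies-analytic-intro} yields that $L$ is analytic.

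I expect the only non-formal point --- the main obstacle --- to be the descent bookkeeping that promotes ``$s$ vanishes (resp.\ is invertible) at a point'' to an honest Zariski-closed (resp.\ Zariski-open) \emph{subspace} of the rigid space $X$, rather than of a $v$-cover. This is routine once one observes that every locus in sight is stable under a change of trivialization, the ambiguity being an $\O^\times$-valued cocycle, with reducedness of the perfectoid spaces $Y_i$ ensuring that $\{f_i\neq 0\}$ is the genuine complement of the zero locus. Everything else is formal, given the left-exact sequence of Theorem~\ref{t:Picv-ses}.1 as repackaged in Corollary~\ref{c:analytic-on-Zar-dense-implies-analytic-intro}.
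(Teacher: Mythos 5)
Your proof is correct and shares the paper's overall strategy --- reduce via Corollary~\ref{c:analytic-on-Zar-dense-implies-analytic-intro} to showing that the section trivialises $L$ on its non-vanishing locus --- but you implement the key step differently. The paper first localises to $X$ affinoid and \'etale over a torus, passes to the toric pro-\'etale perfectoid Galois cover $X_\infty\to X$, uses Cartan--Leray to write the section as a $G$-equivariant function $f\in\O(X_\infty)$, produces a classical point where $f\neq 0$ (via an injection $\O(X_\infty)\hookrightarrow\Map_{\cts}(X_\infty(C),C)$ built from the Maximum Modulus Principle at finite levels), and then spreads the rational domain $\{|f|\geq|\varpi^k|\}$ down to a finite level $X_n$ of the tower, invoking the ``\'etale with Zariski-dense image'' form of the corollary. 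You instead take an arbitrary trivialising $v$-cover and descend the non-vanishing locus directly to an open subspace $U\subseteq X$, which is cleaner and avoids both the Cartan--Leray bookkeeping and the spreading-out to a finite level; the price is that you must know the topological descent facts for $v$-covers of locally spatial diamonds, namely that $|Y|\to|X|$ is a quotient map and that $|Y\times_XY|\to|Y|\times_{|X|}|Y|$ is surjective, so that your compatible open loci $\{f_i\neq 0\}$ really do descend to an open subset of $|X|$; these are standard but should be cited from \cite[\S12]{etale-cohomology-of-diamonds}.

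Two small points deserve more care. First, the reason a nonzero $f_i\in\O(Y_i)$ on an affinoid perfectoid space is non-vanishing at some point is not reducedness per se but uniformity: the supremum seminorm on a perfectoid Tate ring is a norm. (This is exactly the role played by the ``Claim'' in the paper's proof.) Second, your $U$ need not literally be Zariski-open in $X$ --- the vanishing locus of a section of a mere $v$-line bundle is not obviously cut out by a coherent ideal sheaf on $X_{\an}$ --- but this does not matter: $U$ is a nonempty analytic open subspace, and since $X$ is connected and smooth, hence normal and irreducible, any such $U$ is Zariski-dense, which is all that Corollary~\ref{c:analytic-on-Zar-dense-implies-analytic-intro} requires.
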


For the rest of this introduction, let us assume that $K$ is algebraically closed.
In order to shed some light on how the additional $v$-topological line bundles arise, we consider the case that $X$ is an abelian variety $A$ over $K$. Then there is a pro-finite-\'etale perfectoid cover \[\wt A:=\textstyle\varprojlim_{[p]} A\to A\]
with Galois group $T_pA$. Its Cartan--Leray spectral sequence gives rise to an exact sequence
\[ 0\to \Hom_{\cts}(T_pA,K^\times)\to \Pic_v(A)\to \Pic_{\an}(\wt A)^{T_pA},\]
where the first term can be interpreted as pro-\'etale descent data on the trivial line bundle on $\wt A$. On the other hand, we have a logarithmic Hodge--Tate map
\[\Hom_{\cts}(T_pA,K^\times)\xrightarrow{\log}\Hom_{\cts}(T_pA,K) \xrightarrow{\mathrm{HT}} H^0(A,\wt\Omega^1_A),\]
which is surjective. Combining these observations proves Theorem~\ref{t:Picv-ses}.\ref{enum:MT1-projective} for $A$.

The strategy for the general proper case is similar to this: We introduce a diamantine universal pro-finite-\'etale cover
\[\wt X\to X\]
obtained by taking the limit over all connected finite \'etale covers in the category of diamonds. One can then show using Scholze's $p$-adic Hodge theory that the descent data for the trivial line bundle on $\wt X$ generate the group $H^0(X,\wt \Omega^1_X)$ under the Hodge--Tate logarithm.

\subsection{The $p$-adic Simpson correspondence for line bundles}
	The proper case of
	Theorem~\ref{t:Picv-ses} is very closely related to the still mostly conjectural $p$-adic Simpson correspondence \cite{Faltings_SimpsonI}\cite{DeningerWerner_vb_p-adic_curves}: Namely, the theorem shows that we may interpret $v$-topological line bundles on $X$ as Higgs bundles of rank $1$  on $X$, up to a choice of splitting.
	
	On the other hand, as has been observed before by Liu--Zhu \cite[Remark~2.6]{LiuZhu_RiemannHilbert}, pro-\'etale vector bundles are closely related to generalised representations in the sense of Faltings. As our second application, we discuss in \S4 how Theorem~\ref{t:Picv-ses}.\ref{enum:MT1-projective} can thus be used to construct the $p$-adic Simpson correspondence in rank $1$, again based on the diamantine universal cover:
	\begin{Theorem}	
		Let $X$ be a connected smooth proper rigid space over $K$. Fix $x\in X(K)$.
		Then there is an equivalence of tensor categories
	\[\Bigg\{\begin{array}{@{}c@{}l}\text{$1$-dim.\ continuous $K$-linear}\\\text{representations of $\pi_1(X,x)$} \end{array}\Bigg\} \isomarrow \Bigg\{\begin{array}{@{}c@{}l}\text{pro-finite-\'etale analytic}\\\text{Higgs bundles on $X$ of rank }1\end{array}\Bigg\},\]
	depending on choices of a Hodge--Tate splitting and of an exponential function.
	\end{Theorem}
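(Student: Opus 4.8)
The plan is to build the equivalence by combining Theorem~\ref{t:Picv-ses}.\ref{enum:MT1-projective} with the diamantine universal cover $\wt X \to X$ and a Cartan--Leray analysis. First I would recall that for a connected smooth proper rigid space $X$ over an algebraically closed $K$ with base point $x$, the pro-finite-\'etale cover $\wt X\to X$ has automorphism group $\pi_1(X,x)$, and that $\wt X$ is a connected perfectoid space (or at least behaves like one for the purpose of descent), so that its analytic and $v$-Picard groups agree and $H^0(\wt X,\O)=K$. The Cartan--Leray spectral sequence for $\wt X\to X$ then yields a left-exact sequence
\[
0\to H^1_{\cts}(\pi_1(X,x),K^\times)\to \Pic_v(X)\to \Pic_{\an}(\wt X)^{\pi_1(X,x)},
\]
and since $X$ is proper the rightmost term is controlled by the Picard variety; in particular the subgroup of $v$-line bundles that become trivial on $\wt X$ — call these the \emph{pro-finite-\'etale} $v$-line bundles — is exactly $\Hom_{\cts}(\pi_1(X,x),K^\times)$, i.e.\ rank-$1$ continuous $K$-linear representations of $\pi_1(X,x)$. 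This identifies the left-hand side of the claimed equivalence with a subgroup of $\Pic_v(X)$.

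Next I would bring in the Higgs-bundle side. By Theorem~\ref{t:Picv-ses}.\ref{enum:MT1-projective}, applied to proper $X$, there is a short exact sequence $0\to \Pic_{\an}(X)\to \Pic_v(X)\xrightarrow{\HT\log} H^0(X,\Omega^1_X)\{-1\}\to 0$, and the discussion in \S\ref{sec:Simpson-line} explains that a choice of Hodge--Tate splitting of this sequence lets one attach to a $v$-line bundle $L$ the pair $(L_0,\theta)$ consisting of an analytic line bundle $L_0$ and a Higgs field $\theta = \HT\log(L)\in H^0(X,\Omega^1_X)$ (which is automatically integrable in rank one, since $\theta\wedge\theta=0$ for a $1$-form). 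This realizes $\Pic_v(X)\cong \Pic_{\an}(X)\times H^0(X,\Omega^1_X)$ as groups, up to the splitting. I would then cut out, on the Higgs side, the \emph{pro-finite-\'etale analytic} Higgs bundles: those $(L_0,\theta)$ for which $L_0$ becomes trivial after pullback to $\wt X$ — equivalently $L_0$ lies in the image of $\Pic^0$ under the Hodge--Tate-type map — so that the correspondence restricts to a bijection between pro-finite-\'etale $v$-line bundles and pro-finite-\'etale analytic Higgs bundles of rank $1$. The exponential function enters precisely here: to turn the additive datum $\theta$ together with the "constant-coefficient" part coming from $\Hom_{\cts}(\pi_1,K)$ back into a multiplicative object in $\Hom_{\cts}(\pi_1,K^\times)$, one needs a section of $\exp\colon K\to K^\times$ on the relevant (topologically nilpotent, or small) subgroup, compatibly with the logarithm used in the $\HT\log$ construction.

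Concretely, the composite equivalence is: a $1$-dimensional representation $\rho\colon\pi_1(X,x)\to K^\times$ gives a pro-finite-\'etale $v$-line bundle $L_\rho$ via descent along $\wt X\to X$; apply the Hodge--Tate splitting to get $(L_0,\theta)$, and check this pair is pro-finite-\'etale and that $\rho\mapsto(L_0,\theta)$ is a tensor functor (descent is monoidal, and $\HT\log$ is a group homomorphism, hence additive on Higgs fields, matching tensor product of line bundles). For essential surjectivity one runs the Cartan--Leray sequence in reverse: given a pro-finite-\'etale analytic Higgs bundle $(L_0,\theta)$, the splitting produces a $v$-line bundle $L$ with $\HT\log(L)=\theta$ and analytic part $L_0$; since $L_0$ trivializes on $\wt X$ and $\HT\log(L|_{\wt X})=0$ (as $\wt X$ has no nonzero global $1$-forms of the relevant type, or more precisely because $\theta$ pulls back into the part killed by the cover), $L$ itself trivializes on $\wt X$, hence comes from a representation. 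Full faithfulness reduces to injectivity of $\Hom_{\cts}(\pi_1,K^\times)\to\Pic_v(X)$, which is the left-exactness already recorded, together with the fact that the splitting is an isomorphism onto its image.

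The main obstacle I expect is controlling $\Pic_{\an}(\wt X)^{\pi_1(X,x)}$ and, relatedly, pinning down exactly which analytic line bundles on $X$ are "pro-finite-\'etale" — i.e.\ proving that the pro-finite-\'etale $v$-line bundles are \emph{precisely} $\Hom_{\cts}(\pi_1,K^\times)$ and not a larger group, which requires knowing that no nontrivial analytic line bundle on $X$ becomes trivial on $\wt X$ except those detected by $\Hom_{\cts}(\pi_1,K^\times)$ via the connecting map. For abelian varieties this is the classical computation with $\wt A = \varprojlim_{[p]}A$ and $\Pic^0$; in general it rests on the structure of the diamantine universal cover and Scholze's $p$-adic Hodge theory (the primitive comparison / Hodge--Tate decomposition), exactly as used in the proof of Theorem~\ref{t:Picv-ses}.\ref{enum:MT1-projective}. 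A secondary subtlety is verifying that the equivalence is genuinely \emph{tensor} and independent of $x$ up to natural isomorphism, and that the dependence on the two auxiliary choices (Hodge--Tate splitting, exponential) is only through an overall non-canonical identification, not through the category itself.
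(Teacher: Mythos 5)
Your proposal follows essentially the same route as the paper: Cartan--Leray for the universal cover $\wt X\to X$ together with $\O(\wt X)=K$ identifies $\Hom_{\cts}(\pi_1(X,x),K^\times)$ with the pro-finite-\'etale $v$-line bundles, the surjectivity onto $H^0(X,\wt\Omega^1_X)$ comes from the proof of Theorem~\ref{t:Picv-ses}.\ref{enum:MT1-projective}, and the chosen exponential plus Hodge--Tate splitting produce a section $s$ landing in $\Hom_{\cts}(\pi_1(X,x),1+\m)$, giving the dictionary $(E,\theta)\leftrightarrow E\otimes L_{s(\theta)}$ with tensor-compatibility from linearity of $s$. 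Two small corrections: $\wt X$ need not be perfectoid (e.g.\ $\wt X=X$ for $X=\P^n$), and essential surjectivity should not be argued by evaluating $\HT\log$ on $\wt X$ (not a smooth rigid space) --- rather, $L_{s(\theta)}$ is pro-finite-\'etale by construction since $s(\theta)$ is a character of $\pi_1(X,x)$, so $E\otimes L_{s(\theta)}$ is a tensor product of bundles trivialised by $\wt X$; the same fact $\O(\wt X)=K$ also gives $H^0(X,L'\otimes L^{-1})\in\{K,0\}$ for pro-finite-\'etale $L,L'$, which is what upgrades your bijection of isomorphism classes to an actual equivalence of categories.
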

	The construction is entirely global, and avoids any localisation steps to affinoid opens.
	Apart from Theorem~\ref{t:Picv-ses}, our main innovation for the proof is the introduction of the diamantine universal cover $\wt X\to X$, which is a good replacement for the topological universal cover in complex geometry and its role in the complex Simpson correspondence \cite{SimpsonCorrespondence}.
	
	This also gives some idea what to expect for generalisations of Theorem~\ref{t:Picv-ses} to higher ranks. In particular, we believe that the perspective provided by the universal cover $\wt X$ may help find the correct subcategory of Higgs bundles on $X$ for the general $p$-adic Simpson correspondence, which so far has not yet been identified in general.
\subsection{Affine space and affinoid spaces}
The $p$-adic Hodge theoretic phenomenon explaining surjectivity of \eqref{eq:MT-ses} only occurs in the proper case, and indeed for non-quasi-compact spaces it is rarely true that line bundles are trivialised by pro-finite-\'etale covers. In order to investigate what we can expect beyond the proper case, we also determine the $v$-Picard group of the rigid affine space $\A^n$ over $K$:
\begin{Theorem}\label{t:intro-Picv-A^n}
For any $n\in \N$, the Hodge--Tate logarithm defines an isomorphism
\[ \Pic_v(\A^n)=H^0(\A^n,\Omega^1(-1))^{d=0}.\]
\end{Theorem}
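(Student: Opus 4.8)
The plan is to compute $\Pic_v(\A^n)$ by exhausting $\A^n$ by closed polydiscs, where the affinoid case makes the Hodge--Tate logarithm tractable, and then to match the outcome with the closed differentials. Write $\A^n=\bigcup_{k\ge 0}\B^n_k$ with $\B^n_k$ the closed polydisc of radius $|p|^{-k}$. Each $\B^n_k$ is affinoid with $\Pic_{\an}(\B^n_k)=0$ (its Tate algebra is a UFD), and the restriction maps of the Fr\'echet algebras $\O(\B^n_k)$ have dense image, so the topological Mittag--Leffler property gives $\Pic_{\an}(\A^n)=0$ and $\varprojlim{}^1_k\,\O^\times(\B^n_k)=0$; the spectral sequence computing cohomology over an increasing union then yields $\Pic_v(\A^n)\cong\varprojlim_k\Pic_v(\B^n_k)$. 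By Theorem~\ref{t:Picv-ses}.1 every $\mathrm{HT}\log_{\B^n_k}$ is injective and so identifies $\Pic_v(\B^n_k)$ with its image $I_k\subseteq H^0(\B^n_k,\Omega^1(-1))$; since restriction of differentials is injective, taking the limit gives
\[\im(\mathrm{HT}\log_{\A^n})=\bigl\{\,\omega\in H^0(\A^n,\Omega^1(-1)):\omega|_{\B^n_k}\in I_k\text{ for all }k\,\bigr\}.\]
The theorem is thus equivalent to the assertion that this right-hand side is exactly the closed $1$-forms.

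For the inclusion "$\supseteq$'', recall that the closed $1$-forms on $\A^n$ form the subgroup $\{df:f\in\O(\A^n)\}$: the $p$-adic Poincar\'e lemma holds on $\A^n$ because term-by-term integration of a power series converges once its coefficients decay at every radius. (This fails on a fixed closed polydisc, which is exactly why one works with $\A^n$.) As $\im(\mathrm{HT}\log_{\A^n})$ is a subgroup, it suffices to realise $df(-1)$ for each $f\in\O(\A^n)$. Fix the perfectoid $v$-cover $\wt{\A^n}\to\A^n$ obtained by adjoining all $p$-power roots of the coordinates; over $\B^n_k$ it restricts to a perfectoid $v$-cover $\wt{\B}^n_k\to\B^n_k$, on whose fibre products $\wh{\O}$ is acyclic. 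Since $\B^n_k$ is affinoid, the Hodge--Tate spectral sequence gives $H^1_v(\B^n_k,\wh{\O})\cong H^0(\B^n_k,\Omega^1(-1))$, so the class corresponding to $df(-1)|_{\B^n_k}$ is represented by a \v{C}ech $1$-cocycle $z_k$ with values in $\wh{\O}$ of the double fibre product. That ring is Banach, so for $m\gg 0$ the cochain $p^{-m}z_k$ lies in the convergence range of $\exp$; then $\exp(p^{-m}z_k)$ is an $\O^\times$-valued $1$-cocycle, defining a $v$-line bundle $L'$ on $\B^n_k$ with $\mathrm{HT}\log L'=p^{-m}\cdot df(-1)|_{\B^n_k}$, and the tensor power $(L')^{\otimes p^m}$ then has $\mathrm{HT}\log=df(-1)|_{\B^n_k}$. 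Hence $df(-1)|_{\B^n_k}\in I_k$ for all $k$, so $df(-1)\in\im(\mathrm{HT}\log_{\A^n})$ by the description above; surjectivity onto the closed forms follows.

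The reverse inclusion "$\subseteq$'', that no non-closed form is hit, is the heart of the matter and, I expect, the main obstacle. By the description above one must show: if $\omega$ is not closed, then $\omega|_{\B^n_k}\notin I_k$ for some $k$. This is delicate because $I_k$ is \emph{not} contained in the closed forms --- by Theorem~\ref{t:Picv-ses}.3 it is not, even after inverting $p$ --- so the content is only that a non-closed form cannot lie in all the $I_k$ simultaneously. I would make the affinoid computation precise by running the construction above in reverse: compute $H^1_v(\B^n_k,\O^\times)$ through the perfectoid cover (controlling $\O^\times$-cohomology on its fibre products by Kedlaya--Liu), pass to $\wh{\O}$ via $\log$ on the topologically nilpotent part, and read off that the cocycle relation on the triple fibre product forces the symmetry $\partial_i\omega_j=\partial_j\omega_i$ up to an explicit $p$-power denominator that grows with $k$; for a fixed non-closed $\omega$ these denominators are unbounded, so no compatible system can exist. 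Carrying this out --- and in particular keeping track of how the failure of $\wh{\O}$ on the fibre products of $\wt{\A^n}$ to be Banach (it is only Fr\'echet) obstructs running the argument directly on $\A^n$ --- is the step I expect to cost the most work.
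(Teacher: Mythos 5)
You have correctly reduced the theorem to the assertion that
\[
\bigl\{\,\omega\in H^0(\A^n,\wt\Omega^1)\;:\;\omega|_{B^n_k}\in I_k\text{ for all }k\,\bigr\}\;=\;H^0(\A^n,\wt\Omega^1)^{d=0},
\]
and the limit identification $\Pic_v(\A^n)=\varprojlim_k\Pic_v(B^n_k)$ is sound (the paper uses the same \v{C}ech argument for the cover by polydiscs). But there is a genuine gap at exactly the point you flag yourself: the inclusion ``$\subseteq$'' --- that a non-closed form must fail to lie in some $I_k$ --- is not proved. What you offer is a programme (``the cocycle relation on the triple fibre product forces $\partial_i\omega_j=\partial_j\omega_i$ up to a $p$-power denominator growing with $k$'') whose central quantitative claim is neither formulated precisely nor verified, and which amounts to a direct determination of the subgroup $I_k\subseteq H^0(B^n_k,\wt\Omega^1)$ for an affinoid polydisc. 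That subgroup is precisely what the introduction calls ``slightly mysterious'' and what both of the paper's proofs deliberately avoid computing. The paper's route around it is structural rather than quantitative: it replaces the polydisc by the cylinder $B_s\times\A$, for which $\O^\times(\wt B_s\times\A)=\O^\times(\wt B_s)$, so that the $H^1_{\cts}(\Z_p,\O^\times(-))$ term of the Cartan--Leray sequence for $\wt B_s\times\A\to B_s\times\A$ reduces to the one-variable case where $\HT\log$ is surjective by Theorem~\ref{t:Picv-ses}.\ref{enum:MT1-curves}; overconvergence in $s$ restores exactness of the de Rham complex on global sections; and general $n$ is reduced to $n=2$ by slicing along $(\id,z):\A^2\to\A^2\times\A^{n-2}$ (Proposition~\ref{p:Pic=closed-differentials-on-oc-unit-disc} and the concluding argument). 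Some substitute for these ideas is needed; without one the hard inclusion remains open.

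There is also an error in the easy inclusion. Multiplying a cocycle by $p^{-m}$ \emph{increases} its norm, so $p^{-m}z_k$ does not enter the convergence range of $\exp$ for $m\gg0$; with the sign corrected you only obtain a line bundle $L'$ with $\HT\log L'=p^{m}\,df(-1)|_{B^n_k}$, and you would then need a $p^m$-th root of $L'$ in $\Pic_v(B^n_k)$, which need not exist --- this is exactly why the affinoid sequence is right-exact only after inverting $p$ (Theorem~\ref{t:Picv-ses}.3) and why $\HT\log$ on $\Pic_v(B^n)$ is not surjective for $n\ge2$ (Corollary~\ref{c:Cor-Pic_v(B^n)}). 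The step is easily repaired: pull back along the morphism $\A^n\to\A^1$ determined by $f$ and use surjectivity of $\HT\log$ for the paracompact curve $\A^1$, which is how Corollary~\ref{c:df-in-image} proceeds.
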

This ties in with recent results of Colmez--Nizio\l\ \cite{ColmezNiziol_CohomologyAffine} and Le Bras \cite{LeBras-Espaces} describing the pro-\'etale cohomology of $\A^n$. We offer two proofs of Theorem~\ref{t:intro-Picv-A^n}, one relying on a comparison to Le Bras' work, the other an independent one which is slightly more general.

We emphasize that in contrast to Theorem~\ref{t:Picv-ses}.2, the Picard group only sees the closed differentials, rather than all of $H^0(\A^n,\Omega^1(-1))$. In particular, this implies that right-exactness in Theorem~\ref{t:Picv-ses} fails for affinoid $X$ already in the case of the closed disc of radius $\geq 2$.

Instead, for affinoid $X$, the image of the Hodge--Tate logarithm is a slightly mysterious subgroup of $H^0(X,\Omega^1(-1))$ that contains all closed differentials and generates the whole space upon inverting $p$, but is typically not all of $H^0(X,\Omega^1(-1))$ unless $X$ is one-dimensional.
\subsection*{Acknowledgements}
We thank Johannes Ansch\"utz, David Hansen, Arthur-C\'esar Le Bras, Lucas Mann, Peter Scholze and Annette Werner for very useful conversations. 

This work was funded by the Deutsche Forschungsgemeinschaft (DFG, German Research Foundation) under Germany's Excellence Strategy -- EXC-2047/1 -- 390685813. The author was supported by the DFG via the Leibniz-Preis of Peter Scholze.

\subsection*{Notation}
Throughout, let $K$ be a perfectoid field extension of $\Q_p$. Let $\O_K$ be the ring of integers, $\m$ its maximal ideal, $k$ the residue field.  Let $C$ be the completion of an algebraic closure of $K$.

 We use almost mathematics with respect to $(\O_K,\mathfrak m)$ and write $\aeq$ if a natural map becomes an isomorphism after passing to the almost category.

By a rigid space over $K$ we shall by definition mean an adic space in the sense of Huber \cite{huber2013etale} that is locally of topologically finite type over $\Spa(K,\O_K)$. 

Let $\Perf_K$ be the category of perfectoid spaces over $K$. 
Throughout, we shall consider diamonds over $\Spa(K,\O_K)$ in the sense of \cite{etale-cohomology-of-diamonds}, which in this relative setting we may consider as $v$-sheaves on $\Perf_K$.
We recall the diamondification functor \cite[\S10.1]{ScholzeBerkeleyLectureNotes}
\[ \{\text{analytic adic spaces over }K\}\to \{\text{diamonds over }\Spd(K)\},\quad X\mapsto X^\diamondsuit\]
which is fully faithful on semi-normal rigid spaces. 
For any adic space $X$, we will write $X^\diamondsuit$ for the associated diamond when we would like to emphasize the category we work in. However, we often drop this from notation and identify semi-normal rigid space and perfectoid space with their associated diamonds when this is clear from the context.

For a smooth rigid space $X$, we denote by $X_{\proet}$ the pro-\'etale site in the sense of \cite[Definition~3.9]{Scholze_p-adicHodgeForRigid}, which is now sometimes referred to as the ``flattened pro-\'etale site''.

Let us fix notation for some rigid groups we will use: $\G_a$ denotes the rigid analytic affine line $\A^1$ with its additive structure, $\G_a^+$ denotes the subgroup defined by the closed ball of radius 1 around the origin. $\G_m$ denotes the rigid analytic affine line  punctured at the origin with its multiplicative group structure.
We denote by $\O,\O^+,\O^\times$ the sheaves that these groups represent on the \'etale, pro-\'etale or $v$-site. We will indicate the topology by an index, e.g.\ $\O_\tau$ for $\tau=\et,\qproet,v$,... unless this is clear from the context.

\section{Vector bundles on diamonds}
In this section we prove Theorem~\ref{t:Picv-ses}.1 using the Leray spectral sequence of $\nu:X_v\to X_{\et}$ for the sheaf $\O^\times$. To avoid any ambiguity, we begin with a definition of $v$-vector bundles.
\subsection{Definition and basic properties}
For $n\in \N$, let $\GL_n^\diamondsuit$ be the diamond associated to $\GL_n$ considered as a rigid space over $K$.
\begin{Definition}
	Let $Y$ be a diamond over $\Spd(K)$. A $v$-vector bundle of rank $n\in \N$ on $Y$ is a $\GL_n^\diamondsuit$-torsor for the $v$-topology, i.e.\ a $v$-sheaf $V\to Y$ with a $\GL_n^\diamondsuit$-action $\GL_n^\diamondsuit\times V\to V$  over $Y$ for which there is a $v$-cover $Y'\to Y$ with a $\GL_n^\diamondsuit$-equivariant Cartesian diagram
	\[
	\begin{tikzcd}
	\GL_n^\diamondsuit\times Y' \arrow[d] \arrow[r,"\pi_2"] & Y' \arrow[d] \\
	V \arrow[r]                                & Y.           
	\end{tikzcd}\]
	As usual, one sees that this geometric definition is equivalent to the sheaf-theoretic one where a $v$-vector bundle is defined as a locally free $\O_v$-modules of rank $n$ on $Y_v$.
\end{Definition}
In the case of perfectoid spaces, the above $v$-topological notion of vector bundles is equivalent to the usual notion of vector bundles in the analytic topology:
\begin{Theorem}[Kedlaya--Liu {\cite[Theorem 3.5.8]{KedlayaLiu-II}}]\label{t:KL-line-bundles}
	Let $X$ be a perfectoid space over $K$. Then any $v$-vector bundle on $X$ is already trivial locally in the analytic topology on $X$.
\end{Theorem}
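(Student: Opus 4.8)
\emph{Proof proposal.} This is the theorem of Kedlaya--Liu \cite[Theorem~3.5.8]{KedlayaLiu-II}, so my proposal is in the first instance simply to cite it; but let me indicate the shape of the argument one would give. Since the assertion is local on $X$, one may assume $X=\Spa(R,R^+)$ is affinoid perfectoid. I would then split the claim into two parts: first, an equivalence of categories between $\VB_v(X)$ and the category of finite projective $R$-modules, compatible with $N\mapsto N\otimes_R\O$; and second, the elementary observation that a finite projective $R$-module $N$ is analytically locally free. For the latter, $N$ becomes free after inverting finitely many $f_1,\dots,f_k\in R$ generating the unit ideal, and the rational subsets $U_i=\{\,|f_i|\geq|f_j|\text{ for all }j\,\}$ then cover $X$ analytically, with $N\otimes_R\O$ free on each $U_i$ because $\O(U_i)$ is an $R[1/f_i]$-algebra. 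So everything reduces to the module-theoretic equivalence.

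The one non-formal ingredient, and the step I expect to be the main obstacle, is the $v$-acyclicity of the structure sheaf on an affinoid perfectoid: one needs $R\Gamma_v(X,\O^+)\aeq R^+$, hence (after inverting a pseudo-uniformiser) $R\Gamma_v(X,\O)=R$ concentrated in degree $0$. This is precisely where the perfectoid hypothesis is used in an essential way, through the almost purity theorem, tilting, and Bartenwerfer-type estimates controlling higher \v{C}ech cohomology along $v$-covers by affinoid perfectoids; I would not reprove this and would instead cite it.

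Granting the acyclicity, fully faithfulness of $N\mapsto N\otimes_R\O$ is immediate from $H^0_v(X,\O)=R$. For essential surjectivity, given a $v$-vector bundle $M$ I would choose a $v$-cover $Y=\Spa(S,S^+)\to X$ by an affinoid perfectoid trivialising $M$ (any $v$-cover of the quasicompact $X$ admits such a refinement, after passing to a finite disjoint union), and set $N:=H^0(X_v,M)$. Using that the completed \v{C}ech nerve $S^{\hotimes_R(\bullet+1)}$ of the cover resolves $R$ by the acyclicity --- applied also to the trivial bundle on the self-products of $Y$, so that the relevant complexes of Banach modules are strictly exact and survive the completed base change along $R\to S$ --- one identifies $N$ with the module of descent-invariant sections of $S^n$ and checks that the natural map $N\otimes_R\O\to M$ is an isomorphism, so that $N$ is a finite projective $R$-module recovering $M$. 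Combined with the first paragraph, this shows every $v$-vector bundle on $X$ is analytically locally trivial, as claimed.
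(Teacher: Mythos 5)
The paper gives no proof of this statement: it is quoted as a black-box theorem of Kedlaya--Liu, which is exactly what you propose to do in the first instance. Your accompanying sketch --- reduction to affinoid perfectoid $\Spa(R,R^+)$, the equivalence $N\mapsto N\otimes_R\O$ with finite projective $R$-modules resting on the $v$-acyclicity of $\O$, and the elementary passage from finite projective modules to local freeness on the rational cover by the $U_i$ --- is a correct outline of the standard argument, with the genuinely hard inputs (acyclicity and the descent of finite projectivity) appropriately flagged and deferred to the reference.
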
 
As a consequence, vector bundles are in general trivial in the quasi-pro-\'etale topology:
\begin{Corollary}\label{c:refine-X-so-L-becomes-trivial}
	Let $Y$ be a diamond and let $V$ be a $v$-vector bundle on $Y$. Then there is a presentation $Y=X/R$ for some perfectoid space $X$ and some pro-\'etale equivalence relation $R\subseteq X\times X$ such that the pullback of $V$ to $X$ is trivial. In particular, any $v$-vector bundle on $Y$ is already trivial in the quasi-pro-\'etale topology.
\end{Corollary}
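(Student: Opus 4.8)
The plan is to deduce this from Kedlaya--Liu's triviality theorem (Theorem~\ref{t:KL-line-bundles}) by refining an arbitrary presentation of $Y$ until the pullback of $V$ becomes trivial.

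First I would use that, since $Y$ is a diamond, it admits \emph{some} presentation $Y=X_0/R_0$ with $X_0$ a perfectoid space and $R_0\hookrightarrow X_0\times X_0$ a pro-\'etale equivalence relation; let $q_0\colon X_0\to Y$ denote the quotient map, so that $R_0\cong X_0\times_Y X_0$. The pullback $V_0:=q_0^{\ast}V$ is a $v$-vector bundle on the perfectoid space $X_0$, so by Theorem~\ref{t:KL-line-bundles} it is trivial analytically locally on $X_0$: there is an open cover $\{U_i\hookrightarrow X_0\}_{i\in I}$ by perfectoid open subspaces with $V_0|_{U_i}\cong\O_{U_i}^n$ for all $i$. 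Setting $X:=\bigsqcup_{i\in I}U_i$, again a perfectoid space, and letting $\pi\colon X\to X_0$ be the evident \'etale cover, we get $\pi^{\ast}V_0\cong\O_X^n$.

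Next I would check that $X$ furnishes the presentation asked for in the statement. Put $q:=q_0\circ\pi\colon X\to Y$ and $R:=X\times_Y X$. Under the natural identification $R\cong X\times_{X_0}R_0\times_{X_0}X$, each of the two projections $R\to X$ is a composite of base changes of $\pi$ (\'etale) and of the two projections $R_0\to X_0$ (pro-\'etale, by the hypothesis on $R_0$), hence is pro-\'etale; therefore $R$ is a perfectoid space and $R\hookrightarrow X\times X$ is a pro-\'etale equivalence relation. Since $q$ is a surjection of pro-\'etale sheaves (both $q_0$ and $\pi$ being surjective) with kernel pair $R$, we obtain $Y=X/R$. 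By construction $q^{\ast}V=\pi^{\ast}V_0\cong\O_X^n$ is trivial, which gives the first assertion.

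Finally, for the ``in particular'' statement: the map $q\colon X\to Y$ is a quasi-pro-\'etale cover of $Y$ by a perfectoid space, which is part of the general theory of diamonds \cite{etale-cohomology-of-diamonds}, and we have just seen that $V|_X=q^{\ast}V$ is trivial; hence $V$ is trivial in the quasi-pro-\'etale topology. The argument is essentially formal once Theorem~\ref{t:KL-line-bundles} is available; the only place needing a bit of care is the bookkeeping in the middle step, namely that an analytic refinement of a presentation is again a presentation and that the fibre products of perfectoid spaces occurring along the way stay perfectoid with pro-\'etale structure maps; both of these are routine consequences of the formalism of \cite{etale-cohomology-of-diamonds}.
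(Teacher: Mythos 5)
Your proof is correct and follows essentially the same route as the paper: start from an arbitrary presentation $Y=X_0/R_0$, use Theorem~\ref{t:KL-line-bundles} to refine $X_0$ by an analytic cover trivialising the pullback of $V$, and check that the refined pair is still a presentation by a pro-\'etale equivalence relation. The paper simply cites \cite[Proposition~11.3.3-4]{etale-cohomology-of-diamonds} for the bookkeeping step that you verify by hand.
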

\begin{proof}
	Let $Y=X/R$ be any presentation, then by Theorem~\ref{t:KL-line-bundles}, there is an analytic cover $X'\to X$ such that the pullback of $V$ to $X$ becomes trivial over $X'$. Let $R'\subseteq X'\times X'$ be the base-change of $R\to X\times X$, then by \cite[Proposition~11.3.3-4]{etale-cohomology-of-diamonds}, this is again a pro-\'etale equivalence relation, and we have $X'/R'=X/R$.
\end{proof}
\begin{Corollary}\label{l:descent-of-line-bundles}
	Let $Y$ be a diamond. Then any $v$-vector bundle on $Y$ is a diamond.
\end{Corollary}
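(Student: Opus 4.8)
The plan is to combine Corollary~\ref{c:refine-X-so-L-becomes-trivial} with the structure theory of diamonds from \cite{etale-cohomology-of-diamonds}: first trivialise $V$ over a perfectoid presentation of $Y$, and then recognise that $V$ itself admits a quasi-pro-\'etale cover by a perfectoid space, which forces it to be a diamond. Concretely, I would apply Corollary~\ref{c:refine-X-so-L-becomes-trivial} to fix a presentation $Y=X/R$ with $X$ a perfectoid space and $R\subseteq X\times X$ a perfectoid space whose two projections to $X$ are pro-\'etale, such that the pullback $V|_X:=V\times_Y X$ is a trivial $v$-vector bundle, i.e.\ $V|_X\cong\GL_n^\diamond\times_{\Spd K}X$. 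Since $\GL_n$ is a rigid space over $K$, the diamond $\GL_n^\diamond$ carries a quasi-pro-\'etale surjection $\widetilde G\to\GL_n^\diamond$ from a perfectoid space $\widetilde G$ --- either by the very definition of a diamond, or explicitly by covering $\GL_n$ by affinoids and adjoining $p$-power roots of the coordinate functions. Setting $Z:=\widetilde G\times_{\Spd K}X$ and using that fibre products of perfectoid spaces over the affinoid perfectoid space $\Spa(K,\O_K)$ are perfectoid, $Z$ is again a perfectoid space.

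I would then observe that $Z\to V$ is a quasi-pro-\'etale surjection: it factors as $Z=\widetilde G\times_{\Spd K}X\to\GL_n^\diamond\times_{\Spd K}X=V|_X\to V$, where the first map is the base change of the quasi-pro-\'etale surjection $\widetilde G\to\GL_n^\diamond$, and the second is the base change of $X\to Y$ along $V\to Y$, which is quasi-pro-\'etale because it is the covering map of a diamond presentation; quasi-pro-\'etale morphisms of small $v$-sheaves are stable under composition and base change, and surjectivity is clear. Since $V$ is a small $v$-sheaf (being a $\GL_n^\diamond$-torsor over the small $v$-sheaf $Y$), the characterisation of diamonds among small $v$-sheaves as exactly those admitting a quasi-pro-\'etale cover by a perfectoid space \cite[\S11]{etale-cohomology-of-diamonds} then yields that $V$ is a diamond. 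As an alternative to invoking this characterisation, one can compute directly, using $R=X\times_Y X$ and the triviality of $V$ over $X$, that $Z\times_V Z$ is again a perfectoid space and that its two projections to $Z$ are pro-\'etale, so that $V=Z/(Z\times_V Z)$ is literally a diamond presentation in the sense of \cite[Definition~11.1]{etale-cohomology-of-diamonds}.

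The only genuinely delicate point is the book-keeping with quasi-pro-\'etale morphisms of $v$-sheaves: one needs that the covering map of a diamond presentation is quasi-pro-\'etale, that this class is stable under composition and under base change along arbitrary maps of small $v$-sheaves, and --- on the explicit route --- that the two legs of $Z\times_V Z\rightrightarrows Z$ remain pro-\'etale even after taking the descent cocycle of the torsor $V$ into account (which only twists the $\GL_n^\diamond$-factor). All of this is contained in \cite[\S10--11]{etale-cohomology-of-diamonds}; granted it, the argument is purely formal.
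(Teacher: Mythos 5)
Your proposal is correct and follows essentially the same route as the paper: trivialise $V$ via Corollary~\ref{c:refine-X-so-L-becomes-trivial} and then conclude from the quasi-pro-\'etale surjection onto $V$ together with \cite[Proposition 11.6]{etale-cohomology-of-diamonds}. The only (harmless) difference is that the paper applies that proposition directly to the surjection $\GL_n^\diamond\times Y'\to V$ from the diamond $\GL_n^\diamond\times Y'$, whereas you first refine $\GL_n^\diamond$ to a perfectoid cover $\widetilde G$ --- an extra step that the cited proposition makes unnecessary, since it only requires the source to be a diamond, not a perfectoid space.
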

\begin{proof}
	Let $V$ be a $v$-vector bundle on $Y$. By Corollary~\ref{c:refine-X-so-L-becomes-trivial}, there is a quasi-pro-\'etale cover $Y'\to Y$ trivialising $V$. We thus have a quasi-pro-\'etale surjective morphism of $v$-sheaves
	$\GL_n^\diamond \times Y'\to V$
	from a diamond, so by \cite[Proposition 11.6]{etale-cohomology-of-diamonds} $V$ is itself a diamond.
\end{proof}
In particular, using that $X\to Y$ is a $v$-cover, one can describe $v$-vector bundles on $Y$ in terms of analytic vector bundles on $X$ equipped with descent data, as usual. More generally:
\begin{Definition}
	Let $q:X\to Y$ be a $v$-cover of diamonds. Write $\pi_1,\pi_2:X\times_Y X\rightrightarrows X$ for the projection maps. 
	 Let $ V$ be a $v$-vector bundle on $X$. Then a descent datum on $V$ with respect to $q$ is an isomorphism of $v$-vector bundles on $X\times_Y X$
	\[ \varphi:\pi_1^{\ast}V\isomarrow \pi_2^{\ast}V ,\]
	such that the cocycle condition holds.
	For a $v$-vector bundle $V_0$ on $Y=X/R$, the pullback along $q:X\to Y$ carries a canonical descent datum induced by $q\circ \pi_{1}=q\circ \pi_2$.
	A descent datum $\varphi$ is called effective if it is isomorphic to a descent datum of this form.
\end{Definition}
\begin{Lemma}\label{l:effective-descent}
	Let $q:X\to Y$ be a $v$-cover of diamonds. Then any descent datum on a $v$-vector bundle on $X$ is effective: The $v$-vector bundle on $Y$ attached to  $\varphi:\pi_1^{\ast}V\isomarrow \pi_2^{\ast}V$ is
	\[ V_0:=\ker( q_{\ast}V\xrightarrow{\pi_2^{\ast}-\varphi\circ\pi_1^{\ast}} q_{\ast}\pi_{2\ast}\pi_2^{\ast}V).\]
	  In particular, $v$-vector bundles of rank $n$ on $Y$ up to isomorphism are classified by the set
	\[ \Pic_v(Y):=H^1_v(Y,\GL_n^\diamondsuit).\]
\end{Lemma}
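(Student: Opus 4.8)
The plan is to deduce the lemma from the general formalism of descent for sheaves of modules on a site, the only non-formal ingredient being that ``locally free of rank $n$'' is a $v$-local condition — and for this last point we will invoke Theorem~\ref{t:KL-line-bundles} via Corollary~\ref{c:refine-X-so-L-becomes-trivial}.

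\emph{Step 1 (formal descent).} For any $v$-cover $q\colon X\to Y$ of diamonds and any sheaf of $\O_v$-modules $V$ on $X_v$ equipped with a descent datum $\varphi\colon \pi_1^{\ast}V\isomarrow\pi_2^{\ast}V$ satisfying the cocycle condition, I would first check that
\[ V_0:=\ker\bigl( q_{\ast}V\xrightarrow{\ \pi_2^{\ast}-\varphi\circ\pi_1^{\ast}\ } q_{\ast}\pi_{2\ast}\pi_2^{\ast}V\bigr) \]
is a sheaf of $\O_v$-modules on $Y_v$ — automatic, since a kernel of a morphism of sheaves of modules is again one — and that the two pullback maps equip $q^{\ast}V_0$ with a natural isomorphism $q^{\ast}V_0\isomarrow V$ carrying the canonical descent datum of $q^{\ast}V_0$ (the one induced by $q\circ\pi_1=q\circ\pi_2$) to $\varphi$. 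This is the standard assertion that sheaves of modules on a site form a stack for the topology of that site: a sheaf on the localised site $(Y_v)_{/X}$ together with a gluing isomorphism over $X\times_YX$ satisfying the cocycle condition is the same datum as a sheaf on $Y_v$. The verification reduces to a diagram chase using only that $q$ is a $v$-cover, so that sections of a $v$-sheaf over $Y$ are the equaliser of their two pullbacks along $q$; in particular $\O_{Y_v}=\ker(q_{\ast}\O_v\rightrightarrows q_{\ast}\pi_{2\ast}\pi_2^{\ast}\O_v)$.

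\emph{Step 2 (descent of local freeness).} It then remains to see that if $V$ is moreover a $v$-vector bundle of rank $n$ on $X$, then $V_0$ is a $v$-vector bundle of rank $n$ on $Y$. By Corollary~\ref{c:refine-X-so-L-becomes-trivial} (or already by the definition of a $v$-vector bundle) there is a $v$-cover $X'\to X$ over which $V$ becomes isomorphic to $\O_v^{n}$; then the composite $X'\to X\to Y$ is again a $v$-cover, and since $q^{\ast}V_0\cong V$ its pullback of $V_0$ to $X'$ is isomorphic to $\O_v^{n}$. Hence $V_0$ is locally free of rank $n$ in the $v$-topology on $Y$, i.e.\ a $v$-vector bundle. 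Together with Step~1 this shows $\varphi$ is effective, witnessed by $V_0$; equivalently, $\VB_v$ is a stack on the $v$-site of any diamond.

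\emph{Step 3 (classification and the main obstacle).} Given effectivity of descent, the last sentence of the lemma is the usual dictionary between torsors and non-abelian cohomology: a rank-$n$ $v$-vector bundle on $Y$ is a sheaf that is $v$-locally isomorphic to $\O_v^{n}$, so for a fixed trivialising $v$-cover $q\colon X\to Y$ the isomorphism classes of such bundles trivialised by $q$ are parametrised by $\check{Z}^1(X/Y,\GL_n^\diamondsuit)$ modulo coboundaries — here effectivity is exactly what guarantees that every cocycle is realised by an actual bundle — and passing to the colimit over all $v$-covers of $Y$ gives $\check{H}^1_v(Y,\GL_n^\diamondsuit)=H^1_v(Y,\GL_n^\diamondsuit)$, the last equality because \v{C}ech and derived $H^1$ always coincide. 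The conceptual content is concentrated in Step~2, namely that ``$v$-vector bundle'' is a $v$-local notion; this is the only place the Kedlaya--Liu theorem is needed, via Corollary~\ref{c:refine-X-so-L-becomes-trivial}, and I expect the only other point requiring genuine care to be the matching of the canonical descent datum on $q^{\ast}V_0$ with $\varphi$ in Step~1.
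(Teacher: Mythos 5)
The paper gives no proof of this lemma at all --- it is stated as the standard fact that sheaves of $\O_v$-modules form a stack for the $v$-topology, with the explicit equaliser formula for the descended object --- and your three steps are exactly the argument the paper is implicitly relying on, so the proposal is correct and matches the paper's (omitted) reasoning. The only cosmetic point is in Step~3: for the non-abelian sheaf $\GL_n^\diamondsuit$ one should read $H^1_v(Y,\GL_n^\diamondsuit)$ as the pointed set of torsor classes, i.e.\ as $\check{H}^1$, rather than invoking a comparison with a derived functor $H^1$, which does not exist in the non-abelian setting.
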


In the special case that the diamond $Y$ is the quotient of a perfectoid space $X$ by the action of a profinite group, the descent data defining vector bundles can be described as $1$-cocycles in continuous group cohomology, as we shall now discuss.
\subsection{The Cartan--Leray spectral sequence}
\begin{Definition}
	Let $f:X\to Y$ be a morphism of diamonds over $\Spd(K)$. Let $G$ be a locally profinite group. We say that $f$ is Galois with group $G$ if $f$ is a quasi-pro-\'etale $G$-torsor: Explicitly, this means that $f$ is a quasi-pro-\'etale cover and there is a $G$-action on $X$ that leaves $f$ invariant such that the action and projection maps induce an isomorphism
	\[G\times X\isomarrow X\times_YX.\]
\end{Definition}
Let $f:X\to Y$ be Galois with group $G$ and let $\mathcal F$ be a sheaf of topological abelian groups on $Y_v$.  Generalising from the case of finite $G$ known from \'etale cohomology, one might hope that there is in this situation a Cartan--Leray spectral sequence relating the continuous group cohomology of $H^j_v(X,\mathcal F)$ with $H^j_v(Y,\mathcal F)$. 
However, apart from special cases (e.g.\ if $\mathcal F$ is a sheaf of discrete abelian groups pulled back from $Y_{\et}$, see \cite[Remark~2.25]{CHJ}), it is not clear how to make this precise: Topological abelian groups do not form an abelian category, and it is in general not clear what topology $H^j_v(X,\mathcal F)$ should be endowed with. These issues can be fixed using the formalism of condensed abelian groups of Clausen--Scholze \cite{Scholze-Condensed}. 

For our purposes, however, the following ad hoc version in low degrees will be sufficient:
\begin{Proposition}\label{p:Cartan--Leray}
	Let $q:X\to Y$ be a morphism of diamonds over $K$ that is Galois for the action of a locally profinite group $G$ on $X$. Let $\tau=v$ or $\qproet$ and let $\mathcal F$ be a sheaf of not necessarily abelian topological groups on $Y_{\tau}$ with the property that 	for $i=1,2$ we have
	\begin{equation}\label{eq:CL-condition}
	\mathcal F(X\times G^i)=\Map_{\cts}(G^i,\mathcal F(X)).
	\end{equation}
	For example, for $\mathcal F=\O, \mathcal O^\times, \GL_n(\O)\dots$, this condition holds for any $i\geq 0$.
 Then:
 \begin{enumerate}
 \item\label{i:CL-left-ex} There is a left-exact sequence of pointed sets (of abelian groups if $\mathcal F$ is abelian):
	\[0\to H^1_{\cts}(G,\mathcal F(X))\to H^1_\tau(Y,\mathcal F)\to H^1_\tau(X,\mathcal F)^G.\]
	\item \label{i:CL-five-ex} Assume that $\mathcal F$ is abelian, and that moreover the natural specialisation map
	\[ H^1_\tau(X\times G,\mathcal F)\to \Map(G,H^1_\tau(X,\mathcal F))\]
	is injective. Then this extends to a ``Cartan--Leray $5$-term exact sequence''
	\[ 0\to H^1_{\cts}(G,\mathcal F(X))\to H^1_\tau(Y,\mathcal F)\to H^1_\tau(X,\mathcal F)^G\to H^2_{\cts}(G,\mathcal F(X))\to H^2_\tau(Y,\mathcal F). \]
	\item\label{i:CL-full} If moreover $H^j_\tau(X,\mathcal F)$ carries a topology for all $j\geq 1$ such that for all $i\geq 0$ we have
	\begin{equation}\label{eq:CL-higher}
	H^j_{\tau}(X\times G^i,\mathcal F)= \Map_{\cts}(G^i,H^j_{\tau}(X,\mathcal F)),
	\end{equation}
	then we obtain the full Cartan--Leray spectral sequence
	\[E^{ij}_2=H^i_{\cts}(G,H^j_{\tau}(X,\mathcal F))\Rightarrow H^{i+j}_{\tau}(Y,\mathcal F).\]
	\end{enumerate}
\end{Proposition}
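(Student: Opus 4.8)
The plan is to deduce all three statements from the descent (Čech-to-derived-functor) spectral sequence of the cover $q$, after identifying its Čech nerve with the bar construction of $G$. Since $q\colon X\to Y$ is a cover for $\tau\in\{v,\qproet\}$, its Čech nerve $X^{\times_Y(\bullet+1)}$ is a hypercover of $Y$, so for any sheaf $\mathcal F$ of abelian groups on $Y_\tau$ there is a convergent spectral sequence
\[E_2^{ij}=\check{H}^i\!\big(X/Y,\ \underline H^j_\tau\mathcal F\big)\ \Longrightarrow\ H^{i+j}_\tau(Y,\mathcal F),\]
where $\underline H^j_\tau\mathcal F$ is the presheaf $U\mapsto H^j_\tau(U,\mathcal F)$ and $\check{H}^i$ is computed from the cosimplicial group $i\mapsto H^j_\tau\big(X^{\times_Y(i+1)},\mathcal F\big)$. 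Because $q$ is Galois with group $G$, iterating the defining isomorphism $G\times X\isomarrow X\times_YX$ identifies $X^{\times_Y(i+1)}\cong G^i\times X$ compatibly with the simplicial structure, so that this cosimplicial group is the usual bar complex for $G$ acting on the left-hand variables, with $G$ acting on $X$ through the torsor structure.

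For part (1), I would combine the general left-exact sequence of pointed sets attached to a cover, namely $1\to \check{H}^1(X/Y,\mathcal F)\to H^1_\tau(Y,\mathcal F)\to H^1_\tau(X,\mathcal F)$, with the fact that $q\circ g=q$ for $g\in G$, which forces the image in $H^1_\tau(X,\mathcal F)$ to land in $H^1_\tau(X,\mathcal F)^G$. It then remains to identify $\check{H}^1(X/Y,\mathcal F)$ with $H^1_\cts(G,\mathcal F(X))$: by the above, the relevant part of the Čech complex is $\mathcal F(X)\to\mathcal F(G\times X)\to\mathcal F(G^2\times X)$, which by \eqref{eq:CL-condition} (for $i\le 2$) is precisely the truncation of the inhomogeneous continuous cochain complex $\Map_\cts(G^\bullet,\mathcal F(X))$. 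In the abelian case this is the $E_2^{1,0}$-edge of the spectral sequence. For part (2), with $\mathcal F$ abelian I would read off the five-term exact sequence
\[0\to E_2^{1,0}\to H^1_\tau(Y,\mathcal F)\to E_2^{0,1}\to E_2^{2,0}\to H^2_\tau(Y,\mathcal F);\]
here $E_2^{1,0}=H^1_\cts(G,\mathcal F(X))$ and $E_2^{2,0}=H^2_\cts(G,\mathcal F(X))$ exactly as above (the latter needing \eqref{eq:CL-condition} in the slightly larger range used to compute $\check{H}^2$, which holds for the sheaves $\mathcal O,\mathcal O^\times,\GL_n(\mathcal O),\dots$), while $E_2^{0,1}$ is the equalizer of $\pi_1^\ast,\pi_2^\ast\colon H^1_\tau(X,\mathcal F)\rightrightarrows H^1_\tau(G\times X,\mathcal F)$. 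The hypothesis that the specialisation map $H^1_\tau(G\times X,\mathcal F)\to\Map(G,H^1_\tau(X,\mathcal F))$ is injective identifies this equalizer with $\{c:g^\ast c=c\ \forall g\in G\}=H^1_\tau(X,\mathcal F)^G$, giving the asserted sequence.

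For part (3), the extra hypothesis \eqref{eq:CL-higher} says exactly that, for every $j$, the cosimplicial group $i\mapsto H^j_\tau(G^i\times X,\mathcal F)=\Map_\cts\big(G^i,H^j_\tau(X,\mathcal F)\big)$ is the bar complex of $G$ with coefficients in the topological $G$-module $H^j_\tau(X,\mathcal F)$; hence $E_2^{ij}=H^i_\cts\big(G,H^j_\tau(X,\mathcal F)\big)$ and the spectral sequence above becomes the full Cartan--Leray spectral sequence.

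The one step that needs genuine care is the bookkeeping behind the identification $X^{\times_Y(i+1)}\cong G^i\times X$: one must check that this isomorphism is compatible with all face and degeneracy maps, so that applying $\mathcal F$ (and invoking \eqref{eq:CL-condition}) produces precisely the standard complex computing $H^\bullet_\cts(G,-)$, with the correct $G$-action on $\mathcal F(X)$, resp.\ on $H^j_\tau(X,\mathcal F)$, induced by functoriality from the $G$-action on $X$. In the non-abelian case of part (1) one additionally has to argue exactness at the level of pointed sets (cocycles modulo coboundaries) rather than quote a spectral sequence, but this is the familiar computation from the Cartan--Leray sequence for finite Galois covers in \'etale cohomology, which transfers verbatim once \eqref{eq:CL-condition} is in hand.
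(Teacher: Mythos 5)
Your proposal is correct and takes essentially the same route as the paper: both arguments run the \v{C}ech-to-sheaf spectral sequence of the $\tau$-cover $q:X\to Y$, identify the \v{C}ech nerve with $X\times G^\bullet$ via the torsor structure, and use \eqref{eq:CL-condition} (resp.\ \eqref{eq:CL-higher}) to recognise the resulting cosimplicial groups as the continuous cochain complexes computing $H^i_{\cts}(G,-)$, with the injectivity hypothesis in part (2) serving only to identify the equaliser $E_2^{0,1}$ with $H^1_\tau(X,\mathcal F)^G$. The only piece of the statement you do not address is the parenthetical claim that \eqref{eq:CL-condition} holds for $\mathcal F=\O,\O^\times,\GL_n(\O)$, which the paper checks on the basis of affinoid perfectoids via $\O(X\times G)=\Map_{\cts}(G,K)\hotimes_K\O(X)=\Map_{\cts}(G,\O(X))$.
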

The last part is implicit in \cite[\S5]{Scholze_p-adicHodgeForRigid} where it is used in the following form:
\begin{Corollary}\label{c:CL-acyclic}
If $\mathcal F$ satisfies $\eqref{eq:CL-condition}$  and is $\tau$-acyclic on $X\times G^i$ for all $i\geq 0$, then we have
\[H^i_{\cts}(G,\mathcal F(X))=H^{i}_{\tau}(Y,\mathcal F).\]
\end{Corollary}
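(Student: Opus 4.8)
The plan is to run the \v{C}ech-to-derived-functor spectral sequence for the $\tau$-cover $q:X\to Y$ and rewrite it in terms of continuous group cohomology using the Galois property, as is done implicitly in \cite[\S5]{Scholze_p-adicHodgeForRigid}. Write $C^i:=X\times_Y\cdots\times_Y X$ for the $(i+1)$-fold fibre product. For the cover $q$ in the site $Y_\tau$ there is a spectral sequence $E_1^{ij}=H^j_\tau(C^i,\mathcal F)\Rightarrow H^{i+j}_\tau(Y,\mathcal F)$ whose $E_1$-differential is the alternating sum of the cosimplicial face maps of the \v{C}ech nerve, or equivalently $E_2^{ij}=\check{H}^i(X/Y,\underline{H}^j(\mathcal F))$ with $\underline{H}^j(\mathcal F)$ the presheaf $U\mapsto H^j_\tau(U,\mathcal F)$. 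Since $q$ is Galois with group $G$, iterating the isomorphism $G\times X\isomarrow X\times_YX$ gives identifications $G^i\times X\isomarrow C^i$ compatible with all the \v{C}ech face and degeneracy maps, realizing the \v{C}ech nerve of $q$ as the standard simplicial object attached to the action of $G$ on $X$; this is a routine check, identical to the one used for finite $G$ in \'etale cohomology. Hence under hypothesis \eqref{eq:CL-condition} — which we read as holding for all $i\ge 0$, as it does for $\O,\O^\times,\GL_n(\O),\dots$ — the row $E_1^{\bullet 0}$ is identified with the complex $\Map_{\cts}(G^\bullet,\mathcal F(X))$ of continuous inhomogeneous cochains computing $H^\bullet_{\cts}(G,\mathcal F(X))$.

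Now the hypothesis that $\mathcal F$ is $\tau$-acyclic on every $X\times G^i$ says exactly that $E_1^{ij}=H^j_\tau(G^i\times X,\mathcal F)=0$ for all $j\ge 1$, so the spectral sequence is concentrated in the row $j=0$, degenerates, and yields $H^i_\tau(Y,\mathcal F)=H^i(E_1^{\bullet 0})=H^i_{\cts}(G,\mathcal F(X))$ for all $i$, as claimed. One could equivalently feed these vanishings into Proposition~\ref{p:Cartan--Leray}.\ref{i:CL-full}: the topological condition \eqref{eq:CL-higher} then holds automatically, being \eqref{eq:CL-condition} in degree $0$ and the trivial equality in degrees $\ge 1$, and the resulting spectral sequence $E_2^{ij}=H^i_{\cts}(G,H^j_\tau(X,\mathcal F))\Rightarrow H^{i+j}_\tau(Y,\mathcal F)$ collapses onto its $j=0$ column. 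For a non-abelian $\mathcal F$, where only $i=0,1$ are meaningful, the case $i=0$ is the sheaf property of $q$ together with \eqref{eq:CL-condition} for $i=1$, and the case $i=1$ follows from Proposition~\ref{p:Cartan--Leray}.\ref{i:CL-left-ex}: there the final term $H^1_\tau(X,\mathcal F)^G$ vanishes by acyclicity, which forces the injection $H^1_{\cts}(G,\mathcal F(X))\hookrightarrow H^1_\tau(Y,\mathcal F)$ to also be surjective.

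The only step carrying genuine content is the identification of the \v{C}ech nerve of the $G$-torsor $q$ with the bar construction and the matching of the two differentials; but this is the standard comparison between \v{C}ech cohomology of a Galois cover and group cohomology, and it goes through verbatim once \eqref{eq:CL-condition} is available to turn the fibre products into continuous mapping spaces. So I do not expect a real obstacle here: the corollary is essentially a repackaging of Proposition~\ref{p:Cartan--Leray} in the case where the covering $q$ is cohomologically trivial.
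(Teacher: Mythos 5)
Your proposal is correct and follows essentially the same route as the paper: the paper also derives this from the \v{C}ech-to-sheaf spectral sequence of the cover $X\to Y$, identifying the \v{C}ech complex with the continuous cochain complex via \eqref{eq:CL-condition} (read for all $i\geq 0$) and letting the acyclicity hypothesis kill all higher rows so that the sequence degenerates. Your observation that one can equivalently invoke Proposition~\ref{p:Cartan--Leray}.\ref{i:CL-full}, since \eqref{eq:CL-higher} holds trivially in degrees $j\geq 1$ under the acyclicity assumption, is exactly the intended reading.
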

\begin{proof}[Proof of Proposition~\ref{p:Cartan--Leray}]
	These all follow from the \cH-to-sheaf spectral sequence of the $\tau$-cover $X\to Y$. The associated \cH-complex is of the form
	\[ H^j_{\tau}(X,\mathcal F)\to  H^j_{\tau}(X\times G,\mathcal F)\to  H^j_{\tau}(X\times G\times G,\mathcal F) \dots   \]
	which by \eqref{eq:CL-condition} for $i=0,1,2$ and $j=0$ in part 1, respectively by \eqref{eq:CL-higher} in part 3, is equal to
	\begin{equation}\label{eq:cts-bar-complex}
	 = H^j_{\tau}(X,\mathcal F)\to \Map_{\cts}(G,H^j_{\tau}(X,\mathcal F))\to  \Map_{\cts}(G\times G,H^j_{\tau}(X,\mathcal F))\to \dots
	 \end{equation}
	By a standard computation,  this is precisely the complex of continuous cochains, which by definition computes $H^i_{\cts}(G,H^j_{\tau}(X,\mathcal F))$. This shows parts 1 and
	part 3.
	
	Part 2 holds as for the $5$-term exact sequence, we only need $\cH^i((X\to Y),\mathcal F)$ for $i=1,2$ and the kernel of 
	\[ H^1_{\tau}(X,\mathcal F)\to H^1_{\tau}(X\times G,\mathcal F),\]
	which is precisely $ H^1_{\tau}(X,\mathcal F)^G$ if the displayed injectivity condition holds.
	
	It remains to check that \eqref{eq:CL-condition} holds in the given examples: It suffices to show this for $i=1$ and for $X$ in the basis of affinoid perfectoid spaces in $Y_{\tau}$. But here we have 
	\[\O(X\times G)=\O(G)\hotimes_K \O(X)=\Map_{\cts}(G,K)\hotimes_K \O(X)=\Map_{\cts}(G,\O(X)).\]
	The case of $M_n(\O)$ follows by forming products, the case of $\GL_n(\O)$ by taking units.
\end{proof}
As an immediate application, this tells us that continuous $1$-cocycles are precisely the descent data for $X\to Y$ on the trivial vector bundle $\O^n$ on $X$:
\begin{Corollary}\label{c:Cartan--Leray-for-line-bundles}
	Let $X\to Y$ be Galois with group $G$, then there is a left-exact sequence
	\[ 0\to H^1_{\cts}(G,\GL_n(\O(X)))\to H^1_v(Y,\GL_n)\to H^1_v(X,\GL_n)^G.\]
	More functorially, this is given by sending any continuous $1$-cocycle $c:G\to \GL_n(\O(X))$ to the $v$-vector bundle
	$V$ on $Y$ defined on $Y'\in Y_v$ by
	\[ V(Y')=\{ x\in \O^n(Y'\times_YX)\mid g^\ast x=c(g)x \text{ for all }g\in G\}.\]
\end{Corollary}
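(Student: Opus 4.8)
The plan is to read off the statement from Proposition~\ref{p:Cartan--Leray}.\ref{i:CL-left-ex} together with the explicit descent formula of Lemma~\ref{l:effective-descent}. First I would apply Proposition~\ref{p:Cartan--Leray}.\ref{i:CL-left-ex} with $\tau=v$ and $\mathcal F=\GL_n(\O)$, which is one of the sheaves for which the hypothesis \eqref{eq:CL-condition} is recorded to hold for all $i\geq 0$; since $\GL_n(\O)(X)=\GL_n(\O(X))$ and $H^1_v(-,\GL_n(\O))=H^1_v(-,\GL_n^\diamondsuit)$, this immediately produces the displayed left-exact sequence. So the only real work is to identify the first map with the assignment $c\mapsto V$.

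For this I would unwind the construction of that map. It is the edge map of the \cH-to-sheaf spectral sequence of the $v$-cover $q\colon X\to Y$ used in the proof of Proposition~\ref{p:Cartan--Leray}, where the relevant \cH-complex is identified --- via the Galois isomorphism $X\times_YX\cong G\times X$ together with \eqref{eq:CL-condition} --- with the complex of continuous inhomogeneous $G$-cochains valued in $\GL_n(\O(X))$. Under this identification a continuous $1$-cocycle $c\colon G\to\GL_n(\O(X))$ corresponds to a descent datum $\varphi_c\colon\pi_1^{\ast}\O^n\isomarrow\pi_2^{\ast}\O^n$ on the trivial bundle on $X\times_YX$, namely multiplication by $c(g)\in\GL_n(\O(X))$ on the slice of $G\times X\cong X\times_YX$ indexed by $g$; the $1$-cocycle identity for $c$ is precisely the cocycle condition for $\varphi_c$. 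The edge map then sends $c$ to the $v$-vector bundle on $Y$ obtained by descending $(\O^n,\varphi_c)$ along $q$, which by Lemma~\ref{l:effective-descent} is $\ker( q_{\ast}\O^n\xrightarrow{\pi_2^{\ast}-\varphi_c\circ\pi_1^{\ast}}q_{\ast}\pi_{2\ast}\pi_2^{\ast}\O^n)$.

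It then remains to evaluate this kernel. On $Y'\in Y_v$ one has $q_{\ast}\O^n(Y')=\O^n(Y'\times_YX)$, and a section $x$ there lies in the kernel iff $\pi_2^{\ast}x=\varphi_c(\pi_1^{\ast}x)$ on $Y'\times_YX\times_YX$; rewriting this triple fibre product as $G\times(Y'\times_YX)$, under which $\pi_1$ restricts to the $G$-translation, $\pi_2$ to the projection, and $\varphi_c$ acts by $c(g)$ on the $g$-th slice, the condition becomes exactly $g^{\ast}x=c(g)x$ for all $g\in G$, and it is visibly functorial in $Y'$. I expect the only genuine obstacle to be the bookkeeping of conventions: one must pin down the isomorphism $G\times X\cong X\times_YX$ and which of its two projections is the action map, so that the cocycle conditions for $c$ and for $\varphi_c$ match and the final formula comes out as $g^{\ast}x=c(g)x$ rather than a variant involving $c(g)^{-1}$ or $c(g^{-1})$; once those choices are fixed consistently the remaining verification is purely formal.
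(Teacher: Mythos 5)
Your proposal is correct and follows exactly the paper's own route: the left-exact sequence is Proposition~\ref{p:Cartan--Leray}.\ref{i:CL-left-ex} applied to $\mathcal F=\GL_n(\O)$ (one of the listed examples satisfying \eqref{eq:CL-condition}), and the explicit formula comes from unwinding the descent datum attached to a cocycle via the kernel description in Lemma~\ref{l:effective-descent}. The paper states this in a single sentence; your write-up simply supplies the bookkeeping it leaves implicit.
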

\begin{proof}
	The first part follows from Proposition~\ref{p:Cartan--Leray}, the last one from Lemma~\ref{l:effective-descent}
\end{proof}

\subsection{The sheaf of principal units}
In this section, let $X$ be either a smooth rigid space over $K$ or a perfectoid space over $K$. We consider the (big) site $X_{\tau}$ for $\tau$ one of the following topologies: The \'etale or pro-\'etale topology from \cite[\S2.1]{huber2013etale} and \cite[Definition 3.9]{Scholze_p-adicHodgeForRigid} if $X$ is rigid, or the \'etale, pro-\'etale or $v$-topology from \cite[Definition 8.1]{etale-cohomology-of-diamonds} if $X$ is perfectoid. In particular, $\Perf_{K,\tau}={\Spa(K)}_{\tau}$.
\begin{Definition}
	We denote by $U_{\tau}:=1+\m\O_{\tau}^+\subseteq \O_{\tau}^\times$ the subsheaf of $\O_{\tau}^\times$ of principal units. This is represented (in diamonds over $K$) by the open disc of radius $1$ centred at $1\in \G_m$. It contains the sheaf of $p$-power unit roots $\mu_{p^\infty}\subseteq U_{\tau}$, but not all unit roots $\mu\subseteq \O_{\tau}^\times$.
\end{Definition}
The following sheaf will be very useful to compute Picard groups of diamonds: Roughly, it plays the same role in determining the cohomology of $\O^\times_{\tau}$ as the sheaf $\O_{\tau}^+/p$ has for $\O^+_{\tau}$.
\begin{Definition}		
	We denote by $\overline{\O}_{\tau}^\times$ the abelian sheaf on $X_{\tau}$ defined as the quotient
	\[\overline{\O}_{\tau}^\times:=\O_{\tau}^{\times}/U_{\tau}=\O_{\tau}^{\times}/(1+\m\O^+_\tau).\] 
	We will often simply denote the sheaf $\overline{\O}_v^\times$  on $\Perf_{K,v}$ by $\overline{\O}^\times$.
\end{Definition}

\begin{Definition}
	Let $G$ be a topological abelian group, written multiplicatively. Following \cite[\S3]{Robertson_toptorsion}, we call an element $x\in G$ a topological torsion element if 
	\[x^{n!}\to 1\quad \text{for } n\to \infty.\]
	In all situations that we will encounter, this will be equivalent to the condition that there is $N\in \N$ for which $x^{Np^n}\to 1$ for $n\to \infty$. For example, the topological torsion subgroup of $K^\times$ is $(1+\m_K)\mu(K)$ where $\mu(K)\subseteq K^\times$ is the subgroup of all unit roots.
\end{Definition}
\begin{Definition}\label{d:top-torsion-subsheaf}
	We denote by $\O^{\times,\mathrm{tt}}\subseteq \O^\times$ the topologically torsion subsheaf. Explicitly, this is the subsheaf generated by $U=1+\m\O^+$ and the subsheaf $\mu$ of unit roots.
\end{Definition}
\begin{Definition}\label{d:commute-tf}
For multiplicative sheaves like $\bOx$, we write $\bOx\tf$ for the sheaf $\varinjlim_{x\mapsto x^p}\bOx$  obtained by inverting $p$ on the sheaf of abelian groups. We caution that this involves a sheafification, so we do not in general have $\O^\times\tf(X)=\O^\times(X)\tf$ (e.g.\ not for $X=\G_m$). However, this holds on quasi-compact objects, like affinoids in any of the sites we consider.
\end{Definition}
\begin{Lemma}\label{l:bOx-p-divisible}
	\begin{enumerate}
	\item We have $\overline{\O}_{\tau}^\times\tf=\overline{\O}_{\tau}^\times$, i.e.\ the sheaf $\overline{\O}_{\tau}^\times$ is uniquely $p$-divisible.
	\item We have $(\O^\times_\tau/\O_\tau^{\times,\mathrm{tt}})\otimes_{\Z}\Q=\O^\times_\tau/\O_\tau^{\times,\mathrm{tt}}$, i.e.\ the sheaf $\O^\times_\tau/\O_\tau^{\times,\mathrm{tt}}$ is uniquely divisible.
	\end{enumerate}
\end{Lemma}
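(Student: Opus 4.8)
The plan is to reduce both statements to showing that a suitable power map is an automorphism of the sheaf in question, which can be checked on the basis of affinoids $Y=\Spa(A,A^+)$ of $X_\tau$; the relevant structural input will be that $A^+$ is integrally closed in $A$, that $p\in\m$, and that the value group of $K$ is dense. The \emph{surjectivity} half is soft: since every $n\ge 1$ is invertible in $\Q_p\subseteq K$, the isogeny $[n]\colon\G_m\to\G_m$ is finite étale over $K$, so for a unit $f$ on $Y\in X_\tau$ the fibre product $Y\times_{f,\G_m,[n]}\G_m\to Y$ is a finite étale — in particular a $\tau$- — cover on which $f$ acquires an $n$-th root. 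Hence $\O^\times_\tau$ is $n$-divisible for every $n$, and therefore so is any quotient sheaf, in particular $\overline{\O}^\times_\tau$ and $\O^\times_\tau/\O^{\times,\tt}_\tau$. So in each part it remains to prove the corresponding \emph{injectivity}.

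For part (1), $\overline{\O}^\times_\tau\tf=\overline{\O}^\times_\tau$ is equivalent to $x\mapsto x^p$ being injective on $\overline{\O}^\times_\tau$, and this reduces to the claim $(\star)$: if $Y=\Spa(A,A^+)$ is affinoid in $X_\tau$ and $f\in\O^\times(Y)=A^\times$ satisfies $f^p\in U(Y)=1+\m A^+$, then already $f\in U(Y)$ — for granting $(\star)$, a section of the kernel of $x\mapsto x^p$ on $\overline{\O}^\times_\tau$ is locally the class of such an $f$, hence the class of an element of $U$, hence zero. To prove $(\star)$: writing $f^p=1+m$ with $m\in\m A^+$ gives $|f(y)|=1$ for all $y\in Y$; since $f^p\in A^+$ and $A^+$ is integrally closed in $A$, it follows that $f\in A^+$; the binomial theorem then yields $(f-1)^p\equiv f^p-1\pmod{pA^+}$ (all intermediate binomial coefficients $\binom pj$, $0<j<p$, being divisible by $p$), so $(f-1)^p\in\m A^+$ because $p\in\m$; and since $f-1\in A^+$ and every element of $\m A^+$ has supremum norm strictly $<1$, density of the value group of $K$ forces $f-1\in\m A^+$. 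I expect $(\star)$ to be the main obstacle, as it is the only place where genuine $p$-adic input enters (through $p\in\m$ and integral closedness of $A^+$); the remainder is formal.

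For part (2), since $\O^\times_\tau/\O^{\times,\tt}_\tau$ is divisible by every prime, it suffices to show multiplication by each prime $\ell$ is injective on $Q:=\O^\times_\tau/\O^{\times,\tt}_\tau$; then every prime acts invertibly on $Q$, i.e.\ $Q\otimes_\Z\Q=Q$. Suppose $f\in\O^\times$ with $f^\ell\in\O^{\times,\tt}_\tau$; locally $f^\ell=u\zeta$ with $u\in U$, $\zeta\in\mu$. If $\ell\neq p$, then $\mu$ is $\ell$-divisible (adjoin $\ell$-th roots of unity) and $U$ is $\ell$-divisible (Hensel's lemma, $\ell$ being invertible in $\O_K$), so locally $\zeta=\eta^\ell$ and $u=w^\ell$, whence $(f\eta^{-1}w^{-1})^\ell=1$, so $f\eta^{-1}w^{-1}\in\mu_\ell\subseteq\mu$ and $f\in\mu\cdot U\subseteq\O^{\times,\tt}_\tau$. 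If $\ell=p$, then using the canonical splitting $\mu=\mu_{p^\infty}\times\mu_{p'}$ (with $\mu_{p'}$ the roots of unity of order prime to $p$) and $\mu_{p^\infty}\subseteq U$ we may assume $\zeta\in\mu_{p'}$; since $x\mapsto x^p$ is an automorphism of $\mu_{p'}$, we get $\zeta=\eta^p$ with $\eta\in\mu_{p'}$, and then $(f\eta^{-1})^p=u\in U$, so $f\eta^{-1}\in U$ by $(\star)$, giving $f\in\mu_{p'}\cdot U\subseteq\O^{\times,\tt}_\tau$. In all cases the class of $f$ in $Q$ vanishes locally, hence vanishes, which gives the required injectivity.
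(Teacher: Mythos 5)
Your proof is correct and follows essentially the same route as the paper: the paper deduces both parts from a diagram comparing the Kummer sequence $1\to\mu_N\to\O^\times_\tau\xrightarrow{N}\O^\times_\tau\to 1$ with the analogous sequence for $U_\tau$ (resp.\ $\O^{\times,\tt}_\tau$), and your surjectivity step is exactly the Kummer input while your claim $(\star)$ is exactly the content of the exactness of the row for $U_\tau$ (namely $\mu_p\subseteq U$ together with the binomial congruence showing that $p$-th roots of principal units are principal units). Your write-up merely makes explicit the section-level verifications that the paper's two-line diagram argument leaves implicit.
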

\begin{proof}
	This follows from the commutative diagram of exact sequences in the \'etale topology
	\[\begin{tikzcd}
		1 \arrow[r] & \mu_{p} \arrow[r] & \O_{\tau}^\times \arrow[r, "p"] & \O_{\tau}^\times \arrow[r] & 1 \\
		1 \arrow[r] & \mu_{p} \arrow[r] \arrow[u,equal] & U_{\tau} \arrow[r, "p"] \arrow[u] & U_{\tau} \arrow[u] \arrow[r] & 1.
	\end{tikzcd}\]
	The second part follows from the same argument for the exact sequence
	\[ 1\to \mu_N\to \O^{\times,\mathrm{tt}}\xrightarrow{N} \O^{\times,\mathrm{tt}}\to 1.\qedhere\]
\end{proof}
Our interest in $\overline{\O}^\times$ stems from the following key approximation lemma, which says that in contrast to $\O^\times_{\proet}$, the sheaf $\overline{\O}^\times_{\proet}$  arises via pullback from the \'etale site. 
\begin{Lemma}\label{l:limit-of-bOx}
	Let $X$ be a smooth rigid space over $K$.
	Let $X_\infty$ be an affinoid perfectoid object in $X_{\proet}$ that can be represented as $X_\infty=\varprojlim_{i\in I} X_i$  for some affinoids $X_i$.
	Then 
	\[\overline{\O}_{\proet}^{\times}(X_\infty)=\varinjlim_{i\in I} \bOx_{\et}(X_i).\]
	In particular, for the morphism of sites $u:X_{\proet}\to X_{\et}$, we have \[\bOx_{\proet}=u^{\ast}\bOx_{\et}.\]
	Similarly, we have $\O^\times_{\proet}/\O^{\times,\mathrm{tt}}_{\proet}=u^{\ast}(\O^\times_{\et}/\O^{\times,\mathrm{tt}}_{\et})$.
\end{Lemma}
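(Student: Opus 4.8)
The plan is to reduce everything to the structure of $\overline{\O}^\times$ as a quotient sheaf and to the known behaviour of the sheaves $\O^\times$ and $U = 1 + \m\O^+$ on the pro-\'etale site. First I would recall that for an affinoid perfectoid $X_\infty = \varprojlim_{i\in I} X_i \in X_{\proet}$ written as a cofiltered limit of affinoids, one has $\O^+_{\proet}(X_\infty) = (\varinjlim_{i} \O^+_{\et}(X_i))^{\wedge}$ almost, and $\O_{\proet}(X_\infty) = \varinjlim_i \O_{\et}(X_i)$ after inverting $p$, together with Scholze's computation that $\O^+_{\proet} = u^\ast \O^+_{\et}$ up to completion and almost-zero issues. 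The key point is that passing to the quotient $\overline{\O}^\times = \O^\times/U$ kills precisely the part where the completion matters: the topology on $\O^\times(X_i)$ coming from the valuation has $U$-cosets as a fundamental system of neighbourhoods of $1$, so the quotient $\overline{\O}^\times$ is discrete, and a $\varinjlim$ of discrete things needs no completion.

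The main steps, in order, are as follows. (i) Show $\overline{\O}^\times_{\proet}(X_\infty) = \varinjlim_i \overline{\O}^\times_{\et}(X_i)$. For this, take the exact sequence $1 \to U_{\proet} \to \O^\times_{\proet} \to \overline{\O}^\times_{\proet} \to 1$ on $X_\infty$, and compare with the colimit of the corresponding sequences on the $X_i$; one needs that $\varinjlim$ is exact, that $\O^\times_{\proet}(X_\infty) = \varinjlim_i \O^\times_{\et}(X_i)$, and that $U_{\proet}(X_\infty) = \varinjlim_i U_{\et}(X_i)$ — but actually it is cleaner to argue directly that a section of $\overline{\O}^\times_{\proet}(X_\infty)$, i.e.\ a section of $\O^\times$ modulo $U$ locally, can be refined to lie in the image of some $\O^\times_{\et}(X_i)$, because $\O^+_{\proet}(X_\infty)$ is, up to a bounded power of $\m$, the $\m$-adic completion of $\varinjlim_i \O^+_{\et}(X_i)$, so any unit is congruent modulo $\m\O^+$ to one coming from a finite stage. (ii) Deduce the sheaf statement $\overline{\O}^\times_{\proet} = u^\ast \overline{\O}^\times_{\et}$: by definition $u^\ast \overline{\O}^\times_{\et}$ is the sheafification of the presheaf $X_\infty \mapsto \varinjlim_i \overline{\O}^\times_{\et}(X_i)$, and step (i) identifies this presheaf with $\overline{\O}^\times_{\proet}$, which is already a sheaf, so no further sheafification is needed. (iii) The statement for $\O^\times_{\proet}/\O^{\times,\tt}_{\proet}$ follows by the identical argument, replacing $U$ by $\O^{\times,\tt} = $ (subsheaf generated by $U$ and $\mu$): the only extra input is that the unit roots $\mu$ are already captured \'etale-locally, which is immediate since $\mu$ is a constant (ind-finite) sheaf pulled back from the point.

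The main obstacle I expect is step (i), specifically handling the completion: $\O^+_{\proet}(X_\infty)$ is genuinely a completion of $\varinjlim_i \O^+_{\et}(X_i)$, not that colimit on the nose, so one must verify carefully that modding out by $1 + \m\O^+$ really does collapse this discrepancy. Concretely, the subtlety is that a unit $f \in \O^\times_{\proet}(X_\infty)$ need not lift to a finite stage, but $f$ is a $p$-adic (or $\m$-adic) limit of elements $f_i$ coming from stages $X_i$, and for $i$ large enough $f f_i^{-1} \in 1 + \m\O^+_{\proet}(X_\infty)$; one then needs $f_i$ itself to be a unit at a finite stage, which holds because being a unit is detected modulo $\m$ on a reduced (here perfectoid, hence in particular the relevant rings are well-behaved) ring, so $f_i \in \O^\times_{\et}(X_j)$ for some $j \geq i$. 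The almost-mathematics bookkeeping — that various maps are isomorphisms only up to $\m$-torsion — must be tracked but does not affect the quotient $\overline{\O}^\times$, since $\overline{\O}^\times$ is a sheaf of honest abelian groups and the almost-ambiguity lives inside $U$. Everything else is formal: exactness of filtered colimits, and the universal property of $u^\ast$.
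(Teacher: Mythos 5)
Your core computation --- approximating a unit $f\in\O^\times(X_\infty)$ by elements $f_i\in\O(X_i)$, observing that the discrepancy eventually lands in $1+\m\O^+(X_\infty)$, and detecting membership in $1+\m\O^+(X_i)$ at a finite stage because $|X_\infty|\to|X_i|$ is eventually surjective --- is exactly the engine of the paper's proof, which uses it to show that $\varinjlim_i\O^\times(X_i)/U(X_i)\to\O^\times(X_\infty)/U(X_\infty)$ is an isomorphism. One local quibble: your justification that $f_i$ is already a unit at a finite stage (``being a unit is detected modulo $\m$ on a reduced ring'') is not the right mechanism and is circular as written, since you form $ff_i^{-1}$ before knowing $f_i$ is invertible; the paper instead approximates $f^{-1}$ by elements $f_i'$ as well, notes $f_if_i'\to 1$, and deduces $f_if_i'\in 1+\m\O^+(X_i)\subseteq\O^\times(X_i)$, hence $f_i\in\O^\times(X_i)$.

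The genuine gap is in passing from this statement about the presheaf quotient $\O^\times/U$ to the statement about the sheaf $\bOx$. You treat a section of $\bOx_{\proet}(X_\infty)$ as ``a section of $\O^\times$ modulo $U$ locally'' that can be refined to come from a finite stage, but a section of the quotient sheaf is only locally represented by units, and the obstruction to finding one global representative is a class in $H^1(X_\infty,U)$, which you never show to vanish (by the logarithm sequence \eqref{eq:log} it is a quotient of $H^1(X_\infty,\mu_{p^\infty})$, a torsion group with no reason to be zero); no refinement of the cover removes this obstruction. Consequently $\bOx(Y)\neq\O^\times(Y)/U(Y)$ on the nose, and your steps (i) and (ii) do not follow from the density argument alone. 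The paper's bridge is Lemma~\ref{l:bOx-explicit}: evaluating the exponential sequence \eqref{eq:exp} and using $H^1_\tau(Y,\O)=0$ on affinoids (\'etale side) and affinoid perfectoids (pro-\'etale side) gives $\bOx_\tau(Y)=(\O^\times_\tau(Y)/U_\tau(Y))\tf$ --- the inversion of $p$ is essential, kills the torsion obstruction, and is harmless precisely because $\bOx$ is uniquely $p$-divisible (Lemma~\ref{l:bOx-p-divisible}). Your plan invokes neither the $p$-divisibility nor the $H^1(Y,\O)=0$ input, so it is incomplete at exactly this point. Finally, for the last assertion the paper does not rerun the argument with $\O^{\times,\mathrm{tt}}$ in place of $U$ as you propose, but simply quotes $\O^\times/\O^{\times,\mathrm{tt}}=\bOx\otimes_\Z\Q$; your route could be made to work but is unnecessary extra effort.
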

For the proof, we crucially use that we work in the ``flattened pro-\'etale site'' of \cite{Scholze_p-adicHodgeForRigid}, rather than finer variants. We also need the $p$-adic logarithm sequence, which we now recall.

\subsection{The $p$-adic exponential and its higher direct image}
In complex geometry, a useful tool to study line bundles is the exponential exact sequence
\[ 0\to 2\pi i\Z\to \O\xrightarrow{\exp} \O^\times\to 0.\]
Over $\Q_p$, we have the following analogue of this sequence:

\begin{Lemma}\label{l:p-adic-exponential}
	Let $p'=p$ if $p>2$ and $p'=4$ if $p=2$. The $p$-adic exponential and logarithm map define homomorphisms of rigid group varieties
	\[\exp: p'\G_a^+\to 1+p'\G_a^+, \]
	\[ \log:1+\m \G_a^+\to \G_a\]
	such  that $\log(1+p'\G_a^+)\subseteq p'\G_a^+$ and $\exp\circ \log=\id$ on $1+p'\G_a^+$ and $\log\circ\exp=\id$ on $p'\G_a^+$.

	In particular, the logarithm defines a short exact sequence of sheaves
	\begin{equation}\label{eq:log}
		1\to \mu_{p^\infty}\to U_{\tau}\xrightarrow{\log}  \O_{\tau}\to 1,
	\end{equation}
	whereas the exponential defines a short exact sequence
	\begin{equation}\label{eq:exp}
		1\to \O_{\tau}\xrightarrow{\exp}  \O^\times_{\tau}\tf\to  \bOx_{\tau}\to 1.
	\end{equation}
\end{Lemma}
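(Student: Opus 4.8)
The plan is to prove the three assertions in order: the first by the classical $p$-adic convergence estimates, and the other two by pure sheaf theory, the key imported input being Lemma~\ref{l:bOx-p-divisible}. The conceptual point to bear in mind throughout is that $\exp$ converges only on the small ball $p'\G_a^+$, not on all of $\G_a$; this is exactly why the exponential sequence \eqref{eq:exp} must involve $\O^\times_\tau\tf$ rather than $\O^\times_\tau$, and why it is not formally ``dual'' to \eqref{eq:log}.

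For the rigid-analytic maps I would recall the estimates $v_p(n)\le n-1$ and $v_p(n!)=(n-s_p(n))/(p-1)\le (n-1)/(p-1)$, which give $|T^n/n!|\le |T|$ and $|T^n/n|\le |T|$ whenever $|T|\le |p'|$, while $|T^n/n|\le |T|^n n\to 0$ whenever $|T|<1$. Hence $\exp(T)=\sum_{n\ge 0}T^n/n!$ converges on the closed ball $p'\G_a^+$ with image in $1+p'\G_a^+$, and $\log(1+T)=\sum_{n\ge 1}(-1)^{n-1}T^n/n$ converges on every closed ball of radius $<1$, hence defines an analytic map on the open ball $1+\m\G_a^+$ with target $\G_a$, restricting to $1+p'\G_a^+\to p'\G_a^+$. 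Additivity of $\exp$, multiplicativity of $\log$, and the two inversion identities hold as identities of formal power series, and the relevant compositions stay within the radii of convergence, so they descend to identities of rigid-analytic morphisms; in particular $\exp$ and $\log$ are mutually inverse isomorphisms of $\tau$-sheaves $p'\O^+_\tau\cong 1+p'\O^+_\tau$ on $X_\tau$. I expect this step to be entirely routine.

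Next I would prove \eqref{eq:log}. Left-exactness: $\O_\tau$ is a sheaf of $\Q_p$-vector spaces, hence torsion-free, so $\log$ annihilates $\mu_{p^\infty}\subseteq U_\tau$; conversely, if $u$ is a local section of $U_\tau$ with $\log u=0$, then over a qcqs object $u$ factors through $1+c\G_a^+$ for some $|c|<1$, so that $u^{p^n}$ is a section of $1+p'\O^+_\tau$ for $n\gg 0$, and $\log(u^{p^n})=p^n\log u=0$ combined with the injectivity of $\log$ on $1+p'\O^+_\tau$ forces $u^{p^n}=1$, i.e.\ $u\in\mu_{p^\infty}$. Surjectivity: given a local section $f$ of $\O_\tau$, over a qcqs affinoid object $Y$ one has $\O(Y)=\O^+(Y)\tf$, so $p^n f\in p'\O^+_\tau(Y)$ for some $n$; then $\exp(p^nf)\in(1+p'\O^+_\tau)(Y)\subseteq U_\tau(Y)$ has logarithm $p^nf$, and since $U_\tau$ is $p$-divisible (the bottom row of the diagram in the proof of Lemma~\ref{l:bOx-p-divisible}) one may, after a $\tau$-cover, extract a $p^n$-th root $h\in U_\tau$ of $\exp(p^nf)$, whence $\log h=p^{-n}\log(h^{p^n})=f$.

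Finally \eqref{eq:exp} is formal: inverting $p$ is a filtered colimit, hence an exact operation on $\tau$-sheaves of abelian groups. Applied to $1\to U_\tau\to\O^\times_\tau\to\bOx_\tau\to 1$, and using $\bOx_\tau\tf=\bOx_\tau$ from Lemma~\ref{l:bOx-p-divisible}, it gives $1\to U_\tau\tf\to\O^\times_\tau\tf\to\bOx_\tau\to 1$; applied to \eqref{eq:log}, using $\mu_{p^\infty}\tf=0$ (as $\mu_{p^\infty}$ is $p$-power torsion) and $\O_\tau\tf=\O_\tau$, it identifies $\log\colon U_\tau\tf\isomarrow\O_\tau$, with inverse $\O_\tau\to U_\tau\tf$ induced by $\exp$ --- this is the definition of $\exp\colon\O_\tau\to\O^\times_\tau\tf$. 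Splicing the two sequences yields \eqref{eq:exp}. There is no deep content here, and I expect the surjectivity of $\log$ in \eqref{eq:log} to be the one step carrying any weight, since it is there that the $p$-divisibility of $U_\tau$ and the relation $\O(Y)=\O^+(Y)\tf$ on qcqs affinoids are essential, and there that one must resist defining $\exp$ directly on $\O_\tau$.
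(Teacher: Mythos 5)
Your proposal is correct and follows essentially the same route as the paper: classical convergence estimates for the rigid-analytic maps, the kernel and surjectivity of $\log$ via raising to $p^n$-th powers and extracting roots $\tau$-locally, and the exponential sequence by inverting $p$ using Lemma~\ref{l:bOx-p-divisible}. The only (immaterial) difference is that you obtain \eqref{eq:exp} by localising $1\to U_\tau\to\O^\times_\tau\to\bOx_\tau\to 1$ and splicing in $U_\tau\tf\cong\O_\tau$, whereas the paper localises $0\to p'\O^+\xrightarrow{\exp}\O^\times\to\O^\times/(1+p'\O^+)\to 1$ directly.
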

\begin{proof}
	The first sequence is well-known, see e.g.\ \cite[\S7]{dJ_etalefundamentalgroups}. We sketch the argument:
	
Clearly $\log(x)=\sum (-1)^{n}(x-1)^n/n$ and $\exp(x)=\sum x^n/n!$ define rigid analytic maps over $\Q_p$ as described. By classical non-archimedean analysis, these have the desired properties on $\C_p$-points. It follows that they also hold on the level of rigid groups. 
	
	To get the first exact sequence, one observes that the kernel of $\log$ has to be $\mu_{p^\infty}$ since for any $x\in U$, some power $x^{p^n}$ lies in $1+p'\O^+$ where $\log$ is injective. The logarithm is surjective in the \'etale topology since for any $x\in \O$ with $p^{n}x\in p'\O^+$, any $p^{n}$-th root $y$ of the unit $\exp(p^{n}x)$, which exists \'etale-locally, will satisfy $\log(y)=\tfrac{1}{p^{n}}\log(\exp(p^{n}x))=x$.
	
	For the exponential sequence, consider the short exact sequence (we omit $\tau$) 
	\[0\to p'\O^{+}\xrightarrow{\exp}\O^{\times}\to \O^{\times}/(1+p'\O^+)\to 1.\]
	After inverting $p$, this becomes the exact sequence \eqref{eq:exp}: This is because $(1+\m\O^+)/(1+p'\O^+)$ is $p^\infty$-torsion, and thus $\O^{\times}/(1+p'\O^+)\tf=\bOx\tf=\bOx$ by Lemma~\ref{l:bOx-p-divisible}.1.
\end{proof}

As an immediate consequence, we get an explicit description of $\bOx$ on a basis of $X_{\tau}$:
\begin{Lemma}\label{l:bOx-explicit}
	Let $Y$ be a quasi-compact object of $X_{\tau}$ such that $H^1_\tau(Y,\O)=0$. Then
	\[\bOx_{\tau}(Y)=(\O_{\tau}^\times(Y)/U_{\tau}(Y))\tf.\]
\end{Lemma}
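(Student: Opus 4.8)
The plan is to deduce the identity by taking global sections over $Y$ of the two short exact sequences of sheaves on $X_\tau$ that we already have — the definition $1\to U_\tau\to \O^\times_\tau\to \bOx_\tau\to 1$ and the logarithm sequence \eqref{eq:log} — and then inverting $p$. Concretely, I would first apply $H^0_\tau(Y,-)$ to the defining sequence. Since $U_\tau(Y)$ is by construction the kernel of $\O^\times_\tau(Y)\to \bOx_\tau(Y)$, the long exact cohomology sequence produces a short exact sequence of abelian groups
\[ 0\to \O^\times_\tau(Y)/U_\tau(Y)\to \bOx_\tau(Y)\to Q\to 0, \]
where $Q$ is the image of the connecting map $\bOx_\tau(Y)\to H^1_\tau(Y,U_\tau)$. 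So the lemma amounts to showing that $Q$ vanishes after inverting $p$ while $\bOx_\tau(Y)$ does not change.

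The second ingredient is that the hypothesis $H^1_\tau(Y,\O)=0$ forces $H^1_\tau(Y,U_\tau)$ to be $p$-power torsion. For this I would use the logarithm sequence $1\to \mu_{p^\infty}\to U_\tau\xrightarrow{\log}\O_\tau\to 1$: its long exact sequence exhibits $H^1_\tau(Y,U_\tau)$ as a quotient of $H^1_\tau(Y,\mu_{p^\infty})$, using $H^1_\tau(Y,\O)=0$. Because $Y$ is quasi-compact, cohomology commutes with the filtered colimit $\mu_{p^\infty}=\varinjlim_n\mu_{p^n}$, and each $H^1_\tau(Y,\mu_{p^n})$ is annihilated by $p^n$; hence $H^1_\tau(Y,\mu_{p^\infty})$, and therefore $H^1_\tau(Y,U_\tau)$ and its subgroup $Q$, is $p$-power torsion.

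Finally I would invert $p$ in the displayed short exact sequence. Applying the exact functor $-\otimes_{\Z}\Z[\tfrac1p]$: the term $Q$ dies since it is $p$-power torsion, and $\bOx_\tau(Y)$ is unchanged because, by Lemma~\ref{l:bOx-p-divisible}.1, the sheaf $\bOx_\tau$ is uniquely $p$-divisible, so $p$ acts invertibly on its sections and $\bOx_\tau(Y)\tf=\bOx_\tau(Y)$. This gives exactly $\bOx_\tau(Y)=(\O^\times_\tau(Y)/U_\tau(Y))\tf$.

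I do not expect a real obstacle here; the one point that needs a little attention is the reduction showing $H^1_\tau(Y,\mu_{p^\infty})$ is torsion, which is precisely where quasi-compactness of $Y$ is used (to commute $H^1$ with the colimit defining $\mu_{p^\infty}$), and also why $H^1_\tau(Y,\O)=0$ by itself only yields the statement \emph{after} inverting $p$ rather than on the nose.
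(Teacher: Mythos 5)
Your argument is correct and is essentially the paper's proof in unpacked form: the paper simply evaluates the exponential sequence \eqref{eq:exp} at $Y$ and commutes $\tf$ with $H^0(Y,-)$ using quasi-compactness, whereas you invert $p$ after taking cohomology of the defining quotient sequence and kill the error term $Q\subseteq H^1_\tau(Y,U_\tau)$ via the logarithm sequence \eqref{eq:log} and $H^1_\tau(Y,\O)=0$. The two routes rest on exactly the same inputs (the sequences \eqref{eq:log}/\eqref{eq:exp}, unique $p$-divisibility of $\bOx_\tau$ from Lemma~\ref{l:bOx-p-divisible}, and commuting a filtered colimit past cohomology on the quasi-compact object $Y$), so no further comment is needed.
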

\begin{proof}
 We evaluate \eqref{eq:exp} at $Y$ and commute $\tf$ with $H^0(Y,-)$ like in Definition~\ref{d:commute-tf}.
\end{proof}
We now use this to prove the key lemma from the last subsection:
\begin{proof}[Proof of Lemma~\ref{l:limit-of-bOx}]
	It suffices to prove this locally on an analytic cover of $X_\infty$, so we may assume that the map
	\[\phi:\varinjlim\O(X_i)\to \O(X_\infty)\]
	has dense image. We claim that in this case the map
	\begin{equation}\label{eq:tilde-limit-property-on-O^x}
		\phi:\varinjlim\O^\times(X_i)\to \O^\times(X_\infty)
	\end{equation}
  has dense image, too. To see this, let $f\in \O^\times(X_\infty)$, and let $\phi(f_i)\to f$ with $f_i\in \O(X_i)$ be any converging sequence in the image, and similarly $\phi(f'_i)\to f^{-1}$, then we have
	\[\phi(f_if'_i)\to 1.\]
	In particular, for $i$ large enough, we have $\phi(f_if'_i)\in 1+\m\O^+(X_\infty)=U(X_\infty)$. 
	\begin{claim}\label{eq:O^+(U)/p -> O^+(U_infty)/p injective}
		For $i\gg0$, we have 
		\[\O(X_i)\cap \phi^{-1}(1+\m\O^+(X_\infty))=1+\m\O^+(X_i).\]
	\end{claim}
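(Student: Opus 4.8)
\emph{Proof idea.} The inclusion $\supseteq$ is clear, since the pullback map $\O^+(X_i)\to\O^+(X_\infty)$ carries $\m\O^+(X_i)$ into $\m\O^+(X_\infty)$. For the reverse inclusion I would reduce everything to a comparison of spectral (supremum) norms. As $X$ is smooth and $X_i\to X$ is \'etale, each $X_i$ is reduced, so $\O^+(X_i)$ is the closed unit ball of the spectral norm $\|\cdot\|_{X_i}$; and since $K$ is perfectoid, hence not discretely valued, the set $\{|c|:0\neq c\in\m\}$ is cofinal in $[0,1)$, so that
\[ 1+\m\O^+(X_i)=\{h\in\O(X_i):\|h-1\|_{X_i}<1\},\]
and the analogous description holds on the uniform affinoid $X_\infty$. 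Writing $g:=f-1$, the claim thus amounts to: for $i\gg0$ and every $g\in\O(X_i)$, the inequality $\|\phi(g)\|_{X_\infty}<1$ already forces $\|g\|_{X_i}<1$.

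I would in fact prove the stronger statement that for $i\gg0$ the pullback $\O(X_i)\to\O(X_\infty)$ is isometric for the spectral norms, i.e.\ $\|\phi(g)\|_{X_\infty}=\|g\|_{X_i}$. This is the step where the \emph{flattened} pro-\'etale site of \cite{Scholze_p-adicHodgeForRigid} is essential: by construction of $X_{\proet}$ we may, after re-indexing, assume there is $i_0\in I$ such that the transition maps $X_i\to X_j$ are finite \'etale \emph{surjective} for all $i\geq j\geq i_0$ --- a feature that fails in finer variants of the pro-\'etale site, and presumably also the reason the assertion is only made for $i\gg0$. Finite morphisms are spectral, and a cofiltered limit of nonempty spectral spaces along surjective spectral transition maps surjects onto every term; combined with $|X_\infty|=\varprojlim_i|X_i|$ this shows $|X_\infty|\to|X_i|$ is surjective for all $i\geq i_0$. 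Since the spectral norm of a function is its supremum over the points of the adic space, surjectivity of $\mathrm{pr}_i\colon|X_\infty|\to|X_i|$ gives $\|\phi(g)\|_{X_\infty}=\sup_{x\in|X_\infty|}|g(\mathrm{pr}_i(x))|=\sup_{y\in|X_i|}|g(y)|=\|g\|_{X_i}$, which is exactly what is needed.

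The crux --- and the only genuine obstacle --- is this isometry: along a tower of non-surjective \'etale maps the spectral norm can strictly drop, so that $\|\phi(g)\|_{X_\infty}$ is strictly smaller than $\|g\|_{X_i}$ and the reverse inclusion fails (e.g.\ for a strictly shrinking family of closed discs). Everything else is bookkeeping: the identification of $1+\m\O^+$ with an open unit ball, the substitution $g=f-1$, and checking that the earlier reduction (in the proof of Lemma~\ref{l:limit-of-bOx}) to an analytic cover of $X_\infty$ on which $\phi$ has dense image is compatible with retaining finite \'etale surjective transition maps --- which it is, because a rational subset of $X_\infty$ can be written as the preimage of a common rational subset of all sufficiently large $X_i$, and finite \'etale surjective maps are stable under base change.
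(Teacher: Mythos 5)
Your argument is correct and is essentially the paper's own proof: both reduce the reverse inclusion to the pointwise (sup-norm) characterization of membership in $\O^+$ resp.\ $1+\m\O^+$, and then invoke surjectivity of $|X_\infty|\to|X_i|$ for $i\gg0$. The only difference is that you spell out why that surjectivity holds (eventually finite \'etale surjective transition maps, which is built into the definition of the flattened pro-\'etale site), whereas the paper simply asserts it.
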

	\begin{proof}
		The inclusion ``$\supseteq$'' is clear. To see the other, recall that $f\in \O(X_i)$ is in $\O^+(X_i)$ if and only if $|f(x)|\leq 1$ for all $x\in X_i$. Since $X_\infty\to X_i$ is surjective on the underlying topological spaces for $i\gg0$, this can be checked after pullback to $X_\infty$.
	\end{proof}
	This implies that 
	\[f_if'_i\in 1+\m\O^+(X_i)\subseteq \O^{\times}(X_i)\]
	for $i\gg 0$, and thus $f_i\in \O^\times(X_i)$, as desired.
	
	We conclude from combining~\eqref{eq:tilde-limit-property-on-O^x} and Claim~\ref{eq:O^+(U)/p -> O^+(U_infty)/p injective} that the induced map
	\[\varinjlim \O^\times(X_i)/U(X_i) \to  \O^\times(X_\infty)/U(X_\infty)\]
	is an isomorphism.
	Since the $X_i$ are affinoid and $X_\infty$ is affinoid perfectoid, it follows from Lemma~\ref{l:bOx-explicit} applied to the \'etale site on the left and the pro-\'etale site on the right that also
	\[ \varinjlim \bOx_{\et}(X_i)\isomarrow \bOx_{\proet}(X_\infty) \]
	is an isomorphism. This proves the first part. The second follows from \cite[Lemma~3.16]{Scholze_p-adicHodgeForRigid}.
	
	The case of $\O^\times/\O^{\times,\mathrm{tt}}$ follows since by Lemma~\ref{l:bOx-p-divisible}, we have $\O^\times/\O^{\times,\mathrm{tt}}=\bOx\otimes_\Z\Q$.
\end{proof}

We now use this to prove the main result of this section:
\begin{Proposition}\label{p:exponential-on-R^1O->R^1O^x}
	Let $X$ be a smooth rigid space over $K$.
	Then for the morphism of sites $\nu:X_{v}\to X_{\et}$, the short exact sequences \eqref{eq:log} and \eqref{eq:exp} induce natural isomorphisms
	\begin{alignat*}{3}
		\log\colon&R^i\nu_{\ast}U&\isomarrow& R^i\nu_{\ast}\O \quad& \text{ for any }i\geq 1,\\
		\exp\colon&R^1\nu_{\ast}\O&\isomarrow& R^1\nu_{\ast}\O^\times.&
	\end{alignat*}
\end{Proposition}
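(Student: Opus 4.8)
The plan is to apply $R\nu_*$ to the two short exact sequences \eqref{eq:log} and \eqref{eq:exp} and to reduce both isomorphisms to the agreement of $v$-cohomology with \'etale cohomology for torsion coefficients, together with the approximation Lemma~\ref{l:limit-of-bOx}.

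For the logarithm I would apply $R\nu_*$ to \eqref{eq:log}, obtaining a long exact sequence relating $R^i\nu_*\mu_{p^\infty}$, $R^i\nu_*U$ and $R^i\nu_*\O$. Since $\mu_{p^\infty}=\varinjlim_n\mu_{p^n}$ is a torsion abelian sheaf pulled back from $X_\et$, and the pullback along $\nu\colon X_v\to X_\et$ of a torsion \'etale sheaf has vanishing higher direct images (the comparison of \'etale and $v$-cohomology for torsion coefficients, \cite{etale-cohomology-of-diamonds}), we get $R^i\nu_*\mu_{p^n}=0$ for $i\geq1$, hence $R^i\nu_*\mu_{p^\infty}=0$ for $i\geq1$ after passing to the harmless filtered colimit (the objects of $X_v$ in sight being quasi-compact). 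The long exact sequence then yields $\log\colon R^i\nu_*U\isomarrow R^i\nu_*\O$ for every $i\geq1$.

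For the exponential I would first note that $\O^\times\to\O^\times\tf$ is surjective as a map of $v$-sheaves with kernel $\mu_{p^\infty}$ (since $p$-th roots of units exist $v$-locally), so the same torsion vanishing gives $R^i\nu_*\O^\times\isomarrow R^i\nu_*\O^\times\tf$ for $i\geq1$. Then I would apply $R\nu_*$ to \eqref{eq:exp}. One checks that $\nu_*$ identifies $\O$, $\O^\times\tf$ and $\bOx$ with their \'etale versions — for $\bOx$ via Lemma~\ref{l:bOx-explicit} on a basis of affinoids — and that \eqref{eq:exp} is already short exact on $X_\et$; hence the connecting map $\nu_*\bOx\to R^1\nu_*\O$ vanishes, so $\exp\colon R^1\nu_*\O\hookrightarrow R^1\nu_*\O^\times\tf=R^1\nu_*\O^\times$ is injective, and it will be an isomorphism once $R^1\nu_*\bOx=0$.

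This last vanishing is the crux and the step I expect to be the main obstacle; it is exactly where the invertibility of $p$ in $\bOx$ must be used, as it kills the $p$-adic Hodge-theoretic contribution that makes $R^1\nu_*\O$ nonzero. Using $1\to\mu/\mu_{p^\infty}\to\bOx\to\O^\times/\O^{\times,\mathrm{tt}}\to1$ and the torsion comparison once more reduces it to $R^i\nu_*(\O^\times/\O^{\times,\mathrm{tt}})=0$ for $i\geq1$. By Lemma~\ref{l:limit-of-bOx} this sheaf is pulled back from $X_\et$, and by Lemma~\ref{l:bOx-p-divisible} it is uniquely divisible, hence torsion-free and flat over $\underline{\Z}$; the projection formula then identifies $R\nu_*(\O^\times/\O^{\times,\mathrm{tt}})$ with $(\O^\times_\et/\O^{\times,\mathrm{tt}}_\et)\otimes^{\mathrm{L}}_{\underline{\Z}}R\nu_*\underline{\Z}$, so it suffices to know $R^i\nu_*\underline{\Z}=0$ for $i\geq1$, i.e.\ the agreement of $v$- and \'etale cohomology of the discrete constant sheaf $\underline{\Z}$, locally on $X$. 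Alternatively, and probably closer to the methods just developed, one works \'etale-locally on $X$: choosing a toric chart and the associated pro-finite-\'etale perfectoid cover $\widetilde{X}\to X$ with Galois group $\Z_p^n$, one applies the Cartan--Leray spectral sequence of Proposition~\ref{p:Cartan--Leray}, whose hypotheses for $\bOx$ hold by Theorem~\ref{t:KL-line-bundles} and the vanishing of higher $\O$-cohomology on affinoid perfectoids, and the resulting continuous cohomology of the pro-$p$ group $\Z_p^n$ vanishes in positive degrees because its coefficient module is uniquely $p$-divisible. Finally, all the isomorphisms obtained are automatically natural and functorial in $X$, being induced by the canonical maps of sheaves in \eqref{eq:log} and \eqref{eq:exp}.
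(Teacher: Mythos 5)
Your handling of the logarithm sequence (reduce to $R^i\nu_*\mu_{p^\infty}=0$ for $i\geq 1$ via the torsion comparison) and your reduction of the exponential statement to injectivity of $\exp$ plus the vanishing $R^1\nu_*\bOx=0$ both match the paper. The problem is the crux itself, which you correctly flag but do not establish: both of your routes break down at the passage from the flattened pro-\'etale site to the $v$-site. In route (a), the assertion that $\O^\times_v/\O^{\times,\tt}_v$ is pulled back from $X_\et$ is not what Lemma~\ref{l:limit-of-bOx} says: that lemma identifies $\O^\times_{\proet}/\O^{\times,\tt}_{\proet}$ with $u^{\ast}(\O^\times_{\et}/\O^{\times,\tt}_{\et})$ for $u\colon X_{\proet}\to X_{\et}$, where every object is a limit of \'etale covers of $X$ and the approximation argument applies. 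On the $v$-site the analogous statement is false: a unit over a point $\Spa(C')\to X$ whose valuation lies outside the divisible hull of $|K^\times|$ gives a section of $\O^\times_v/\O^{\times,\tt}_v$ that cannot, even after further $v$-localisation, be matched modulo $\O^{\times,\tt}$ by units pulled back from \'etale neighbourhoods. (The projection formula you invoke is also unjustified for such coefficients, and would be superfluous anyway: if the sheaf really were $\nu^{\ast}$ of an \'etale sheaf, \cite[Propositions 14.7, 14.8]{etale-cohomology-of-diamonds} would give $R\nu_{\ast}\nu^{\ast}F=F$ directly.) In route (b), the Cartan--Leray sequence of Proposition~\ref{p:Cartan--Leray}.\ref{i:CL-left-ex} only controls the term $H^1_{\cts}(\Z_p^n,\bOx(\wt X))$ --- your unique-$p$-divisibility argument for its vanishing is fine once one notes that $\bOx(\wt X)$ is discrete because $U$ is open in $\O^\times$ --- but it says nothing about the third term $H^1_v(\wt X,\bOx)^{\Z_p^n}$. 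That term does not obviously vanish: on an affinoid perfectoid $Y$ the exponential sequence together with Theorem~\ref{t:KL-line-bundles} gives $H^1_v(Y,\bOx)\cong \Pic_{\an}(Y)\tf$, which need not be zero, so Corollary~\ref{c:CL-acyclic} is not applicable and you have not shown that the relevant classes die \'etale-locally on $X$. A similar caveat applies to your computation of $\nu_{\ast}\bOx$: Lemma~\ref{l:bOx-explicit} requires $H^1_\tau(Y,\O)=0$, which fails for smooth affinoids in the $v$-topology, so it cannot be applied on a basis of affinoids of $X_{\et}$ directly.

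The paper's proof of $R^1\nu_{\ast}\bOx=0$ (Lemma~\ref{l:R^1nu_*bOx}) supplies exactly the missing ingredient by factoring $\nu$ as $X_v\xrightarrow{w}X_{\proet}\xrightarrow{u}X_{\et}$: Kedlaya--Liu (Theorem~\ref{t:KL-line-bundles}) gives $R^1w_{\ast}(\O^\times\tf)=0$, hence $R^1w_{\ast}\bOx=0$ using $R^2w_{\ast}\O=0$, and $w_{\ast}\bOx_v=\bOx_{\proet}$ follows from the exponential sequence on affinoid perfectoids; the remaining vanishing $R^1u_{\ast}\bOx_{\proet}=0$ is then precisely where Lemma~\ref{l:limit-of-bOx} and \cite[Corollary 3.17.(i)]{Scholze_p-adicHodgeForRigid} enter. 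Some two-step d\'evissage of this kind --- or an argument that genuinely quantifies over all perfectoid spaces in $X_v$ rather than only the toric tower --- is needed to close the gap.
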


For the proof, we use Lemma~\ref{l:limit-of-bOx} as a stepping stone to get to the $v$-topology:
\begin{Lemma}\label{l:R^1nu_*bOx}
	In the setting of Proposition~\ref{p:exponential-on-R^1O->R^1O^x}, we have
	\begin{enumerate}
		\item $\displaystyle\nu_{\ast}\bOx_{v}=\bOx_{\et}$,
		\item $R^1\nu_{\ast}\bOx_{v}=0$.
	\end{enumerate}
\end{Lemma}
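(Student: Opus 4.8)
The plan is to reduce everything to the \'etale site via Lemma~\ref{l:limit-of-bOx}, which already tells us that $\bOx_{\proet}=u^{\ast}\bOx_{\et}$, and then to cross from the pro-\'etale to the $v$-site using that both compute the same cohomology on affinoid perfectoid spaces. More precisely, factor $\nu$ as $X_v\xrightarrow{\lambda} X_{\proet}\xrightarrow{u} X_{\et}$. For part (1), one computes $\nu_{\ast}\bOx_v$ on the \'etale site by evaluating on affinoids $X=\Spa(A,A^+)\in X_{\et}$. Choosing a pro-\'etale perfectoid cover $X_\infty=\lim_i X_i\to X$, one wants $\bOx_v(X_\infty)=\bOx_{\proet}(X_\infty)$; this follows because $\bOx$ fits (after inverting $p$, via Lemma~\ref{l:bOx-explicit}) into a sequence built from $\O^\times$ and $U$, and on affinoid perfectoid $X_\infty$ the $v$- and pro-\'etale cohomology of $\O$, $\O^+$ (hence $\O^\times$, $U$) agree by the Kedlaya--Liu / almost-purity results already invoked in the excerpt (Theorem~\ref{t:KL-line-bundles} and its consequences). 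Then $\nu_{\ast}\bOx_v$ on $X$ is the equalizer of $\bOx_{\proet}$ over the \v{C}ech nerve of $X_\infty\to X$, which by Lemma~\ref{l:limit-of-bOx} equals $\varinjlim_i\bOx_{\et}(X_i)$ glued back up, i.e.\ $\bOx_{\et}(X)$. Sheafifying gives $\nu_{\ast}\bOx_v=\bOx_{\et}$.

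For part (2), the strategy is the same \v{C}ech-to-derived-functor argument, now in degree one. Using the composite $\nu=u\circ\lambda$ and the Leray spectral sequence $R^iu_{\ast}R^j\lambda_{\ast}\Rightarrow R^{i+j}\nu_{\ast}$, it suffices to show (a) $R^1\lambda_{\ast}\bOx_v=0$, i.e.\ $\bOx$ has vanishing first $v$-cohomology on affinoid perfectoid spaces, and (b) $R^1u_{\ast}\bOx_{\proet}=0$. Claim (a) again reduces to the analogous vanishing for $\O^\times$ and $U$ on affinoid perfectoid $X_\infty$ after inverting $p$: $H^1_v(X_\infty,\O)=0$ and $H^1_v(X_\infty,\O^+)\aeq 0$ (standard for affinoid perfectoid), whence $H^1_v(X_\infty,U)$ and $H^1_v(X_\infty,\O^\times)\tf$ are controlled, and the exponential sequence \eqref{eq:exp} gives $H^1_v(X_\infty,\bOx)=0$. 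Claim (b) is where Lemma~\ref{l:limit-of-bOx} does the real work: since $\bOx_{\proet}=u^{\ast}\bOx_{\et}$ and the pro-\'etale cohomology of a sheaf pulled back from the \'etale site is computed on the perfectoid cover $X_\infty$ by a colimit of \'etale cohomology groups $\varinjlim_i H^1_{\et}(X_i,\bOx_{\et})$, one checks that this colimit vanishes on a basis of affinoids where the $X_i$ can be taken so that $H^1_{\et}(X_i,\O)=0$ — then Lemma~\ref{l:bOx-explicit} identifies $\bOx_{\et}(X_i)=(\O^\times(X_i)/U(X_i))\tf$, and the relevant $H^1$ is built from $H^1_{\et}(X_i,\O^\times)=\Pic(X_i)$ and $H^1_{\et}(X_i,U)$, which are handled by the logarithm sequence \eqref{eq:log} together with $H^1_{\et}(X_i,\O)=0$. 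Passing to the colimit and sheafifying kills $R^1u_{\ast}\bOx_{\proet}$.

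**Main obstacle.** The delicate point is bookkeeping the interaction of the localization $\tf$ (inverting $p$ on the multiplicative sheaf, which involves sheafification — see Definition~\ref{d:commute-tf}) with the various $R^i\nu_\ast$, $\varinjlim$, and base change along $X_\infty\to X$; one must consistently work on quasi-compact objects (affinoids), where $\tf$ commutes with $H^0$ and with filtered colimits, and only sheafify at the very end. A second, more structural subtlety is justifying that the $v$-cohomology of $\O^+$ (or $\O^+/p$) on affinoid perfectoid spaces vanishes almost, and that this almost-vanishing is enough after inverting $p$ — here one invokes that $\bOx$ is already uniquely $p$-divisible (Lemma~\ref{l:bOx-p-divisible}) so the almost-zero $p$-power torsion ambiguity disappears. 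I expect claim (2) to be the harder half, with the verification of $R^1u_{\ast}\bOx_{\proet}=0$ — i.e.\ genuinely exploiting the ``flattened'' pro-\'etale site so that affinoid perfectoids are cofinal and Lemma~\ref{l:limit-of-bOx} applies — being the crux.
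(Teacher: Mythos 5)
Your overall architecture --- factoring $\nu$ as $X_v\xrightarrow{w}X_{\proet}\xrightarrow{u}X_{\et}$, pushing the exponential sequence \eqref{eq:exp} through $w_{\ast}$, and then using Lemma~\ref{l:limit-of-bOx} to identify $\bOx_{\proet}$ with $u^{\ast}\bOx_{\et}$ so that the \'etale-site adjunction \cite[Corollary~3.17.(i)]{Scholze_p-adicHodgeForRigid} finishes the job --- is exactly the paper's. Part (1) is fine, and in part (2)(b) you correctly locate the real work in Lemma~\ref{l:limit-of-bOx}; but note that you neither need nor can in general have the vanishing of $\varinjlim_i H^1_{\et}(X_i,\bOx_{\et})$. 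The object $R^1u_{\ast}(u^{\ast}\bOx_{\et})$ is the \emph{\'etale sheafification} of $U\mapsto H^1_{\proet}(U,u^{\ast}\bOx_{\et})=H^1_{\et}(U,\bOx_{\et})$, which vanishes for formal reasons once the pullback identification is in place; no computation with $\Pic(X_i)$ is required or possible.

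The genuine gap is in your claim (a). You assert $H^1_v(X_\infty,\bOx)=0$ for affinoid perfectoid $X_\infty$ and try to deduce it from $H^1_v(X_\infty,\O)=0$ and $H^1_v(X_\infty,\O^+)\aeq 0$. This cannot work: by the exponential sequence and acyclicity of $\O$ one gets $H^1_v(X_\infty,\bOx)\cong H^1_v(X_\infty,\O^\times\tf)$, which on a quasi-compact object is $\Pic_v(X_\infty)$ with $p$ inverted, and the ($v$-, equivalently analytic) Picard group of an affinoid perfectoid space is not controlled by $H^1(\O^+)$ and has no reason to vanish, even after inverting $p$. The statement you actually need is the weaker sheaf-level one, $R^1w_{\ast}(\O^\times_v\tf)=0$, and its proof requires a different input: by Theorem~\ref{t:KL-line-bundles} (Kedlaya--Liu) every $v$-line bundle on an affinoid perfectoid space is trivial on an analytic cover, which is again a cover in $X_{\proet}$, so every class in $H^1_v(X_\infty,\O^\times)$ dies after refinement and the sheafification vanishes; combined with $R^2w_{\ast}\O=0$ this yields $R^1w_{\ast}\bOx_v=0$. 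In short, the missing ingredient in your step (a) is precisely the Kedlaya--Liu descent theorem for line bundles (which you invoke only in part (1), for a different purpose), and the target should be the vanishing of the higher direct image sheaf rather than of the cohomology groups themselves. Your ``main obstacle'' paragraph reflects the same misdiagnosis: the crux of part (2) is not the almost vanishing of $H^1(\O^+)$ but the analytic-local triviality of $v$-line bundles on perfectoid spaces.
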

\begin{proof}
	We can split up $\nu$ into the two morphism of sites
	\[  \nu_{\ast}:X_{v}\xrightarrow{w} X_{\proet}\xrightarrow{u} X_{\et}.\]
	As $\O_v$ and $\O_{\proet}$ are both acyclic on affinoid perfectoids, we know that \[Rw_{\ast}\O_{v}=\O_{\proet}.\]
	Commuting $w_{\ast}$ and $\tf$ like in Definition~\ref{d:commute-tf},
	we also have \[w_{\ast}(\O_{v}^\times\tf)=\O_{\proet}^\times\tf.\]
	By the long exact sequence of $w_{\ast}$ for \eqref{eq:exp}, this together implies that \[w_\ast\bOx_{v}=\bOx_{\proet}.\]
	Similarly, since any $v$-topological line bundle on an affinoid perfectoid space is trivial in the analytic topology by Theorem~\ref{t:KL-line-bundles}, and affinoid perfectoids form a basis of $X_{\proet}$, we have 
	\[R^1w_{\ast}(\O^\times_{v}\tf)=0.\]
	It follows from $R^2w_{\ast}\O=0$ that
	\[\quad R^1w_{\ast}\bOx_{v}=0.\]
	We now combine these to get to $\nu$:
 	By the Leray spectral sequence, the above implies 
	\[ R^1\nu_{\ast}\bOx_{v} = R^1u_{\ast}(w_{\ast}\bOx_{v})=R^1u_{\ast}\bOx_{\proet}.\]
	We have thus reduced to considering $u:X_{\proet}\to X_{\et}$.
	Here we have $\bOx_{\proet}=u^{\ast}\bOx_{\et}$  by Lemma~\ref{l:limit-of-bOx},
	which by \cite[Corollary~3.17.(i)]{Scholze_p-adicHodgeForRigid} implies $\bOx_{\et}=u_{\ast}\bOx_{\proet}$
	as well as
	\[R^1u_{\ast}\bOx_{\proet}=0.\]
	Putting everything together, this proves the lemma.
\end{proof}
\begin{Lemma}\label{l:Rvmu=0}
	Let $Y$ be any diamond, then for $\nu:Y_v\to Y_{\et}$ we have $R\nu_{\ast}\mu_{p^\infty}=\mu_{p^\infty}$.
\end{Lemma}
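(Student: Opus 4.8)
The plan is to reduce the statement, in two steps, to the comparison of $v$-cohomology and \'etale cohomology for a finite constant sheaf on a diamond, which is a basic input from \cite{etale-cohomology-of-diamonds}. First I would pass to finite level: write $\mu_{p^\infty}=\varinjlim_n\mu_{p^n}$ as a filtered colimit of abelian sheaves along the inclusions $\mu_{p^n}\hookrightarrow\mu_{p^{n+1}}$. Since $R^i\nu_\ast$ is the sheafification of the presheaf $U\mapsto H^i_v(U,-)$ and may therefore be evaluated on the basis of $Y_{\et}$ given by spatial diamonds, on which $v$-cohomology commutes with filtered colimits \cite{etale-cohomology-of-diamonds} (as does sheafification), one obtains $R^i\nu_\ast\mu_{p^\infty}=\varinjlim_n R^i\nu_\ast\mu_{p^n}$. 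Hence it suffices to prove $R\nu_\ast\mu_{p^n}=\mu_{p^n}$ for each $n$.

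Second, I would pass to constant coefficients. Since $\Char K=0$, the algebra $K[x]/(x^{p^n}-1)$ is finite \'etale over $K$, so $\mu_{p^n}$ is a finite locally constant sheaf on $Y_{\et}$; explicitly it becomes the constant sheaf $\underline{\Z/p^n\Z}$ after the finite \'etale base change $Y\times_{\Spd K}\Spd K(\zeta_{p^n})\to Y$. As formation of $R^i\nu_\ast$ commutes with \'etale localisation on the target, both the assertion that $\mu_{p^n}\to R^0\nu_\ast\mu_{p^n}$ is an isomorphism and the assertion $R^i\nu_\ast\mu_{p^n}=0$ for $i\geq 1$ may be checked after pullback along this cover. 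We are thus reduced to proving $R\nu_\ast\underline{\Z/p^n\Z}=\underline{\Z/p^n\Z}$.

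This last identity is precisely the comparison of $v$- and \'etale cohomology for the finite constant sheaf $\Z/p^n\Z$ on an arbitrary diamond, which is part of the theory of the \'etale site of diamonds \cite{etale-cohomology-of-diamonds}: $H^0$ is computed in the usual way, and $R^i\nu_\ast$ vanishes for $i\geq 1$ because every $v$-cohomology class with coefficients in a finite group is split by an \'etale cover — for the affinoid perfectoid building blocks of $Y$ this is a Kedlaya--Liu type input, and the general case follows by $v$-descent. The step I expect to carry all the content is this last one, and the point to be careful about is that the coefficients are $p$-power torsion: for a general sheaf pulled back from $Y_{\et}$ the natural map $\mathcal F\to R\nu_\ast\mathcal F$ is very far from an isomorphism (indeed $R^1\nu_\ast\O\neq 0$ — the Hodge--Tate phenomenon underlying this whole paper), and what rescues $\mu_{p^n}$ is exactly that, unlike $\O^\times$ or $\O$, it is finite locally constant, so the \'etale-to-$v$ comparison applies. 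All one has to confirm is that this comparison is available with $p$-torsion, and not only prime-to-$p$, coefficients for diamonds.
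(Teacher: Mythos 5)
Your argument is correct and bottoms out in the same key input as the paper, whose entire proof is to observe that $\mu_{p^\infty}$ is an (ind-finite-)\'etale sheaf pulled back from $Y_{\et}$ and to invoke \cite[Propositions 14.7, 14.8]{etale-cohomology-of-diamonds} directly. Those propositions hold for arbitrary abelian sheaves pulled back from the \'etale site, with no finiteness or prime-to-$p$ hypothesis, so your two d\'evissage steps (passing to $\mu_{p^n}$ via a filtered colimit and then to the constant sheaf $\Z/p^n$ by a finite \'etale base change) are harmless but not actually needed.
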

\begin{proof}
	Since $\mu_{p^\infty}$ is a constant sheaf, this follows from \cite[Propositions 14.7, 14.8]{etale-cohomology-of-diamonds}.
\end{proof}

We now have everything in place to prove Proposition~\ref{p:exponential-on-R^1O->R^1O^x}:
\begin{proof}[Proof of Proposition~\ref{p:exponential-on-R^1O->R^1O^x}]
	The first part follows from Lemma~\ref{l:Rvmu=0} and the sequence \eqref{eq:log}.
	
	For the second isomorphism, consider the long exact sequence  of the exponential \eqref{eq:exp}
	\[0\to \nu_{\ast}\O \to \nu_{\ast}(\O^\times\tf)\to \nu_{\ast}\bOx\to R^1\nu_{\ast}\O\xrightarrow{\exp} R^1\nu_{\ast}(\O^\times\tf) \to R^1\nu_{\ast}\bOx\tf.\]
	 By Lemma~\ref{l:R^1nu_*bOx}.1, we have $\nu_{\ast}\bOx=\bOx_{\et}$. As we have a map $\O_{\et}^\times\tf\to \nu_{\ast}(\O^\times\tf)$ (in fact an isomorphism by Remark~\ref{r:nu_*O} below, but we do not need this here), this shows that the boundary map vanishes. Thus $\exp$ in the sequence is injective. The last term vanishes by Lemma~\ref{l:R^1nu_*bOx}.2, hence $\exp$ is an isomorphism. 
	 Finally, the colimit of the Kummer sequence
	 \[ 1\to \mu_{p^\infty}\to \O^\times\to \O^\times\tf\to 1\]
	 combines with Lemma~\ref{l:Rvmu=0} to show that we have $R^i\nu_{\ast}\O^\times=R^i\nu_{\ast}\O^\times\tf$ for any $i\geq 1$.
\end{proof}

With these preparations, we can now deduce the first part of Theorem~\ref{t:Picv-ses}, using a variant of a result of Scholze describing $R^1\nu_{\ast}\O$.
\begin{Definition}\label{d:BK-twist}
	Let $\theta:W(\O_{K^\flat})\to \O_K$ be Fontaine's map.
	For any $i\in \Z$, we denote by $\O_K\{i\}:=(\ker \theta)^i/(\ker\theta)^{i+1}$ the $i$-th Breuil--Kisin--Fargues twist. This is non-canonically isomorphic to $\O_K$ as an $\O_K$-module. For any $\O_K$-module $M$, or a sheaf of such, we set
	\[ M\{i\}:=M\otimes_{\O_K}\O_K\{i\}.\]
	As explained in \cite[Example~4.24]{BMS}, if $K$ contains all $p$-power unit roots, then there is a canonical isomorphism
	\[ K\{i\}=K(i)\]
	where the right-hand-side denotes the Tate twist $K(i)=K\otimes_{\Z}\Z_p(i)$. In this sense, Breuil--Kisin--Fargues twists are a generalisation of Tate twists to general perfectoid base fields.
	
	Finally, we set 
	\[ \wt\Omega^i_X:=\Omega^i_X\{-i\}.\]
\end{Definition}
\begin{Proposition}[{\cite[Proposition 3.23]{Scholze2012Survey}}]\label{p:Hodge-Tate-spectral-sequence}
	Let $K$ be a perfectoid field extension of $\Q_p$ and let $X$ be a smooth rigid space over $K$.
	Let $\nu:X_{v}\to X_{\et}$ be the natural morphism of sites. Then there are canonical isomorphisms  on $X_{\et}$ for all $i\geq 0$:
	\[ R^i\nu_{\ast}\O= \wt\Omega^i_{X}=\Omega^i_X\{-i\}.\]
\end{Proposition}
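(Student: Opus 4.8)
The plan is to reduce the statement to Scholze's computation of the pro-\'etale cohomology of $\widehat{\O}$ over a toric chart, the present case being a mild generalisation from algebraically closed $K=C$ to an arbitrary perfectoid field. First factor $\nu$ as $X_v\xrightarrow{w}X_{\proet}\xrightarrow{u}X_{\et}$. Since $\O_v$ and $\O_{\proet}$ are acyclic on the affinoid perfectoid spaces, which form a basis of $X_{\proet}$, we have $Rw_{\ast}\O_v=\O_{\proet}$ (as already used in the proof of Lemma~\ref{l:R^1nu_*bOx}), so the Leray spectral sequence for $u\circ w$ reduces us to showing $R^iu_{\ast}\O_{\proet}=\wt\Omega^i_X$. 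Both sides are sheaves on $X_{\et}$ compatible with \'etale localisation, and every smooth rigid space admits, locally, an \'etale map to a torus $\mathbb T^n=\Spa(K\langle T_1^{\pm1},\dots,T_n^{\pm1}\rangle)$; so we may assume $X$ affinoid together with such a chart.

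For such $X$, let $\mathbb T^n_\infty$ be the affinoid perfectoid obtained by adjoining all $p$-power roots of the coordinates, and put $X_\infty:=X\times_{\mathbb T^n}\mathbb T^n_\infty$. Then $X_\infty\to X$ is a pro-finite-\'etale affinoid perfectoid cover, Galois for a group $\Delta$ acting by scaling the roots $T_j^{1/p^m}$; geometrically $\Delta=T_p\mu_{p^\infty}^{\oplus n}$, which is non-canonically $\Z_p^n$. Since $\O_{\proet}$ is acyclic on each affinoid perfectoid $X_\infty\times\Delta^i$, Corollary~\ref{c:CL-acyclic} with $\tau=\proet$ gives $H^i_{\proet}(X,\O)=H^i_{\cts}(\Delta,\O(X_\infty))$; running this over all affinoid \'etale $U\to X$ (with $U_\infty:=U\times_X X_\infty$) identifies $R^iu_{\ast}\O_{\proet}$ with the \'etale sheafification of $U\mapsto H^i_{\cts}(\Delta,\O(U_\infty))$.

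It remains to compute this continuous group cohomology, which is the real content. The key input is that, up to almost isomorphism (after base change $K\to C$), the $\Delta$-module $\O^+(X_\infty)/p$ decomposes as the invariant summand $\O^+(X)/p$ together with a summand whose continuous $\Delta$-cohomology vanishes; passing to the limit over the roots and inverting $p$ produces a $\Delta$-equivariant decomposition of $\O(X_\infty)$ whose cohomology is that of the Koszul complex on $\Delta$, so that $H^i_{\cts}(\Delta,\O(X_\infty))=\textstyle\bigwedge^i_{\O(X)}\big(\O(X)\otimes_{\Z_p}\Hom(\Delta,\Z_p)\big)$. The degree-one piece $\O(X)\otimes_{\Z_p}\Hom(\Delta,\Z_p)$ is, via $d\log$ applied to the canonical compatible system of $p$-power roots of the coordinates, isomorphic to $\Omega^1_{\mathbb T^n}(X)\otimes_{\Z_p}\Hom(T_p\mu_{p^\infty},\Z_p)=\Omega^1_{\mathbb T^n}(X)\{-1\}$; pulling back along the chart and taking exterior powers gives $R^iu_{\ast}\O_{\proet}=\Omega^i_X\{-i\}=\wt\Omega^i_X$.

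I expect the main obstacle to be exactly this last computation: the decomposition of $\O^+(X_\infty)/p$ rests on almost purity, and the twist appearing has to be tracked carefully --- it is the Tate twist $(-i)$ when $K=C$ is algebraically closed, the case treated in \cite[\S5]{Scholze_p-adicHodgeForRigid}, but for a general perfectoid field it is the Breuil--Kisin--Fargues twist $\{-i\}$, arising from the canonical $\Z_p(1)$-structure on $\Delta$ as in \cite{BMS} (alternatively one may first pass to $K(\mu_{p^\infty})^\wedge$ and descend). Since all of this is carried out in \cite[Proposition~3.23]{Scholze2012Survey}, I would content myself with the sketch above and refer there for the details.
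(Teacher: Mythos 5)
Your reduction to $R^iu_{\ast}\O_{\proet}$ via $Rw_{\ast}\O_v=\O_{\proet}$, the localisation to a toric chart, and the toric computation you sketch (Cartan--Leray for the tower, almost decomposition of $\O^+(X_\infty)/p$, Koszul complex, $d\log$) reproduce Scholze's proof of the algebraically closed case, which the paper simply cites as a black box. The genuine gap is the case of a general perfectoid $K$, which you relegate to a parenthesis but which is exactly the content the paper has to supply: it states explicitly that \cite[Proposition~3.23]{Scholze2012Survey} only covers algebraically closed base fields and that no reference exists in the stated generality. Concretely, your central step --- applying Corollary~\ref{c:CL-acyclic} to $X_\infty\to X$ with $\Delta\cong\Z_p^n$ --- is not available over a general perfectoid $K$: the toric tower is a torsor under the profinite group \emph{scheme} $T_p\mu_{p^\infty}^{\oplus n}$, which is a constant group only when $K$ contains all $p$-power roots of unity, and a perfectoid field need not (e.g.\ the completion of $\Q_p(p^{1/p^\infty})$). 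Without that, the isomorphism $\Delta\times X_\infty\cong X_\infty\times_XX_\infty$ required by Proposition~\ref{p:Cartan--Leray} fails for any constant $\Delta$; and, as you note yourself, the almost decomposition of $\O^+(X_\infty)/p$ is only established after base change to $C$, so what your computation actually produces is $H^i_{\cts}(\Delta,\O(X_{\infty,C}))$, i.e.\ the result over $C$.

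What is then missing is the descent from $C$ (or from $K(\mu_{p^\infty})^\wedge$) back to $K$, and this is the nontrivial part of the paper's argument rather than something that can be deferred to the reference: one runs the full Cartan--Leray spectral sequence of Proposition~\ref{p:Cartan--Leray}.\ref{i:CL-full} for the $\Gal(C|K)$-cover $X_C\to X$, which requires verifying the continuity hypothesis \eqref{eq:CL-higher} for all $H^j$, and, crucially, proving $H^i_{\cts}(\Gal(C|K),A_C)=0$ for $i\geq1$ so that the spectral sequence collapses. The paper obtains this vanishing by splitting $A_C\to\wt A_C$ (with $\wt A_C$ the toric perfectoid cover) as topological $\Gal(C|K)$-modules and invoking acyclicity of $\O$ on the affinoid perfectoid $\wt X$; none of this is automatic. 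If you want to keep your direct computation, the cleanest repair is to carry it out over $C$ (or over $K(\mu_{p^\infty})^\wedge$, tracking the Breuil--Kisin--Fargues twist there) and then actually supply this descent step.
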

\begin{Remark}\label{r:nu_*O}
	Already for $i=0$, this is the non-trivial result that $\nu_{\ast}\O=\O$, proved more generally by Kedlaya--Liu for semi-normal rigid spaces \cite[Theorem 8.2.3]{KedlayaLiu-II}.
	
	The notation $ \wt\Omega^i_X$ is motivated by \cite[\S8]{BMS}, where a much finer integral result about $\O^+$ is proved for $X$ that admit a smooth formal model.
\end{Remark}
\begin{proof}
	For algebraically closed $K$, this is shown in
	\cite[Proposition 3.23]{Scholze2012Survey} for $X_{\proet}\to X_{\et}$.
	But for  $w:X_{v}\to X_{\proet}$, we have $Rw_{\ast}\O=\O_{\proet}$, so the case of $\nu$ follows.

	The case of general perfectoid $K$ follows from this by Galois descent, by an argument similar to that of \cite[Proposition 6.16.(ii)]{Scholze_p-adicHodgeForRigid}. Since we do not know a reference for this in the literature in the desired generality, we sketch a proof here: Recall that $C$ is the completion of an algebraic closure of $K$. It suffices to prove that for any smooth affinoid rigid space $X=\Spa(A)$ over $K$ that is standard-\'etale over a torus $\mathbb T^d$, we have a natural isomorphism
	\[ H^j_v(X,\O)=H^0(X,\wt\Omega^j_{X}).\]
	To see this, let $X_C=\Spa(A_C)$ and let $\wt X=\Spa(\wt A)$ be the pullback along the toric tower $\wt {\mathbb T}^d\to \mathbb T^d$. Let $\wt X_C=\Spa(\wt A_C)$, then we have a Cartesian square of pro-\'etale covers in $X_{\proet}$
	\[ \begin{tikzcd}
		\wt X \arrow[d] & \wt X_C \arrow[d] \arrow[l] \\
		X & X_C \arrow[l]
	\end{tikzcd}\]
	in which the horizontal maps are Galois with group $G:=\Gal(C|K)$ and the map on the right is Galois with group $\Z_p^d(1)$. Since $\wt X$ and $\wt X_C$ are each affinoid perfectoid, $\O$ is acyclic on them. The Cartan--Leray sequence of Corollary~\ref{c:CL-acyclic} for the right map therefore shows
	\[ H^j_{\cts}(\Z_p^d(1),\wt A_C)=H^j_v(X_C,\O)=H^0(X_C,\wt\Omega^j_{X}).\]
	by the first part. More generally, for any $n\geq 0$, the same  Cartan--Leray sequence for $\wt X_C\times G^n\to X_C\times G^n$ combines with \cite[Lemma~5.5]{Scholze_p-adicHodgeForRigid} to show that for $n\geq 0$, we have
	\[ H^i_v(X_C\times G^n,\O)=H^i_{\cts}(\Z_p^d(1),\O(\wt X_C\times G^n))=\Map_{\cts}(G^n,H^0(X_C,\wt\Omega^j_{X})).\]
	We thus get the full Cartan--Leray spectral sequence from Proposition~\ref{p:Cartan--Leray}.\ref{i:CL-full}:
	\[ H^i_{\cts}(G,H^0(X_C,\wt\Omega^j_{X}))\Rightarrow H^{i+j}_v(X,\O).\]
	The \'etale map $X\to \mathbb T^d$ induces an isomorphism $\Omega^j_{X}\cong\O_X^k$ where $k={d\choose j}$. Consequently,
	\[H^0(X_C,\wt\Omega^j_{X})=H^0(X,\wt\Omega^j_{X})\hotimes_{K}C=A^k\{-j\}\hotimes_KC\cong A^k\hotimes_KC=A_C^k\]
	as topological $G$-modules.
	We claim that $H^i_{\cts}(G,A_C)=0$ for $i\geq 1$. Indeed, observe that the map $A_C\to \wt A_C$ is split in topological $G$-modules: This is because by \cite[Lemma~4.5]{Scholze_p-adicHodgeForRigid}, we can pullback the canonical module-splitting of $\wt {\mathbb T}^d\to \mathbb T^d$. We thus have an injection
	\[H^i_{\cts}(G,A_C)\hookrightarrow H^i_{\cts}(G,\wt A_C)=H^i_v(\wt X,\O)=0\]
 	by Corollary~\ref{c:CL-acyclic} this time applied to the top map using that $\wt X$ is affinoid perfectoid.
	
	All in all, this shows that the above spectral sequence collapses and induces isomorphisms
	\[ H^{j}_v(X,\O)=H^0_{\cts}(G,H^0(X_C,\wt\Omega^j_{X}))=(\wt\Omega_X^j(X)\hotimes_KC)^G=\wt\Omega_X^j(X).\qedhere\]
\end{proof}

\begin{Definition}
	We denote by $\HT$ the induced map in the Leray sequence
	\[ \HT:H^1_v(X,\O)\to H^0(X,\Omega_X^1\{-1\}).\]
\end{Definition}
Combining Propositions~\ref{p:exponential-on-R^1O->R^1O^x} and \ref{p:Hodge-Tate-spectral-sequence}, we see:
\begin{Corollary}\label{c:R1nu_*O^x}
	The logarithm induces an isomorphism $\HT\log:R^1\nu_{\ast}\O^\times=\Omega^{1}_X\{-1\}$.
\end{Corollary}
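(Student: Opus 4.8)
The plan is to read off the isomorphism from Propositions~\ref{p:exponential-on-R^1O->R^1O^x} and~\ref{p:Hodge-Tate-spectral-sequence}, the only substantive point being to check that the map one obtains is the one induced by the logarithm, so that the notation $\HT\log$ is warranted.

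First I would recall, from the proof of Proposition~\ref{p:exponential-on-R^1O->R^1O^x}, that the colimit of the Kummer sequence together with Lemma~\ref{l:Rvmu=0} yields $R^1\nu_{\ast}\O^\times=R^1\nu_{\ast}\O^\times\tf$, and that $\exp$ induces an isomorphism $R^1\nu_{\ast}\O\isomarrow R^1\nu_{\ast}\O^\times\tf$. Composing the inverse of the latter with the Hodge--Tate identification $R^1\nu_{\ast}\O=\Omega^1_X\{-1\}$ of Proposition~\ref{p:Hodge-Tate-spectral-sequence} immediately produces an isomorphism $R^1\nu_{\ast}\O^\times\isomarrow\Omega^1_X\{-1\}$ of sheaves on $X_{\et}$.

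The one point requiring care is to verify that $\exp^{-1}\colon R^1\nu_{\ast}\O^\times\to R^1\nu_{\ast}\O$ is induced by the $p$-adic logarithm. Since $\log$ is defined only on the subsheaf $U=1+\m\O^+\subseteq\O^\times$, I would route this through the natural map $\iota\colon U\hookrightarrow\O^\times\to\O^\times\tf$ and show that $\iota$ equals the composite $U\xrightarrow{\log}\O\xrightarrow{\exp}\O^\times\tf$ of \eqref{eq:log} and~\eqref{eq:exp}. As this is a statement about morphisms of $v$-sheaves, it can be checked on sections over quasi-compact objects: a section $x$ of $U$ satisfies $x^{p^n}\in 1+p'\O^+$ for $n\gg0$, where $\exp$ and $\log$ are mutually inverse by Lemma~\ref{l:p-adic-exponential}, so $\exp(\log x^{p^n})=x^{p^n}$ holds in $\O^\times\subseteq\O^\times\tf$. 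Rewriting the left-hand side as $\bigl(\exp(\log x)\bigr)^{p^n}$ (using that $\exp$ and $\log$ are homomorphisms), and using that $\O^\times\tf$ is uniquely $p$-divisible and hence $p^n$-torsion free, we conclude $\exp(\log x)=\iota(x)$, as claimed.

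Applying $R^1\nu_{\ast}$ to this identity, the morphism $\iota_{\ast}\colon R^1\nu_{\ast}U\to R^1\nu_{\ast}\O^\times$ factors as $\exp\circ\log$; both factors are isomorphisms by Proposition~\ref{p:exponential-on-R^1O->R^1O^x}, so $\iota_{\ast}$ is an isomorphism and $\exp^{-1}=\log\circ\iota_{\ast}^{-1}$ is precisely the logarithm transported along the isomorphism $R^1\nu_{\ast}U\cong R^1\nu_{\ast}\O^\times$. Postcomposing with Proposition~\ref{p:Hodge-Tate-spectral-sequence} then identifies the isomorphism of the second paragraph with $\HT\log\colon R^1\nu_{\ast}\O^\times\isomarrow R^1\nu_{\ast}\O=\Omega^1_X\{-1\}$. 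I expect this last identification --- the bookkeeping forced by $\log$ being only a partial homomorphism on $\O^\times$ --- to be the only genuinely delicate point; the existence of the isomorphism is a formal consequence of the two cited propositions.
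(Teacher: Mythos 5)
Your proof is correct and takes essentially the same route as the paper, whose proof consists of the single line ``Combining Propositions~\ref{p:exponential-on-R^1O->R^1O^x} and~\ref{p:Hodge-Tate-spectral-sequence}, we see'' --- your composite of $\exp^{-1}$ with the Hodge--Tate identification is exactly that combination. The extra verification that $U\hookrightarrow\O^\times\tf$ factors as $\exp\circ\log$, so that the resulting isomorphism is genuinely the one induced by the logarithm and the notation $\HT\log$ is justified, is a point the paper leaves implicit, and your argument for it (checking on quasi-compacts after raising to a $p$-power and using $p$-torsion-freeness of $\O^\times\tf$) is sound.
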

This shows the first part of Theorem~\ref{t:Picv-ses}: In fact, it implies the following stronger form which also bounds the cokernel on the right in terms of the Brauer group of $X$:
\begin{Theorem}\label{t:5TES-version}
	Let $X$ be a smooth rigid space over $K$. Then the $5$-term exact sequence of the Leray spectral sequence of  $\nu:X_{v}\to X_{\et}$ for the sheaf $\O^\times$ is of the form
	\[ 0\to \Pic_{\an}(X)\to \Pic_{v}(X)\xrightarrow{\HT\log} H^0(X,\widetilde\Omega_X^1)\to H^2_{\et}(X,\O^\times)\to  H^2_{v}(X,\O^\times).\]
\end{Theorem}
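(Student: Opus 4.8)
The plan is to derive Theorem~\ref{t:5TES-version} directly as the five-term exact sequence attached to the Leray spectral sequence
\[
E_2^{ij}=H^i_{\et}(X,R^j\nu_{\ast}\O^\times)\Rightarrow H^{i+j}_v(X,\O^\times)
\]
for the morphism of sites $\nu:X_v\to X_{\et}$ and the sheaf $\mathcal F=\O^\times$. First I would recall that for any such spectral sequence there is a functorial exact sequence
\[
0\to E_2^{1,0}\to H^1\to E_2^{0,1}\xrightarrow{d_2} E_2^{2,0}\to H^2,
\]
so here this reads
\[
0\to H^1_{\et}(X,\nu_{\ast}\O^\times)\to H^1_v(X,\O^\times)\to H^0_{\et}(X,R^1\nu_{\ast}\O^\times)\xrightarrow{d_2} H^2_{\et}(X,\nu_{\ast}\O^\times)\to H^2_v(X,\O^\times).
\]

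The remaining work is then just to identify each term. For the $E_2^{i,0}$-terms I would invoke $\nu_{\ast}\O^\times=\O^\times_{\et}$: this follows from $\nu_{\ast}\O=\O$ (Remark~\ref{r:nu_*O}, via Kedlaya--Liu) together with the Kummer sequence $1\to\mu_{p^\infty}\to\O^\times\to\O^\times\tf\to1$ and Lemma~\ref{l:Rvmu=0} $R\nu_\ast\mu_{p^\infty}=\mu_{p^\infty}$, arguing as in the proof of Proposition~\ref{p:exponential-on-R^1O->R^1O^x}; alternatively one cites Kedlaya--Liu's $\nu_\ast\O^\times=\O^\times$ for semi-normal spaces directly. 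Hence $H^1_{\et}(X,\nu_{\ast}\O^\times)=H^1_{\et}(X,\O^\times)=\Pic_{\et}(X)=\Pic_{\an}(X)$ by the rigid-analytic étale descent cited in the introduction, and $H^2_{\et}(X,\nu_{\ast}\O^\times)=H^2_{\et}(X,\O^\times)$. Of course $H^1_v(X,\O^\times)=\Pic_v(X)$ and $H^2_v(X,\O^\times)=H^2_v(X,\O^\times)$ tautologically. For the $E_2^{0,1}$-term I use Corollary~\ref{c:R1nu_*O^x}: the logarithm induces an isomorphism $\HT\log:R^1\nu_{\ast}\O^\times\isomarrow\Omega^1_X\{-1\}=\wt\Omega^1_X$, so $H^0_{\et}(X,R^1\nu_{\ast}\O^\times)=H^0(X,\wt\Omega^1_X)$, and the edge map $H^1_v(X,\O^\times)\to E_2^{0,1}$ becomes exactly the map denoted $\HT\log$ in the statement. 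Substituting all of these identifications into the five-term sequence yields precisely the asserted sequence.

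There is genuinely no hard step here: everything reduces to book-keeping once Corollary~\ref{c:R1nu_*O^x} and $\nu_\ast\O^\times=\O^\times$ are in hand, both of which have been established. The only point requiring a word of care is the naturality/compatibility claim — namely that the connecting map $H^1_v(X,\O^\times)\to H^0(X,\wt\Omega^1_X)$ coming from the Leray edge morphism agrees with the map $\HT\log$ of Definition~\ref{d:BK-twist} and of Theorem~\ref{t:Picv-ses}. This is immediate from the construction: both are induced by applying $H^0_{\et}(X,-)$ to the sheaf-level isomorphism $R^1\nu_\ast\O^\times\cong R^1\nu_\ast\O\cong\wt\Omega^1_X$ of Corollary~\ref{c:R1nu_*O^x}, composed with the edge map $H^1_v(X,\O^\times)\to H^0_{\et}(X,R^1\nu_\ast\O^\times)$. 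I would therefore present the proof as a short paragraph: write down the five-term sequence, quote the identification of the two $E_2^{i,0}$ terms with $\Pic_{\an}(X)$ and $H^2_{\et}(X,\O^\times)$, quote Corollary~\ref{c:R1nu_*O^x} for the middle term, and note that left-exactness of the resulting sequence is exactly part (1) of Theorem~\ref{t:Picv-ses}.
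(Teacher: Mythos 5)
Your proof is correct and follows essentially the same route as the paper: both write down the Leray five-term exact sequence for $\nu:X_v\to X_{\et}$ and $\O^\times$, identify $\nu_\ast\O^\times=\O^\times_{\et}$ (hence $E_2^{1,0}=\Pic_{\et}(X)=\Pic_{\an}(X)$ by rigid \'etale descent) and identify $E_2^{0,1}$ via Corollary~\ref{c:R1nu_*O^x}. The extra remarks you make on deducing $\nu_\ast\O^\times=\O^\times_{\et}$ and on the compatibility of the edge map with $\HT\log$ are fine but not substantively different from what the paper does.
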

\begin{proof}
	We consider the 5-term exact sequence of the Leray sequence for $\nu:X_v\to X_{\et}$. By Remark~\ref{r:nu_*O}, we have $\nu_{\ast}\O^\times=\O^\times_{\et}$, so its first term is $\Pic_{\et}(X)$. This is equal to $\Pic_{\an}(X)$ by  \cite[Proposition~8.2.3]{FvdP}.
	By Corollary~\ref{c:R1nu_*O^x}, the third term is as described.
\end{proof}
\begin{Remark}
	In \cite{heuer-Picard-good-reduction} it is shown that Lemma~\ref{l:R^1nu_*bOx} generalises and we in fact have $R\nu_{\ast}\bOx=\bOx$. It follows that for any $i\geq 1$, the exponential induces isomorphisms 
	$R^i\nu_{\ast}\O^\times=R^i\nu_{\ast}\O=\Omega^i_X\{-i\}$.
	This gives a ``multiplicative Hodge--Tate spectral sequence'' relating e.g.\ the \'etale to the $v$-topological Brauer group in terms of Hodge cohomology. 
\end{Remark}

\section{Analyticity criteria}
As a first application, we now deduce from Theorem~\ref{t:Picv-ses}.1 some criteria for deciding whether a given $v$-line bundle is analytic. We think that these will be useful in practice (for instance, see Example~\ref{exmp:CHJ}).
We start with a direct consequence of exactness of the $\HT\log$-sequence:

\begin{Corollary}\label{c:analytic-on-Zar-dense-implies-analytic}
	Let $X$ be a smooth rigid space and $L$ a $v$-line bundle on $X$. Let $V\subseteq X$ be any Zariski-dense open subspace. Then $L$ is analytic if $L|_{V}$ is. More generally, let $f:Y\to X$ be a smooth morphism with Zariski-dense image. Then $L$ is analytic if and only if $f^{\ast}L$ is.
\end{Corollary}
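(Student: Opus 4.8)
The plan is to deduce this from the left‑exactness and functoriality of the sequence \eqref{eq:MT-ses} in Theorem~\ref{t:Picv-ses}.1, reducing the whole statement to the injectivity of a pullback map on global $1$‑forms.

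First I would record that, by left‑exactness of \eqref{eq:MT-ses}, a $v$‑line bundle $M$ on a smooth rigid space $Z$ is analytic if and only if $\HT\log(M)=0$ in $H^0(Z,\wt\Omega^1_Z)$. Next, for a morphism $f\colon Y\to X$ of smooth rigid spaces, functoriality of \eqref{eq:MT-ses} gives a commutative diagram
\[
\begin{tikzcd}[column sep=large]
0\arrow[r] & \Pic_{\an}(X)\arrow[r]\arrow[d] & \Pic_v(X)\arrow[r,"\HT\log"]\arrow[d,"f^\ast"] & H^0(X,\wt\Omega^1_X)\arrow[d,"f^\ast"]\\
0\arrow[r] & \Pic_{\an}(Y)\arrow[r] & \Pic_v(Y)\arrow[r,"\HT\log"] & H^0(Y,\wt\Omega^1_Y)
\end{tikzcd}
\]
in which the right‑hand vertical map is the natural pullback on global differentials, i.e.\ the composite $H^0(X,\wt\Omega^1_X)\to H^0(Y,f^\ast\wt\Omega^1_X)\to H^0(Y,\wt\Omega^1_Y)$. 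Since an analytic line bundle clearly pulls back to an analytic line bundle, the content is the converse: if $f^\ast L$ is analytic then so is $L$. As $\HT\log(f^\ast L)=f^\ast\HT\log(L)$ by the diagram, this follows once we know that $f^\ast\colon H^0(X,\wt\Omega^1_X)\to H^0(Y,\wt\Omega^1_Y)$ is injective whenever $f$ is smooth with Zariski‑dense image; the first assertion of the Corollary is then the special case of the open immersion $V\hookrightarrow X$.

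The crux is therefore this injectivity, which I would prove as follows. Since $\O_K\{-1\}$ is free of rank one over $\O_K$, we may ignore the Breuil--Kisin--Fargues twist and work with $\Omega^1_X$, which is a vector bundle because $X$ is smooth. As $f$ is smooth, the relative cotangent sequence $0\to f^\ast\Omega^1_X\to\Omega^1_Y\to\Omega^1_{Y/X}\to 0$ is (locally split) exact, so $f^\ast\Omega^1_X$ is a subbundle of $\Omega^1_Y$; hence it suffices to show that $H^0(X,\Omega^1_X)\to H^0(Y,f^\ast\Omega^1_X)$ is injective. Let $\omega\in H^0(X,\Omega^1_X)$ with $f^\ast\omega=0$. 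The vanishing locus $Z(\omega)\subseteq X$ of the section $\omega$ of the vector bundle $\Omega^1_X$ is Zariski‑closed --- locally on an affinoid $\Spa(A)$ trivialising $\Omega^1_X$ it is cut out by the ideal generated by the coordinates of $\omega$ --- and the vanishing locus of $f^\ast\omega$ is $f^{-1}(Z(\omega))$. From $f^\ast\omega=0$ we get $f^{-1}(Z(\omega))=Y$, i.e.\ the image of $f$ lies in $Z(\omega)$; since this image is Zariski‑dense and $Z(\omega)$ is Zariski‑closed, $Z(\omega)=X$. Thus $\omega$ vanishes at every point of $X$, and as $X$ is reduced (being smooth) this forces $\omega=0$.

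I expect no serious obstacle here: the only point requiring a little care is the handling of ``Zariski‑dense'', for which it is cleanest to use that a Zariski‑dense subset is one contained in no proper Zariski‑closed subspace, so that any Zariski‑closed set containing it must be all of $X$. Everything else --- functoriality of \eqref{eq:MT-ses}, the subbundle property $f^\ast\Omega^1_X\hookrightarrow\Omega^1_Y$ for smooth $f$, and the fact that $\Omega^1_X$ has no nonzero global section with full vanishing locus on the reduced space $X$ --- is routine.
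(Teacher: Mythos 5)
Your proposal is correct and follows essentially the same route as the paper: reduce, via left-exactness and functoriality of the sequence \eqref{eq:MT-ses}, to the injectivity of $f^{\ast}\colon H^0(X,\wt\Omega^1_X)\to H^0(Y,\wt\Omega^1_Y)$ for $f$ smooth with Zariski-dense image. The only difference is that the paper simply asserts this injectivity after localising to affinoids, whereas you supply a (global, and arguably cleaner) proof via the subbundle property $f^\ast\Omega^1_X\hookrightarrow\Omega^1_Y$ and the Zariski-closedness of the vanishing locus of a section of a vector bundle on the reduced space $X$.
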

\begin{proof}
	By Theorem~\ref{t:Picv-ses}.1, $L$ is analytic if and only if $\HT\log(L)=0$. As we can check this locally, we may assume that $X$ and $Y$ are affinoid. Then since $f$ is smooth with Zariski-dense image, the map $H^0(X,\Omega^1)\hookrightarrow H^0(Y,\Omega^1)$ is injective. Now use that $\HT\log$ is functorial.
\end{proof}
Second, we can use this to give a characterisation in terms of non-trivial sections:
\begin{Proposition}\label{p:non-trivial-section-implies-analytic}
	Let $X$ be a smooth connected rigid space and $L$ a $v$-line bundle on $X$. If $H^0(X,L)\neq 0$, then $L$ is analytic. 
\end{Proposition}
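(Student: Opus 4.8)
The plan is to exhibit a non-empty open subspace $V\subseteq X$ on which $L$ becomes trivial, and then to conclude by Corollary~\ref{c:analytic-on-Zar-dense-implies-analytic}. The hypothesis that $X$ is connected enters only through the fact that a smooth connected rigid space is normal, hence irreducible; consequently every non-empty open subspace of $X$ is Zariski-dense, so it suffices to trivialise $L$ over \emph{some} non-empty open.

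First I would fix $0\neq s\in H^0(X,L)$ and, using Corollary~\ref{c:refine-X-so-L-becomes-trivial}, a presentation $X^\diamond=X'/R$ with $X'$ a perfectoid space, $R=X'\times_{X^\diamond}X'$ a pro-\'etale equivalence relation, and the quasi-pro-\'etale cover $q\colon X'\to X^\diamond$ trivialising $L$. Fixing a trivialisation $\tau\colon\mathcal O_{X'}\isomarrow q^\ast L$, set $f:=\tau^{-1}(q^\ast s)\in \mathcal O(X')$. Since $q$ is a cover, $H^0(X_v,L)\hookrightarrow H^0(X'_v,q^\ast L)$ is injective, so $f\neq 0$; as $X'$ is reduced (perfectoid spaces being uniform), the open $V':=\{x\in X':f(x)\neq 0\}\subseteq X'$ is non-empty, and $f$ is invertible on $V'$. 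Because $q^\ast s$ is compatible with the canonical descent datum on $q^\ast L$, transporting this datum along $\tau$ yields a unit $c\in\mathcal O^\times(R)$ with $p_2^\ast f=c\cdot p_1^\ast f$; hence $f(p_1(\xi))=0$ if and only if $f(p_2(\xi))=0$ for every $\xi\in|R|$, so $V'$ is saturated for the equivalence relation induced by $R$ on $|X'|$.

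Since $|X^\diamond|=|X'|/|R|$ carries the quotient topology and $|X^\diamond|=|X|$, the saturated open $V'$ descends to a non-empty open subspace $V\subseteq X$ with $q^{-1}(V)=V'$. Over $q^{-1}(V)$ the map $s\colon\mathcal O_X\to L$ pulls back, via $\tau$, to multiplication by the unit $f|_{V'}$, hence is an isomorphism; as being an isomorphism of $v$-sheaves may be checked $v$-locally on the base, $s|_V\colon\mathcal O_V\isomarrow L|_V$. Thus $L|_V$ is trivial, in particular analytic, and as $V$ is a Zariski-dense open subspace of the irreducible space $X$, Corollary~\ref{c:analytic-on-Zar-dense-implies-analytic} shows that $L$ is analytic.

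The main obstacle I anticipate is the descent of the open $V'$ along the $v$-cover $q$: this rests on the saturation of $V'$---which combines the cocycle identity $p_2^\ast f=c\,p_1^\ast f$ with the surjectivity of $|R|\to|X'|\times_{|X^\diamond|}|X'|$ and the injectivity of residue-field extensions---together with the identifications $|X^\diamond|=|X'|/|R|=|X|$ of underlying topological spaces, after which an open of $|X'|$ is $q$-saturated precisely when it is pulled back from $X$. The remaining verifications (admissibility and non-emptiness of $V'$, invertibility of $f$ on $V'$, compatibility of $q^\ast s$ with the descent datum, and normality $\Rightarrow$ irreducibility) are routine.
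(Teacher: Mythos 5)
Your proof is correct, and it follows the same overall strategy as the paper's — produce a non-empty open $V$ on which the section $s$ becomes an invertible function after trivialising $L$ on a cover, deduce $L|_V\cong\O_V$, and conclude by Zariski-density via Corollary~\ref{c:analytic-on-Zar-dense-implies-analytic} — but the technical route is genuinely different. The paper first localises to an affinoid \'etale over a torus and trivialises $L$ on the explicit toric perfectoid tower $X_\infty=\varprojlim X_n$, so that by Cartan--Leray (Corollary~\ref{c:Cartan--Leray-for-line-bundles}) the section is an $f\in\O(X_\infty)$ with $g^{\ast}f=c(g)f$; non-vanishing of $f$ at a point is then extracted from an injection $\O(X_\infty)\hookrightarrow\Map_{\cts}(X_\infty(C),C)$ built from the Maximum Modulus Principle on the reduced finite levels $X_n$, and the quasi-compact rational open $\{|f|\geq|\varpi^k|\}$ is descended to a finite level of the tower. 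You instead work with an arbitrary presentation $X^\diamond=X'/R$ from Corollary~\ref{c:refine-X-so-L-becomes-trivial}, encode the descent datum as a single unit $c\in\O^\times(R)$, obtain non-vanishing from uniformity of perfectoid rings (a function in $\bigcap_n\varpi^nR^\circ$ is zero), and descend the entire non-vanishing locus in one step using that $|X^\diamond|=|X'|/|R|$ carries the quotient topology. This avoids the choice of a toric chart and the approximation of rational opens along the tower, at the cost of invoking the topological quotient description of diamonds and the surjectivity of $|R|\to|X'|\times_{|X^\diamond|}|X'|$; both are available in the references the paper already cites. The remaining points are exactly the ones you flag and are fine: $V'=\bigcup_n\{|f|\geq|\varpi^n|\}$ is a union of rational opens on which $f$ is invertible, and connected smooth $\Rightarrow$ normal $\Rightarrow$ irreducible, so $V$ is Zariski-dense — a fact the paper's own proof also uses implicitly in its last step.
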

\begin{proof}
	The statement is local on $X$, so we can assume that $X$ is affinoid and \'etale over a torus. In particular, there is a toric pro-\'etale affinoid perfectoid Galois cover
	\[ X_\infty\to \dots \to X_1\to X_0=X\]
	with Galois group $G$. By Corollary~\ref{c:analytic-on-Zar-dense-implies-analytic}, it suffices to prove that $L$ becomes trivial on \textit{some} non-empty open subspace $V\subseteq X_n$ for some $n$.
	
	Since $L$ is trivial analytic-locally on $X_\infty$ by Theorem~\ref{t:KL-line-bundles}, we can after passing to some affinoid open $V_n\subseteq X_n$ assume that it is trivial on $X_\infty$. In this case, we know by Corollary~\ref{c:Cartan--Leray-for-line-bundles} that $L$ is associated to a $1$-cocycle
	$c:G\to \O^\times(X_\infty)$, that is we have
	\[ H^0(X,L)=\{ f\in \O(X_\infty)\mid g^\ast f=c(g)f  \quad \text{ for all }g\in G\}.\]
	
	Assume now that we have a non-trivial element $f\in H^0(X,L)$. We claim that this is invertible on the pullback of some open $V\subseteq X_n$ for some $n$. To see this, we use:
	\begin{claim}
		There is $x\in X_\infty(C,\O_C)$ with $f(x)\neq 0$.
	\end{claim}
	\begin{proof}
		Write $X_i=\Spa(A_i,A_i^+)$ for $i\in \N$, then by \cite[Lemma~4.5]{Scholze_p-adicHodgeForRigid} we have $X_\infty=\Spa(A,A^+)$ where $A=(\varinjlim A_i^+)^\wedge\tf$. 	We have compatible maps for each $i\in \N$ and $k\in \N$
		\[ A_i^+/p^k\hookrightarrow \Map_{\lc}(X_i(C,\O_C), \O_C/p^k),\quad f\mapsto (x\mapsto f(x))\]
		which are injective by the Maximum Modulus Principle since $A_i$ is smooth, so $A_{I,C}$ is reduced, and $C$ is algebraically closed. In the colimit over $i$, we obtain an injection
		\[ \textstyle\varinjlim_{i\in I}A_i^+/p^k\hookrightarrow \textstyle\varinjlim_{i\in I}\Map_{\lc}(X_i(C), \O_C/p^k)\hookrightarrow \Map_{\lc}(X_\infty(C), \O_C/p^k).\]
		Taking the inverse limit over $k$ and inverting $p$, we get an injection
		\[ \O(X_\infty) \hookrightarrow \Map_{\cts}(X_\infty(C),C).\]
		This gives the desired statement.
	\end{proof}
	We deduce from the claim that there is $k\in \N$ such that $|f(x)|\geq |\varpi^k|$. Consequently, the rational open $V_\infty$ of $X_\infty$ defined by $|f|\geq  |\varpi^k|$ is non-empty. We can therefore find a non-empty rational open $V$ in some $X_n$ whose pullback to $X_\infty$ is contained in $U_\infty$. We replace $V_\infty$ by this pullback, then in particular, $f$ is invertible on $V_\infty$.
	
	But if $f\in \O^\times(V_\infty)$, then multiplication by $f$ defines an isomorphism $\O_{|V}\isomarrow L_{|V}$. In particular, $L$ is trivial on $V$, in particular analytic, and thus it is analytic on $X$.
\end{proof}
Combining Corollary~\ref{c:analytic-on-Zar-dense-implies-analytic} and Proposition~\ref{p:non-trivial-section-implies-analytic}, we deduce a stronger version:
\begin{Corollary}
	Let $X$ be a smooth connected rigid space. Then a $v$-line bundle $L$ on $X$ is analytic if and only if $\nu_{\ast}L\neq 0$, where $\nu:X_{v}\to X_{\et}$ is the natural morphism of sites.
\end{Corollary}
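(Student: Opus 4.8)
The plan is to deduce this as a quick combination of Proposition~\ref{p:non-trivial-section-implies-analytic} and the functoriality statement in Corollary~\ref{c:analytic-on-Zar-dense-implies-analytic}, using in addition the standard fact that a smooth connected rigid space is irreducible, so that every non-empty open subspace is Zariski-dense.

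First I would treat the ``only if'' direction. If $L$ is analytic, write $L\cong\nu^{\ast}L_0$ for a line bundle $L_0$ on $X_{\et}$. Since $L_0$ is invertible, the projection formula gives $\nu_{\ast}L=\nu_{\ast}\nu^{\ast}L_0=L_0\otimes_{\O_{\et}}\nu_{\ast}\O_v$, and $\nu_{\ast}\O_v=\O_{\et}$ by Remark~\ref{r:nu_*O}, whence $\nu_{\ast}L\cong L_0\neq 0$. (Alternatively, restricting to a cover trivialising $L_0$ one already sees $\nu_{\ast}L\cong\O_{\et}\neq 0$.)

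For the ``if'' direction, assume $\nu_{\ast}L\neq 0$. As $\nu_{\ast}L$ is a sheaf on $X_{\et}$ with $(\nu_{\ast}L)(U)=H^0(U_v,L)$, and since affinoids form a basis of $X_{\et}$, there is an affinoid $U\in X_{\et}$ with $H^0(U_v,L)\neq 0$. Decomposing $U$ into its connected components, which are again open affinoids, I may assume $U$ is connected; then $U$ is a smooth connected rigid space and Proposition~\ref{p:non-trivial-section-implies-analytic} shows that $L|_U$ is analytic. Now the structure morphism $f:U\to X$ is \'etale, hence smooth, and its image is open and non-empty; since $X$ is smooth and connected, hence irreducible, this image is Zariski-dense. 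Corollary~\ref{c:analytic-on-Zar-dense-implies-analytic} then yields that $L$ is analytic.

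I do not expect a genuine obstacle here: once Proposition~\ref{p:non-trivial-section-implies-analytic} and Corollary~\ref{c:analytic-on-Zar-dense-implies-analytic} are in hand, the statement is a formal assembly. The only point requiring a (standard) external input is that the image of the \'etale morphism $f$ is Zariski-dense, which rests on the irreducibility of smooth connected rigid spaces; all remaining steps are bookkeeping with sheaves and the projection formula.
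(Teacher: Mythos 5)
Your argument is correct and is exactly the assembly the paper intends: the paper gives no written proof, merely stating that the corollary follows by combining Corollary~\ref{c:analytic-on-Zar-dense-implies-analytic} and Proposition~\ref{p:non-trivial-section-implies-analytic}, and your reduction to a connected affinoid \'etale neighbourhood with a non-trivial section, followed by the Zariski-density of the image of an \'etale map out of it (via irreducibility of connected normal rigid spaces), is the intended route. No gaps.
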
 

\begin{Corollary}\label{c:map-from-v-line-bundle}
	Let $X$ be a smooth connected rigid space.
	Let $V$ be an analytic vector bundle and $L$ a $v$-line bundle on $X$. If there is a non-trivial map $L\to V$, then $L$ is analytic.
\end{Corollary}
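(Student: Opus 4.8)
The plan is to convert the non-trivial map $L\to V$ into a non-zero $v$-section of the dual line bundle $L^{\vee}$ on a small connected open, apply Proposition~\ref{p:non-trivial-section-implies-analytic} there, and then propagate analyticity back to all of $X$ via Corollary~\ref{c:analytic-on-Zar-dense-implies-analytic}.

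First I would observe that a map $L\to V$ is the same datum as a global section $\phi$ of the $v$-sheaf $L^{\vee}\otimes_{\O_v} V$, which is again a $v$-vector bundle since $V$ is one. Because $V$ is \emph{analytic}, there is an analytic covering $\{U_j\}$ of $X$ on which $V$ trivialises, say $V|_{U_j}\cong \O_{U_j}^{n}$; over such a $U_j$ the section $\phi$ becomes an $n$-tuple $(s_1,\dots,s_n)$ with $s_i\in H^0((U_j)_v,L^{\vee})$. Since $\phi\neq 0$ and $L^{\vee}\otimes_{\O_v} V$ is a sheaf, $\phi$ cannot restrict to $0$ on every $U_j$, so there is an index $j$ and an $i$ with $s_i\neq 0$. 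Shrinking $U_j$ to a connected open subspace on which $s_i$ remains non-zero --- for instance a suitable connected affinoid --- we obtain a smooth connected rigid space $U$ together with a non-zero element of $H^0(U_v,L^{\vee})$.

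Now Proposition~\ref{p:non-trivial-section-implies-analytic} applies to the $v$-line bundle $L^{\vee}|_U$ on $U$ and shows that it is analytic; since $\Pic_{\an}(U)\hookrightarrow \Pic_{v}(U)$ is a homomorphism of abelian groups, the inverse $L|_U=(L^{\vee}|_U)^{\vee}$ is analytic as well. As $X$ is smooth and connected, hence irreducible, the non-empty open $U$ is Zariski-dense in $X$, so Corollary~\ref{c:analytic-on-Zar-dense-implies-analytic} lets us conclude that $L$ itself is analytic.

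The step I expect to require the most care is the very first reduction: making rigorous that a non-zero morphism of $v$-sheaves $L\to V$ necessarily restricts to a non-vanishing $v$-section of $L^{\vee}$ over some \emph{analytic} open, so that Proposition~\ref{p:non-trivial-section-implies-analytic} --- which is stated for $v$-line bundles on honest connected smooth rigid spaces --- can be brought to bear. This is precisely where the hypothesis that $V$ is an analytic vector bundle (rather than an arbitrary $v$-vector bundle) enters, since it allows us to trivialise $V$ over an analytic covering and thereby identify $L^{\vee}\otimes_{\O_v} V$ locally with a finite direct sum of copies of $L^{\vee}$; the remaining steps are a formal combination of Proposition~\ref{p:non-trivial-section-implies-analytic} and Corollary~\ref{c:analytic-on-Zar-dense-implies-analytic}.
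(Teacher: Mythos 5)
Your proof is correct and follows essentially the same route as the paper: trivialise $V$ analytically-locally to reduce to $V=\O$, dualise the resulting non-zero component to get a non-trivial section of $L^{\vee}$, and invoke Proposition~\ref{p:non-trivial-section-implies-analytic} together with Corollary~\ref{c:analytic-on-Zar-dense-implies-analytic}. Your write-up is in fact slightly more careful than the paper's at the localisation step, where you make explicit that the section cannot vanish on every member of the trivialising cover and that analyticity propagates back from a Zariski-dense open.
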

\begin{proof}
	The statement is local on $X$, so we can assume that $V=\O^n$ is trivial. Thus $f:L\to \O^n$ consists of functions $f_i:L\to \O$ for $i=1,\dots,n$ and $f$ is non-trivial if one of the $f_i$ is. We are thus reduced to $V=\O$. But then $f\neq 0$ if and only if its dual $f^\vee:\O\to L^\vee$ is non-trivial. By Proposition~\ref{p:non-trivial-section-implies-analytic}, this implies that $L^\vee$ is analytic, and thus so is $L$.
\end{proof}
Third, the property of "being analytic" on products can be checked on fibres:
\begin{Corollary}\label{c:analytic-on-products}
	Let $X$ and $Y$ be a smooth rigid spaces and let $L$ be a $v$-line bundle on $X\times Y$. Assume that there are Zariski-dense sets of points $S\subseteq X(K)$ and $T\subseteq Y(K)$ such that $L_x$ on $Y$ for $x\in S$ and $L_y$ on $X$ for $y\in T$ are all analytic. Then $L$ is analytic.
\end{Corollary}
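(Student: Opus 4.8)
\emph{Proof plan.}

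\medskip

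The plan is to reduce, via Theorem~\ref{t:Picv-ses}.1, to a vanishing statement for differential forms. Since $X\times Y$ is again a smooth rigid space over $K$, that theorem says that $L$ is analytic if and only if $\omega:=\HT\log(L)$ vanishes in $H^0(X\times Y,\Omega^1_{X\times Y}\{-1\})$. Write $\mathrm{pr}_X,\mathrm{pr}_Y$ for the two projections; smoothness of $X$ and $Y$ yields a canonical splitting $\Omega^1_{X\times Y}=\mathrm{pr}_X^{\ast}\Omega^1_X\oplus\mathrm{pr}_Y^{\ast}\Omega^1_Y$, compatible with the Breuil--Kisin--Fargues twist since the latter is pulled back from $\Spa K$. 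Accordingly we write $\omega=\omega_X+\omega_Y$ with $\omega_X\in H^0(X\times Y,\mathrm{pr}_X^{\ast}\Omega^1_X\{-1\})$ and $\omega_Y\in H^0(X\times Y,\mathrm{pr}_Y^{\ast}\Omega^1_Y\{-1\})$; it then suffices to prove $\omega_X=0$ and $\omega_Y=0$.

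\medskip

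For $x\in S$, let $i_x\colon Y\cong\{x\}\times Y\hookrightarrow X\times Y$ be the slice inclusion, a morphism of smooth rigid spaces satisfying $i_x^{\ast}L=L_x$. By assumption $L_x$ is analytic, so functoriality of $\HT\log$ in Theorem~\ref{t:Picv-ses}.1 gives $i_x^{\ast}\omega=\HT\log(L_x)=0$. Since $\mathrm{pr}_X\circ i_x$ is the constant map $Y\to X$ at the $K$-point $x$, the induced map $i_x^{\ast}(\mathrm{pr}_X^{\ast}\Omega^1_X)\to\Omega^1_Y$ is zero, while $\mathrm{pr}_Y\circ i_x=\mathrm{id}_Y$; hence $i_x^{\ast}\omega$ is exactly the restriction of $\omega_Y$ along the slice $\{x\}\times Y$. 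Thus $\omega_Y$ restricts to $0$ on $\{x\}\times Y$ for every $x\in S$, and symmetrically, using the points $y\in T$, the form $\omega_X$ restricts to $0$ on $X\times\{y\}$ for every $y\in T$.

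\medskip

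It remains to show: a section $\eta\in H^0(X\times Y,\mathrm{pr}_Y^{\ast}\Omega^1_Y\{-1\})$ that restricts to $0$ on $\{x\}\times Y$ for all $x$ in a Zariski-dense subset $S\subseteq X(K)$ is zero (and the mirror statement for the other factor). This may be checked locally on $Y$: replacing $Y$ by an affinoid open on which $\Omega^1_Y$ is free with frame $dt_1,\dots,dt_d$, but keeping $X$ unchanged so that $S$ stays Zariski-dense, we may write $\eta=\sum_j f_j\,dt_j$ with $f_j\in\O(X\times Y)$, and the hypothesis says $f_j|_{\{x\}\times Y}=0$ for every $x\in S$. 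The key observation is that $W_j:=\{x\in X : f_j|_{\{x\}\times Y}=0\}$ is Zariski-closed in $X$: working over affinoid opens $U\subseteq X$ and choosing a Schauder basis $(e_\beta)$ of the $K$-Banach space $\O(Y)$, one has $f_j|_{U\times Y}=\sum_\beta g_\beta\,e_\beta$ with $g_\beta\in\O(U)$, and $f_j|_{\{x\}\times Y}=0$ is equivalent to the simultaneous vanishing of all $g_\beta$ at $x$, so $W_j\cap U=\bigcap_\beta V(g_\beta)$ is Zariski-closed. Since $W_j\supseteq S$ and $S$ is Zariski-dense, $W_j=X$; hence $f_j$ vanishes on every slice $\{x\}\times Y$, hence at every point of $X\times Y$, hence $f_j=0$ as $X\times Y$ is reduced (being smooth over $K$). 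Therefore $\eta=0$; together with the previous paragraph this gives $\omega=0$, i.e.\ $L$ is analytic.

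\medskip

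I expect the main obstacle to be the claim that $W_j$ is Zariski-closed in $X$ --- a semicontinuity statement to the effect that ``the fibre of $\mathrm{pr}_X$ over $x$ lies in a fixed analytic subset'' is a closed condition on $x$; note one cannot circumvent this by restricting $S$ to an affinoid open of $X$, since $S$ need not meet such an open (e.g.\ $X$ a disc and $S$ accumulating only at the boundary). The remaining inputs --- the splitting of $\Omega^1_{X\times Y}$ with its functoriality, and the vanishing of the differential pullback along a constant map --- are formal. Finally, using both Zariski-dense families $S$ and $T$ is essential: if $L_0$ is a non-analytic $v$-line bundle on $X$, then $\mathrm{pr}_X^{\ast}L_0$ is analytic on every slice $\{x\}\times Y$, yet by Corollary~\ref{c:analytic-on-Zar-dense-implies-analytic} it is not analytic.
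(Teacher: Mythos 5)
Your proof is correct and follows essentially the same strategy as the paper's: reduce via Theorem~\ref{t:Picv-ses}.1 to showing $\HT\log(L)=0$, split $\Omega^1_{X\times Y}$ into the pullbacks from the two factors, identify $\HT\log(L_x)$ with the restriction of the $Y$-component to the slice $\{x\}\times Y$, and conclude by Zariski-density. Where you differ is precisely where you are more careful: the paper opens with ``we can without loss of generality assume that $X$ and $Y$ are affinoid,'' and, as you rightly observe, $S\cap U$ need not be Zariski-dense (or even nonempty) in an affinoid open $U\subseteq X$, so the density hypothesis does not localise on $X$. Your repair --- localising only on $Y$, keeping $X$ global, and proving that $W_j=\{x\in X:\ f_j|_{\{x\}\times Y}=0\}$ is Zariski-closed via a Schauder-basis expansion of $\O(U)\hotimes_K\O(V)$ over affinoid opens $U\subseteq X$ --- supplies exactly the semicontinuity content that the paper's phrase ``vanish on $S\times Y$, and thus also on its Zariski-closure $X\times Y$'' implicitly relies on. The closing counterexample with $\mathrm{pr}_X^{\ast}L_0$ correctly shows that both density hypotheses are needed.
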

\begin{proof}
	We can without loss of generality assume that $X$ and $Y$ are affinoid. As they are smooth, $\Omega^1_X$ and $\Omega^1_Y$ are vector bundles, and after localising further we may assume that they are free, with bases $v_1,\dots v_n$ and $w_1,\dots w_m$, respectively. We then have
	\[ \Omega^1(X\times Y)=(\O(X\times Y)\otimes_{\O(X)}\Omega_X^1(X))\oplus (\O(X\times Y)\otimes_{\O(Y)}\Omega_Y^1(Y)),\]
	this follows from the corresponding algebraic statement for finitely generated $K$-algebras.
	
	The corollary now follows from Theorem~\ref{t:Picv-ses}.1: According to the above decomposition,
	\[ \HT\log(L)=\sum_{i=1}^n f_i\otimes v_i +\sum_{j=1}^m g_j\otimes w_j.\]
	Then $\HT\log(L_x)=\sum g_j(x)w_j\in \wt\Omega^1(Y)$. If this vanishes for all $x\in S$, then all $g_j$ vanish on $S\times Y$, and thus also on its Zariski-closure $X\times Y$. Thus $g_j=0$. Similarly for $y\in T$.
\end{proof}

Finally, we note that if we add good reduction assumptions, then any $v$-line bundle trivialised by a Galois cover of good reduction is already trivial in the Zariski topology. 

\begin{Definition}\label{d:pro-etale-torsor-of-formal-schemes}
	Let $\mathfrak X$ be a formal scheme of topologically finite type over $\O_K$. Let $G$ be a pro-finite group. We say that a morphism $\mathfrak X_\infty\to \mathfrak X$ is a pro-\'etale $G$-torsor with group $G$  if there is a cofiltered inverse system $(\mathfrak X_i)_{i\in I}$ of finite \'etale Galois torsors $\mathfrak X_i\to \mathfrak X$ of group $G_i$ such that $\mathfrak X_\infty=\varprojlim \mathfrak X_i$ and $G=\varprojlim G_i$. Then $\mathfrak X$ automatically has a $G$-action.
\end{Definition}
\begin{Proposition}\label{p:pro-etale-formal-model-implies-analytic}
	Let $\mathfrak X$ be a formal scheme of topologically finite type over $\O_K$ and let $\mathfrak X_\infty\to \mathfrak X$ be a pro-\'etale $G$-torsor. Then for any $1$-cocycle $c:G\to \mathcal O(\mathfrak X_\infty)^\times$, the associated $v$-line bundle on the generic fibre $\mathcal X$ is the analytification of a Zariski-line bundle on $\mathfrak X$.
\end{Proposition}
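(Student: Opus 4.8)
The plan is to reduce to the affine case $\mathfrak X=\Spf R$, to build the desired line bundle on $\mathfrak X$ by descending the cocycle modulo $p^n$ through a finite level of the tower and passing to the limit, and then to identify its analytification with the $v$-line bundle $L$ associated to $c$ by uniqueness of effective descent. So assume first $\mathfrak X=\Spf R$ with $R$ Noetherian and $p$-adically complete (which holds since $\mathfrak X$ is topologically of finite type over $\O_K$), write $\mathfrak X_i=\Spf R_i$, so that $R_\infty:=\O(\mathfrak X_\infty)=(\varinjlim_i R_i)^{\wedge}$ is the $p$-adic completion and hence $R_\infty/p^n=\varinjlim_i R_i/p^n$. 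The key step is a continuity argument: for each $n$ the reduction $c_n:=(c\bmod p^n)\colon G\to(R_\infty/p^n)^\times$ is a continuous cocycle into a discrete group, hence locally constant with finite image; since $(R_\infty/p^n)^\times=\varinjlim_i(R_i/p^n)^\times$, one may choose $i(n)$ (increasing in $n$) so that $c_n$ is the inflation of an honest $1$-cocycle $\bar c_n\colon G_{i(n)}\to(R_{i(n)}/p^n)^\times$ for the finite Galois cover $\Spec(R_{i(n)}/p^n)\to\Spec(R/p^n)$. Classical (fpqc, hence Galois) descent of modules attaches to $\bar c_n$ an invertible $R/p^n$-module $\mathcal L_n$ together with a trivialisation of $\mathcal L_n\otimes_R R_{i(n)}/p^n$ whose two pullbacks to the double cover differ by $\bar c_n$; since Galois descent is compatible with refining the cover, $\mathcal L_{n+1}\otimes_{R/p^{n+1}}R/p^n\cong\mathcal L_n$ canonically, so $\mathcal L:=\varprojlim_n\mathcal L_n$ is an invertible $R$-module, i.e.\ a Zariski-line bundle on $\mathfrak X$.

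It then remains to identify the analytification $\mathcal L^{\an}$ on $\mathcal X$ with $L$. The chosen trivialisations are compatible in $n$, so in the limit they glue to an isomorphism $\mathcal L\otimes_R R_\infty\cong R_\infty$ whose descent datum along $\mathfrak X_\infty\to\mathfrak X$ is, by construction, the cocycle $c$, read off via the torsor identification $\mathfrak X_\infty\times_{\mathfrak X}\mathfrak X_\infty\cong G\times\mathfrak X_\infty$. Passing to generic fibres — where $\mathcal X_\infty\to\mathcal X$ is the Galois cover with group $G$ obtained as the limit of the finite \'etale covers $\mathcal X_i\to\mathcal X$, and $c$ maps to its image in $\O^\times(\mathcal X_\infty)$ — both $\mathcal L^{\an}$ and $L$ are exhibited as the effective descent (Lemma~\ref{l:effective-descent}, Corollary~\ref{c:Cartan--Leray-for-line-bundles}) of the trivial line bundle on $\mathcal X_\infty$ equipped with the descent datum $c$; hence $\mathcal L^{\an}\cong L$ canonically. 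Finally, for a general formal scheme $\mathfrak X$ one applies the affine case to the members of an affine open cover together with the restricted torsors and cocycles; because the whole construction is functorial for restriction to open subschemes, the resulting line bundles and their identifications with $L$ agree canonically on overlaps (and satisfy the cocycle condition on triple overlaps), so they glue to a Zariski-line bundle on $\mathfrak X$ whose analytification is $L$.

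The only place where anything beyond formalities happens is the continuity step that lets $c\bmod p^n$ descend to a finite level $i(n)$ of the tower; once that is in hand, the argument is classical Galois descent, a limit over $n$, and the uniqueness of effective descent for the $v$-cover $\mathcal X_\infty\to\mathcal X$. The remaining technical points — that $\varprojlim_n\mathcal L_n$ is genuinely invertible over $R$ (using that $R$ is Noetherian and $p$-adically complete) and that passing to the analytic generic fibre is insensitive to possible $p$-torsion in $R$ — are routine and can be dealt with briefly.
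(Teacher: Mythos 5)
Your proposal is correct and follows essentially the same route as the paper: reduce to the affine case, use continuity to descend $c\bmod p^n$ to a finite level of the tower, apply finite Galois/\'etale descent to get invertible $R/p^n$-modules, pass to the inverse limit (the paper cites \cite[Tag 0D4B]{StacksProject} for invertibility of the limit), and identify the analytification with $L$ via the effectivity of descent along $\mathcal X_\infty\to\mathcal X$. The only cosmetic difference is that the paper finishes by Zariski-locally choosing a generator of the limit module and exhibiting an explicit invertible eigenfunction $f\in\O(\mathfrak X_\infty)^\times$ with $g^\ast f=c(g)f$, which trivialises $L$ directly rather than invoking uniqueness of descent.
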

\begin{proof}
	With notation as in Definition~\ref{d:pro-etale-torsor-of-formal-schemes},
	the statement is local on $\mathfrak X_0:=\mathfrak X$, so we can reduce to the case that $\mathfrak X$ is affine, and thus so are the $\mathfrak X_i=\Spf(A_i)$ as well as $\mathfrak X_\infty=\Spf(A_\infty)$. 
	
	Let $\mathcal X_i$ be the rigid generic fibre of $\mathfrak X_i$ and let $\mathcal X_\infty=\varprojlim \mathcal X_i$ as a diamond. Since $\mathfrak X_i$ is affine, we then have a natural $G$-equivariant morphism of $\O_K$-algebras
	\[ \O(\mathfrak X_\infty)=\varprojlim_n\varinjlim_i\O(\mathfrak X_i)/p^n\to  \varprojlim_n\varinjlim_i\O^+/p^n(\mathcal X_i)\to\varprojlim_n\O^+/p^n(\mathcal X_\infty)=\O^+(\mathcal X_\infty).\]
	Using Corollary~\ref{c:Cartan--Leray-for-line-bundles}, we thus indeed get a $v$-line bundle $L$ on $\mathcal X$ associated to $c$. Furthermore, by Corollary~\ref{c:Cartan--Leray-for-line-bundles}, this $L$ is given on any  $Y\to \mathcal X$ in $\mathcal X_v$ by
	\[ L(Y)=\{f\in \O(Y\times_{\mathcal X_0} \mathcal X_\infty)|g^{\ast}f=c(g)f \text{ for all }g\in G\}.\]
	It thus suffices to prove that Zariski-locally on $\mathfrak X_0$, there is $f\in \O(\mathfrak X_\infty)^\times$ such that $g^{\ast}f=c(g)f$, since then $L|_Y=f\O|_Y$, which shows that $L$ is trivial on $Y$.
	
	Consider now for each $n\in \N$ the  reduction of the cocycle $c$ mod $p^n$
	\[G\xrightarrow{c} A_\infty^\times\to (A_\infty/p^n)^\times.\]
	As this factors over a finite quotient of $G$ \cite[(1.2.5) Proposition]{NeuSchWin}, we can like before associate to this an \'etale line bundle $L_n$ on $\mathfrak X_{0}/p^n$. By \'etale descent, this is associated to a finite locally free $A_0/p^n$-module $M_n$ of rank $1$. Then also $M=\varprojlim M_n$ is finite locally free with $M/p^n=M_n$ by \cite[Tag 0D4B]{StacksProject}. Passing from $\mathfrak X_0$ to any Zariski-cover where $M$ is free, any generator of $M$ induces a compatible system of $f_n\in  (A_\infty/p^n)^\times$ such that $g^{\ast}f_n=c(g)f_n$. Then $f=(f_n)_n\in \varprojlim_n (A_\infty/p^n)^\times=\O(\mathfrak X_\infty)^\times$ has the desired properties.
\end{proof}
\begin{Example}\label{exmp:CHJ}
	In order to illustrate how the above criteria can be used in practice, we now sketch various new proofs that the sheaf $\omega^{\kappa}$ of overconvergent $p$-adic modular forms defined by Chojecki--Hansen--Johansson \cite[Definition~2.18]{CHJ} is an analytic line bundle: This is a sheaf on an overconvergent neighbourhood $\mathcal X(\epsilon)$ of the ordinary locus of the modular curve. By definition, it is given by a $v$-descent datum for a certain pro-\'etale map $\mathcal X_{\Gamma(p^\infty)}(\epsilon)_a\to \mathcal X(\epsilon)$. It is therefore clear from the definition that it is a $v$-line bundle. We can now employ any of the above criteria to see that $\omega^\kappa$ is already analytic  (see \cite[\S3.4]{ChrisChris-HilbertCHJ} for more details):
	\begin{enumerate}
		\item By Corollary~\ref{c:analytic-on-Zar-dense-implies-analytic}, it suffices to prove that $\omega^\kappa$ is analytic on the ordinary locus $\mathcal X(0)\subseteq \mathcal X(\epsilon)$, which is a Zariski-dense open subspace. But here the statement is essentially classical, and originally due to Katz \cite[\S4]{p-adicMSMF}: One reduces the definition to the Igusa tower, where one has a pro-\'etale formal model, and then invokes Proposition~\ref{p:pro-etale-formal-model-implies-analytic}.
		\item By Proposition~\ref{p:non-trivial-section-implies-analytic}, it suffices to show that $\omega^{\kappa}$ has a non-trivial section. Such a section is given by the Eisenstein series (one first has to check that this matches the definition).
		\item The bundle $\omega^\kappa$ can be defined for rigid analytic families of weights $\kappa$, and naturally extends to a $v$-bundle $\omega$ on $\mathcal X(\epsilon)\times \mathcal W$ where $\mathcal W$ parametrises $p$-adic weights. We can now use Corollary~\ref{c:analytic-on-products} and check analyticity on fibres: It is easy to see that $\omega$ becomes trivial over each point of $\mathcal X(\epsilon)$. It thus suffices to prove that $\omega^{\kappa}$ is analytic for a Zariski-dense subset of weights in $\mathcal W$. Such a set is given by the classical weights.
	\end{enumerate}
\end{Example}

\section{The image of the Hodge--Tate logarithm}
Now that we have constructed the left-exact sequence
\[ 0\to \Pic_{\an}(X)\to \Pic_{v}(X)\xrightarrow{\HT\log}H^0(X,\wt\Omega^1_X),\]
we would like to determine the image of $\HT\log$ in order to give a complete answer to Question~\ref{q:howfariso}. Towards this goal, we consider in this section the one-dimensional, the affinoid, and the proper case, thus completing the proof of Theorem~\ref{t:Picv-ses}.
\subsection{The case of curves}
We start with  part \ref{enum:MT1-curves} of Theorem~\ref{t:Picv-ses}:
This says that for a smooth paracompact rigid space of pure dimension $1$ over an algebraically closed field, the above sequence
is in fact also right-exact.
We note that paracompactness is quite a weak condition:
For example, by the main theorem of \cite{LiuvdPut_separatedcurves}, any separated one-dimensional rigid space is paracompact.

The reason why this condition appears is the following lemma:
\begin{Lemma}[{\cite[Corollary 2.5.10]{deJongvdPut}}]\label{l:cohomology-in-deg-d-for-paracompact-space}
	Let $X$ be a paracompact rigid space of dimension $d$. Then $H^i_{\an}(X,F)=0$ for any abelian sheaf $F$ on $X_{\an}$ and any $i>d$.
\end{Lemma}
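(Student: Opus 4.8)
The plan is to transpose to the admissible ($G$-)topology the classical theorem that a paracompact topological space of covering dimension $\le d$ has cohomological dimension $\le d$: here de Jong--van der Put's $\dim X=d$ plays the role of ``covering dimension $\le d$'', and paracompactness is exactly the hypothesis that makes \v{C}ech cohomology compute derived-functor cohomology.

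First I would reduce to \v{C}ech cohomology. For a paracompact rigid space the locally finite admissible coverings are cofinal among all admissible coverings, and by the rigid-analytic analogue of the classical fact that \v{C}ech and sheaf cohomology agree on paracompact spaces one has $H^i_{\an}(X,F)=\varinjlim_{\mathfrak U}\check H^i(\mathfrak U,F)$, the colimit ranging over such coverings. So it suffices to exhibit a cofinal family of $\mathfrak U$ with $\check H^i(\mathfrak U,F)=0$ for every $i>d$ and every abelian $F$ (in particular, quasi-compact affinoids are covered by the same argument, since they are trivially paracompact).

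Second --- and this is the geometric heart of the matter --- I would show that every admissible covering of $X$ refines to a locally finite admissible covering $\mathfrak U=\{U_j\}_{j\in J}$ of order at most $d+1$: no point of $X$ lies in more than $d+1$ of the $U_j$, equivalently every $(d+2)$-fold intersection $U_{j_0}\cap\cdots\cap U_{j_{d+1}}$ is empty. Local finiteness is supplied by paracompactness; the bound on the order is a statement in dimension theory, which I would reduce to the affinoid case and then, passing to a formal model, to the combinatorics of finite coverings of a $d$-dimensional scheme of finite type over the residue field $k$, where ordinary Noetherian (Grothendieck-style) dimension theory gives covering dimension $\le d$.

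Granting such a $\mathfrak U$, the conclusion is immediate: the \v{C}ech complex $\check C^\bullet(\mathfrak U,F)$ has $n$-th term $\prod_{j_0<\cdots<j_n}F(U_{j_0}\cap\cdots\cap U_{j_n})$, which vanishes as soon as $n\ge d+1$ since the intersections are empty; hence $\check H^i(\mathfrak U,F)=0$ for $i>d$, and taking the colimit over refinements finishes the proof. The main obstacle is the second step: establishing that a $d$-dimensional paracompact rigid space has covering dimension $\le d$, i.e.\ admits order-$(d+1)$ refinements. This is precisely where both hypotheses genuinely enter, and it is the technical content behind de Jong--van der Put's Corollary 2.5.10; everything around it is formal homological algebra on sites.
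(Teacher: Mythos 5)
The paper offers no proof of this lemma: it is quoted verbatim from de Jong--van der Put, so there is no internal argument to compare yours against. Measured against the actual proof of \cite[Corollary 2.5.10]{deJongvdPut}, your outline is the right one and follows the same skeleton: reduce derived-functor cohomology to \v{C}ech cohomology using paracompactness, show that coverings of order $\le d+1$ are cofinal, and observe that the alternating \v{C}ech complex of such a covering is zero in degrees $>d$. You have also correctly located the load-bearing step. The main thing to flag is that \emph{both} of the facts you invoke without proof --- not only the order-$(d+1)$ refinement, but also ``\v{C}ech $=$ sheaf cohomology for paracompact rigid spaces'' --- are nontrivial in the $G$-topology, and de Jong--van der Put dispose of both with a single device you do not mention: they associate to $X$ the topological space $P(X)$ of prime filters on the admissible opens, prove that its topos of sheaves is equivalent to that of $X_{\an}$, and show that for paracompact $X$ the space $P(X)$ is a genuinely paracompact topological space of covering dimension $\le d$. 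After that, both of your steps become the classical theorems of point-set topology (Godement's comparison and the vanishing of cohomology above the covering dimension) applied to an honest topological space, with no need to redo shrinking lemmas or effaceability arguments inside the $G$-topology.

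One concrete caution about your sketch of the refinement step. You propose to pass to a formal model and conclude by ``Noetherian (Grothendieck-style) dimension theory'' on the $d$-dimensional reduction over $k$. Grothendieck's theorem bounds the \emph{cohomological} dimension of a Noetherian space by its Krull dimension; it does not directly produce open refinements of order $\le d+1$. Indeed, in the Zariski topology of an irreducible scheme every nonempty open contains the generic point, so a cover of order $\le d+1$ can have at most $d+1$ nonempty members meeting a given irreducible component, and producing such refinements of an arbitrary given cover is a genuine combinatorial statement requiring its own argument (and the opens of $X$ induced from the reduction are of a special shape, so the translation between admissible covers of $X$ and Zariski data on $\tilde X$ also needs care). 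This is exactly the content of the dimension computation for $P(X)$ in \cite[\S2.5]{deJongvdPut}; as written, your proposal names the right target but does not yet contain a proof of it.
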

This is used to prove  the following lemma on the Brauer group of curves, which by the 5-term exact sequence of Theorem~\ref{t:5TES-version} completes the proof of Theorem~\ref{t:Picv-ses}.\ref{enum:MT1-curves}:
\begin{Lemma}
	Let $K$ be algebraically closed.
	Let $X$ be a paracompact rigid space of dimension $1$ over $K$. Then $H^2_{\et}(X,\O^\times)=0$.
\end{Lemma}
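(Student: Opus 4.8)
The plan is to compute $H^2_{\et}(X,\O^\times)$ via the Kummer sequence, reducing it to a statement about $\et$-cohomology of $\mu_{p^n}$ (and $\mu_m$) together with the higher cohomology of the structure sheaf, and then to exploit the dimension bound of Lemma~\ref{l:cohomology-in-deg-d-for-paracompact-space} to kill everything in degree $\geq 2$.

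First I would recall the Kummer exact sequence $1\to \mu_m\to \O^\times\xrightarrow{m}\O^\times\to 1$ on $X_{\et}$, valid since $K$ is algebraically closed of characteristic $0$ (so all $m$-th roots of unity are present and the map is surjective \'etale-locally). Taking the long exact sequence, to show $H^2_{\et}(X,\O^\times)=0$ it suffices to show $H^2_{\et}(X,\mu_m)=0$ for all $m$ and that $H^2_{\et}(X,\O^\times)$ is $m$-divisible with trivial $m$-torsion; more efficiently, it suffices to show $H^2_{\et}(X,\mu_m)=0$ and $H^3_{\et}(X,\mu_m)=0$ for all $m$ — actually just $H^2_{\et}(X,\mu_m)=0$ together with surjectivity of $H^2_{\et}(X,\O^\times)\xrightarrow{m}H^2_{\et}(X,\O^\times)$, but cleaner is to note that vanishing of $H^i_{\et}(X,\mu_m)$ for $i=2,3$ forces $H^2_{\et}(X,\O^\times)$ to be uniquely $m$-divisible, and doing this for all $m$ together with $H^2_{\et}$ being torsion-free would still need an argument; so I will instead split the comparison between $\et$ and $\an$ cohomology of $\mu_m$.

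The key input is that $X_{\an}$ and $X_{\et}$ have the same cohomology for the finite locally constant sheaf $\mu_m$ in the relevant range, or more directly: the constant sheaf $\mu_m$ is pulled back from the analytic site, and one compares via the morphism of sites $\lambda: X_{\et}\to X_{\an}$. Since $\mu_m$ is locally constant and $K$ is algebraically closed, on a suitable analytic cover $\mu_m$ becomes constant; the point is that for curves the relevant $R^i\lambda_\ast$ vanish in high degree, or one uses the Artin-type comparison. In fact the cleanest route: $\O^\times_{\et}=\lambda^\ast \O^\times_{\an}$ and for the torsion sheaves $\mu_m$ one has $R\lambda_\ast\mu_m=\mu_m$ in low degrees because \'etale cohomology of affinoid curves over algebraically closed $K$ vanishes in degrees $\geq 2$ (a rigid-analytic analogue of Artin vanishing / dimension $1$ cohomological dimension, cf. Berkovich / de Jong–van der Put). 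Granting this, the Leray spectral sequence for $\lambda$ gives $H^i_{\et}(X,\mu_m)=H^i_{\an}(X,\mu_m)$ for $i\leq 2$, which vanishes for $i=2$ by Lemma~\ref{l:cohomology-in-deg-d-for-paracompact-space} since $X$ is paracompact of dimension $1$. Running the same argument for all powers of all primes, $H^2_{\et}(X,\O^\times)$ becomes uniquely divisible; but also $H^1_{\et}(X,\O^\times)=\Pic_{\an}(X)$ and $H^2_{\et}(X,\O)=0$ (again by the dimension bound applied to $\O_{\an}$, using $R\lambda_\ast\O=\O$), so feeding the Kummer sequence and the exponential/logarithm description through one more time pins $H^2_{\et}(X,\O^\times)$ down to $0$: concretely, from $H^2_{\et}(X,\mu_m)=0$ and $H^3_{\et}(X,\mu_m)=0$ we get $H^2_{\et}(X,\O^\times)\xrightarrow{m}H^2_{\et}(X,\O^\times)$ is an isomorphism for all $m$, so the group is a $\Q$-vector space; and comparing with $H^2_{\an}(X,\O^\times)$ — which is $0$ by Lemma~\ref{l:cohomology-in-deg-d-for-paracompact-space} — via $R\lambda_\ast$ in low degrees forces it to vanish.

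The main obstacle I anticipate is justifying the comparison $H^i_{\et}(X,\mu_m)=H^i_{\an}(X,\mu_m)$ in degree $2$, i.e. the statement that $R^2\lambda_\ast\mu_m=0$ (or that $R^j\lambda_\ast\mu_m$ for $j\geq 1$ contributes nothing through the Leray spectral sequence when $\dim X=1$). This rests on the fact that \'etale cohomology of a rigid-analytic curve over an algebraically closed non-archimedean field has cohomological dimension $1$ for torsion coefficients — a genuine input from the theory (Berkovich, de Jong, Huber) rather than something formal. Once that is in hand, everything else is bookkeeping with long exact sequences and the paracompact dimension bound; so I would state the needed \'etale-cohomological-dimension fact explicitly as a cited lemma and then assemble the Kummer-sequence argument around it.
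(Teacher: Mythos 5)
There is a genuine gap, and it sits exactly where you flagged your ``main obstacle''. The input you want to cite --- that a rigid analytic curve over an algebraically closed non-archimedean field has \'etale cohomological dimension $1$ for torsion coefficients, so that $H^2_{\et}(X,\mu_m)=0$ --- is false in the generality you need. A paracompact rigid space of dimension $1$ includes all smooth proper curves, and for those $H^2_{\et}(X,\mu_m)\cong \Z/m$ (already for $X=\P^1$, via the degree map). For the same reason the comparison $H^2_{\et}(X,\mu_m)=H^2_{\an}(X,\mu_m)$ cannot hold: the right-hand side vanishes by Lemma~\ref{l:cohomology-in-deg-d-for-paracompact-space}, the left-hand side does not. (Artin-type vanishing above the dimension is a theorem for affinoid or Stein curves, not for arbitrary paracompact ones.) Consequently the Kummer-sequence route does not deliver unique $m$-divisibility of $H^2_{\et}(X,\O^\times)$ for free; to kill the $m$-torsion one would instead need surjectivity of $\Pic_{\et}(X)/m\to H^2_{\et}(X,\mu_m)$, which is a different and harder statement. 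Your closing sentence --- that the divisibility ``combined with'' a comparison to $H^2_{\an}(X,\O^\times)=0$ forces vanishing --- also does not parse as a proof: unique divisibility plus vanishing of an unrelated group yields nothing unless the comparison map is shown to be injective or an isomorphism.

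The irony is that the step you mention only in passing at the end is the entire correct proof, and it makes the Kummer sequence unnecessary. The paper's argument is: for $r\colon X_{\et}\to X_{\an}$ one has $Rr_{\ast}\O^\times=\O^\times$ (this is the genuine input, from Fresnel--van der Put: \'etale descent identifies $\Pic_{\an}$ with $\Pic_{\et}$ and kills the higher direct images of $\O^\times$), so the Leray spectral sequence gives $H^2_{\an}(X,\O^\times)\isomarrow H^2_{\et}(X,\O^\times)$, and the left-hand side vanishes by Lemma~\ref{l:cohomology-in-deg-d-for-paracompact-space} since $X$ is paracompact of dimension $1$. The point is that the comparison between the \'etale and analytic sites, which fails for the torsion sheaves $\mu_m$, holds for the sheaf $\O^\times$ itself; working with $\mu_m$ is exactly the wrong decomposition here. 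If you replace your cited ``cohomological dimension $1$'' fact with the statement $Rr_{\ast}\O^\times=\O^\times$ and drop the Kummer sequence entirely, your argument collapses to the paper's.
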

\begin{proof}
	This is proved by Berkovich for good $k$-analytic spaces \cite[Lemma 6.1.2]{Berkovich_EtaleCohom}, which via the comparison to rigid spaces \cite[\S8.3, Theorem~8.3.5]{huber2013etale} proves the result for taut rigid spaces. More generally, one can also argue purely in the rigid analytic category:
	
	Let $r\colon X_{\et}\to X_{\an}$ be the natural morphism of sites. Then we have
	\[Rr_{\ast}\O^\times=\O^\times\]
	by \cite[Lemma~8.3.1, Proposition~8.2.3  and Corollary~8.3.2]{FvdP}.
	Thus the natural map
	\[ H^2_{\an}(X,\O^\times)\to H^2_{\et}(X,\O^\times)\]
	is an isomorphism. But the left hand side vanishes by Lemma~\ref{l:cohomology-in-deg-d-for-paracompact-space} as $X$ is paracompact.
\end{proof}
The case of curves has a few interesting consequences for the general case, which we will later use to compute $\Pic_v(\A^n)$. These are based on functoriality of $\HT\log$:

\begin{Remark}\label{r:pullback-of-differentials}
	A general strategy to describe the image of $\HT\log$ is as follows: If $f:X\to Y$ is a morphism of smooth rigid spaces, then by functoriality we obtain a commutative diagram
	\begin{equation}\label{eq:surjective-on-differentials}
		\begin{tikzcd}[column sep = 2cm]
			\Pic_v(Y) \arrow[d,"f^{\ast}"] \arrow[r," \mathrm{HT}_Y\log"] & {H^0(Y,\wt\Omega^1_Y)} \arrow[d,"f^{\ast}"] \\
			\Pic_v(X) \arrow[r," \mathrm{HT}_X\log"]           & {H^0(X,\wt\Omega^1_X)}.                   
		\end{tikzcd}
	\end{equation}
	In particular, we have $f^{\ast}(\im\mathrm{HT}_Y\log)\subseteq \im \mathrm{HT}_X\log$.
	For example, one could use this to reduce the case of projective $X$ in Theorem~\ref{t:Picv-ses}.2 to that of abelian varieties via the Albanese variety $X\to A$. But this no longer works in general for proper $X$, see \cite[Example 5.6]{HansenLi_HodgeSymmetry}.
\end{Remark}

\begin{Corollary}\label{c:df-in-image}
	Let $X$ be any smooth rigid space. Then for any 
	$f\in \O(X)$, the differential $df\in H^0(X,\Omega^1)$ is in the image of $\HT\log\{1\}:\Pic_v(X)\{1\}\to H^0(X,\Omega^1_X)$.
\end{Corollary}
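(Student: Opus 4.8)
The plan is to reduce to the single case $X=\A^1=\G_a$ by functoriality of the Hodge--Tate logarithm, and then to settle that case via the cokernel bound of Theorem~\ref{t:5TES-version}.

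Given $f\in\O(X)$, I would regard $f$ as a morphism of rigid spaces $f\colon X\to\G_a=\A^1$ over $K$, where $\A^1$ carries its standard coordinate $t$; then $df=f^{\ast}(dt)$ with $dt\in H^0(\A^1,\Omega^1_{\A^1})$. The functoriality diagram of Remark~\ref{r:pullback-of-differentials}, which is available for an arbitrary morphism of smooth rigid spaces, gives $\HT\log_X\circ f^{\ast}=f^{\ast}\circ\HT\log_{\A^1}$. Hence it suffices to prove that $dt\{-1\}$ lies in the image of $\HT\log\colon\Pic_v(\A^1)\to H^0(\A^1,\wt\Omega^1_{\A^1})$: indeed, if $dt\{-1\}=\HT\log(L)$ for some $v$-line bundle $L$ on $\A^1$, then $df\{-1\}=f^{\ast}(dt\{-1\})=\HT\log(f^{\ast}L)$, which after untwisting by $\{1\}$ (using the canonical identification $\O_K\{-1\}\otimes_{\O_K}\O_K\{1\}=\O_K$) is exactly the assertion that $df\in\im(\HT\log\{1\})$.

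To see that $\HT\log$ is surjective for $\A^1$, note that $\A^1$ is separated of dimension $1$, hence paracompact by \cite{LiuvdPut_separatedcurves}. By Theorem~\ref{t:5TES-version}, the cokernel of $\HT\log_{\A^1}$ embeds into $H^2_{\et}(\A^1,\O^\times)$. As in the proof of the curve case of Theorem~\ref{t:Picv-ses}, we have $Rr_{\ast}\O^\times=\O^\times$ for the natural morphism of sites $r\colon\A^1_{\et}\to\A^1_{\an}$ by \cite[Lemma~8.3.1, Proposition~8.2.3 and Corollary~8.3.2]{FvdP}, so $H^2_{\et}(\A^1,\O^\times)=H^2_{\an}(\A^1,\O^\times)$, and this vanishes by Lemma~\ref{l:cohomology-in-deg-d-for-paracompact-space} because $\A^1$ is paracompact of dimension $1$. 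Therefore $\HT\log_{\A^1}$ is surjective, so in particular $dt\{-1\}$ is in its image, and the corollary follows.

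The only step that requires attention is the last one: one must check that the input into the curve-case argument (paracompactness of $\A^1$ and $Rr_{\ast}\O^\times=\O^\times$) is valid over an arbitrary perfectoid field $K$, not merely an algebraically closed one --- which is why I re-run the argument here rather than simply invoke Theorem~\ref{t:Picv-ses}.2\ref{enum:MT1-curves}, whose statement is restricted to algebraically closed $K$ even though its proof does not use that hypothesis. Everything else is a formal consequence of functoriality of $\HT\log$. One could run the same argument with $\G_m$ and the differential $d\log t=dt/t$ in place of $\A^1$ and $dt$.
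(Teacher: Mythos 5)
Your proof is correct and takes essentially the same route as the paper: view $f$ as a morphism $X\to\A^1$, use surjectivity of $\HT\log$ for the paracompact curve $\A^1$, and pull back along $f^{\ast}$ via the functoriality of Remark~\ref{r:pullback-of-differentials}. The only difference is that you unfold the proof of the curve case (Theorem~\ref{t:5TES-version} plus vanishing of $H^2_{\et}(\A^1,\O^\times)$) to verify that it does not require $K$ to be algebraically closed, whereas the paper simply cites Theorem~\ref{t:Picv-ses}.2\ref{enum:MT1-curves}; this is a reasonable extra precaution given that the corollary is stated for general perfectoid $K$.
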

\begin{proof}
	Associated to $f$ we have a map $f:X\to \A^1$ that sends the parameter $T$ on $\A^1$ to $f$.
	Since $\A^1$ is a paracompact curve,  Theorem~\ref{t:Picv-ses}.\ref{enum:MT1-curves} shows that $\Pic_v(\A^1)=H^0(\A^1,\wt\Omega^1)$.
	The desired statement now follows  from Remark~\ref{r:pullback-of-differentials} since $f^{\ast}$ sends $dT\mapsto df$.
\end{proof}

\subsection{The cokernel in the affinoid case}
Next, we prove
part 3 of Theorem~\ref{t:Picv-ses}, which is also an easy consequence of Proposition~\ref{p:exponential-on-R^1O->R^1O^x}: We need to see that for $X$ an affinoid smooth rigid space, we get a short exact sequence
	\[ 0\to \Pic_{\an}(X)\tf\to \Pic_{v}(X)\tf\to H^0_{\an}(X,\wt\Omega^1_X)\to 0.\]
\begin{proof}[Proof of Theorem~\ref{t:Picv-ses}.3]
	The morphism of Leray 5-term exact sequences associated to the exponential \eqref{eq:exp} gives a commutative diagram of connecting homomorphisms
	\[\begin{tikzcd}
		{H^0(X,R^1\nu_{\ast}\O)} \arrow[d,"\sim"labelrotate,"\exp"] \arrow[r] & {H^2_{\et}(X,\O)} \arrow[d,,"\exp"] \\
		{H^0(X,R^1\nu_{\ast}\O^\times\tf)} \arrow[r] & {H^2_{\et}(X,\O^\times\tf)},
	\end{tikzcd}\]
	where the map on the left is an isomorphism by Proposition~\ref{p:exponential-on-R^1O->R^1O^x}.
	It therefore suffices to see that the top morphism is zero, as then so is the bottom one. But since $X$ is affinoid,\[H^2_{\et}(X,\O)=H^2_{\an}(X,\O)=0,\]
	where the first equality holds by \cite[Proposition~8.2.3.2]{FvdP}.
\end{proof}

The remaining part of Theorem~\ref{t:Picv-ses} is the proper case \ref{enum:MT1-projective}, which is arguably the most interesting one. For this we need a further ingredient: The universal cover of $X$.

\subsection{The diamantine universal cover}
Let $X$ be a connected smooth proper rigid space over an algebraically closed $K$ and fix a base point $x\in X(K)$. Since $X$ is a locally Noetherian adic space, we have the \'etale fundamental group $\pi_{1}(X,x)$, a profinite group that governs the finite \'etale covers of $X$: More precisely, let $X_{\profet}=\mathrm{Pro}(X_{\mathrm{f\acute{e}t}})$ be the category of pro-finite-\'etale covers of $X$. Let $\pi_{1}(X,x)\mathrm{-pfSets}$ be the category of profinite sets with a continuous $\pi_{1}(X,x)$-action. Then:
\begin{Proposition}[{\cite[Proposition~3.5]{Scholze_p-adicHodgeForRigid}}]
		There is an equivalence of categories
		\begin{alignat*}{2}
		F:X_{\profet}&\:\to\:&& \pi_{1}(X,x)\text{-}\mathrm{pfSets}\\
		(Y_i)_{i\in I} &\:\mapsto\:&& F(X):=\varprojlim_{i\in I}\Hom_X(x,Y_i).
		\end{alignat*}
\end{Proposition}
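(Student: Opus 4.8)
The plan is to deduce this from two standard inputs: the fundamental theorem of Galois categories applied to $X_{\fet}$, and the identification of the pro-category of finite $G$-sets with the category of profinite continuous $G$-sets.

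First, since $X$ is a connected locally Noetherian adic space with the chosen geometric point $x$, the category $X_{\fet}$ of finite \'etale covers, together with the fibre functor $F_0\colon X_{\fet}\to\mathrm{FinSets}$, $Y\mapsto\Hom_X(x,Y)$, is a Galois category; this is exactly the framework in which the profinite group $\pi_1(X,x):=\Aut(F_0)$ is defined for adic spaces (see \cite{dJ_etalefundamentalgroups}). By the fundamental theorem of Galois categories, $F_0$ upgrades to an equivalence $X_{\fet}\isomarrow\pi_1(X,x)\text{-}\mathrm{FinSets}$ onto the category of finite sets with continuous $\pi_1(X,x)$-action.

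Next I would pass to pro-categories. An equivalence of categories induces an equivalence of the associated pro-categories, so $X_{\profet}=\mathrm{Pro}(X_{\fet})\isomarrow\mathrm{Pro}\bigl(\pi_1(X,x)\text{-}\mathrm{FinSets}\bigr)$, and this equivalence acts levelwise, $(Y_i)_{i\in I}\mapsto(F_0(Y_i))_{i\in I}$. It then remains to identify $\mathrm{Pro}(G\text{-}\mathrm{FinSets})$ with $G\text{-}\mathrm{pfSets}$ for $G=\pi_1(X,x)$, via $(S_i)_{i\in I}\mapsto\varprojlim_iS_i$, the limit taken in topological spaces: it is profinite and carries a continuous $G$-action because each $S_i$ does. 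Conversely, every profinite $G$-set is a cofiltered limit of finite $G$-sets, which gives essential surjectivity. Composing the two equivalences yields the functor $X_{\profet}\to\pi_1(X,x)\text{-}\mathrm{pfSets}$ sending $(Y_i)_{i\in I}\mapsto\varprojlim_{i\in I}\Hom_X(x,Y_i)$, which is the asserted one.

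The only point that is not purely formal is full faithfulness of $(S_i)_i\mapsto\varprojlim_iS_i$, i.e.\ the identity
\[
\varprojlim_j\varinjlim_i\Hom_G(S_i,T_j)=\Hom_{G,\cts}\bigl(\varprojlim_iS_i,\varprojlim_jT_j\bigr).
\]
Since both sides commute with $\varprojlim_j$, this reduces to the claim that a continuous $G$-equivariant map from a profinite $G$-set $S=\varprojlim_iS_i$ to a \emph{finite} $G$-set $T$ factors through some $S_i$, uniquely after possibly refining $i$. This is where compactness enters: the point-fibres of such a map form a finite clopen partition of $S$, and a finite clopen partition of a cofiltered limit of finite discrete spaces is always pulled back from some finite stage; the induced map to $T$ is then $G$-equivariant after possibly enlarging $i$, since the $G$-action is continuous and $T$ finite, so all relevant stabilisers are open. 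I expect this clopen-partition argument, together with the routine bookkeeping needed to keep the indexing categories cofiltered (replacing transition maps by surjections, or $S_i$ by a $G$-stable quotient), to be the only real content; everything else is the Galois-category formalism together with the definition of $\pi_1(X,x)$.
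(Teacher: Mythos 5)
Your argument is correct and is the standard proof of this fact; note that the paper itself gives no proof here, citing \cite[Proposition~3.5]{Scholze_p-adicHodgeForRigid}, and Scholze's argument there is exactly the one you outline (the Galois-category equivalence $X_{\fet}\simeq\pi_1(X,x)\text{-}\mathrm{FinSets}$ followed by the identification of the pro-category of finite continuous $G$-sets with profinite continuous $G$-sets). The one cosmetic slip is in your bookkeeping remark: to make the transition maps surjective you replace $S_i$ by the image of $S=\varprojlim_j S_j$ in $S_i$, which is a $G$-stable \emph{subobject} rather than a quotient, and it is on this image that the factored map is automatically $G$-equivariant.
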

In particular, we have a universal object in $X_{\profet}$, which corresponds to $\pi_{1}(X,x)$ endowed with the translation action on itself. Since cofiltered inverse limits exists in the category of diamonds \cite[Lemma 11.22]{etale-cohomology-of-diamonds}, we can associate a diamond to this object:
\begin{Definition}\label{d:universal-cover}
	The universal pro-finite-\'etale cover $\wt X\to X$ is defined as the diamond
	\[\wt X:=\varprojlim_{X'\to X}X'\]
	where the index category consists of all connected finite \'etale covers $(X',x')\to (X,x)$ with $x'\in X'(K)$ a choice of lift of the base point $x\in X(K)$. This is a spatial diamond, and the canonical projection
	\[\wt X\to X\]
	is a pro-finite-\'etale $\pi_{1}(X,x)$-torsor in a canonical way. Here the additional datum of the lift $x'$ in the index category is necessary to make this action canonical, and to make the association $X\mapsto \wt X$ functorial in a canonical way. It gives a distinguished point $\wt x\in \wt X(K)$.
\end{Definition}
\begin{Example}
	\begin{itemize}
	\item If $X$ is an abelian variety, or more generally an abeloid variety, then
	\[ \wt X=\textstyle\varprojlim_{[n]}X\]
	is the limit over multiplication by $n$ where $n$ ranges through $\N$. This is represented by a perfectoid space \cite[Corollary~5.9]{perfectoid-covers-Arizona} which has the interesting feature that it is ``$p$-adic locally constant in $X$'', i.e.\ many different $X$ have isomorphic $\wt X$ \cite{heuer-isoclasses}.
	
	\item If $X$ is a connected smooth proper curve of genus $\geq 1$, then $\wt X$ is also represented by a perfectoid space \cite[Corollary~5.7]{perfectoid-covers-Arizona}, and has first been considered by Hansen.
	
	\item In the other extreme, if $X$ is a space without any non-split finite \'etale covers, e.g.\ $X=\P^n$ or $X$ a K3 surface, then we simply have $\wt X=X$. In particular, $\wt X$ is not always perfectoid.
	We do not know if $X$ is always represented by an adic space.
	\end{itemize}
\end{Example}
We call $\wt X$ the universal pro-finite-\'etale cover due to the following universal property:
\begin{Lemma}\label{l:profet-UP-of-wtX}
	Let $Y\to X$ be any pro-finite-\'etale cover, i.e.\ an element of $X_{\profet}$,  and fix a lift $y\in Y(K)$ of $x$. Then there is a unique morphism $(\wt X,\wt x)\to (Y,y)$ over $X$.
\end{Lemma}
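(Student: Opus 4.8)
The plan is to deduce the lemma from the equivalence $F\colon X_{\profet}\isomarrow \pi_1(X,x)\text{-}\mathrm{pfSets}$ of \cite[Proposition~3.5]{Scholze_p-adicHodgeForRigid} recalled above, under which $\wt X$ corresponds to $\pi_1(X,x)$ with its translation action and $\wt x$ to the neutral element $e$. First I would translate the data: writing $Y=(Y_i)_{i\in I}$, a lift $y\in Y(K)$ of $x$ is the same as a compatible system of $X$-morphisms $x\to Y_i$, hence precisely an element $[y]$ of $F(Y)=\varprojlim_i\Hom_X(x,Y_i)$; likewise $\wt x$ gives $e\in F(\wt X)=\pi_1(X,x)$. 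The formal heart of the argument is then the elementary observation that for any (profinite) $\pi_1(X,x)$-set $S$ and any $s\in S$ there is exactly one $\pi_1(X,x)$-equivariant map $\pi_1(X,x)\to S$ with $e\mapsto s$, namely $g\mapsto g\cdot s$. Applying this to $S=F(Y)$, $s=[y]$ and transporting back through $F$ yields the asserted unique morphism $(\wt X,\wt x)\to(Y,y)$ over $X$.

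The step I expect to be the actual obstacle is bridging between pro-objects and diamonds: $F$ is an equivalence of categories of pro-objects, whereas $\wt X$ in Definition~\ref{d:universal-cover} is the \emph{diamond} $\varprojlim_{X'\to X}X'$. I would therefore establish that for each finite \'etale $Y_i\to X$ the natural map
\[
\varinjlim_{X'\to X}\Hom_X(X',Y_i)\longrightarrow \Hom_X(\wt X,Y_i)
\]
is bijective, with $X'$ ranging over the connected finite \'etale covers from Definition~\ref{d:universal-cover}; this is the standard approximation property for cofiltered limits of spatial diamonds along qcqs transition maps, cf.\ \cite[Lemma~11.22]{etale-cohomology-of-diamonds} and its surroundings. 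By the classical Galois correspondence on $X_{\fet}$ the left-hand side equals $\varinjlim_{U}F(Y_i)^U=F(Y_i)$, where $U$ runs over the open subgroups of $\pi_1(X,x)$ and the union is exhaustive because $F(Y_i)$ is finite and the action continuous; passing to the limit over $i$ then identifies $\Hom_X(\wt X,Y)$ with $F(Y)=\Hom_{\pi_1(X,x)}(\pi_1(X,x),F(Y))$.

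It remains to check that this chain of identifications is compatible with evaluation at base points — namely that a morphism $f\colon\wt X\to Y$ over $X$ goes to the equivariant map $g\mapsto g\cdot(f\circ\wt x)$ — which unwinds directly from the fact that each canonical projection $\wt X\to X'$ carries $\wt x$ to the chosen lift $x'$. With this in hand, the unique equivariant map with $e\mapsto[y]$ from the first paragraph corresponds to the unique morphism $f$ with $f\circ\wt x=y$, which is exactly the content of the lemma. (Equivalently, one can package the whole argument as: $\wt X\to X$ is the pro-finite-\'etale $\pi_1(X,x)$-torsor trivialising every object of $X_{\profet}$, the fibre of any $Y\in X_{\profet}$ over $x$ is canonically $F(Y)$, and a pointed lift of $x$ to $Y$ is the same datum as a pointed equivariant morphism out of this torsor.)
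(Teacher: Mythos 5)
Your argument is correct, but it takes a genuinely different and considerably more elaborate route than the paper's. The paper's proof is two lines: by the limit property of $\wt X$ and of $Y=\varprojlim_i Y_i$ one reduces to $Y\to X$ finite \'etale, and then the connected component of $y$ in $Y$, pointed by $y$, is literally one of the objects in the index category of Definition~\ref{d:universal-cover}, so the canonical projection from $\wt X$ to that term, composed with the inclusion into $Y$, gives the desired morphism; uniqueness is the standard rigidity of a morphism out of a connected cover once its value at one point is fixed. You instead transport everything through the fibre functor $F$ and reduce to the tautology that $\pi_1(X,x)$ with its translation action corepresents pointed objects in $\pi_1(X,x)$-sets. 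What your route buys is an explicit treatment of uniqueness (which the paper leaves implicit) and of the comparison between $\Hom$ in the pro-category $X_{\profet}$ and $\Hom$ of diamonds, which you correctly single out as the only non-formal point; the approximation statement $\Hom_X(\wt X,Y_i)=\varinjlim_{X'}\Hom_X(X',Y_i)$ that you establish is exactly the input the paper uses implicitly when it invokes ``the limit property'' (the precise reference is the compatibility of the \'etale site with cofiltered limits along qcqs transition maps, \cite[Proposition~11.23, Proposition~14.9]{etale-cohomology-of-diamonds}, rather than Lemma~11.22 alone). The cost is that you invoke the full Galois correspondence where the paper only needs that pointed connected covers are cofinal in the index category. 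Both proofs are sound; the paper's is shorter because $\wt X$ was defined precisely so that this lemma becomes a formality.
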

\begin{proof}
	By the limit property, it suffices to see this for finite \'etale $Y\to X$. Passing to the connected component of $y$, we see $(Y,y)$ appears in the index of the limit defining $\wt X$.
\end{proof}
More interestingly, $\wt X\to X$ is also a topological universal cover in the following sense:
\begin{Proposition}\label{p:universal-cover}
	Let $X$ be a connected smooth proper adic space over $K$. Then for any $n\in \N$ and $F$ any of the $v$-sheaves $\Z/n, \Z_p, \wh \Z, \O^{+a}/p^n, \O^{+a}, \O, U, \O^{\times,\tt}$, we have
\begin{alignat*}{3}
H^0(\wt X,F)&=&&H^0(\Spa(K),F),\\
H^1_{v}(\wt X,F)&=&&0.
\end{alignat*}
\end{Proposition}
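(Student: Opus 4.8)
The plan is to reduce everything to the two basic building blocks $F = \O^{+a}/p$ and $F = \Z/n$, from which all the other cases follow by standard limit and exact-sequence arguments, and then to prove the statement for these two building blocks by exploiting the cofiltered presentation $\wt X = \varprojlim_{X'\to X} X'$ together with the properness of $X$ (and hence of each $X'$). The key input is the almost purity / primitive comparison machinery of Scholze's \cite{Scholze_p-adicHodgeForRigid}: for a connected smooth \emph{proper} rigid space $X'$ over the algebraically closed field $K$, the cohomology $H^i_{\et}(X', \O^{+a}/p)$ is a finite $\O_K/p$-module, and the transition maps in the tower $\wt X$ will be shown to kill these groups in positive degrees in the limit. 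Concretely, I would first record that $v$-cohomology of $\wt X$ (a spatial diamond presented as a cofiltered limit of the proper spatial diamonds $X'^\diamond$ with qcqs transition maps) commutes with the limit, so that for any of the listed sheaves $F$ one has $H^i_v(\wt X, F) = \varinjlim_{X'\to X} H^i_v(X', F)$; this uses \cite[Proposition~14.9]{etale-cohomology-of-diamonds} or the analogous limit statement for $v$-sheaves, applied to $\Z/n$ and $\O^{+a}/p^n$, which are the sheaves for which such a limit formula is available on the nose.

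Next I would handle the \emph{finite} coefficient cases. For $F = \Z/n$: since $\wt X \to X$ is the universal pro-finite-\'etale cover with Galois group $\pi_1(X,x)$, for any finite \'etale $X' \to X$ the pullback $\wt X \to X'$ is again the universal cover of $X'$, and $H^1_{\et}(X', \Z/n) = \Hom_{\cts}(\pi_1(X',x'), \Z/n)$ classifies connected finite \'etale $\Z/n$-covers of $X'$; each such cover is dominated by some $X''$ in the tower, so the class dies in $H^1_{\et}(X'', \Z/n)$, and hence $\varinjlim_{X'} H^1_{\et}(X', \Z/n) = 0$. For $H^0$ one simply notes $H^0(X',\Z/n) = \Z/n$ for all connected $X'$. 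Since $\nu_\ast$ and higher $\nu_\ast$ of the constant sheaf $\Z/n$ are again $\Z/n$ (e.g.\ \cite[Propositions~14.7, 14.8]{etale-cohomology-of-diamonds}, as already invoked in Lemma~\ref{l:Rvmu=0}), this gives the claim for $\Z/n$ in the $v$-topology; passing to the limit over $n$ (and using that $\wt X$ is quasi-compact so $H^1_v$ commutes with the relevant limits) gives the cases $F = \Z_p$ and $F = \wh\Z$. For $F = \O^{+a}/p$: using the limit formula above it suffices to see that $\varinjlim_{X'} H^1_{\et}(X', \O^{+a}/p) = 0$. Here I would argue that the transition maps in this colimit are \emph{almost zero}. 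Since each $X'$ is smooth proper over $K$ algebraically closed, the Hodge--Tate decomposition (or the primitive comparison theorem together with finiteness of $H^i_{\et}(X',\F_p)$) shows $H^1_{\et}(X', \O^{+a}/p)$ is almost finitely generated over $\O_K/p$; the key point is that a nonzero such class, being detected after a finite \'etale cover by a class in $H^1_{\et}(-, \F_p) \otimes \O_K/p$-type pieces, or more robustly by a direct cohomological-dimension argument, is annihilated upon pulling back along a suitable $X'' \to X'$ in the tower. Concretely the cleanest route is via the $\F_p$-Kummer theory combined with the Hodge--Tate filtration: $H^1_{\et}(X',\O^{+a}/p)$ receives the image of $H^1_{\et}(X',\mu_p)\otimes^a \O_K/p$ and maps to $H^1(X', \wh\Omega)$-type pieces, and the covers $X''$ kill the $\mu_p$-part as in the previous paragraph while the differential part is handled via perfectoidness of the relevant intermediate covers when $X$ has enough \'etale covers — but to avoid the (known) subtlety that $\wt X$ need not be perfectoid, I would instead cite the primitive comparison / almost purity statement directly: $H^1_v(\wt X, \O^{+a}/p) \aeq \varinjlim H^1_{\et}(X', \O^{+a}/p)$ and this colimit vanishes because each class is pulled back to $0$ on a further finite \'etale cover, by the finite-index description of $H^1_{\et}(X', \F_p)$.

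Finally I would bootstrap from $\O^{+a}/p$ to the remaining sheaves. From $H^1_v(\wt X, \O^{+a}/p) = 0$ and $H^0(\wt X, \O^{+a}/p) = \O_K^a/p$, derived $p$-adic completeness and the Bockstein sequences give $H^1_v(\wt X, \O^{+a}/p^n) = 0$ and then, passing to the limit over $n$, $H^1_v(\wt X, \O^{+a}) = 0$ and $H^0(\wt X, \O^{+a}) = \O_K^a$; inverting $p$ yields the $F = \O$ case. For $F = U = 1 + \m\O^+$, the logarithm short exact sequence \eqref{eq:log}, namely $1 \to \mu_{p^\infty} \to U \to \O \to 1$, together with the vanishing just established for $\O$ and the $\Z/n$ (hence $\mu_{p^\infty}$) cases, gives $H^0(\wt X, U) = U(\Spa K)$ and $H^1_v(\wt X, U) = 0$. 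For $F = \O^{\times,\tt}$, I would use that $\O^{\times,\tt}$ is generated by $U$ and $\mu$, i.e.\ there is a short exact sequence $1 \to U \to \O^{\times,\tt} \to \mu/\mu_{p^\infty} \to 1$ or rather $1 \to \mu_{p^\infty} \to \O^{\times,\tt} \to \O \oplus (\mu/\mu_{p^\infty}) \to 1$ after suitable bookkeeping; in any case the relevant quotient is a colimit of prime-to-$p$ roots of unity, which is a (locally constant, torsion) sheaf handled by the $\Z/n$ computation, so the long exact sequence again collapses. The main obstacle is the $\O^{+a}/p$ computation: showing that positive-degree cohomology classes over the proper covers $X'$ are uniformly killed in the tower, i.e.\ controlling $\varinjlim_{X'} H^1_{\et}(X', \O^{+a}/p)$, without assuming $\wt X$ is perfectoid. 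I expect to handle this by the finiteness of $H^i_{\et}(X', \F_p)$ (Scholze's primitive comparison theorem) plus the observation that a finite \'etale cover killing a given $\F_p$-class exists and lies in the tower, combined with the Hodge--Tate decomposition to control the "differential" part after base change to $C$; alternatively one cites \cite[Lemma~4.5]{Scholze_p-adicHodgeForRigid}-type splitting results along the intermediate toric/perfectoid covers that do exist inside the tower.
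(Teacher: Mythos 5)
Your proposal follows essentially the same route as the paper: the limit formula of \cite[Proposition~14.9]{etale-cohomology-of-diamonds}, the observation that $\Z/n$-torsors on each $X'$ die further up the tower, the Primitive Comparison Theorem to reduce $\O^{+a}/p^m$ to $\Z/p^m$-coefficients, a limit argument to pass to $\O^{+a}$ and $\O$, and the logarithm sequences for $U$ and $\O^{\times,\tt}$; your second, ``clean'' version of the $\O^{+a}/p$ step is exactly the paper's argument, and your extension $1\to U\to \O^{\times,\tt}\to \mu/\mu_{p^\infty}\to 1$ is an equally valid alternative to the paper's sequence $1\to\mu\to\O^{\times,\tt}\to\O\to 0$. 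The only point to tighten is the passage from $\Z/n$ to $\wh\Z$ and from $\O^{+a}/p^n$ to $\O^{+a}$: the correct justification is not quasi-compactness of $\wt X$ but repleteness of the $v$-site, giving $\Rlim\Z/n=\wh\Z$ and an exact sequence whose $\mathrm{R}^1\lim$-term vanishes because the $H^0$-system is surjective.
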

\begin{Remark}
	If $X$ is either a curve of genus $g\geq 1$ or an abeloid variety, we in fact have $H^i_{v}(\wt X,-)=0$ for all $i\geq 1$ for all of these coefficients \cite[Theorem~3.9]{heuer-isoclasses}. But for a general smooth proper rigid space $X$, this is no longer true as the example of $\P^1$ shows.
\end{Remark}
\begin{proof}
	We start with $\Z/n$-coefficients: By  \cite[Proposition 14.9]{etale-cohomology-of-diamonds},  we have for any $i\geq 0$:
	\[H^i(\wt X,\Z/n)=\varinjlim_{X'\to X} H^i(X',\Z/n).\]
	
	For $i=0$, since each $X'$ is connected, this implies $H^0(\wt X,\Z/n)=\Z/n$. In the limit over $n\in\N$, we get $H^0(\wt X,\wh \Z)=\wh \Z$ and similarly for $\Z_p$.
	
	For $i=1$, the group $H^1_{\et}(X',\Z/n)=H^1_{v}(X',\Z/n)$ parametrises the finite \'etale $\Z/n$-torsors on $X'$. Since any $\Z/n$-torsor is trivialised by a connected finite \'etale covers of $X$, each cohomology class gets killed in the inverse system defining $\wt X$. It follows that
	\begin{equation}\label{eq:H^1(wt X,Z/p^n)}
	H^1_v(\wt X,\Z/n)=\varinjlim_{X'\to X} H^1_v(X',\Z/n)=0.
	\end{equation}
	Since the $v$-site is replete, we have $\Rlim \Z/n=\wh\Z$ as sheaves on $X_v$ by \cite[Proposition 3.1.10]{bhatt-scholze-proetale}, so the Grothendieck spectral sequence for $\mathrm{R}\Gamma(\wt X,-)\circ \Rlim$ yields an exact sequence
	\[0\to \textstyle\mathrm{R}^1\lim_nH^0(\wt X,\Z/n)\to H^1_{v}(\wt X,\wh\Z)\to \textstyle\varprojlim_n H^1_v(\wt X,\Z/n)\to 0.\]
	The first term vanishes by the first part. The last also vanishes, so $H^1_{v}(\wt X,\wh\Z)=0$ as desired.
	
	The case of $\Z_p$-cofficients follows as $\Z_p$ is a direct factor of $\wh \Z$.
	
	To see the last part, we use the Primitive Comparison Theorem~\cite[Theorem 5.1]{Scholze_p-adicHodgeForRigid}, according to which we have for any $i\geq 0$ and $m\geq 0$ and any finite \'etale cover $X'\to X$
	\[ H^i(X',\O^+/p^m)\aeq H^i(X',\Z/p^m)\otimes_{\Z_p}\O_K.\]
	
	For $i=0$, we deduce in the limit that $ H^0(\wt X,\O^+)\aeq \O_K$.
	For $i=1$, we conclude from \eqref{eq:H^1(wt X,Z/p^n)} applied to $n=p^m$ that
	\[ H^1_v(\wt X,\O^+/p^m)\aeq \varinjlim_{X'\to X}H^1_v( X',\O^+/p^m)\aeq\varinjlim_{X'\to X}H^1_v(X',\Z/p^m)\otimes_{\Z_p}\O_K=0.\]
	It then follows from the same $\Rlim$-argument as above that 
	\[ H^1_v(\wt X,\O^+)\aeq \textstyle\varprojlim_m H^1_v(\wt X,\O^+/p^m)\aeq 0.
	\]
	The case of $U$ follows from the long exact sequence of the logarithm \eqref{eq:log}. The case of $\O^{\times,\tt}$ similarly follows from a logarithm sequence modified to include all unit roots $\mu$:
	\[1\to \mu\to \O^{\times,\mathrm{tt}}\xrightarrow{\log} \O\to 0\qedhere,\]
\end{proof}

Our guiding analogy in the following will be that $\wt X\to X$ behaves like the topological universal cover in complex geometry. We are going to make this more precise in the next section, but as 
a first instance, we recover the statement (cmp.\ \cite[Theorem 1.2]{Scholze_p-adicHodgeForRigid}):
\begin{Corollary}\label{c:pi_1^p-ab-fg}
	Let $T$ be the maximal torsionfree abelian  pro-$p$-quotient of $\pi_1(X,x)$. Then $T$ is a finite free $\Z_p$-module, and there is a natural isomorphism
	\[ T=\Hom( H^1_{\et}(X,\Z_p),\Z_p).\]
\end{Corollary}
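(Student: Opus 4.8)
The plan is to compute $H^1_{\et}(X,\Z_p)$ by descent along the universal cover and then pass to $\Z_p$-linear duals; throughout write $G=\pi_1(X,x)$. First I would feed $\wt X\to X$, which by Definition~\ref{d:universal-cover} is a pro-finite-\'etale $G$-torsor, into the Cartan--Leray sequence of Proposition~\ref{p:Cartan--Leray}.\ref{i:CL-left-ex} with coefficients in the constant sheaves $\underline{\Z/p^n}$. By Proposition~\ref{p:universal-cover} we have $H^0(\wt X,\Z/p^n)=\Z/p^n$ with trivial $G$-action and $H^1_v(\wt X,\Z/p^n)=0$, so the sequence collapses to $H^1_v(X,\Z/p^n)=H^1_{\cts}(G,\Z/p^n)=\Hom_{\cts}(G,\Z/p^n)$. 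Taking the inverse limit over $n$ (the transition maps on $H^0$ are surjective, so there is no $\mathrm{R}^1\lim$ term) and using $H^1_v(X,\Z/p^n)=H^1_{\et}(X,\Z/p^n)$ for these constant sheaves, I get a natural isomorphism $H^1_{\et}(X,\Z_p)=\Hom_{\cts}(G,\Z_p)$. Since $\Z_p$ is an abelian, torsionfree pro-$p$ group, every such homomorphism factors through the maximal torsionfree abelian pro-$p$ quotient $T$, so in fact $H^1_{\et}(X,\Z_p)=\Hom_{\cts}(T,\Z_p)$.

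Next I would show that $T$ is finite free over $\Z_p$. Let $G^{(p)}$ be the maximal pro-$p$ quotient of $G$. By the Burnside basis theorem the minimal number of topological generators of $G^{(p)}$ equals $\dim_{\F_p}\Hom_{\cts}(G^{(p)},\F_p)=\dim_{\F_p}\Hom_{\cts}(G,\F_p)=\dim_{\F_p}H^1_{\et}(X,\F_p)$, and this is finite because $X$ is proper and smooth, by Scholze's primitive comparison theorem \cite{Scholze_p-adicHodgeForRigid} (the same input that underlies Proposition~\ref{p:universal-cover}). Hence $G^{(p)}$ is topologically finitely generated, so its abelianisation is a finitely generated $\Z_p$-module, and therefore so is its torsionfree quotient $T$. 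Being finitely generated and torsionfree over $\Z_p$, it follows that $T\cong\Z_p^{\,r}$ for some finite $r$, and then $H^1_{\et}(X,\Z_p)=\Hom_{\Z_p}(\Z_p^{\,r},\Z_p)\cong\Z_p^{\,r}$ is finite free as well.

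Finally, since $T$ is finite free, the canonical evaluation map $T\to\Hom_{\Z_p}(\Hom_{\cts}(T,\Z_p),\Z_p)$, $t\mapsto(\varphi\mapsto\varphi(t))$, is an isomorphism; composing with the identification $\Hom_{\cts}(T,\Z_p)=H^1_{\et}(X,\Z_p)$ from the first step gives the asserted natural isomorphism $T=\Hom(H^1_{\et}(X,\Z_p),\Z_p)$. The only non-formal ingredient here is the finiteness of $H^1_{\et}(X,\F_p)$ --- equivalently, the topological finite generation of the pro-$p$ fundamental group of a proper smooth rigid space --- so this is where the real content of the corollary lies; everything else is the Cartan--Leray sequence combined with Proposition~\ref{p:universal-cover}, the Burnside basis theorem, and double duality for finite free $\Z_p$-modules.
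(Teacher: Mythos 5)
Your proof is correct and follows essentially the paper's own argument: both apply the Cartan--Leray sequence of Proposition~\ref{p:Cartan--Leray} to the universal cover $\wt X\to X$ together with the vanishing results of Proposition~\ref{p:universal-cover} to identify $H^1_{\et}(X,\Z_p)$ with $\Hom_{\cts}(\pi_1(X,x),\Z_p)$, and then dualise; the paper just takes $\mathcal F=\Z_p$ directly rather than passing through $\Z/p^n$-coefficients and an inverse limit. The only real divergence is the finiteness of $T$: you derive topological finite generation of the maximal pro-$p$ quotient from $\dim_{\F_p}H^1_{\et}(X,\F_p)<\infty$ via the Burnside basis theorem, whereas the paper simply cites finite generation of $H^1_{\et}(X,\Z_p)$ from \cite[Theorem~1.1]{Scholze_p-adicHodgeForRigid} --- your version makes explicit a small step the paper leaves implicit.
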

\begin{proof}
	By Proposition~\ref{p:universal-cover} and  Proposition~\ref{p:Cartan--Leray}.\ref{i:CL-left-ex} (Cartan--Leray) for $\wt X\to X$ with $\mathcal F=\Z_p$, we have $\Hom(\pi_1(X,x),\Z_p)=H^1_{\et}(X,\Z_p)$. The result follows by applying $\Hom(-,\Z_p)$. That $T$ is finite free follows from $H^1_{\et}(X,\Z_p)$ being finitely generated \cite[ Theorem 1.1]{Scholze_p-adicHodgeForRigid}.
\end{proof}
The relevance of the universal cover $\wt X$ to Theorem~\ref{t:Picv-ses}.\ref{enum:MT1-projective} is now the following:
\begin{Corollary}\label{c:CL-for-GL_n}
	For any $n\geq 1$, there is a short exact sequence of pointed sets
	\[ 1\to \Hom_{\cts}(\pi_1(X,x),K^\times)\to \Pic_v(X)\to \Pic_v(\wt X).\]
\end{Corollary}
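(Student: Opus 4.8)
The plan is to apply the left-exact Cartan--Leray sequence (Corollary~\ref{c:Cartan--Leray-for-line-bundles}, i.e.\ Proposition~\ref{p:Cartan--Leray}.\ref{i:CL-left-ex} with $\mathcal F=\O^\times=\GL_1$) to the cover $\wt X\to X$, and then to identify its first term using Proposition~\ref{p:universal-cover}. By Definition~\ref{d:universal-cover}, $\wt X\to X$ is a pro-finite-\'etale $\pi_1(X,x)$-torsor; since any pro-finite-\'etale torsor is in particular a quasi-pro-\'etale torsor and $\pi_1(X,x)$ is profinite, hence locally profinite, this map is Galois with group $\pi_1(X,x)$ in the sense required. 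As $\mathcal F=\O^\times$ satisfies the hypothesis \eqref{eq:CL-condition}, Corollary~\ref{c:Cartan--Leray-for-line-bundles} provides a left-exact sequence of abelian groups
\[ 0\to H^1_{\cts}(\pi_1(X,x),\O^\times(\wt X))\to \Pic_v(X)\to \Pic_v(\wt X)^{\pi_1(X,x)},\]
where the last arrow is pullback along $\wt X\to X$.

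It then remains to rewrite the outer terms. Since $\O^\times$ is the sheaf of units of $\O$, we have $\O^\times(\wt X)=\O(\wt X)^\times$, and Proposition~\ref{p:universal-cover} gives $\O(\wt X)=H^0(\wt X,\O)=H^0(\Spa(K),\O)=K$, so $\O^\times(\wt X)=K^\times$. The $\pi_1(X,x)$-action on $\wt X$ is an action over $\Spa(K)$, hence acts on $\O(\wt X)=K$ by $K$-algebra automorphisms, i.e.\ trivially; so it acts trivially on $K^\times$ as well. For a trivial coefficient module, degree-one continuous group cohomology is the group of continuous homomorphisms, whence
\[ H^1_{\cts}(\pi_1(X,x),\O^\times(\wt X))=\Hom_{\cts}(\pi_1(X,x),K^\times).\]
Composing the displayed left-exact sequence with the tautological inclusion $\Pic_v(\wt X)^{\pi_1(X,x)}\hookrightarrow \Pic_v(\wt X)$ then produces the asserted sequence
\[ 1\to \Hom_{\cts}(\pi_1(X,x),K^\times)\to \Pic_v(X)\to \Pic_v(\wt X),\]
which is left-exact, i.e.\ exact at $\Hom_{\cts}(\pi_1(X,x),K^\times)$ and at $\Pic_v(X)$; it is a sequence of abelian groups, and in particular of pointed sets. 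The identical argument with $\GL_n$ in place of $\GL_1$, using $\GL_n(\O(\wt X))=\GL_n(K)$ with its (again trivial) $\pi_1(X,x)$-action, yields the analogous sequence for rank-$n$ $v$-vector bundles, which accounts for the label; only the rank-one case will be needed later.

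I do not expect a genuine obstacle: the statement is a formal consequence of material already in place. The points that merit a little care are all bookkeeping --- verifying that $\wt X\to X$ really qualifies as a quasi-pro-\'etale $\pi_1(X,x)$-torsor of diamonds so that the Cartan--Leray formalism applies; checking that $\O^\times$ satisfies \eqref{eq:CL-condition} on $\wt X$, which need not be affinoid perfectoid (this follows by covering $\wt X$ by affinoid perfectoids and using that $\Map_{\cts}(G^i,-)$ commutes with the relevant finite limits, exactly as in the proof of Proposition~\ref{p:Cartan--Leray}); and pinning down $\O(\wt X)=K$ from Proposition~\ref{p:universal-cover}, which is precisely what forces the space of descent data for the trivial line bundle to be $\Hom_{\cts}(\pi_1(X,x),K^\times)$.
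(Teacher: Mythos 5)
Your proposal is correct and follows exactly the paper's own route: the paper likewise obtains the sequence by applying Corollary~\ref{c:Cartan--Leray-for-line-bundles} to the pro-finite-\'etale $\pi_1(X,x)$-torsor $\wt X\to X$ with $\mathcal F=\O^\times$ and then invoking $\O(\wt X)=K$ from Proposition~\ref{p:universal-cover}. Your extra bookkeeping (trivial $\pi_1$-action on $K^\times$, the identification $H^1_{\cts}=\Hom_{\cts}$, and the remark about the $\GL_n$ version explaining the ``for any $n\geq 1$'') is all consistent with what the paper leaves implicit.
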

\begin{proof}
	This follows from Corollary~\ref{c:Cartan--Leray-for-line-bundles} (Cartan--Leray) applied to the pro-finite-\'etale $\pi_1(X,x)$-torsor $\wt X\to X$ and $\mathcal F=\O^\times$, and the fact that $\O(\wt X)=K$ by Proposition~\ref{p:universal-cover}.
\end{proof}
We can thus see characters of $\pi_1(X,x)$ as descent data on the trivial line bundle on $\wt X$.
This is part of a much more general picture that we study in the next section. For now, the crucial point is that it gives us ``enough'' $v$-line bundles in $\Pic_v(X)$ to generate $H^0(X,\widetilde\Omega^1_X)$.

\subsection{The proper case}
We now have everything in place to finish the remaining case of Theorem~\ref{t:Picv-ses}:
\begin{proof}[Proof of Theorem~\ref{t:Picv-ses}.\ref{enum:MT1-projective}]
	By passing to connected components, we may without loss of generality assume that $X$ is connected. Fix a base point $x\in X(K)$.
	
	Recall from the proof of Theorem~\ref{t:Picv-ses}.1 that the term $H^0(X,\Omega^1_X)\{-1\}$ arises from the Leray spectral sequence as $H^0(X,R^1\nu_{\ast}\O^\times)$. We now compare this to the Leray spectral sequence for $\O$, which we recall gives the Hodge--Tate spectral sequence.
	By \cite[Theorem~13.3.(ii)]{BMS}, the latter degenerates at the $E_2$-page since $X$ is proper. Consequently,
	\[\HT: H^1_v(X,\O)\to H^0(X,\widetilde\Omega^1_X) \]
	is surjective.
	
	We now compare this to the Cartan--Leray  sequences of Proposition~\ref{p:Cartan--Leray}.\ref{i:CL-left-ex} for 
	$\wt X\to X$.
	By Proposition~\ref{p:universal-cover}, we have $H^1_v(\wt X,\O)=0$. Hence the Cartan--Leray sequence of $\O$ is of the form
	\[0\to  \Hom_{\cts}(\pi_1(X,x),K)\to H^1_v( X,\O)\to H^1_v(\wt X,\O)=0.\]
	Similarly, by Corollary~\ref{c:CL-for-GL_n}, there is a contribution of $ \Hom_{\cts}(\pi_1(X,x),K^\times)$ to $\Pic_v(X)$. Passing from $\O^\times$ to  $U=1+\m\O^+\subseteq \O^\times$, we
	 compare these Cartan--Leray sequences via the logarithm $\log:U\to \O$, and get by construction of $\HT\log$ a commutative diagram 
	\begin{equation}\label{eq:Rep-to-diff-surj-proper-case}
	\begin{tikzcd}
		 {\Hom_{\cts}(\pi_1(X,x),1+\m)}     \arrow[r]        \arrow[d,"\log"']  & {H^1_{v}(X,U)}    \arrow[d,"\log"'] \arrow[r]&    {H^1_{v}(X,\O^\times)}  \arrow[d,"\HT\log"]                   \\
		 {\Hom_{\cts}(\pi_1(X,x),K)}\arrow[r,"\sim"] & {H^1_v(X,\O)} \arrow[r, two heads,"\HT"] & {H^0(X,\widetilde\Omega^1)}.
	\end{tikzcd}
	\end{equation}
	To prove that $\HT\log$ is surjective,
	it thus remains to see that the left vertical map is surjective. To see this, we note that any continuous homomorphism $\varphi:\pi_1(X,x)\to K$ factors through the maximal torsionfree abelian pro-$p$-quotient, which is a finite free $\Z_p$-module by Corollary~\ref{c:pi_1^p-ab-fg}. We can thus lift it to a continuous homomorphism
	\[\pi_1(X,x)\to 1+\m\subseteq K^\times\] since $\log:1+\m\to K$ is surjective, $K$ being algebraically closed.
\end{proof}

\section{Application to the $p$-adic Simpson correspondence}
Let $K$ be an algebraically closed complete extension of $\Q_p$.
Then the proper case of Theorem~\ref{t:Picv-ses}.2  is very closely related to the $p$-adic Simpson correspondence from the pro-étale/$v$-topological
perspective of \cite[\S2]{LiuZhu_RiemannHilbert}\cite[\S3]{wuerthen_vb_on_rigid_var}\cite[\S7]{MannWerner_LocSys_p-adVB}: In this section, we show that Theorem~\ref{t:Picv-ses} can be used to construct the $p$-adic Simpson correspondence in rank 1.

\subsection{Overview}
In order to provide some context, let us briefly describe a few known results about the $p$-adic Simpson correspondence. We refer to \cite[\S1]{wuerthen_vb_on_rigid_var} for a much more detailed overview. 

Let $X$ be a connected proper smooth rigid space over $K$. Fix a base point $x\in X(K)$.
Inspired by the complex Corlette--Simpson correspondence \cite{SimpsonCorrespondence}, the $p$-adic Simpson correspondence pioneered independently by Deninger--Werner \cite{DeningerWerner_vb_p-adic_curves} and Faltings \cite{Faltings_SimpsonI} is a conjectural equivalence between the category $\mathrm{Rep}_{K}(\pi_1(X,x))$ of continuous representations 
\[\pi_1(X,x)\to \GL(W)\]
on finite dimensional $K$-vector spaces $W$,
and a certain subcategory of the Higgs bundles on $X$, yet to be identified. Here by a Higgs bundle we shall mean a pair $(E,\theta)$ of an analytic vector bundle $E$ on $X$ together with a $1$-form $\theta\in H^0(X,\End(E)\otimes \wt\Omega_X^1)$ satisying $\theta\wedge \theta=0$. Such $\theta$ are called Higgs fields. We recall that $\wt\Omega_X^1:=\Omega^1_X(-1)$ where the $(-1)$ is a Tate twist; it is natural to include it in this context since it appears in the $p$-adic Hodge--Tate sequence.

In the case that $K=\C_p$ and $X$ is algebraic and defined over a finite extension of $\Q_p$,  Deninger--Werner have identified a category $\mathcal B^s(X_{\C_p})$ of algebraic vector bundles $V$ with ``numerically flat reduction'' for which they can construct a functor  \cite[\S9-\S10]{DeningerWerner_Simpson} \[\mathcal B^s(X_{\C_p})\to \mathrm{Rep}_{\C_p}(\pi_1(X,x)),\]
generalising their earlier work in the case of curves \cite[Theorem~1.1]{DeningerWerner_vb_p-adic_curves}. This gives the desired functor in the case of vanishing Higgs field, i.e.\ $\theta=0$.

W\"urthen has recently extended this to the setting of proper connected seminormal rigid analytic varieties over $\C_p$, for which he constructs a fully faithful functor on analytic vector bundles $E$ \cite[Theorem 1.1]{wuerthen_vb_on_rigid_var}. Moreover, he shows that the condition of numerically flat reduction implies that $E$ is trivialised by a pro-finite-\'etale cover of $X$ \cite[Proposition 4.13]{wuerthen_vb_on_rigid_var}. Passing from the analytic to the $v$-topology, Mann--Werner \cite[Theorem 0.1]{MannWerner_LocSys_p-adVB} extend this to $v$-vector bundles, and show that the condition of numerically flat reduction can be checked on proper covers. They then set up a category equivalence of such $v$-vector bundles to $\C_p$-local systems on $X$ that admit an integral model over $\O_{\C_p}$.

In an independent line of research, for algebraic $X$ that have an integral model with toroidal singularities over a complete discrete valuation ring, Faltings constructed an equivalence of categories from ``small'' Higgs bundles to a category of ``small generalised representations'' \cite[Theorem~5]{Faltings_SimpsonI}. Here generalised representations form a category into which representations of $\pi_1(X,x)$ embed fully faithfully. He then proved that the smallness assumption can be removed for curves  \cite[Theorem~6]{Faltings_SimpsonI}. This construction was further developed by Abbes--Gros and Tsuji \cite{AGT-p-adic-Simpson}. However, towards a $p$-adic Simpson correspondence, it is currently not known which Higgs bundles correspond to actual representations.

Reinterpreting these objects in the setting of Scholze's $p$-adic Hodge theory, Liu--Zhu were able to define a functor from $\Q_p$-local systems on any smooth rigid space defined over a finite extension of $\Q_p$ to nilpotent Higgs bundles \cite[Theorem~2.1, Remark~2.6]{LiuZhu_RiemannHilbert}. But it is not clear how this can be extended to a functor on all of $\mathrm{Rep}_{K}(\pi_1(X,x))$.

Despite these many recent advances, a construction of a more general functor either from Higgs bundles beyond the case of $\theta=0$, or from all $K$-linear representations beyond small or $\Q_p$-representations has not been found yet.
\subsection{Pro-finite-\'etale vector bundles via the universal cover}
The aim of this section is to construct the $p$-adic Simpson correspondence of rank $1$ in full generality, i.e.\ for smooth proper rigid spaces defined over any algebraically closed non-archimedean extension $K$ of $\Q_p$. Here we note that in rank $1$, a Higgs bundle is simply a pair $(L, \theta)$ of an analytic line bundle $L$ on $X$ and a global differential $\theta \in H^0(X,\widetilde\Omega^1)$, which is automatically a Higgs field. The basic idea is that by Theorem~\ref{t:Picv-ses}.\ref{enum:MT1-projective}, Higgs bundles of rank $1$ are essentially the $v$-line bundles, at least after certain choices. Under this correspondence, the condition of vanishing Chern classes in the complex case is replaced by the following:

\begin{Definition}
	\begin{enumerate}
	\item We say that a $v$-vector bundle on $X$ is pro-finite-\'etale if it is trivialised by a pro-finite-\'etale cover of $X$. Equivalently, it is trivialised by the universal cover $\wt X\to X$ from Definition~\ref{d:universal-cover}. 
	We denote by $\Pic_{\profet}(X)\subseteq \Pic_v(X)$ the subgroup of pro-finite-\'etale line bundles, and by $\Pic_{\profet,\an}(X)$ its intersection with $\Pic_{\an}(X)$.
	\item We call a Higgs bundle $(E,\theta)$ pro-finite-\'etale  if $E$ is pro-finite-\'etale.
\end{enumerate}
\end{Definition}

The first step in the complex Simpson correspondence is to associate to any finite dimensional complex representation of the fundamental group of a compact K\"ahler manifold a holomorphic vector bundle that becomes trivial on the topological universal cover. Using the $p$-adic universal cover $q:\wt X\to X$ of Definition~\ref{d:universal-cover}, we get an analogous construction:
\begin{Theorem}\label{t:CL-for-GL_n}
	Let $X$ be a connected smooth proper rigid space over $K$. Fix $x\in X(K)$. Then the universal cover $\wt X\to X$ induces an exact equivalence of tensor categories
	\begin{alignat*}{4}
		\Bigg\{\begin{array}{@{}c@{}l}\text{ finite dim.\ continuous $K$-linear}\\\text{representations of $\pi_1(X,x)$} \end{array}\Bigg\} &\isomarrow&& \Bigg\{\begin{array}{@{}c@{}l}\text{pro-finite-\'etale}\\\text{$v$-vector bundles on $X$}\end{array}\Bigg\}\\V(\wt X)\quad &\mapsfrom&&\quad V\\
		\rho:\pi_1(X,x)\to \GL(W)\quad&\mapsto &&\quad V_{\rho}
	\end{alignat*}
	where the $v$-vector bundle $V_{\rho}$ on $X$  associated to $\rho$ is defined on $Y\in X_v$ by
	\[ V_{\rho}(Y)=\{ x\in W\otimes_K\O(Y\times_X\wt X)\mid g^{\ast} x=\rho(g)x \text{ for all }g\in \pi_1(X,x)\}.\]
\end{Theorem}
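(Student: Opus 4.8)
The plan is to interpret both functors as two faces of Cartan--Leray descent along the pro-finite-\'etale $\pi_1(X,x)$-torsor $q\colon\wt X\to X$ of Definition~\ref{d:universal-cover}, with all cohomological input coming from Proposition~\ref{p:universal-cover} (which gives $\O(\wt X)=K$ and $H^1_v(\wt X,\O)=0$) and from Corollary~\ref{c:Cartan--Leray-for-line-bundles} in its $\GL_n$-form.

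First I would verify that both functors are well-defined. Given $\rho\colon\pi_1(X,x)\to\GL(W)=\GL_n(K)\subseteq\GL_n(\O(\wt X))$, the fact that $\pi_1(X,x)$ acts trivially on $\O(\wt X)=K$ means $\rho$ is a continuous $1$-cocycle for the torsor $\wt X\to X$, so Corollary~\ref{c:Cartan--Leray-for-line-bundles} produces a $v$-vector bundle $V_\rho$ on $X$ with exactly the displayed description; it is trivialised by $\wt X$ by construction, hence pro-finite-\'etale, and $V_\rho|_{\wt X}$ is trivial of rank $n$. Conversely, if $V$ is a pro-finite-\'etale $v$-vector bundle then by Lemma~\ref{l:profet-UP-of-wtX} it is already trivialised by $\wt X$, so $V(\wt X)\cong\O(\wt X)^n=K^n$ is a finite-dimensional $K$-vector space carrying a $K$-linear $\pi_1(X,x)$-action via the action on $\wt X$; I would check continuity by observing that the descent datum of $V$ lives in $\GL_n(\O(\wt X\times_X\wt X))$ and that the Galois identification $\wt X\times_X\wt X\cong\pi_1(X,x)\times\wt X$ together with condition~\eqref{eq:CL-condition} for $\GL_n(\O)$ identifies this group with $\Map_{\cts}(\pi_1(X,x),\GL_n(K))$.

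Next I would show the two functors are mutually quasi-inverse. Effectivity of descent (Lemma~\ref{l:effective-descent}) plus the identifications $\wt X\times_X\wt X\cong\pi_1(X,x)\times\wt X$ and $\wt X\times_X\wt X\times_X\wt X\cong\pi_1(X,x)^2\times\wt X$ show that the descent datum of any pro-finite-\'etale $V$ is a continuous map $\pi_1(X,x)\to\GL_n(K)$ which, by the cocycle condition and triviality of the action on $K$, is a homomorphism $\rho$; thus $V\cong V_\rho$, giving essential surjectivity. Unwinding the explicit formula for $V_\rho$ on $Y=\wt X$ (using $\wt X\times_X\wt X\cong\pi_1(X,x)\times\wt X$ and evaluation at the identity) identifies $V_\rho(\wt X)$ with $W$ compatibly with the $\pi_1(X,x)$-actions, so the composite $\rho\mapsto V_\rho\mapsto V_\rho(\wt X)$ is naturally $\rho$. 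For full faithfulness, a morphism $V_\rho\to V_{\rho'}$ pulled back along the $v$-cover $\wt X\to X$ becomes an element of $\Hom_K(W,W')\otimes_K\O(\wt X)=\Hom_K(W,W')$ compatible with descent data, i.e.\ a $\pi_1(X,x)$-equivariant map, so $\Hom(V_\rho,V_{\rho'})=\Hom_{\pi_1(X,x)}(W,W')$. Exactness of $\rho\mapsto V_\rho$ is checked after pullback to the $v$-cover $\wt X$, where every object is a trivial bundle; exactness of $V\mapsto V(\wt X)$ follows from left-exactness of global sections together with the vanishing $H^1_v(\wt X,V')=H^1_v(\wt X,\O)^{\oplus n'}=0$ for any pro-finite-\'etale kernel $V'$, again by Proposition~\ref{p:universal-cover}. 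The tensor structures match through the explicit descriptions, with the trivial representation going to $\O$; coherence of the comparison is routine.

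The main obstacle will be the bookkeeping around the torsor $q\colon\wt X\to X$: establishing the Galois identifications $\wt X\times_X\wt X\cong\pi_1(X,x)\times\wt X$ (and its higher analogues) as $v$-sheaves, and verifying that the Cartan--Leray condition~\eqref{eq:CL-condition} holds for $\GL_n(\O)$ on $\wt X$ and its iterated self-products, so that descent data really are continuous $\GL_n(K)$-valued cocycles. Since $\wt X$ need not be affinoid perfectoid, this cannot be read off directly from the basis of affinoid perfectoids as in the proof of Proposition~\ref{p:Cartan--Leray}; instead one argues through the presentation $\wt X=\varprojlim_{X'\to X}X'$, reducing to the finite levels where the fibre products split into disjoint copies, and passing to the limit using $\O(\wt X)=K$ from Proposition~\ref{p:universal-cover}. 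Everything else is a formal consequence of Cartan--Leray descent and the cohomological vanishing on $\wt X$.
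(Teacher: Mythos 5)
Your proposal is correct and follows essentially the same route as the paper: identify pro-finite-\'etale bundles with those trivialised by $\wt X$ via Lemma~\ref{l:profet-UP-of-wtX}, combine effectivity of descent (Lemma~\ref{l:effective-descent}) with the Cartan--Leray description of descent data as continuous cocycles (Corollary~\ref{c:Cartan--Leray-for-line-bundles}) and $\O(\wt X)=K$ from Proposition~\ref{p:universal-cover}, then check the quasi-inverse by evaluating $V_\rho$ on $\wt X$ and verify exactness and tensor compatibility on global sections over $\wt X$. The ``obstacle'' you flag about condition~\eqref{eq:CL-condition} on $\wt X$ is already absorbed into the paper's Proposition~\ref{p:Cartan--Leray}, whose verification reduces to the basis of affinoid perfectoids by the sheaf property, so no extra argument is needed there.
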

\begin{proof}
	By Lemma~\ref{l:profet-UP-of-wtX}, the right hand side are precisely the $v$-vector bundles trivialised by the $v$-cover $\wt X\to X$. By Lemma~\ref{l:effective-descent}, these correspond to descent data on trivial vector bundles on $\wt X$. By Proposition~\ref{p:universal-cover}, trivial vector bundles on $\wt X$ are equivalent to finite dimensional $K$-vector spaces via the functor $W\mapsto W\otimes_K\O_{\wt X}$. The desired equivalence now follows from Corollary~\ref{c:Cartan--Leray-for-line-bundles} which implies that descent data on $W\otimes_K\O_{\wt X}$ are equivalent to continuous representations $\rho:\pi_1(X,x)\to \GL(W)$ by sending $\rho$  to $V_{\rho}$.
	
	To see that $V\mapsto V(\wt X)$ defines a quasi-inverse, observe that via $\wt X\times_X\wt X=\pi_1(X,x)\times \wt X$,
	\[ V_{\rho}(\wt X)=\{ x\in \Map_{\cts}(\pi_1(X,x),W)\mid x(g-)=\rho(g)x \text{ for all }g\in \pi_1(X,x)\}.\]
	Via the evaluation at $0$, this is in natural bijection with $W$, as we wanted to see.
	
	It is clear that both functors are exact and preserve tensors: Indeed, whether a sequence on $X$ is exact can be checked on $\wt X$, where it is exact if and only if it is on global sections.
\end{proof}
\begin{Remark}
	The same argument for $\GL_n(\O)$ replaced by $\GL_n(\O^+)$ shows that $v$-locally free $\O^+$-modules can be interpreted as the ``generalised representations'' of rank $n$ in the sense of \cite[\S2]{Faltings_SimpsonI}; This has also been observed by Liu--Zhu \cite[Remark~2.6]{LiuZhu_RiemannHilbert}.
\end{Remark}
We now apply this to formulate a $p$-adic Simpson correspondence in rank one. For this it is desirable to characterise pro-finite-\'etale line bundles on $X$ in a more explicit way:	
	\begin{Definition}
		We say that a $v$-line bundle $L$ on $X$ is topologically torsion if $L$ is in the image of
		\[H_v^1(X, \O^{\times,\mathrm{tt}}) \to H_v^1(X, \O^\times),\]
		where  $\O^{\times,\mathrm{tt}}\subseteq \O^\times$ is the topological torsion subsheaf from Definition~\ref{d:top-torsion-subsheaf}.
		We denote the subgroup of topological torsion line bundles by
		$\Pic_v^{\mathrm{tt}}(X)$.
		We denote by $\Pic_{\an}^{\mathrm{tt}}(X)$ the intersection of $\Pic_v^{\mathrm{tt}}(X)$ with $\Pic_{\an}(X)$.
	\end{Definition}
	\begin{Example}
		We will show in \cite[\S4]{heuer-diamantine-Picard} that $\Pic_{\an}^{\mathrm{tt}}(X)$ is precisely the topological torsion subgroup of $\Pic_{\an}(X)$ endowed with its natural topology as $K$-points of the rigid analytic Picard variety. If $X$ is projective with torsionfree N\'eron--Severi group and $K=\C_p$, this happens to equal $\Pic_{\an}^0(X)$ but this is no longer true for any non-trivial extension of $K$. 
		
		For example, if $X$ is an abelian variety with good reduction $\overline{X}$ over $k$, let $X^\vee$ be the dual abelian variety with its reduction $\overline{X}^\vee$. Then $\Pic_{\an}^{\mathrm{tt}}(X)$ is precisely the preimage of the torsion subgroup of $\overline{X}^\vee(k)$ under the specialisation map $\Pic^0(X)=X^\vee(K)\to \overline{X}^\vee(k)$.
	\end{Example}

\subsection{The $p$-adic Simpson correspondence for line bundles}
We can now give our second main application of Theorem~\ref{t:Picv-ses}:
\begin{Theorem}[$p$-adic Simpson correspondence of rank one]\label{t:p-adic-Simpson-rk-1}
	Let $X$ be a connected smooth proper rigid space over $K$. Fix a base point $x\in X(K)$.
	
	 \begin{enumerate}
	 	\item There is a short exact sequence, functorial in $X$,
	\[ 0\to \Pic_{\profet,\an}(X)\to \Hom_{\cts}(\pi_1(X,x),K^\times)\to H^0(X,\widetilde\Omega_X^1)\to 0.\]
	\item Any choice of a splitting of $\log\colon1+\m\to K$ as well as a splitting of the Hodge--Tate sequence define an equivalence of tensor categories
	\[\Bigg\{\begin{array}{@{}c@{}l}\text{$1$-dim.\ continuous $K$-linear}\\\text{representations of $\pi_1(X,x)$} \end{array}\Bigg\} \isomarrow \Bigg\{\begin{array}{@{}c@{}l}\text{pro-finite-\'etale analytic}\\\text{Higgs bundles on $X$ of rank }1\end{array}\Bigg\}.\]
	\item We have $\Pic_{\profet,\an}(X)=\Pic_{\an}^{\tt}(X)$, so the right hand side can equivalently be described as the topological torsion Higgs bundles.
	\end{enumerate}
\end{Theorem}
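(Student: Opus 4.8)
The plan is to deduce all three statements by feeding the structure of $\Pic_v(X)$ from Theorem~\ref{t:Picv-ses} (and its proof in the proper case) into the Cartan--Leray description of pro-finite-\'etale line bundles from Corollary~\ref{c:CL-for-GL_n} and Theorem~\ref{t:CL-for-GL_n}. For part~(1) I would first note that, by the rank-one case of Theorem~\ref{t:CL-for-GL_n}, the injection $\Hom_{\cts}(\pi_1(X,x),K^\times)\hookrightarrow\Pic_v(X)$ has image exactly $\Pic_{\profet}(X)$, and then restrict the left-exact $\HT\log$-sequence of Theorem~\ref{t:Picv-ses}.1 to this subgroup. Since $\ker(\HT\log)=\Pic_{\an}(X)$ by Theorem~\ref{t:Picv-ses}.1, the kernel of the resulting map $\phi\colon\Hom_{\cts}(\pi_1(X,x),K^\times)\to H^0(X,\wt\Omega^1_X)$, $\rho\mapsto\HT\log(V_\rho)$, is precisely $\Pic_{\profet}(X)\cap\Pic_{\an}(X)=\Pic_{\profet,\an}(X)$, which gives exactness on the left and in the middle; and surjectivity of $\phi$ is exactly what is established in the proof of Theorem~\ref{t:Picv-ses}.\ref{enum:MT1-projective}, where diagram~\eqref{eq:Rep-to-diff-surj-proper-case} shows that already the characters valued in $1+\m$ surject onto $H^0(X,\wt\Omega^1_X)$. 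Functoriality in $(X,x)$ is inherited from the universal cover (Definition~\ref{d:universal-cover}) and from $\HT\log$ (Remark~\ref{r:pullback-of-differentials}).

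For part~(2), the rank-one case of Theorem~\ref{t:CL-for-GL_n} already gives a tensor equivalence between $1$-dimensional representations and pro-finite-\'etale $v$-line bundles, with isomorphism classes $\Pic_{\profet}(X)$; it remains to identify this category with pro-finite-\'etale analytic rank-one Higgs bundles $(L',\theta)$, $\theta\in H^0(X,\wt\Omega^1_X)$. The key is to build a \emph{group-theoretic} splitting $\sigma$ of the sequence of part~(1) from the two given choices: the splitting of the Hodge--Tate sequence is a $K$-linear section $t\colon H^0(X,\wt\Omega^1_X)\to H^1_v(X,\O)$, which I compose with the Cartan--Leray isomorphism $H^1_v(X,\O)\cong\Hom_{\cts}(\pi_1(X,x),K)$ (valid because $H^1_v(\wt X,\O)=0$, Proposition~\ref{p:universal-cover}) and then push forward along the chosen section $s\colon K\to 1+\m$ of $\log$; a chase of~\eqref{eq:Rep-to-diff-surj-proper-case} using $\log\circ s=\id$ gives $\phi\circ\sigma=\id$. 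The equivalence then sends $L\mapsto(L\otimes V_{\sigma(\HT\log L)}^{-1},\,\HT\log L)$, whose first entry lies in $\Pic_{\profet,\an}(X)$ by Theorem~\ref{t:Picv-ses}.1, and sends $(L',\theta)\mapsto L'\otimes V_{\sigma(\theta)}$. These are mutually inverse tensor functors since $t$, $s$ and $\HT\log$ are homomorphisms, essentially surjective by part~(1), and fully faithful because a nonzero morphism of pro-finite-\'etale $v$-line bundles becomes multiplication by an element of $\O(\wt X)=K^\times$ after pullback along $\wt X\to X$ and is therefore an isomorphism, which is compatible with the scalar Higgs fields only when they agree. (One should pick $s$ continuously so that $\sigma$ lands in continuous characters, which is possible as $K$ is divisible and $\mu_{p^\infty}(K)$ discrete.)

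For part~(3), I would run the Cartan--Leray sequence of Proposition~\ref{p:Cartan--Leray}.\ref{i:CL-left-ex} for $\wt X\to X$ with the sheaf $\O^{\times,\tt}$: by Proposition~\ref{p:universal-cover} we have $H^0(\wt X,\O^{\times,\tt})=\O^{\times,\tt}(\Spa K)=(1+\m)\mu(K)$, the topological torsion subgroup of $K^\times$, and $H^1_v(\wt X,\O^{\times,\tt})=0$, while hypothesis~\eqref{eq:CL-condition} for $\O^{\times,\tt}$ follows from that for $\O$ and $\mu$ via the modified logarithm sequence $1\to\mu\to\O^{\times,\tt}\to\O\to1$. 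This identifies $H^1_v(X,\O^{\times,\tt})$, and hence $\Pic_v^{\tt}(X)$, with the image of $\Hom_{\cts}(\pi_1(X,x),(1+\m)\mu(K))$ in $\Pic_v(X)$. But $\pi_1(X,x)$ is profinite, so every continuous character $\pi_1(X,x)\to K^\times$ has compact totally disconnected image, each element of which is topologically torsion, hence the character factors through $(1+\m)\mu(K)$; thus $\Hom_{\cts}(\pi_1(X,x),K^\times)=\Hom_{\cts}(\pi_1(X,x),(1+\m)\mu(K))$ and so $\Pic_{\profet}(X)=\Pic_v^{\tt}(X)$. Intersecting with $\Pic_{\an}(X)$ gives $\Pic_{\profet,\an}(X)=\Pic_{\an}^{\tt}(X)$ and the reformulation of part~(2).

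The bulk of this is bookkeeping among the cited results; the genuinely delicate step is part~(2), namely upgrading the bijection on isomorphism classes to an equivalence of tensor categories. That hinges on the computation of the $\Hom$-groups on the Higgs side, for which the essential input is that pro-finite-\'etale line bundles are trivial on $\wt X$ and $\O(\wt X)=K$.
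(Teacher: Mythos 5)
Your proposal is correct and follows essentially the same route as the paper: part (1) via restricting the $\HT\log$-sequence to the image of $\Hom_{\cts}(\pi_1(X,x),K^\times)$ and quoting diagram~\eqref{eq:Rep-to-diff-surj-proper-case} for surjectivity, part (2) via the splitting $\sigma$ and the twisting functors $L\mapsto (L\otimes V_{\sigma(\HT\log L)}^{-1},\HT\log L)$ with full faithfulness reduced to $\O(\wt X)=K$, and part (3) by comparing the Cartan--Leray sequences for $\O^{\times,\tt}$ and $\O^\times$ together with the fact that continuous characters of a profinite group land in the topological torsion subgroup of $K^\times$. The only cosmetic difference is that you spell out a few points the paper leaves implicit (continuity of the section $s$, and why characters factor through $(1+\m)\mu(K)$), which does not change the argument.
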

In particular, this singles out pro-finite-\'etale Higgs line bundles as the desired subcategory for the Simpson correspondence in rank $1$. Before we discuss the proof, we make some remarks on how this relates to the works discussed in the last subsection:

\begin{Remark}
	The choices made in Theorem~\ref{t:p-adic-Simpson-rk-1}.2 are essentially the same as the ones made by Faltings in his construction of a $p$-adic Simpson correspondence for small generalised representations: The only difference is that in the generality we work in, one needs to choose a splitting of the Hodge--Tate sequence. There is a canonical such splitting if $X$ is defined over a discretely valued non-archimedean extension of $\Q_p$ \cite[Corollary~1.8]{Scholze_p-adicHodgeForRigid}, which is part of the assumption of Faltings' setup. In our setup, since $X$ is quasi-compact, a choice of splitting is induced by a choice of lifting of $X$ to $B_{\dR}^+(K)/\xi^2$, which also appears in Faltings' work. This lift is arguably a ``better'' choice than that of a splitting of the map $\HT$, as the equivalence then becomes functorial in rigid spaces with a choice of lift.
\end{Remark}
\begin{Remark}	
	We note that the ``topological torsion'' condition is strictly weaker than the ``smallness'' condition imposed by Faltings in \cite[\S2]{Faltings_SimpsonI}.
\end{Remark}
\begin{Remark}
	For an analytic line bundle $L$ on $X$, the condition $L\in \Pic_{\profet,\an}(X)$ means precisely that $L$ is in the category $\mathcal B^{\mathrm{p\acute{e}t}}(\O_X)$ of ``trivialisable'' analytic vector bundles in the sense of \cite[Theorem~3.10]{wuerthen_vb_on_rigid_var}. But we explicitly also include the case of general $\theta$.
\end{Remark}
\begin{Remark}
If $X$ is algebraic, $L$ is analytic, and $K=\C_p$, then one can show that the condition from part 3 is equivalent to $L$ having numerically flat reduction in the sense of Deninger--Werner, using \cite[Proposition 4.13]{wuerthen_vb_on_rigid_var}. In this light, Theorem~\ref{t:p-adic-Simpson-rk-1} confirms at least in rank $1$ that this  is the correct replacement for the complex condition of being ``semistable with vanishing Chern classes'', also beyond the case of vanishing Higgs fields.
\end{Remark}

\begin{Remark}
	More generally, Theorem~\ref{t:CL-for-GL_n} suggests that pro-finite-\'etale Higgs bundles are a promising step towards the correct subcategory for the $p$-adic Simpson correspondence. In particular, this would mean that the functor constructed in \cite{MannWerner_LocSys_p-adVB} is already the correct functor from Higgs bundles to local systems. We will pursue this further in future work.
\end{Remark}

\begin{proof}[Proof of Theorem~\ref{t:p-adic-Simpson-rk-1}]
	The first part follows from Theorem~\ref{t:Picv-ses}.\ref{enum:MT1-projective} and Corollary~\ref{c:CL-for-GL_n}: We only need to see that the composition
	\[ \Hom(\pi_1(X,x),K^\times)\isomarrow\Pic_{\profet}(X)\subseteq \Pic_v(X)\to H^0(X,\wt\Omega^1)\]
	is surjective. But this follows from diagram~\eqref{eq:Rep-to-diff-surj-proper-case} in the proof of Theorem~\ref{t:Picv-ses}.\ref{enum:MT1-projective}.
	
	To deduce the second part, we first note that the choices made induce a splitting $s$ of
	\[ \Hom(\pi_1(X,x),1+\m)\xrightarrow{\log}\Hom(\pi_1(X,x),K)=H^1_{\an}(X,\O)\xrightarrow{\HT}H^0(X,\wt\Omega^1).\]
	In particular, they define a splitting of the sequence in part 1. This gives a bijection between isomorphism classes.
	In order to upgrade this to an equivalence of categories, we use the equivalence of Theorem~\ref{t:CL-for-GL_n}: This shows that it suffices to construct a tensor equivalence 
	\[\Bigg\{\begin{array}{@{}c@{}l}\text{pro-finite-\'etale analytic}\\\text{Higgs bundles on $X$ of rank }1\end{array}\Bigg\} \isomarrow \Bigg\{\begin{array}{@{}c@{}l}\text{pro-finite-\'etale}\\\text{$v$-line bundles on $X$}\end{array}\Bigg\}.\]
	To define this, we first observe that to any Higgs bundle $(E,\theta)$, the splitting $s$ associates a character $s(\theta):\pi_1(X,x)\to K^\times$ to which Theorem~\ref{t:CL-for-GL_n} attaches a $v$-line bundle $L_{\theta}$. We now define the functor by sending
	\[ (E,\theta)\mapsto E\otimes L_{\theta}.\]
	This is indeed functorial as any morphism of Higgs line bundles $(E_1,\theta_1)\to (E_2,\theta_2)$ is trivial unless $\theta_1=\theta_2$, in which case it is simply the datum of a morphism $E_1\to E_2$.
	
	A quasi-inverse can be defined as follows: Let $E$ be any pro-finite-\'etale $v$-line bundle on $X$, and let $\theta(E)\in H^0(X,\wt \Omega^1_X)$ be the image of the isomorphism class of $E$ under $\HT\log$. Then we define a functor by
	\[E\mapsto (E\otimes L_{\theta(E)}^{-1},\theta(E)),\]
	where the line bundle is analytic by left-exactness of the sequence in Theorem~\ref{t:Picv-ses}. 
	
	This is also functorial, for trivial reasons: For any two pro-finite-\'etale line bundles $L,L'$, the line bundle of endomorphisms $L'\otimes L^{-1}$ pulls back to the trivial bundle along $\wt X\to X$, because $L'$ and $L$ do. Since $\O(\wt X)=K$, it follows that 
	\[H^0(X,L'\otimes L^{-1})=H^0(\wt X,L'\otimes L^{-1})^{\pi_1(X,x)}\cong \begin{cases}
	K&\text{if } L'\cong L,\\0 & \text{otherwise.}
	\end{cases}\]
	We have thus constructed the desired equivalence of categories. That this is  a tensor equivalences follows from the linearity of the section $s$, which implies that
	\[ L_{\theta_1+\theta_2}=L_{\theta_1}\otimes L_{\theta_2}.\]
	
	It remains to prove part 3. This is achieved by the following lemma.
\end{proof}
\begin{Lemma}
	Inside  $\Pic_v(X)$, we have $\Pic_v^{\mathrm{tt}}(X)=\Pic_{\profet}(X)$. 
 \end{Lemma}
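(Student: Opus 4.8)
The goal is to show that a $v$-line bundle on $X$ is trivialised by a pro-finite-\'etale cover if and only if it lies in the image of $H^1_v(X,\O^{\times,\tt})\to H^1_v(X,\O^\times)$. The strategy is to use the diamantine universal cover $q\colon\wt X\to X$ as the ``universal'' pro-finite-\'etale cover (Lemma~\ref{l:profet-UP-of-wtX}), so that $L\in\Pic_{\profet}(X)$ precisely when $q^\ast L$ is trivial, and then to identify this condition via the two Cartan--Leray sequences of Proposition~\ref{p:Cartan--Leray}.\ref{i:CL-left-ex} applied to $q$ — one for $\O^\times$ and one for $\O^{\times,\tt}$ — using the acyclicity computations for $\wt X$ in Proposition~\ref{p:universal-cover}.

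\textbf{The inclusion $\Pic_v^{\tt}(X)\subseteq\Pic_{\profet}(X)$.} Suppose $L$ comes from a class in $H^1_v(X,\O^{\times,\tt})$. By Proposition~\ref{p:universal-cover} we have $H^1_v(\wt X,\O^{\times,\tt})=0$ (this is exactly one of the sheaves listed there), so the Cartan--Leray sequence for $q\colon\wt X\to X$ with coefficients in $\O^{\times,\tt}$ reads
\[ 0\to H^1_{\cts}(\pi_1(X,x),\O^{\times,\tt}(\wt X))\to H^1_v(X,\O^{\times,\tt})\to H^1_v(\wt X,\O^{\times,\tt})=0,\]
so the class of $L$ in $H^1_v(X,\O^{\times,\tt})$ is pulled back from a continuous $1$-cocycle $\pi_1(X,x)\to\O^{\times,\tt}(\wt X)$, hence $q^\ast L$ is trivial as an $\O^{\times,\tt}$-torsor, a fortiori as an $\O^\times$-torsor. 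Therefore $L\in\Pic_{\profet}(X)$.

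\textbf{The inclusion $\Pic_{\profet}(X)\subseteq\Pic_v^{\tt}(X)$.} Conversely, let $L\in\Pic_{\profet}(X)$, so $q^\ast L$ is trivial. By Corollary~\ref{c:Cartan--Leray-for-line-bundles}, $L$ is described by a continuous $1$-cocycle $c\colon\pi_1(X,x)\to\O^\times(\wt X)^\times=K^\times$, where the last equality uses $\O(\wt X)=K$ from Proposition~\ref{p:universal-cover}. The key point is then that any continuous homomorphism $\pi_1(X,x)\to K^\times$ automatically lands in the topological torsion subgroup $(1+\m)\mu(K)\subseteq K^\times$: indeed, $\pi_1(X,x)$ is a profinite group, so its image under a continuous homomorphism to the Hausdorff topological group $K^\times$ is a compact, hence bounded, subgroup; a bounded subgroup of $K^\times$ is contained in $\O_K^\times$, and moreover every element $x$ of the image has $x^{n!}\to 1$ because the image is profinite (so for each open neighbourhood $V$ of $1$ the subgroup $\{g : g^{m}\in V \text{ for } m \text{ a suitable multiple}\}$ is open of finite index) — more simply, a continuous homomorphism from a profinite group factors through finite quotients on each $\O_K^\times/(1+\m^n)$, exhibiting every element of the image as topologically torsion in the sense of the definition preceding Definition~\ref{d:top-torsion-subsheaf}. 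Thus $c$ factors through $\O^{\times,\tt}(\wt X)=\big((1+\m)\mu(K)\big)$, i.e.\ through the topological torsion subsheaf evaluated on $\wt X$. Feeding this $\O^{\times,\tt}$-valued cocycle back through the Cartan--Leray sequence for $\O^{\times,\tt}$ produces a class in $H^1_v(X,\O^{\times,\tt})$ whose image in $H^1_v(X,\O^\times)$ is, by functoriality of Cartan--Leray in the sheaf (Proposition~\ref{p:Cartan--Leray}) and compatibility with Corollary~\ref{c:Cartan--Leray-for-line-bundles}, exactly the class of $L$. Hence $L\in\Pic_v^{\tt}(X)$.

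\textbf{Main obstacle.} The only genuine content beyond bookkeeping with the Cartan--Leray sequences is the claim that a continuous character $\pi_1(X,x)\to K^\times$ necessarily takes values in the topological torsion subgroup — this is where the profiniteness of $\pi_1(X,x)$ and the structure of $K^\times$ (bounded subgroups lie in $\O_K^\times$, and $\O_K^\times$ is a profinite-by-$\mu_{p^\infty}$-ish extension whose elements are all topologically torsion) are used. Everything else is a direct comparison of the two left-exact Cartan--Leray sequences attached to $q\colon\wt X\to X$, using $H^1_v(\wt X,\O^\times)$ need not vanish but $H^1_v(\wt X,\O^{\times,\tt})=0$ and $\O(\wt X)=K$ from Proposition~\ref{p:universal-cover}, together with the fact that $q$ is the universal pro-finite-\'etale cover so that ``trivialised by some pro-finite-\'etale cover'' is equivalent to ``trivialised by $q$'' (Lemma~\ref{l:profet-UP-of-wtX}).
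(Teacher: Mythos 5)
Your proof is correct and follows essentially the same route as the paper: both arguments compare the two Cartan--Leray sequences for $\O^{\times}$ and $\O^{\times,\tt}$ along $\wt X\to X$, deducing $\Pic_v^{\mathrm{tt}}(X)\subseteq\Pic_{\profet}(X)$ from $H^1_v(\wt X,\O^{\times,\mathrm{tt}})=1$ (Proposition~\ref{p:universal-cover}) and the reverse inclusion from the fact that every continuous character $\pi_1(X,x)\to K^\times$ factors through the topological torsion subgroup $(1+\m)\mu(K)$. The only difference is that you also sketch a justification of this last factorisation (compactness of the image plus finiteness of its image in the discrete quotient $k^\times$), which the paper merely asserts.
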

\begin{proof}
	For the inclusion $\supseteq$, we use that any continuous homomorphism $\pi_1(X,x)\to K^\times$ factors through $\O^{\times,\tt}(K)$. Comparing Cartan--Leray sequences, we get a commutative diagram
		\[
	\begin{tikzcd}
	0\arrow[r]&	{\Hom_{\cts}(\pi_1(X,x),\O^{\times,\tt}(K))} \arrow[r]\arrow[d,equal]        & {H^1_v( X,{\O^{\times,\tt})}}  \arrow[d] \arrow[r] &H^1_v(\wt X,\O^{\times,\mathrm{tt}})\arrow[d]\\
			0\arrow[r]&{\Hom_{\cts}(\pi_1(X,x),K^\times)} \arrow[r] & {H^1_v(X,\O^\times)}\arrow[r] &H^1_v(\wt X,\O^{\times}).
	\end{tikzcd}\]
	By Corollary~\ref{c:CL-for-GL_n}, the image of the bottom left map is precisely $\Pic_{\profet}(X)$. The diagram shows that this is included in the image of the map in the middle,  which is precisely $\Pic_v^{\mathrm{tt}}(X)$.

	By the same diagram, the inclusion $\subseteq$ holds as $H^1_v(\wt X,\O^{\times,\mathrm{tt}})=1$ by Proposition~\ref{p:universal-cover}.
\end{proof}

\section{The $v$-Picard group of affine space $\A^n$}
By the Theorem of Quillen--Suslin, every vector bundle on $\Spec(K[X_1,\dots,X_n])$ is trivial. Similarly, every analytic vector bundle on the rigid affine space $\A^n$ is trivial \cite[\S V.3 Proposition 2.(ii)]{Gruson_FibresVect}. In this final section, we prove that this is no longer true in the $v$-topology:

\begin{Theorem}\label{t:Pic-v-A^n}
	For any $n\in \N$, the Hodge--Tate logarithm induces an isomorphism
	\[ \Pic_v(\A^n)=H^0(\A^n,\wt\Omega^1)^{d=0}.\]
	More generally, for $k\geq 0$ and $n\geq 1$, we have $\Pic_v(\G_m^k\times \A^n)=H^0(\G_m^k\times\A^n,\wt\Omega^1)^{d=0}$.
\end{Theorem}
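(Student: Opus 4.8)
The plan is to read off the result from the left-exact sequence of Theorem~\ref{t:Picv-ses}.1 together with the de Rham acyclicity of $\A^n$, handling the $\G_m$-directions by functoriality. Fix a distinguished element for $K$, so that $\wt\Omega^i_X\cong\Omega^i_X$ as in the Remark after Theorem~\ref{t:Picv-ses}, and write $X=\G_m^k\times\A^n$; I take $K$ algebraically closed, which is the case of interest (the general perfectoid case needs only minor modifications). Every analytic vector bundle on $X$ is trivial: this is Gruson's theorem \cite{Gruson_FibresVect} for $\A^n$, and the general case follows by exhausting $X$ by products of closed annuli and polydiscs, on which the Picard group vanishes, and checking that the relevant $\varprojlim^1$ of unit groups vanishes. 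Hence $\Pic_{\an}(X)=0$, so Theorem~\ref{t:Picv-ses}.1 already gives that $\HT\log\colon\Pic_v(X)\hookrightarrow H^0(X,\Omega^1_X)$ is injective, and the content is to identify its image.

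First I would prove the inclusion $H^0(X,\Omega^1_X)^{d=0}\subseteq\im(\HT\log\{1\})$. The image of the homomorphism $\HT\log\{1\}\colon\Pic_v(X)\{1\}\to H^0(X,\Omega^1_X)$ is a subgroup. By Corollary~\ref{c:df-in-image} it contains every exact form $df$ with $f\in\O(X)$. For each $i$ the projection $\mathrm{pr}_i\colon X\to\G_m$ is a smooth morphism, so by functoriality of $\HT\log$ (diagram~\eqref{eq:surjective-on-differentials}) and the surjectivity statement of part~\ref{enum:MT1-curves} of Theorem~\ref{t:Picv-ses} applied to the paracompact curve $\G_m$, the form $\mathrm{pr}_i^\ast(c\tfrac{dT}{T})=c\tfrac{dT_i}{T_i}$ lies in $\im(\HT\log\{1\})$ for every $c\in K$. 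Finally, by Künneth and the $p$-adic Poincaré lemma $H^{>0}_{\dR}(\A^n)=0$, one has $H^1_{\dR}(X)=\bigoplus_i K\cdot\tfrac{dT_i}{T_i}$, so every closed $1$-form on $X$ is of the shape $df+\sum_i c_i\tfrac{dT_i}{T_i}$ and therefore lies in $\im(\HT\log\{1\})$.

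The reverse inclusion $\im(\HT\log)\subseteq H^0(X,\Omega^1_X)^{d=0}$ is the crux, and I expect it to be the main obstacle. By the $5$-term exact sequence of Theorem~\ref{t:5TES-version}, this image equals the kernel of the transgression $d_2\colon H^0(X,\wt\Omega^1)\to H^2_{\et}(X,\O^\times)$ in the Leray spectral sequence of $\nu\colon X_v\to X_{\et}$ for $\O^\times$, so I must show this kernel is the subgroup of closed forms. Since $X$ is Stein we have $H^2_{\et}(X,\O)=0$, and as $\exp\colon\O\to\O^\times\tf$ is a genuine map of sheaves the analogous transgression for $\O^\times\tf$ then vanishes; combined with the Kummer sequence $1\to\mu_{p^\infty}\to\O^\times\to\O^\times\tf\to1$ and Proposition~\ref{p:exponential-on-R^1O->R^1O^x}, this forces $d_2$ to factor through the image of $H^2_{\et}(X,\mu_{p^\infty})$ in $H^2_{\et}(X,\O^\times)$, where it should be the de Rham differential $d$ followed by a specialization into the torsion cohomology of $\A^n$; the task is then to prove the resulting injectivity. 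I would do this in two complementary ways. First, for $k=0$, by comparing $H^\ast_{\proet}(\A^n,-)$ with the explicit computations of the pro-\'etale and syntomic cohomology of $\A^n$ of Le Bras \cite{LeBras-Espaces} and Colmez--Nizio\l\ \cite{ColmezNiziol_CohomologyAffine}, from which $\Pic_v(\A^n)$ can be read off directly. Second, and independently in the generality of $\G_m^k\times\A^n$, by a d\'evissage: exhaust $X$ by smooth affinoids standard-\'etale over tori as in the proof of Proposition~\ref{p:Hodge-Tate-spectral-sequence}, run the toric Cartan--Leray spectral sequence to compute the relevant cohomology on each piece, and use the de Rham acyclicity of $\A^n$ to control the passage to the limit. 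The delicate points, where most of the effort lies, are that $X$ is not quasi-compact — so the $\mu_{p^\infty}$-cohomology and the groups $\bOx(X)$ must be computed through the exhaustion, with an attendant $\varprojlim^1$-argument — and that the abstract transgression must be matched with the de Rham differential on the nose.
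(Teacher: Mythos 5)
Your first half is correct and matches the paper: $\Pic_{\an}=1$, injectivity of $\HT\log$ from Theorem~\ref{t:Picv-ses}.1, exact forms in the image via Corollary~\ref{c:df-in-image}, the forms $c\,dY_i/Y_i$ via pullback from the curve $\G_m$ using part~\ref{enum:MT1-curves}, and the identification of the closed forms as exact ones plus these logarithmic ones. Your reduction of the reverse inclusion to the transgression $d_2$ of Theorem~\ref{t:5TES-version}, and the observation that $d_2$ must factor through the image of $H^2_{\et}(X,\mu_{p^\infty})$ because the transgression for $\O^\times\tf$ dies against $H^2_{\et}(X,\O)=0$, are also sound (and consistent with Lemma~\ref{l:H(A^n,U)=H(A^n,Ox)}, which reduces everything to $H^1_v(\A^n,U)$). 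But at exactly this point the argument stops: showing that the induced map on $H^0(X,\wt\Omega^1)$ has kernel precisely the closed forms \emph{is} the theorem, and neither of your two routes is developed enough to close it. Note that the relevant obstruction group is genuinely nonzero only because $\A^n$ is not quasi-compact ($H^2_{\et}(\A^n,\mu_{p^\infty})$ carries an $\mathrm{R}^1\!\lim$ of the groups $H^1_{\et}(B_s^n,\mu_{p^\infty})$), so everything hinges on an argument that sees this limit structure.

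Concretely: your first route (reading the answer off Le Bras/Colmez--Nizio\l) is one of the paper's two proofs, but it is not a direct read-off. One needs the comparison diagram between $1\to\mu_{p^\infty}\to U\to\O\to 1$ and the fundamental exact sequence $0\to\Q_p(1)\to\B[\tfrac1t]^{\varphi=p}\to\BdRp/t\BdRp\to0$, the resulting identification $\Pic_v(\A^n)\cong H^1_{\proet}(\A^n,\B[\tfrac1t]^{\varphi=p})$, and then Scholze's Poincar\'e lemma to compute $R\nu_*\BdRp$ as a filtered de Rham complex --- that last step is precisely what matches the abstract boundary map with $d$ ``on the nose'', i.e.\ the point you flag as delicate and leave open; it also restricts $K$. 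Your second route, as stated, cannot work: on each quasi-compact piece of an exhaustion the image of $\HT\log$ is \emph{strictly larger} than the closed forms (by Theorem~\ref{t:Picv-ses}.3 it rationally generates all of $H^0(B_s^n,\wt\Omega^1)$, and by Corollary~\ref{c:Cor-Pic_v(B^n)} it is not the closed forms for $n\geq2$), so computing affinoid-by-affinoid and controlling a $\varprojlim^1$ does not cut the limit down to closed forms. The paper's second proof instead isolates one coordinate at a time: it works with $B\times\A$ and its explicit perfectoid $\Z_p$-cover $\wt B\times\A$ obtained by embedding $B\hookrightarrow\G_m$ (Lemma~\ref{l:perfectoid-disc-Galois-cover-of-disc}), exploits $\O^\times(\wt B\times\A)=\O^\times(\wt B)$ to pin down the left-hand term of the Cartan--Leray sequence, passes to the overconvergent limit $\varinjlim_{s>1}$ over discs $B_s$ so that the de Rham complex becomes exact (Lemma~\ref{l:deRham-cpx-open-unit-disc}, Proposition~\ref{p:Pic=closed-differentials-on-oc-unit-disc}), and finally reduces $n\geq3$ to $n=2$ by evaluating a putative non-closed form in the image at a point of $\A^{n-2}$. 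None of these mechanisms appears in your sketch, so the reverse inclusion remains a genuine gap.
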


\begin{Remark}
	We are interested in the case of $\G_m\times \A$ since its de Rham complex is no longer exact on global sections. This shows that we should really think of line bundles as living in $(\wt\Omega^1)^{d=0}$ rather than $d(\O)$. This is also evidenced by the $1$-dimensional case.
\end{Remark}
\begin{Definition}
We denote by $B^n=\Spa(K\langle T_1,\dots,T_n\rangle)\subseteq \A^n$ the closed unit ball. For any $s\in |K|$, denote by $B_s^n\subseteq \A^n$ the closed ball defined by $|T_i|\leq s$ for all $i=1,\dots,n$.
\end{Definition}
We often use that $B_s^n\cong B^n$ by rescaling. In particular, $\Pic_{\an}(B_s^n)=1$ by \cite[Satz~1]{Lutkebohmert_Vektorraumbundel}.
\begin{Corollary}\label{c:Cor-Pic_v(B^n)}
	In contrast to Theorem~\ref{t:Picv-ses}.2, the map
	$\HT\log:\Pic_v(B^n)\to H^0(B^n,\wt\Omega^1)$
	is no longer surjective for $n\geq 2$.
\end{Corollary}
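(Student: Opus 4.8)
The plan is to deduce this from Theorem~\ref{t:Pic-v-A^n} by a rescaling-and-exhaustion argument, arguing by contradiction. So suppose that $\HT\log\colon\Pic_v(B^n)\to H^0(B^n,\wt\Omega^1)$ is surjective. Since every closed ball $B_s^n$ with $s\in|K^\times|$ is isomorphic to $B^n$ by rescaling the coordinates, it follows that $\HT\log\colon\Pic_v(B_s^n)\to H^0(B_s^n,\wt\Omega^1)$ is surjective for every such $s$. On the other hand $\Pic_{\an}(B_s^n)=1$ by \cite[Satz~1]{Lutkebohmert_Vektorraumbundel}, so the left-exact sequence of Theorem~\ref{t:Picv-ses}.1 shows that $\HT\log$ is also injective on $\Pic_v(B_s^n)$. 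Hence $\HT\log\colon\Pic_v(B_s^n)\isomarrow H^0(B_s^n,\wt\Omega^1)$ would be an isomorphism for every $s$.

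I would then pass to the limit over an exhaustion $\A^n=\bigcup_{m\in\N}B_{s_m}^n$ by closed balls along a countable cofinal sequence $s_m\to\infty$, e.g.\ $s_m=|p|^{-m}$. Using that on the $v$-site one has $R\Gamma(\A^n,\O^\times)=\Rlim_m R\Gamma(B_{s_m}^n,\O^\times)$, and that $R^i\varprojlim$ vanishes for $i\geq2$ on the countable index set, the resulting $\varprojlim^1$-sequence yields a surjection $\Pic_v(\A^n)=H^1_v(\A^n,\O^\times)\twoheadrightarrow\varprojlim_m\Pic_v(B_{s_m}^n)$. Since $\Omega^1_{\A^n}$ is free, one also has $H^0(\A^n,\wt\Omega^1)=\varprojlim_m H^0(B_{s_m}^n,\wt\Omega^1)$, and by functoriality of $\HT\log$ (Remark~\ref{r:pullback-of-differentials}) the map $\HT\log_{\A^n}$ factors as the composite
\[\Pic_v(\A^n)\twoheadrightarrow\varprojlim_m\Pic_v(B_{s_m}^n)\xrightarrow[\sim]{\varprojlim\HT\log}\varprojlim_m H^0(B_{s_m}^n,\wt\Omega^1)=H^0(\A^n,\wt\Omega^1).\]
In particular $\HT\log_{\A^n}$ would be surjective. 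But for $n\geq2$ the global $1$-form $T_1\,dT_2$ satisfies $d(T_1\,dT_2)=dT_1\wedge dT_2\neq0$, so $H^0(\A^n,\wt\Omega^1)^{d=0}$ is a proper subgroup of $H^0(\A^n,\wt\Omega^1)$, which contradicts the isomorphism of Theorem~\ref{t:Pic-v-A^n}. This contradiction proves the corollary.

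The routine inputs are the rescaling isomorphism $B^n\cong B_s^n$, Lütkebohmert's triviality of analytic vector bundles on balls, and the freeness of $\Omega^1_{\A^n}$. The step that requires genuine care — and which I regard as the main obstacle — is the descent identity $R\Gamma(\A^n,\O^\times)=\Rlim_m R\Gamma(B_{s_m}^n,\O^\times)$, i.e.\ that $v$-cohomology of the increasing union $\A^n=\bigcup_m B_{s_m}^n$ with $\O^\times$-coefficients is computed by the inverse limit over the closed balls. This is precisely the exhaustion statement underlying the proof of Theorem~\ref{t:Pic-v-A^n}, so I would quote it from there, or else re-derive it from the fact that $\A^n$ is a spatial diamond exhausted by the quasi-compact open subspaces $B_{s_m}^n$.
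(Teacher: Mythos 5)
Your proposal is correct and follows essentially the same route as the paper: assume surjectivity, upgrade to an isomorphism on each $B_s^n$ via $\Pic_{\an}(B_s^n)=1$ and left-exactness, pass to the limit over the exhaustion $\A^n=\bigcup_s B_s^n$ to conclude $\HT\log$ would hit all of $H^0(\A^n,\wt\Omega^1)$, and contradict Theorem~\ref{t:Pic-v-A^n} since not every $1$-form on $\A^n$ is closed for $n\geq 2$. The only difference is cosmetic: you settle for the surjection $\Pic_v(\A^n)\twoheadrightarrow\varprojlim_m\Pic_v(B_{s_m}^n)$ from the $\varprojlim^1$-sequence, whereas the paper upgrades this to an isomorphism using $\Pic_{\an}(\A^n)=1$; either suffices.
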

\begin{proof}
	If it was, it would be an isomorphism. Covering $\A^n$ by the $B^n_s$, this shows
	\[\Pic_v(\A^n)= \textstyle\varprojlim_s \Pic_v(B^n_s)=\textstyle\varprojlim_s H^0(B^n_s,\wt\Omega^1)=H^0(\A^n,\wt\Omega^1)\]
	where the first isomorphism uses $\Pic_{\an}(\A^n)=1$. This is
	a contradiction to Theorem~\ref{t:Pic-v-A^n}.
\end{proof}

We will give two different proofs of the first part of Theorem~\ref{t:Pic-v-A^n}: The first relies on known results about $\mathrm{R}\Gamma_{\proet}(\A^n,\Q_p)$ and the Poincar\'e Lemma in $X_{\proet}$. We note that in general, this restricts the setup to $K$ being the completion of an algebraic closure of a discretely valued field.
 The second approach is closer in spirit to classical rigid analytic computations. It is slightly more general as it does not require the Poincar\'e Lemma.
 
 \subsection{Preparations}
 We begin with some technical preparations that we will need for both proofs:
 	\begin{Lemma}\label{l:bOx-on-B^n}
 		 Let $n\geq 1$. Let $Y$ be any diamond over $\Spa(K)$. Then
 		 \begin{alignat*}{3}
 		 H^0(Y\times B^n,\bOx)&=&&H^0(Y,\bOx)\\
 		 H^1_v(B^n,\bOx)&=&&1.
 		 \end{alignat*}
	\end{Lemma}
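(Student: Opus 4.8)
\emph{Strategy.} I would prove the two assertions by quite different means. For $H^1_v(B^n,\bOx)=1$ I expect a purely cohomological argument: since $B^n$ is a smooth rigid space, Lemma~\ref{l:R^1nu_*bOx} gives $\nu_{\ast}\bOx_v=\bOx_{\et}$ and $R^1\nu_{\ast}\bOx_v=0$, so the five-term exact sequence of the Leray spectral sequence of $\nu\colon B^n_v\to B^n_{\et}$ yields $H^1_v(B^n,\bOx)=H^1_{\et}(B^n,\bOx_{\et})$. I would then compare the long exact sequences of $1\to U_{\et}\to\O^{\times}_{\et}\to\bOx_{\et}\to 1$ and of the logarithm sequence \eqref{eq:log} for $\tau=\et$. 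As $B^n$ is affinoid, $H^2_{\et}(B^n,\O)=H^2_{\an}(B^n,\O)=0$, so $H^2_{\et}(B^n,U_{\et})$ is a quotient of $H^2_{\et}(B^n,\mu_{p^\infty})=\varinjlim_mH^2_{\et}(B^n,\mu_{p^m})$ and hence is $p$-power torsion; and $\Pic_{\et}(B^n)=\Pic_{\an}(B^n)=0$ by \cite[Proposition~8.2.3]{FvdP} and \cite[Satz~1]{Lutkebohmert_Vektorraumbundel}, so the connecting map $H^1_{\et}(B^n,\bOx_{\et})\hookrightarrow H^2_{\et}(B^n,U_{\et})$ is injective. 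Since $\bOx_{\et}$ is uniquely $p$-divisible by Lemma~\ref{l:bOx-p-divisible}, so is $H^1_{\et}(B^n,\bOx_{\et})$; but a uniquely $p$-divisible group injecting into a $p$-power torsion group is trivial, so $H^1_{\et}(B^n,\bOx_{\et})=0$.

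\emph{The first statement: reduction to units on a polydisc.} Here the content should be local. I would set $\wt B^n:=\Spa K\langle T_1^{1/p^\infty},\dots,T_n^{1/p^\infty}\rangle$, an affinoid perfectoid space; the projection $\wt B^n\to B^n$ is a $v$-cover, being a cofiltered limit of the finite flat surjections $\Spa K\langle T_i^{1/p^m}\rangle\to B^n$. Fixing a $v$-cover $\coprod_\alpha Z_\alpha\to Y$ by affinoid perfectoid spaces, one gets a $v$-cover $\coprod_\alpha Z_\alpha\times\wt B^n\to Y\times B^n$ by affinoid perfectoids, and the sheaf property of $\bOx$ writes $H^0(Y\times B^n,\bOx)$ as the equalizer of the two coface maps on $\prod_\alpha\bOx(Z_\alpha\times\wt B^n)$.

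\emph{The unit computation.} The step I expect to be the crux is the following: for any affinoid perfectoid $Z=\Spa(S,S^+)$ over $K$, every unit $f$ of $\O(Z\times\wt B^n)=S\langle T_1^{1/p^\infty},\dots,T_n^{1/p^\infty}\rangle$ has the form $f=(f|_{T=0})\cdot u$ with $f|_{T=0}\in S^{\times}=\O^{\times}(Z)$ and $u\in U(Z\times\wt B^n)$, and moreover $\O^{\times}(Z)\cap U(Z\times\wt B^n)=U(Z)$. To prove this I would note $f|_{T=0}\in S^{\times}$ since ring maps preserve units, reducing to $f|_{T=0}=1$; pulling $f$ back along the rescaling morphism $(z,t,s)\mapsto(z,st)$ from $Z\times\wt B^n\times\wt B^1$ then produces a unit $\psi$ on $Z\times\wt B^n\times\wt B^1$ with $\psi|_{s=1}=f$ and $\psi|_{s=0}=1$, whose expansion in the perfectoid variable $s$ I write $\psi=\sum_{k}c_k s^{k}$ with $c_0=1$, $c_k\in\O(Z\times\wt B^n)$. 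At a rank-$1$ point $x$ of $Z\times\wt B^n$ the residue field $\kappa(x)$ is a perfectoid field and $\psi|_x=\sum_kc_k(x)s^{k}$ is a unit of $\kappa(x)\langle s^{1/p^\infty}\rangle$, which forces its constant term to dominate: $|c_k(x)|<|c_0(x)|=1$ for $k>0$. As rank-$1$ points compute the spectral seminorm and it is attained on the compact Berkovich spectrum, this gives $\|c_k\|_{\sup}<1$, so $f-1=\sum_{k>0}c_k$ is topologically nilpotent, i.e.\ lies in $\m\O^+(Z\times\wt B^n)$, and $f\in U(Z\times\wt B^n)$; the intersection statement is clear since the spectral norm of an element of $S$ is unchanged in $S\langle T^{1/p^\infty}\rangle$. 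Combining this with Lemma~\ref{l:bOx-explicit} for the affinoid perfectoids $Z\times\wt B^n$ and $Z$ (on which $\O$ is $v$-acyclic) gives a natural isomorphism $\bOx(Z\times\wt B^n)=\bOx(Z)$, compatible with restriction along the origin section $Z\hookrightarrow Z\times\wt B^n$.

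\emph{Assembling, and the main obstacle.} To finish I would identify the equalizer. The double intersections are $Z_{\alpha\beta}\times_{\Spa K}(\wt B^n\times_{B^n}\wt B^n)$ with $Z_{\alpha\beta}:=Z_\alpha\times_YZ_\beta$, and $\wt B^n\times_{B^n}\wt B^n\to\Spa K$ has a section (the ``origin--origin'' $K$-point), so projecting that factor away is split injective on $\bOx$. Under the identifications $\bOx(Z_\gamma\times\wt B^n)=\bOx(Z_\gamma)$ the two coface maps therefore become the pullbacks $\bOx(Z_\alpha)\to\bOx(Z_{\alpha\beta})$ and $\bOx(Z_\beta)\to\bOx(Z_{\alpha\beta})$ along the projections, composed with this split injection; hence the equalizer is $\ker\bigl(\prod_\alpha\bOx(Z_\alpha)\rightrightarrows\prod_{\alpha,\beta}\bOx(Z_{\alpha\beta})\bigr)$, which equals $\bOx(Y)=H^0(Y,\bOx)$ by the sheaf property of $\bOx$ for $\coprod_\alpha Z_\alpha\to Y$. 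The hard part of the whole argument is the unit computation of the previous paragraph -- the statement that invertible functions on the perfectoid polydisc bundle $Z\times\wt B^n$ are, up to a unit pulled back from $Z$, principal units; everything else is formal manipulation with $v$-covers and $\bOx$ together with the already-established facts $\Pic_{\an}(B^n)=0$, $H^2_{\et}(B^n,\O)=0$, unique $p$-divisibility of $\bOx$, and Lemmas~\ref{l:R^1nu_*bOx} and~\ref{l:bOx-explicit}.
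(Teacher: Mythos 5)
Your proof is correct in substance, but it diverges from the paper's in instructive ways. For $H^1_v(B^n,\bOx)=1$ you and the paper both first reduce to $H^1_{\et}(B^n,\bOx_{\et})$ via Lemma~\ref{l:R^1nu_*bOx}; the paper then kills this group in one stroke with the exponential sequence \eqref{eq:exp}, sandwiching it between $\Pic_{\et}(B^n)\tf=1$ and $H^2_{\et}(B^n,\O)=0$, whereas you route through $1\to U\to \O^\times\to\bOx\to 1$ and the logarithm sequence and then play unique $p$-divisibility against $p$-power torsion. Your version works (note that $H^2_{\et}(B^n,\mu_{p^\infty})=\varinjlim_m H^2_{\et}(B^n,\mu_{p^m})$ uses that étale cohomology of the qcqs space $B^n$ commutes with filtered colimits of sheaves); it is simply a longer path to the same vanishing.

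For the first statement the difference is more substantial. The paper reduces by induction to $n=1$ and to $Y$ affinoid perfectoid and then quotes the classical description $C\langle T\rangle^\times=C^\times\times(1+\m\O_C\langle T\rangle)$ at geometric points. You instead pass to the perfectoid cover $Z\times\wt B^n$, prove the unit decomposition for $S\langle T_1^{1/p^\infty},\dots,T_n^{1/p^\infty}\rangle$, and reassemble by a \v{C}ech argument using origin sections for split injectivity. This buys something real: on the affinoid perfectoid space $Z\times\wt B^n$ the identification $\bOx=(\O^\times/U)\tf$ of Lemma~\ref{l:bOx-explicit} applies literally (it needs $H^1(-,\O)=0$, which fails for $Z\times B^n$ in the $v$-topology), so the sheafification issues that the paper's terse argument passes over are genuinely handled. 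The price is that the step you yourself identify as the crux bottoms out in an assertion you do not prove: that a unit of $\kappa(x)\langle s^{1/p^\infty}\rangle$ with constant term $1$ has all higher coefficients of absolute value $<1$. This perfectoid analogue of the BGR statement is not in BGR; it does follow from the classical case by approximation (a unit $f$ of the completed perfection with $f(0)=1$ differs by an element of $1+\m R^\circ$ from a zero-free, hence invertible, element of some finite level $C\langle s^{1/p^m}\rangle$, to which BGR applies in the variable $s^{1/p^m}$), so this is a fixable omission rather than an error, but it must be proved, not asserted. Two smaller points: to get $f-1\in\m\O^+$ from $\|c_k\|_{\sup}<1$ for each $k>0$ you also need $\sup_{k>0}\|c_k\|_{\sup}<1$, which holds because $\|c_k\|_{\sup}\to 0$ leaves only finitely many $k$ above any threshold; and the rescaling through the auxiliary variable $s$ is avoidable, since the same fibre-wise argument applies directly to $f$ on $\{z\}\times\wt B^n$ over rank-one points $z$ of $Z$.
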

	\begin{proof}
		For part 1, we can by induction assume that $n=1$. Since $\bOx$ is a $v$-sheaf, it suffices to prove the statement for $Y=\Spa(R,R^+)$ an affinoid perfectoid space. We then have 
		\[\O(Y\times B)=R\langle T\rangle\]
		and we need to prove that for any $f\in R\langle T\rangle^\times$ of the form $f=1+a_1T+a_2T^2+\dots$, we have $a_i\in \m R^+=R^{\circ\circ}$ for all $i\geq 1$. But this we can check on points of $Y$, which reduces us to the case of $(R,R^+)=(C,C^+)$ a field. Since $\m C^+=\m\O_C$ does not depend on $C^{+}$, this is a statement about rigid geometry: Here, by \cite[\S5.3.1 Proposition 1]{BGR} we indeed have
		\[ C\langle R\rangle^{\times}=C^\times\times (1+\m\O_C\langle T\rangle).\]
		For part 2, Lemma~\ref{l:R^1nu_*bOx}.2 reduces to showing $H^1_{\et}(B^n,\bOx)=1$. This follows from the exponential sequence \eqref{eq:exp} as $\Pic_{\et}(B^n)=1$  and $H^2_{\et}(B^n,\O)=0$.
	\end{proof}
\begin{Lemma}\label{l:H(A^n,U)=H(A^n,Ox)}
	For any $n\geq 1$, we have
	$H^1_v(\A^n,U)= H^1_v(\A^n,\O^\times)$.
\end{Lemma}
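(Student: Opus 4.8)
The plan is to play the short exact sequence of abelian $v$-sheaves
\[ 1\to U\to \O^\times\to \bOx\to 1 \]
off against the exhaustion $\A^n=\bigcup_s B^n_s$ of affine space by closed balls. From the associated long exact cohomology sequence it suffices to establish the two assertions
\[ H^0_v(\A^n,\O^\times)\twoheadrightarrow H^0_v(\A^n,\bOx)\qquad\text{and}\qquad H^1_v(\A^n,\bOx)=1: \]
the first makes $H^1_v(\A^n,U)\to H^1_v(\A^n,\O^\times)$ injective, the second makes it surjective.

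For the cohomology of $\bOx$ I would pass to the balls. The exhaustion $\A^n=\bigcup_s B^n_s$ by the spatial diamonds $B^n_s$ gives, for any abelian $v$-sheaf $F$, a Milnor exact sequence
\[ 0\to {\varprojlim_s}^1 H^{i-1}_v(B^n_s,F)\to H^i_v(\A^n,F)\to \varprojlim_s H^i_v(B^n_s,F)\to 0, \]
as is also used in the proof of Corollary~\ref{c:Cor-Pic_v(B^n)}. By Lemma~\ref{l:bOx-on-B^n}.1, applied with $Y=\Spa(K)$, the inverse system $\big(H^0_v(B^n_s,\bOx)\big)_s$ is, compatibly in $s$, the constant system with value $H^0_v(\Spa K,\bOx)$; since $\Spa(K,\O_K)$ is affinoid perfectoid, $\O$ is $v$-acyclic on it, so Lemma~\ref{l:bOx-explicit} identifies this value with $(K^\times/(1+\m_K))\tf$. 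A constant inverse system has vanishing $\varprojlim^1$, so the Milnor sequence yields
\[ H^0_v(\A^n,\bOx)=(K^\times/(1+\m_K))\tf,\qquad H^1_v(\A^n,\bOx)=\varprojlim_s H^1_v(B^n_s,\bOx)=1, \]
the last equality by Lemma~\ref{l:bOx-on-B^n}.2. This is the second assertion.

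For the first assertion I would use that $\O^\times(\A^n)=K^\times$, the units on affine space being the non-zero constants. By compatibility with the zero section $\Spa(K)\hookrightarrow \A^n$, the map $H^0_v(\A^n,\O^\times)\to H^0_v(\A^n,\bOx)$ is, under the identifications above, the natural map $K^\times\to (K^\times/(1+\m_K))\tf$, so it remains to see that $K^\times/(1+\m_K)$ is already uniquely $p$-divisible (then this $\tf$ does nothing and the map is the ordinary quotient map, which is surjective). This is where I use that $K$ is perfectoid: its residue field $k$ is perfect, hence $\O_K^\times/(1+\m_K)=k^\times$ is $p$-divisible, and its value group $v(K^\times)$ is $p$-divisible because $\O_K/p$ is semiperfect; as an extension of two $p$-divisible abelian groups, $K^\times/(1+\m_K)$ is $p$-divisible.

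The step that needs the most care is the reduction from $B^n_s$ to $\A^n$, i.e.\ the assertion that $v$-cohomology turns the admissible cover $\A^n=\bigcup_s B^n_s$ of the locally spatial diamond $(\A^n)^\diamond$ into a homotopy limit over the quasi-compact $B^n_s$; this is the only place where a genuine site-theoretic input is needed. Everything else is formal, and in particular the argument never touches $H^2_{\et}(\A^n,\O^\times)$ or any Brauer group, which is what makes it go through.
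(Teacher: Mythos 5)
Your argument is correct and is essentially the paper's own proof: the paper likewise reduces to showing that the boundary map out of $H^0(\A^n,\bOx)$ and the map $H^1_v(\A^n,\O^\times)\to H^1_v(\A^n,\bOx)$ vanish, and deduces this from Lemma~\ref{l:bOx-on-B^n} together with the \v{C}ech-to-sheaf sequence of the exhaustion $\A^n=\bigcup_s B^n_s$. You have merely spelled out the details the paper leaves implicit, in particular the $p$-divisibility of $K^\times/(1+\m_K)$ for perfectoid $K$, which is the right justification for the vanishing of the boundary map in degree $0$.
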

\begin{proof}
	It suffices to prove that the second and fourth map in the long exact sequence
	\[ H^0(\A^n,\O^\times)\to H^0(\A^n,\bOx)\to H^1_v(\A^n,U)\to H^1_v(\A^n,\O^\times)\to H^1_v(\A^n,\bOx) \]
	are trivial. This follows from Lemma~\ref{l:bOx-on-B^n} by the \cH-to-sheaf sequence of  $\A^n=\cup_{s\in\N} B_s^n$.
\end{proof}
 
\subsection{Proof via comparison to pro-\'etale cohomology}
In this section, let us assume that $K=\C_p$. Then Theorem~\ref{t:Pic-v-A^n} is closely related to a result of
	Colmez--Nizio{\l} \cite{ColmezNiziol_CohomologyAffine}, and independently of Le Bras \cite{LeBras-Espaces}, who both show:
	\begin{Theorem}[{\cite[Theorem 1]{ColmezNiziol_CohomologyAffine}, \cite[Th\'eor\`eme 3.2]{LeBras-Espaces}}]\label{t:Colmez-Niziol-LeBras}
	Over $\C_p$, we have for all $i\geq 1$:
	\begin{equation}\label{eq:LeBras} H^i_{\proet}(\A^n,\Q_p)=H^0(\A^n,\wt \Omega^{i})^{d=0}.
	\end{equation}
	\end{Theorem}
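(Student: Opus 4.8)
The plan is to establish \eqref{eq:LeBras} by combining the pro-\'etale Poincar\'e lemma with the fundamental exact sequence of $p$-adic Hodge theory, and then to exploit that $\A^n$ is Stein, i.e.\ exhausted by the affinoid balls $B^n_s$ with dense transition maps. Over $\C_p$ one has on $\A^n_{\proet}$ the de Rham period sheaves $\BdRp\subseteq \BdR$ and the geometric period sheaf $\O\BdR$ carrying the connection $\nabla$. The two structural inputs I would use are: first, the pro-\'etale Poincar\'e lemma (Scholze, as corrected by Tan--Tong), which says that the de Rham complex $(\O\BdR\otimes\Omega^\bullet_{\A^n},\nabla)$ is a resolution of $\BdR$, so that $R\Gamma_{\proet}(\A^n,\BdR)$ is computed by the de Rham complex of $\O\BdR$; and second, the fundamental exact sequence $0\to \Q_p\to \B_e\to \BdR/\BdRp\to 0$, where $\B_e$ denotes the Frobenius-fixed crystalline periods. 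Splicing these yields a description of $R\Gamma_{\proet}(\A^n,\Q_p)$ in terms of the de Rham complex of $\A^n$ and the crystalline Frobenius-fixed part.

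Next I would reduce all period-sheaf cohomology on $\A^n$ to the balls. Since $\A^n=\bigcup_s B^n_s$ and each $B^n_s\cong B^n$ is affinoid with the relevant period sheaves acyclic on its affinoid perfectoid pro-\'etale covers, I would compute $R\Gamma$ on each $B^n_s$ and pass to $\varprojlim_s$. Here Stein vanishing makes the Hodge-theoretic input collapse: the coherent cohomology $H^{>0}(\A^n,\Omega^j)$ vanishes (Kiehl), so among the graded pieces $\mathrm{gr}^j\BdR=\wh\O(j)$ only the degree-zero coherent cohomology $H^0(\A^n,\Omega^j)$ survives. One must handle the derived limit $\varprojlim_s$ with care; the transition maps on global forms have dense image, so $R^1\!\lim_s$ vanishes by a Mittag--Leffler argument, just as in the proof of Corollary~\ref{c:Cor-Pic_v(B^n)}.

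The heart of the argument is to see that what remains is exactly the \emph{closed} forms. Unlike the complex case, the de Rham differential $d\colon \O(\A^n)\to \Omega^1$ is not surjective onto closed $1$-forms: $p$-adically the formal antiderivative of an entire function need not converge, since dividing Taylor coefficients by integers destroys entireness. The Frobenius-fixed condition coming from $\B_e$ in the fundamental sequence is precisely what intervenes: after the Poincar\'e lemma has replaced $\BdR$-cohomology by the de Rham complex, the surviving top Hodge-filtered piece is $H^0(\A^n,\Omega^i)$ placed in degree $i$, and the de Rham differential out of this term imposes the condition $d=0$, while the weight mismatch between $\phi=p^i$ and the Frobenius weights of the lower de Rham cohomology of $\A^n$ kills the interior terms. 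Tracking the Breuil--Kisin--Fargues twist $\{-i\}=(-i)$ through this identification yields the stated isomorphism with $H^0(\A^n,\wt\Omega^i)^{d=0}$ for $i\geq 1$, the constant class $\Q_p$ in degree $0$ being the only exception.

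The step I expect to be the main obstacle is controlling the crystalline/$\B_e$ contribution and the exactness at the boundary between the $\phi=1$ and de Rham pieces; these cohomologies are genuinely infinite-dimensional $\Q_p$-vector spaces carrying a $\C_p$-linear structure, so no naive finiteness is available. This is exactly where the two cited approaches do the heavy lifting: Le Bras organizes the $\B_e$-modules as coherent sheaves on the Fargues--Fontaine curve (where $\B_e$-cohomology becomes cohomology of the curve minus $\infty$) and controls everything through Banach--Colmez spaces, whereas Colmez--Nizio\l{} route the computation through syntomic cohomology and its fundamental diagram with Hyodo--Kato and filtered de Rham cohomology. Either way, the decisive technical points are the structure theory of these period cohomologies and the precise behaviour of the connecting maps, rather than the Stein reduction or the de Rham bookkeeping, which are comparatively formal.
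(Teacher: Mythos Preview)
This theorem is not proved in the paper: it is quoted as a result of Colmez--Nizio\l{} and Le Bras, and the paper only sketches the key ingredients of Le Bras' argument inside the proof of Proposition~\ref{p:Pic(A^n)-via-H^1(A^n,Q_p)} in order to adapt them to compute $\Pic_v(\A^n)$. Your outline is broadly faithful to that approach: the fundamental exact sequence $0\to \Q_p\to \B[\tfrac{1}{t}]^{\varphi=1}\to \BdR/\BdRp\to 0$ together with the vanishing of $H^i_{\proet}(\A^n,\B[\tfrac{1}{t}]^{\varphi=1})$ and $H^i_{\proet}(\A^n,\BdR)$ for $i>0$ reduces one to computing $\BdRp$-cohomology, and the Poincar\'e lemma then identifies the answer with closed differentials. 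You are candid that the vanishing on the $\B_e$-side is the substantive input and defer it to the cited papers; this is accurate, and the paper does not supply that step either.

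One difference in emphasis worth noting: the paper (following Le Bras) phrases the Poincar\'e-lemma step via $R\nu_\ast\BdRp$ and the distinguished triangle induced by $0\to t\BdRp\to \BdRp\to \BdRp/t\BdRp\to 0$, so that the connecting map on cohomology is literally the de Rham differential on the associated graded. This makes the appearance of the condition $d=0$ transparent. Your explanation in terms of ``the Frobenius-fixed condition coming from $\B_e$'' and a ``weight mismatch'' is pointing in the right direction but is imprecise: the $d=0$ arises from the boundary map in the $\BdRp$-filtration, not directly from a Frobenius-eigenvalue argument. Likewise, your Stein/Mittag--Leffler paragraph is sound bookkeeping, but the real content lies in the two vanishing statements you defer.
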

	
	In this subsection, we explain how Le Bras' proof of this result can be used to prove Theorem~\ref{t:Pic-v-A^n} over $\C_p$.
	For this, we first note that $H^i_{\proet}(\A^n,\Z/p^n)=H^i_{\et}(\A^n,\Z/p^n)=0$ for $i\geq1$, and thus
	\[\mathrm{R}\Gamma_{\proet}(\A^n,\Z_p)=\Rlim \mathrm{R}\Gamma_{\proet}(\A^n,\Z/p^n)=\Z_p,\]
 	which shows
	\begin{equation}\label{eq:RG_proet(A^n,Q_p vs mu_p)} H^i_{\proet}(\A^n,\Q_p(1))=H^i_{\proet}(\A^n,\mu_{p^\infty}).
	\end{equation}
	
	\begin{Proposition}\label{p:Pic(A^n)-via-H^1(A^n,Q_p)}
	The long exact sequence of the logarithm \eqref{eq:log} induces a sequence
	\[0\to H^1_{\proet}(\A^n, \O^\times)\xrightarrow{\log} H^1_{\proet}(\A^n,\O)\to H^2_{\proet}(\A^n,\mu_{p^\infty})\to 0 \]
	which is short exact and isomorphic to the $(-1)$-twist of the sequence
	\[0\to H^0(\A^n,\Omega^1)^{d=0}\to  H^0(\A^n,\Omega^1)\xrightarrow{d}H^0(\A^n,\Omega^{2})^{d=0}\to 0.\]
	\end{Proposition}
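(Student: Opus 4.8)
My plan is to read the statement off the long exact cohomology sequence of the logarithm sequence \eqref{eq:log}, $1\to\mu_{p^\infty}\to U\xrightarrow{\log}\O\to1$, on $\A^n$, once its three relevant terms have been computed. Writing all cohomology in the pro-\'etale topology, the relevant part of that long exact sequence is
\[H^0(\A^n,\O)\xrightarrow{\delta}H^1(\A^n,\mu_{p^\infty})\to H^1(\A^n,U)\xrightarrow{\log}H^1(\A^n,\O)\xrightarrow{\beta}H^2(\A^n,\mu_{p^\infty}),\]
and I want to identify it, under the Hodge--Tate and Colmez--Nizio{\l}--Le Bras comparisons, with the $(-1)$-twist of the de Rham complex $\O(\A^n)\xrightarrow{d}\Omega^1(\A^n)\xrightarrow{d}\Omega^2(\A^n)$ of the Stein space $\A^n$.

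The term identifications are essentially formal given the results above. First, Lemma~\ref{l:H(A^n,U)=H(A^n,Ox)} gives $H^1(\A^n,U)=H^1(\A^n,\O^\times)=\Pic_v(\A^n)$, and under this identification the map $\log$ above is the Hodge--Tate logarithm of Theorem~\ref{t:Picv-ses}. Second, the Leray spectral sequence for $\nu\colon\A^n_v\to\A^n_{\et}$, together with $R^j\nu_*\O=\wt\Omega^j_{\A^n}$ (Proposition~\ref{p:Hodge-Tate-spectral-sequence}) and the vanishing $H^i_{\et}(\A^n,\wt\Omega^j)=0$ for $i\geq1$ (as $\A^n$ is Stein and $\Omega^j$ is coherent, so \'etale and analytic coherent cohomology agree), collapses to $H^1(\A^n,\O)=H^0(\A^n,\wt\Omega^1)$. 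Third, \eqref{eq:RG_proet(A^n,Q_p vs mu_p)} and Theorem~\ref{t:Colmez-Niziol-LeBras} give $H^i(\A^n,\mu_{p^\infty})=H^i_{\proet}(\A^n,\Q_p)(1)=H^0(\A^n,\wt\Omega^i)^{d=0}(1)$ for $i\geq1$; since over $\C_p$ we have $\wt\Omega^1(1)=\Omega^1$ and $\wt\Omega^2(1)=\Omega^2(-1)$, this is exactly what produces the $(-1)$-twist on the de Rham side. Finally, left-exactness of the three-term sequence, i.e.\ injectivity of $\log$, is Theorem~\ref{t:Picv-ses}.1 applied to $\A^n$: since $\Pic_{\an}(\A^n)=1$, the induced map $\HT\log\colon\Pic_v(\A^n)\to H^0(\A^n,\wt\Omega^1)$ is injective, and this is the map $\log$ above.

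It therefore remains to identify the connecting map $\beta\colon H^1(\A^n,\O)\to H^2(\A^n,\mu_{p^\infty})$ with the $(-1)$-twist of the exterior derivative $d\colon H^0(\A^n,\Omega^1)\to H^0(\A^n,\Omega^2)$. Granting this, right-exactness follows from the classical Poincar\'e lemma for affine space: the de Rham complex of $\O(\A^n)$ is exact in positive degrees (a global closed form, having infinite radius of convergence, is exact), so $d\colon H^0(\A^n,\Omega^1)\to H^0(\A^n,\Omega^2)^{d=0}$ is surjective, hence so is $\beta$, and $\ker\beta=H^0(\A^n,\wt\Omega^1)^{d=0}$. The resulting short exact sequence is then visibly isomorphic to the $(-1)$-twist of $0\to H^0(\A^n,\Omega^1)^{d=0}\to H^0(\A^n,\Omega^1)\xrightarrow{d}H^0(\A^n,\Omega^2)^{d=0}\to0$, and in passing one reads off $\Pic_v(\A^n)=H^1(\A^n,\O^\times)=\ker\beta=H^0(\A^n,\wt\Omega^1)^{d=0}$, which is the first assertion of Theorem~\ref{t:Pic-v-A^n}.

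The identification $\beta=d$ is the main obstacle, and it is the one place where the \emph{proof} of Theorem~\ref{t:Colmez-Niziol-LeBras}, rather than merely its statement, is used: one must check that the Bockstein of $1\to\mu_{p^\infty}\to U\to\O\to1$ matches the de Rham differential under the comparison isomorphisms of the previous paragraph. This compatibility is built into Le Bras' application of the $p$-adic Poincar\'e lemma in $X_{\proet}$, where $\BdRp$ — and hence, via the fundamental exact sequence, $\Q_p$ — is resolved by the de Rham complex of $\O\BdRp$, so that passing to the $\xi$-adic associated graded turns the connection into the exterior derivative. Making this precise means unwinding Le Bras' comparison isomorphisms and carefully bookkeeping the Breuil--Kisin--Fargues and Tate twists; granted it, the rest is the routine assembly sketched above.
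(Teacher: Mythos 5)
Your outline matches the paper's strategy — reduce everything to the long exact sequence of \eqref{eq:log}, identify the outer terms via Lemma~\ref{l:H(A^n,U)=H(A^n,Ox)}, the Hodge--Tate identification $H^1_v(\A^n,\O)=H^0(\A^n,\wt\Omega^1)$, and \eqref{eq:RG_proet(A^n,Q_p vs mu_p)} together with Theorem~\ref{t:Colmez-Niziol-LeBras}, get injectivity of $\log$ from Theorem~\ref{t:Picv-ses}.1 and $\Pic_{\et}(\A^n)=1$, and get surjectivity of the connecting map from exactness of the global de Rham complex of $\A^n$. All of those term identifications are correct. But the one step you defer — identifying the connecting map $\beta$ with the $(-1)$-twist of $d$ under the Colmez--Nizio\l--Le Bras comparison — is not a bookkeeping afterthought; it is the entire content of the proposition, because the comparison isomorphism $H^2_{\proet}(\A^n,\Q_p)\cong H^0(\A^n,\wt\Omega^2)^{d=0}$ is itself only defined through the fundamental exact sequence and the Poincar\'e lemma, so there is no a priori map against which to ``check compatibility''; one has to construct the identification of sequences from scratch.

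Concretely, the paper does this by embedding the logarithm sequence into a three-row commutative diagram of short exact sequences of pro-\'etale sheaves, with middle row $0\to\Q_p(1)\to\B[\tfrac1t]^{\varphi=p}\to\BdRp/t\BdRp\to0$ mapping up to $(\mu_{p^\infty},U,\O)$ (with $\theta$ an isomorphism on the right) and down, via $\cdot t^{-1}$, to the fundamental sequence $0\to\Q_p\to\B[\tfrac1t]^{\varphi=1}\to\BdR/\BdRp\to0$. Using Le Bras' vanishing $H^{>0}_{\proet}(\A^n,\B[\tfrac1t]^{\varphi=1})=0=H^{>0}_{\proet}(\A^n,\BdR)$, the five lemma, and Lemma~\ref{l:H(A^n,U)=H(A^n,Ox)}, this converts the problem into computing the kernel of the boundary map $H^1_{\proet}(\A^n,\BdRp/t\BdRp)\to H^2_{\proet}(\A^n,t\BdRp)$; that boundary map is then identified with the twisted exterior derivative by resolving $R\nu_{\ast}t\BdRp\to R\nu_{\ast}\BdRp\to R\nu_{\ast}\BdRp/t\BdRp$ by the filtered de Rham complexes $\O_X\hotimes_{\Q_p}t^{i}B_{\dR}^+\xrightarrow{d}\Omega^1_X\hotimes_{\Q_p}t^{i-1}B_{\dR}^+\to\cdots$ coming from Scholze's Poincar\'e lemma, whose associated graded has zero differentials. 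Your proposal names all the right ingredients but stops exactly where this diagram chase begins, so as written it establishes the term-by-term identifications and the exactness at the outer spots, but not the commutativity of the middle square — i.e.\ not the assertion that the two three-term sequences are isomorphic \emph{as sequences}. To complete the argument you would need to write down the diagram above (or an equivalent one) and carry out the chase, including the twist bookkeeping you mention.
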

	\begin{proof}
		Let $X=\A_{\Q_p}^n$ be the rigid affine space over $\Q_p$, so that $\A^n=\A^n_{\C_p}=X_{\C_p}$. Choose an isomorphism $\Z_p\cong \Z_p(1)$, i.e.\ a compatible system of $p^n$-th unit roots $\epsilon\in \C_p^\flat=\varprojlim_{x\mapsto x^p}\C_p$.
		
		In order to prove Theorem~\ref{t:Colmez-Niziol-LeBras}, Le Bras considers Colmez' fundamental exact sequence \cite[Proposition 8.25.3]{Colmez-espacesdeBanach} in its incarnation in terms of period sheaves on $X_{\proet}$:
		\begin{equation}\label{eq:fundamental-sequence-of-p-adic-Hodge-theory}
		0\to \Q_p\to \B[\tfrac{1}{t}]^{\varphi=1}\to \BdR/\BdRp\to 0
		\end{equation}
		where $t=\log([\varepsilon])$ (see \cite[\S8]{LeBras-Espaces} for the definition of these sheaves).
		For $i>0$, he shows $H^i_{\proet}(\A^n,\B[\tfrac{1}{t}]^{\varphi=1})=0$ and $H_{\proet}^i(\A^n,\BdR)=0$. This gives an isomorphism for $i>1$:
		\[ H^i_{\proet}(\A^n,\Q_p)\isomarrow H^i_{\proet}(\A^n,\BdRp).\]

		As pointed out to us by Le Bras, this isomorphism is related to our setting in \S2 by way of the following commutative diagram of sheaves on $X_{\proet}$ with short exact rows:
		\[
		\begin{tikzcd}[row sep = 0.4cm]
			0 \arrow[r] & \mu_{p^\infty} \arrow[r]           & U \arrow[r,"\log"]                                                  & \O \arrow[r]                                                   & 0 \\
			0 \arrow[r] & \Q_p(1) \arrow[u] \arrow[r] \arrow[d,"\sim"labelrotate] & \B[\tfrac{1}{t}]^{\varphi=p} \arrow[r] \arrow[u] \arrow[d, "\cdot t^{-1}"] & \BdRp/{t\BdRp} \arrow[u,"\sim"labelrotatep,"\theta"] \arrow[r] \arrow[d, "\cdot t^{-1}"] & 0 \\
			0 \arrow[r] & \Q_p \arrow[r]                     & \B[\tfrac{1}{t}]^{\varphi=1} \arrow[r]                                     & \BdR/\BdRp \arrow[r]                                        & 0.
		\end{tikzcd}\]

	Using  \eqref{eq:RG_proet(A^n,Q_p vs mu_p)}, the $5$-Lemma and Lemma~\ref{l:H(A^n,U)=H(A^n,Ox)}, the top two lines induce an isomorphism
	\[\Pic_{v}(\A^n)=H^1_{\proet}(\A^n,U)\isomarrow H^1_{\proet}(\A^n,\B^{\varphi=p}).\]
	From the bottom two rows, we thus get a morphism of long exact sequences
		\[
	\begin{tikzcd}
			\dots \arrow[r]&\Pic_{v}(\A^n)\arrow[r,"\log"] \arrow[d]&{H^1_{\proet}(\A^n,\BdRp/t\BdRp)} \arrow[r] \arrow[d,"\cdot t^{-1}"] & {H^{2}_{\proet}(\A^n,\Q_p(1))} \arrow[d,"\sim"labelrotate] \arrow[r]&\dots \\
		\dots \arrow[r]&	0\arrow[r]&{H^1_{\proet}(\A^n,\BdR/\BdRp)} \arrow[r,"\sim"]&      {H^{2}_{\proet}(\A^n,\Q_p)}\arrow[r]&\dots 
	\end{tikzcd}\]
	The map on the top left is injective: This follows from  Theorem~\ref{t:Picv-ses}.1 using  $\Pic_{\et}(\A^n)=1$.
	Consequently, using that the bottom map is an isomorphism, $\Pic_v(\A^n)$ gets identified with the kernel of the middle vertical map. Using
	$H^1_{\proet}(\A^n,\BdR)=0$ and the diagram
	\[\begin{tikzcd}
		0 \arrow[r] & t\BdRp \arrow[r] \arrow[d, "\sim"labelrotate,"\cdot t^{-1}"] & \BdRp \arrow[d, "\cdot t^{-1}"] \arrow[r] & \BdRp/t\BdRp \arrow[d, "\cdot t^{-1}"] \arrow[r] & 0 \\
		0 \arrow[r] & \BdRp \arrow[r] & \BdR \arrow[r] & \BdR/\BdRp \arrow[r] & 0,
	\end{tikzcd}\]
	this can in turn be identified with the kernel of the boundary map
	\begin{equation}\label{eq:boundary-map-comparison-to-LeBras}
	H^1_{\proet}(\A^n,\BdRp/t\BdRp)\to H^2_{\proet}(\A^n,t\BdRp).
	\end{equation}
	This can now be understood via Scholze's Poincar\'e Lemma \cite[Corollary 6.13]{Scholze_p-adicHodgeForRigid} and its Corollaries \cite[Remarque~3.18]{LeBras-Espaces}\cite[Corollary~3.2.4]{Bosco_p-adicCohomology}: For $\nu:\A^n_{\proet}\to \A^n_{\et}$, we have
	\[ R\nu_{\ast}\BdRp = \Big(\O_{X}\hotimes_{\Q_p}B_\dR^+ \xrightarrow{d}  \Omega^1_{X}\hotimes_{\Q_p} t^{-1}B_\dR^+ \to\dots\Big).\]
	Here following \cite[before Proposition~3.16]{LeBras-Espaces}, for any vector bundle $F$ on $X_{\et}$, the sheaf $F\hotimes_{\Q_p}B_\dR^+$ is defined, via the equivalence $X_{\C_p,\et}=\varprojlim_{L|\Q_p} X_{L,\et}$ where $L|\Q_p$ ranges over all finite extensions, as the compatible system of sheaves $F_{X_L}\hotimes_{L}B_\dR^+$.
	
	It follows from this that we get a distinguished triangle in $D(\A^n_{\et})$, written vertically
	\[
	\begin{tikzcd}[row sep = 0.3cm]
		R\nu_{\ast}t\BdRp \arrow[d] \arrow[r,"\sim"] & \Big(\O_X\hotimes_{\Q_p}tB_\dR^+ \arrow[r, "d"] \arrow[d]    & \Omega^1_X\hotimes_{\Q_p} B_\dR^+ \arrow[d] \arrow[r, "d"]            & \dots\Big) \\
		R\nu_{\ast}\BdRp \arrow[d] \arrow[r,"\sim"]  & \Big(\O_X\hotimes_{\Q_p}B_{\dR}^+ \arrow[r, "d"]\arrow[ur,dotted] \arrow[d] & \Omega^1_{X}\hotimes_{\Q_p}t^{-1}B_{\dR}^+ \arrow[d] \arrow[r, "d"]\arrow[ur,dotted]  & \dots\Big) \\
		R\nu_{\ast}\BdRp/t\BdRp \arrow[r,"\sim"]    & \Big(\O_X \arrow[r, "0"]                              & \wt\Omega^1 \arrow[r, "0"]                                & \dots\Big),
	\end{tikzcd}\]
	where the right hand side is in fact a short exact sequence of complexes. Chasing the diagram, this shows that the kernel of \eqref{eq:boundary-map-comparison-to-LeBras} gets identified with that of the $(-1)$-twist of
	\[ d:H^0(\A^n,\Omega^1)\to H^0(\A^n,\Omega^{2})^{d=0}.\qedhere\]
	\end{proof}
	\begin{Remark}
	 Bosco \cite{Bosco_p-adicCohomology} has generalised Le Bras' method to show that more generally, for any smooth Stein space $X$ defined over a discretely valued field extension $L|\Q_p$, there is over the completion $K$ of the algebraic closure of $L$ an exact sequence
	\[ 0\to H^i_{\dR}(X_K)\otimes_Kt^{-i+1}B_{\dR}^+\to H^i(X_{K},\B_{\dR}^+)\to \wt\Omega^1(X_{K})^{d=0}\to 0.\]
	Elaborating on the above proof, one might therefore be able to show in this generality that the sequence from Theorem~\ref{t:Picv-ses}.1 is a short exact sequence
	\[0\to \Pic_{\an}(X_{K})\to \Pic_v(X_{K})\to \wt\Omega^1(X_{K})^{d=0}\to 0.\]
	\end{Remark}

\subsection{The intermediate space $\wt B\times \A$}
While being conceptually satisfying, the approach of the last section only works for $K$ a completed algebraic closure of a discretely valued field. In the rest of this section, we shall give an alternative proof of Theorem~\ref{t:Pic-v-A^n} that avoids the input of the Poincar\'e Lemma, and works over general algebraically closed complete $K$. It uses more classical methods and arguably gives a more concrete reason why $\A^n$ has a ``minimal amount'' of $v$-line bundles (``minimal'' as we know $\Pic_v(\A^n)$ must include $H^0(\A^n,\wt\Omega^1)^{d=0}$ by Corollary~\ref{c:df-in-image}).

The basic idea behind the proof is that for any rigid space $X$, the space $X\times\A$ has no more invertible global sections than $X$, and therefore has few descent data for line bundles from pro-\'etale covers. We would like to apply this to $X=B$, which has an explicit perfectoid $\Z_p$-Galois cover $\wt B$ that is easy to work with. Since $\O^\times(B\times  \A)=\O^\times(B)$, the Cartan--Leray exact sequence Corollary~\ref{c:Cartan--Leray-for-line-bundles} for the Galois cover $\wt B\times \A\to B\times \A$ is then of the form
\[ 0\to \Pic_v(B)\to \Pic_v(B\times \A)\to \Pic_v(\wt B\times \A)^{\Z_p}.\]
Using Theorem~\ref{t:Picv-ses}.\ref{enum:MT1-curves}, we would like to see that the map $\HT\log$ identifies this with
\[ 0\to H^0(B,\wt\Omega^1)\to H^0(B\times \A,\wt\Omega^1)^{d=0}\to \O(B)\hotimes_K H^0( \A,\wt\Omega^1)\to 0\]
where if $T_1$ is the coordinate on $B$ and $T_2$ that on $\A$, the last map sends $fdT_1+gdT_2\mapsto gdT_2$.
However, this fails to be exact because the de Rham complex of $B$ is not exact on global sections. As usual, this can be fixed by replacing $B$ by the ``overconvergent unit ball''. Covering $\A$ by overconvergent unit balls of increasing radii, we get the desired result.

To simplify notation, let us fix a trivialisation $\Z_p(1)\cong \Z_p$
We start by constructing the explicit perfectoid Galois cover $\wt B\to B$. For this we use the pro-\'etale perfectoid $\Z_p$-torsor
\[\wt \G_m=\textstyle\varprojlim_{[p]} \G_m\to \G_m.\]

\begin{Lemma}\label{l:perfectoid-disc-Galois-cover-of-disc} Embed $B\hookrightarrow \G_m$ via $T\mapsto 1+pT$. Then the pullback $\wt B:=B\times_{\G_m}\wt \G_m$ of $B$ along $\wt \G_m\to\G_m$ is isomorphic over $K$ to the perfectoid unit disc $\Spa K\langle X^{1/p^\infty}\rangle$.
\end{Lemma}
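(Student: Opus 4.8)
The plan is to make the cover $\wt\G_m\to\G_m$ completely explicit and track $B$ through it. Recall that $\wt\G_m=\varprojlim_{[p]}\G_m$ is the perfectoid torus $\Spa K\langle S^{\pm1/p^\infty}\rangle$, with $S_n:=S^{1/p^n}$ a compatible system of $p$-power roots of the coordinate $S=S_0$, the transition map at level $n$ being $[p]\colon S_{n-1}\mapsto S_n^p$. Under the embedding $B\hookrightarrow\G_m$, $S\mapsto 1+pT$, the space $B$ is the rational subset $\{|S-1|\le|p|\}=\Spa K\langle S^{\pm1}\rangle\langle\tfrac{S-1}{p}\rangle$ of $\G_m$, so $\wt B=B\times_{\G_m}\wt\G_m$ is the rational subspace $\{|S-1|\le|p|\}\subseteq\wt\G_m$, equivalently the cofiltered limit $\varprojlim_n B_n$ of the finite-\'etale tower $B_n:=\{x\in\G_m\mid|S_n(x)^{p^n}-1|\le|p|\}$ with transition maps induced by $[p]$.

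The first step is to show that each $B_n$ is a closed disc, namely $B_n=\{|S_n-1|\le|p|^{1/p^n}\}$. This follows from the factorisation $S^p-1=\prod_{\zeta\in\mu_p}(S-\zeta)$ together with $|1-\zeta|=|p|^{1/(p-1)}$ for $\zeta$ a primitive $p$-th root of unity: computing $|S^p-1|$ as a function of $|S-1|$ shows that the $p$-power map sends $\bar D(1,r)$ onto $\bar D(1,r^p)$ when $r>|p|^{1/(p-1)}$ and onto $\bar D(1,|p|r)$ when $r\le|p|^{1/(p-1)}$, and hence that $[p]^{-1}(\bar D(1,\rho))=\bar D(1,\rho^{1/p})$ for all $\rho$ with $|p|^{1/(p-1)}<\rho^{1/p}<1$ (the inequality guaranteeing that the roots of unity $\zeta\ne1$ already lie inside $\bar D(1,\rho^{1/p})$, so no component issue arises even over $C$). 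Iterating from $\rho=|p|$ — for which $\rho^{1/p}=|p|^{1/p}>|p|^{1/(p-1)}$ — gives the claim for all $n$.

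The second step is to pass to the coordinate $v_n:=S_n-1$, so that $B_n=\bar D(0,|p|^{1/p^n})$ and the transition map becomes $v_{n-1}=(1+v_n)^p-1=pv_n+\binom p2 v_n^2+\dots+v_n^p$, and then to rescale. Since $K$ is perfectoid its value group is $p$-divisible, so one may choose $\varpi_n\in K^\times$ with $|\varpi_n|=|p|^{1/p^n}$ and put $w_n:=v_n/\varpi_n$, identifying $B_n$ with the standard unit ball $\Spa K\langle w_n\rangle$ and $\O^+(B_n)$ with $\O_K\langle w_n\rangle$. An elementary estimate of the coefficients $\binom pj\varpi_n^j/\varpi_{n-1}$ shows that they all lie in $\O_K$, that the leading one $\varpi_n^p/\varpi_{n-1}$ is a unit, and that the lower ones lie in $\m_K$; thus the transition $\O_K\langle w_{n-1}\rangle\to\O_K\langle w_n\rangle$ satisfies $w_{n-1}\equiv\bar u_n w_n^p\pmod{\m_K}$ for a unit $\bar u_n$.

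The final step is the usual tilting argument: after absorbing the units $\bar u_n$ using that $\O_K/\m_K$ is perfect, the $p$-adic completion $\O^+(\wt B)=(\varinjlim_n\O_K\langle w_n\rangle)^{\wedge}$ is an $\O_K$-algebra whose transition maps reduce modulo $\m_K$ to (a twist of) Frobenius; hence $\O^+(\wt B)$ is perfectoid, its reduction is almost the perfection of $\O_K/p\langle w\rangle$, and its tilt is $\Spa K^\flat\langle w^{1/p^\infty}\rangle$, the perfectoid unit disc over $K^\flat$. Untilting then yields $\wt B\cong\Spa K\langle X^{1/p^\infty}\rangle$. The main obstacle is exactly this last step: the transition maps in the tower $(B_n,[p])$ agree with the honest $p$-th-power map only modulo $\m_K$, so identifying the limit with the perfectoid disc on the nose requires either the Frobenius-mod-$\m$ criterion for perfectoidness via tilting as above, or, alternatively, a non-archimedean Koenigs-type linearisation producing a compatible family of analytic isomorphisms $B_n\cong B^1$ intertwining $[p]$ with $x\mapsto x^p$; the coefficient estimates above make the tilting route the cleaner one. (For $p=2$ one should take minor care with the numerical constants, but the structure of the argument is unchanged.)
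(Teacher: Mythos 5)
Your overall route is genuinely different from the paper's: you compute the finite \'etale tower $B_n\to B_{n-1}$ explicitly and try to recognise the completed colimit of the $\O_K\langle w_n\rangle$ as a perfectoid algebra with prescribed tilt, whereas the paper simply observes that $\wt B$ is a rational subset of the perfectoid space $\wt \G_m$ (hence affinoid perfectoid for free), passes to the tilt, and notes that in characteristic $p$ the locus $|Y'-1|\le|p^\flat|$ of the perfectoid torus is literally the perfectoid disc in the coordinate $(Y'-1)/p^\flat$, because Frobenius is additive there: $(Y'-1)^{1/p^n}=Y'^{1/p^n}-1$. Your first two steps are correct: the radii $|p|^{1/p^n}$, and the rescaled transition maps $w_{n-1}=u_nw_n^p+\sum_{j<p}c_{j,n}w_n^j$ with $u_n\in\O_K^\times$ and $|c_{j,n}|=|p|^{(p^n+j-p)/p^n}<1$.

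The gap is in the last step: reducing modulo $\m_K$ is too coarse for either of the two things you want. The tilt is $\varprojlim_{x\mapsto x^p}R^+/\varpi$ for a pseudouniformizer $\varpi$ with $\varpi^p\mid p$, and this is \emph{not} computed by $\varprojlim_{x\mapsto x^p}R^+/\m_K$ --- already for $R^+=\O_K$ the latter gives the residue field $k$ rather than $\O_{K^\flat}$. So ``the transition maps are a twist of Frobenius mod $\m_K$'' does not yield ``the reduction is almost the perfection of $\O_K/p[w]$''; indeed the error coefficients do not even vanish modulo $p$ (at $n=1$, $j=1$ the coefficient has absolute value $|p|^{1/p}$). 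The good news is that your own estimates repair the argument: the smallest exponent occurring is $1-(p-1)/p^n\ge 1/p$, so all error terms are divisible by any $\varpi$ with $|\varpi|=|p|^{1/p}$, and modulo this $\varpi$ (which does satisfy $\varpi^p\mid p$) the transition maps really are $w_{n-1}\mapsto \bar u_nw_n^p$; after absorbing the units (possible since Frobenius is surjective on $\O_K/\varpi$) one can legitimately identify $R^+/\varpi$ with the perfection of $(\O_K/\varpi)[w]$ and read off the tilt. Alternatively, take perfectoidness for free from the fact that $\wt B$ is a rational subset of $\wt\G_m$ and use the tower only to identify the tilt. Either repair works, but as written the mod-$\m_K$ step is a real gap, and it is exactly the point where the paper's direct tilting argument is substantially cleaner.
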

Here $T$ and $X$ are different formal variables, and the map is not given by sending $T\mapsto X$.
\begin{proof}
	We have
	$\wt B=\Spa(R,R^\circ)$ where
	$R:=K\langle Y^{\pm 1/p^\infty}\rangle\langle \tfrac{Y-1}{p}\rangle$, thus $\wt B$
	is affinoid perfectoid. Let $p^\flat\in K^\flat$ be such that $|p^\flat|=|p|$. Write $Y'$ for the parameter of $\G_{m,K^\flat}$. Then 
	\[|Y(x)-1|\leq |p| \Leftrightarrow |Y'(x^\flat)-1|\leq |p^{\flat}| \quad \text{ for any }x\in \wt \G_{m}. \]
	This shows $\wt B^{\flat}=\Spa(R^{\flat},R^{\flat\circ})$ where $R^\flat=K^{\flat}\langle Y'^{\pm 1/p^\infty}\rangle\langle \tfrac{Y'-1}{p^{\flat}}\rangle$.
	But $R^\flat$  is isomorphic to $K^{\flat}\langle X^{ 1/p^\infty}\rangle$ via the map that sends $X^{1/p^n}\mapsto (\tfrac{Y'-1}{p^{\flat}})^{1/p^n}$.
\end{proof}

\begin{Lemma}\label{l:H^1_v(wt B x B, O)}
\label{enum:H^1_v(wt B x B, O)-H1v} For any affinoid perfectoid space $X$ over $K$, there is a natural isomorphism
		\[ H^1_v(X\times B,\O)=\O(X)\hotimes_K H^0(B,\wt\Omega^1).\]
		In particular, for any profinite set $S$, we have $H^1_v(S\times X\times B,\O)=\Map_{\cts}(S,H^1_v(X\times B,\O))$.
\end{Lemma}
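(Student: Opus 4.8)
The plan is to compute $H^1_v(X\times B,\O)$ by means of the explicit perfectoid $\Z_p$-Galois cover of $B$ from Lemma~\ref{l:perfectoid-disc-Galois-cover-of-disc}, and then to commute the completed tensor product $\O(X)\hotimes_K-$ with the resulting continuous $\Z_p$-cohomology. First I would base-change the pro-\'etale perfectoid $\Z_p$-torsor $\wt B\to B$ along $X\times B\to B$. Since $X$ and $\wt B$ are both affinoid perfectoid over $K$, so is $X\times\wt B$, and $X\times\wt B\to X\times B$ is again a pro-\'etale $\Z_p$-torsor; moreover $X\times\wt B\times\underline{\Z_p}^i$ is affinoid perfectoid, hence $\O$ is $v$-acyclic on it, for each $i\geq0$. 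Corollary~\ref{c:CL-acyclic} for this torsor with $\mathcal F=\O$ then yields
\[
H^i_v(X\times B,\O)=H^i_{\cts}(\Z_p,\O(X\times\wt B))=H^i_{\cts}\bigl(\Z_p,\ \O(X)\hotimes_K\O(\wt B)\bigr),
\]
where $\Z_p$ acts only through the second tensor factor, as it acts trivially on $X$.

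Next I would use that for a $K$-Banach module $M$ carrying a continuous $\Z_p$-action with topological generator $\gamma$, the group $H^\bullet_{\cts}(\Z_p,M)$ is computed by the two-term complex $[M\xrightarrow{\gamma-1}M]$ in degrees $0$ and $1$. Applied to $M=\O(\wt B)$, i.e.\ in the case $X=\Spa(K)$: combining Corollary~\ref{c:CL-acyclic}, the vanishing of higher coherent cohomology on the affinoid $B$, and Proposition~\ref{p:Hodge-Tate-spectral-sequence} identifies $H^1_v(B,\O)=H^0(B,R^1\nu_\ast\O)=H^0(B,\wt\Omega^1_B)$, so that $\ker(\gamma-1\mid\O(\wt B))=\O(B)$ and $\coker(\gamma-1\mid\O(\wt B))=H^0(B,\wt\Omega^1)$ canonically via $\HT$. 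The structural input I would take from the toric tower computation of \cite[\S5]{Scholze_p-adicHodgeForRigid} is that $\gamma-1$ is a \emph{strict} endomorphism of the Banach space $\O(\wt B)$: there is a $\Z_p$-equivariant topological direct sum decomposition $\O(\wt B)=\O(B)\oplus N$ on which $\gamma-1$ vanishes on the first summand and restricts to a topological automorphism of $N$. Hence $\ker(\gamma-1)$ and $\im(\gamma-1)=N$ are topologically complemented, and the complex $[\O(\wt B)\xrightarrow{\gamma-1}\O(\wt B)]$ decomposes in the homotopy category of $K$-Banach complexes as the direct sum of $\O(B)$ in degree $0$ and $H^0(B,\wt\Omega^1)$ in degree $1$.

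Since $\O(X)\hotimes_K-$ is exact on topologically split exact sequences of $K$-Banach spaces, applying it to this split complex and passing to cohomology gives the asserted isomorphism
\[
H^1_v(X\times B,\O)=H^1_{\cts}\bigl(\Z_p,\O(X)\hotimes_K\O(\wt B)\bigr)=\O(X)\hotimes_K H^0(B,\wt\Omega^1),
\]
naturally in $X$ since every step is functorial. For the last assertion, $\underline S\times X$ is again affinoid perfectoid for any profinite set $S$, with $\O(\underline S\times X)=\Map_{\cts}(S,\O(X))$; applying the first part with $\underline S\times X$ in place of $X$ then gives
\[
H^1_v(S\times X\times B,\O)=\Map_{\cts}(S,\O(X))\hotimes_K H^0(B,\wt\Omega^1)=\Map_{\cts}\bigl(S,\ H^1_v(X\times B,\O)\bigr),
\]
where the second equality is $\Map_{\cts}(S,M)\hotimes_K N=\Map_{\cts}(S,M\hotimes_K N)$ for $K$-Banach spaces $M,N$.

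The main obstacle is the step establishing that $\gamma-1$ acts strictly on $\O(\wt B)$, together with a $\Z_p$-equivariant topological splitting: only then does $\O(X)\hotimes_K-$ commute with passing to cohomology of the $\Z_p$-complex, since completed tensor product is exact on strict exact sequences but not on arbitrary ones. This forces one to use the concrete description of $\wt B$ in Lemma~\ref{l:perfectoid-disc-Galois-cover-of-disc} — or equivalently Scholze's toric-tower decomposition with its weight grading — rather than a purely formal argument.
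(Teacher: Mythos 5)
Your argument is correct and follows essentially the same route as the paper: Cartan--Leray (Corollary~\ref{c:CL-acyclic}) applied to the base-changed torsor $X\times\wt B\to X\times B$, commutation of $\O(X)\hotimes_K-$ with $H^1_{\cts}(\Z_p,-)$, and the already-known case $X=\Spa(K)$, with the profinite-set statement obtained by substituting $\underline{S}\times X$ for $X$. The only difference is that the paper simply cites \cite[Lemma~5.5]{Scholze_p-adicHodgeForRigid} for the middle step, whereas you unpack that citation into the strictness/splitting argument for $\gamma-1$ on $\O(\wt B)$ that underlies it.
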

\begin{proof}
	By Corollary~\ref{c:CL-acyclic}, the $\Z_p$-Galois cover $X\times \wt B\to X\times B$ induces isomorphisms
	\[H^1_v(X\times B,\O)=H^1_{\cts}(\Z_p,\O(X\times \wt B))=\O(X)\hotimes H^1_{\cts}(\Z_p,\O(\wt B)),\]
	where the last isomorphism is from \cite[Lemma~5.5]{Scholze_p-adicHodgeForRigid}. Then we use that for $X=\Spa(K)$ we already know that $H^1_{\cts}(\Z_p,\O(\wt B))=H^1_v(B,\O)=H^0(B,\wt\Omega^1)$.
\end{proof}

\begin{Lemma}\label{l:Pic_v(wt B x B)-torfree}
	 For any profinite set $S$, the logarithm defines an injection
		\[\log:H^1_v(\wt B\times B\times {S},U)\hookrightarrow H^1_v(\wt B\times B\times {S},\O).\]
	In particular, the specialisation $H^1_v(\wt B\times B\times {S},U)\to \Map(S,H^1_v(\wt B\times B,U))$
		is injective.
\end{Lemma}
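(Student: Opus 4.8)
The plan is to reduce the injectivity of $\log$ to a statement in $\Z_p$-cohomology and there to isolate the contribution of $\mu_{p^\infty}$ to the kernel. Write $Y:=\wt B\times B\times S$ and let $W:=\wt B\times\wt B\times S\to Y$ be the cover obtained by pulling back the perfectoid $\Z_p$-torsor $\wt\G_m\to\G_m$ along the second factor $B\hookrightarrow\G_m$; by Lemma~\ref{l:perfectoid-disc-Galois-cover-of-disc} (applied to this second factor) the space $W$ is affinoid perfectoid and $W\to Y$ is Galois with group $\Z_p\cong\Z_p(1)$. Since $\O$ is acyclic on the affinoid perfectoid space $W$, Proposition~\ref{p:Cartan--Leray}.\ref{i:CL-left-ex} gives $H^1_v(Y,\O)=H^1_{\cts}(\Z_p,\O(W))$; and by Lemma~\ref{l:H^1_v(wt B x B, O)} applied to the affinoid perfectoid space $\wt B\times S$, this equals $\O(\wt B\times S)\hotimes_K H^0(B,\wt\Omega^1)$, in particular it is a $K$-vector space, hence torsion free. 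On the other hand $H^1_v(W,U)=0$: by the logarithm sequence on $W_v$ it is a quotient of the cokernel of the connecting map $\O(W)\to H^1_v(W,\mu_{p^\infty})$, and that map is surjective because $\O(W)$ is a $\mathbb Q$-vector space (one checks $\delta(\tfrac1{p^n}\log u)=[u]\otimes p^{-n}$, using $\exp(p^{N-n}\log u)=u^{p^{N-n}}$ for $N\gg 0$). Hence Proposition~\ref{p:Cartan--Leray}.\ref{i:CL-left-ex} also gives $H^1_v(Y,U)=H^1_{\cts}(\Z_p,U(W))$, compatibly with $\log$. So it suffices to prove that $H^1_{\cts}(\Z_p,U(W))\xrightarrow{\log}H^1_{\cts}(\Z_p,\O(W))$ is injective.

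Next I would feed in the two short exact sequences of $\Z_p$-modules $0\to\mu_{p^\infty}(K)\to U(W)\xrightarrow{\log}L\to 0$ and $0\to L\to\O(W)\xrightarrow{\delta}M\to 0$, where $L:=\log U(W)$ and $M:=\O(W)/L$; the map $\delta$ here is the connecting map above, which as noted is surjective, so $M\cong U(W)\otimes\Q_p/\Z_p$. Chasing the two long exact $\Z_p$-cohomology sequences, the kernel of $H^1_{\cts}(\Z_p,\log)$ is assembled from the image of $H^1_{\cts}(\Z_p,\mu_{p^\infty}(K))$ in $H^1_{\cts}(\Z_p,U(W))$ and from the image of the connecting map $M^{\Z_p}\to H^1_{\cts}(\Z_p,L)$. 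The first piece vanishes: with the fixed trivialisation $\Z_p(1)\cong\Z_p$, the $1$-cocycle $\gamma\mapsto\zeta_{p^m}$ is the coboundary of the unit $(1+pT_2)^{1/p^m}$, where $T_2$ is the coordinate on the covered second factor $B$; this unit lies in $U(W)$ because $|1+pT_2|=1$ on $W$, so its class dies in $H^1_{\cts}(\Z_p,U(W))$, and as these cocycles generate $H^1_{\cts}(\Z_p,\mu_{p^\infty}(K))$ over $\Z$ the whole image is trivial.

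The main obstacle is the second piece: the vanishing of $M^{\Z_p}\to H^1_{\cts}(\Z_p,L)$, equivalently the surjectivity of $\O(W)^{\Z_p}=\O(Y)\to M^{\Z_p}$. Note this is precisely the assertion that $\Pic_v(\wt B\times B\times S)$ has no $p$-torsion: by Corollary~\ref{c:Cartan--Leray-for-line-bundles} together with Kedlaya--Liu's $\Pic_v(W)=\Pic_{\an}(W)=0$ one has $\Pic_v(Y)=H^1_{\cts}(\Z_p,\O^\times(W))$, and unwinding the logarithm identifies its $p$-power torsion with the above obstruction, which -- as $M$ is $p$-divisible -- lies a priori in a $p$-divisible torsion subgroup of $H^1_{\cts}(\Z_p,L)$. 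I would eliminate it by a hands-on analysis of the $\Z_p$-action on $\O(W)=\O(\wt B\times S)\hotimes_K\O(\wt B)$ dictated by the perfectoid disc $\wt B$, in the spirit of Scholze's computation of $\Z_p(1)$-cohomology of perfectoid tori, and I expect essentially all of the genuine work of the lemma to sit at this step. Finally, the ``in particular'' clause is then formal: $\log$ embeds $H^1_v(Y,U)$ into $H^1_v(Y,\O)=\Map_{\cts}(S,H^1_v(\wt B\times B,\O))$ by Lemma~\ref{l:H^1_v(wt B x B, O)}, which injects into $\Map(S,H^1_v(\wt B\times B,\O))$, and this factors through $\Map(S,H^1_v(\wt B\times B,U))$ by the $S=\mathrm{pt}$ case already established.
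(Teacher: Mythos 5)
Your reduction via the Cartan--Leray sequence for the $\Z_p$-cover $W=\wt B\times\wt B\times S\to Y=\wt B\times B\times S$ is a legitimate opening move, but the argument has a genuine --- and indeed self-acknowledged --- gap at the decisive step: you never prove the surjectivity of $\O(Y)\to M^{\Z_p}$, equivalently the vanishing of the connecting map $M^{\Z_p}\to H^1_{\cts}(\Z_p,L)$; you only state that you ``expect'' it to follow from a hands-on analysis of the $\Z_p$-action on $\O(W)$. Since, as you observe yourself, essentially all of the content of the lemma sits in exactly this step (it is the assertion that $H^1_{\cts}(\Z_p,\O^\times(W))$ has no $p$-power torsion of the relevant kind), the proposal does not constitute a proof. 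Two secondary points: the parts you do complete lean on $\Pic_{\an}(\wt B\times\wt B\times S)=0$ for a perfectoid polydisc times a profinite set, which is plausible but not established in the paper (Kedlaya--Liu only gives $\Pic_v=\Pic_{\an}$ there, not its vanishing); and for non-trivial $S$ the kernel of $\log$ on $U(W)$ is $\mu_{p^\infty}(W)=\Maplc(S,\mu_{p^\infty}(K))$ rather than $\mu_{p^\infty}(K)$, so the explicit coboundary computation with $(1+pT_2)^{1/p^m}$ has to be done with locally constant coefficients.

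The paper's proof sidesteps all of this and is much shorter. The kernel of $\log$ on $H^1_v(Y,U)$ is the image of $H^1_v(Y,\mu_{p^\infty})=H^1_{\et}(Y,\mu_{p^\infty})$ by Lemma~\ref{l:Rvmu=0} and the long exact sequence of \eqref{eq:log}. Writing $Y=\varprojlim_{n,i}B^{(n)}\times B\times S_i$ as a cofiltered limit of \emph{rigid} spaces (Lemma~\ref{l:perfectoid-disc-Galois-cover-of-disc} plus a profinite presentation $S=\varprojlim S_i$) and invoking \cite[Proposition~14.9]{etale-cohomology-of-diamonds}, every class in $H^1_{\et}(Y,\mu_{p^\infty})$ is pulled back from some finite level, where it already dies in $H^1_{\et}(B^{(n)}\times B\times S_i,U)$ because that group injects into $\Pic_{\et}(B^{(n)}\times B\times S_i)=1$ by Lemma~\ref{l:bOx-on-B^n}. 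This approximation argument is precisely the tool that makes the ``hard torsion'' you are left with disappear; I would recommend replacing the descent to continuous $\Z_p$-cohomology by it.
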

\begin{proof}
	Choose a profinite presentation $S=\varprojlim_{i\in I} S_i$.
	We need to show that the map
	\[H^1_{\et}(\wt B\times B\times {S},\mu_{p^\infty})\to H^1_v(\wt B\times B\times {S},U)\]
	vanishes. For this, we use that by Lemma~\ref{l:perfectoid-disc-Galois-cover-of-disc} there is an isomorphism of diamonds
	\[ \wt B\times B\times {S} =\textstyle\varprojlim_{n\in \N,i\in I} B^{(n)}\times B\times {S_i}\]
	where $B^{(n)}$ is $B$ in the variable $X^{1/p^n}$. By \cite[Proposition~14.9]{etale-cohomology-of-diamonds}, we have
	\[ H^1_{\et}(\wt B\times B\times {S},\mu_{p^\infty})= \textstyle\varinjlim_{n,i}H^1_{\et}(B^{(n)}\times B\times S_i,\mu_{p^\infty}).\]
	The result now follows as by Lemma~\ref{l:bOx-on-B^n}, we have $H^1_{\et}(B^2\times S_i,U)\hookrightarrow \Pic_{\et}(B^2\times S_i)=1$.
\end{proof}
\begin{Lemma}\label{l:Picv(wtB x A)}
	Let $Y$ be either of $B$ or $\wt B$. Then
	\[\textstyle\Pic_v( Y\times \A)=H^1_v(Y\times \A,U)=\varprojlim_{s\in \N}H^1_v( Y\times B_s,U).\]
\end{Lemma}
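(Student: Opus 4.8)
The plan is to prove the two equalities separately, writing $Z:=Y\times\A$ and $Z_s:=Y\times B_s$, and to run both through the \cH-to-sheaf spectral sequence attached to the increasing affinoid exhaustion $Z=\bigcup_s Z_s$, which for an abelian sheaf $F$ produces a short exact sequence
\[0\to \mathrm{R}^1\!\varprojlim_s H^0(Z_s,F)\to H^1_v(Z,F)\to \varprojlim_s H^1_v(Z_s,F)\to 0.\]

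For the first equality I would repeat the argument of Lemma~\ref{l:H(A^n,U)=H(A^n,Ox)}: from $1\to U\to\O^\times\to\bOx\to1$ it suffices that $H^0(Z,\O^\times)\to H^0(Z,\bOx)$ be surjective and that $H^1_v(Z,\bOx)=1$. Surjectivity follows from Lemma~\ref{l:bOx-on-B^n}.1, which gives $H^0(Z,\bOx)=\varprojlim_sH^0(Z_s,\bOx)=H^0(Y,\bOx)$, a quotient of $K^\times$ for $Y\in\{B,\wt B\}$ (one uses $\O^\times(Y)=K^\times\cdot U(Y)$ for these $Y$, and that $K^\times$ is divisible since $K$ is algebraically closed), which is hit by the constants $K^\times\subseteq\O^\times(Z)$. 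For the vanishing, the exhaustion sequence for $F=\bOx$ reduces me to $\mathrm{R}^1\varprojlim_sH^0(Z_s,\bOx)=0$, immediate as this tower is constant by Lemma~\ref{l:bOx-on-B^n}.1, and to $H^1_v(Z_s,\bOx)=1$; for $Y=B$ the latter is Lemma~\ref{l:bOx-on-B^n}.2 after rescaling $Z_s\cong B^2$, and for $Y=\wt B$ I would pull back along the perfectoid $\Z_p$-cover $\wt B\times\wt B_s\to Z_s$ of Lemma~\ref{l:perfectoid-disc-Galois-cover-of-disc}, apply Cartan--Leray (Corollary~\ref{c:CL-acyclic}), and invoke Kedlaya--Liu (Theorem~\ref{t:KL-line-bundles}) together with \eqref{eq:exp} to see that $\bOx$ is acyclic on the affinoid perfectoid total space.

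For the second equality the exhaustion sequence for $F=U$ reduces everything to $\mathrm{R}^1\varprojlim_s U(Z_s)=0$, and this is the crux: the restriction maps $U(Z_{s'})\to U(Z_s)$ fail to have dense image, so the naive Mittag--Leffler criterion does not apply. My plan is to truncate: since $\bigcap_k (1+p^k\O^+(Z_s))=\{1\}$, we have $U(Z_s)=\varprojlim_k W_{s,k}$ with $W_{s,k}:=U(Z_s)/(1+p^k\O^+(Z_s))$, and for each fixed $k$ the tower $(W_{s,k})_s$ is Mittag--Leffler. The point is a rescaling estimate: if $g\in\m\O^+(Z_{s'})$ then $1+g\in U(Z_{s'})$, and since $B_s\subseteq B_{s'}$ is a strictly smaller ball, the part of $g|_{Z_s}$ of positive degree in the $\A$-coordinate has sup-norm on $Z_s$ smaller than that of $g$ on $Z_{s'}$ by a fixed factor $<1$; hence once $s'$ is large relative to $s$ and $k$, this part lies in $p^k\O^+(Z_s)$, so the image of $W_{s',k}\to W_{s,k}$ is exactly the image of $U(Y)$ (pulled back in the $\A$-coordinate), independent of $s'$. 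This gives $\mathrm{R}^1\varprojlim_s W_{s,k}=0$. Since moreover the transitions $W_{s,k+1}\to W_{s,k}$ are surjective (so $\mathrm{R}^1\varprojlim_k W_{s,k}=0$ for each $s$), a Fubini argument for the double derived limit over $(s,k)$ identifies $\mathrm{R}^1\varprojlim_s U(Z_s)$ with $\mathrm{R}^1\varprojlim_k(\varprojlim_s W_{s,k})$; and this vanishes because the maps $\varprojlim_s W_{s,k+1}\to\varprojlim_s W_{s,k}$ are surjective, their surjectivity coming from $\mathrm{R}^1\varprojlim_s\big((1+p^k\O^+(Z_s))/(1+p^{k+1}\O^+(Z_s))\big)=\mathrm{R}^1\varprojlim_s\O^+(Z_s)/p=0$, again Mittag--Leffler by the same rescaling estimate. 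Combined with the first equality this yields $\Pic_v(Z)=H^1_v(Z,U)=\varprojlim_s H^1_v(Z_s,U)$, as desired.
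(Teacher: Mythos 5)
Your treatment of the second equality is correct, and is essentially a repackaging of the paper's argument (Lemma~\ref{l:cech-cover-of-XxA}): both rest on the same overconvergence estimate, namely that the positive-degree part (in the $\A$-coordinate) of a function on $Y\times B_{s'}$ shrinks by a factor $|p|^{s'-s}$ upon restriction to $Y\times B_s$. The paper feeds this into an explicit convergent infinite product $f_r=\prod_{s\geq r}g_s$ inverting the \v{C}ech differential, which kills $\mathrm{R}^1\varprojlim_s U(Y\times B_s)$ in one stroke; your Mittag--Leffler/double-limit argument reaches the same conclusion with more homological bookkeeping (start your truncation at $k=1$, since $1+p^0\O^+$ is not a subgroup of $U$). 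This part is fine, just heavier than necessary.

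The gap is in the first equality for $Y=\wt B$. You assert that Kedlaya--Liu (Theorem~\ref{t:KL-line-bundles}) together with \eqref{eq:exp} shows that $\bOx$ is acyclic on the affinoid perfectoid space $\wt B\times\wt B_s$. But Theorem~\ref{t:KL-line-bundles} only says that $v$-bundles on a perfectoid space are trivial \emph{locally} in the analytic topology, i.e.\ $\Pic_v=\Pic_{\an}$; it does not make $\Pic_{\an}$ of an affinoid perfectoid space vanish, and via \eqref{eq:exp} the group $H^1_v(\cdot,\bOx)$ is a quotient of $\varinjlim_{\times p}\Pic_{\an}$, not obviously trivial. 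Triviality of the analytic Picard group of the perfectoid polydisc is a genuinely nontrivial statement that does not follow from Kedlaya--Liu and that the paper never asserts (note that Lemma~\ref{l:R^1nu_*bOx} only extracts the sheaf-level consequence $R^1w_\ast(\O^\times\tf)=0$ from Theorem~\ref{t:KL-line-bundles}). In addition, running Cartan--Leray with coefficient sheaf $\bOx$ would require verifying condition \eqref{eq:CL-condition} for $\bOx$, which is not among the cases checked in Proposition~\ref{p:Cartan--Leray}, and you would separately need $H^1_{\cts}(\Z_p,\bOx(\wt B\times\wt B_s))=1$. The paper's proof avoids all of this: it writes $\wt B\sim\varprojlim_n B^{(n)}$ as a tilde-limit of rigid discs and uses the approximation results for $\bOx$ (Lemma~\ref{l:limit-of-bOx} and the argument of Lemma~\ref{l:R^1nu_*bOx}) to obtain $H^i_v(\wt B\times\A,\bOx)=\varinjlim_n H^i_v(B^{(n)}\times\A,\bOx)$ for $i=0,1$, thereby reducing the $\wt B$ case to the already-settled rigid case. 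You should replace your Cartan--Leray step for $\bOx$ by this colimit argument.
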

\begin{proof}
	Arguing exactly like in Lemma~\ref{l:H(A^n,U)=H(A^n,Ox)}, for the first equality it suffices to prove that $\bOx(Y\times \A)=K^\times/(1+\m)$ and $H^1_v(Y\times \A,\bOx)=1$. For $B\times \A$,  this can be seen exactly like in Lemma~\ref{l:H(A^n,U)=H(A^n,Ox)}. To deduce the case of $\wt B\times \A$, write $\wt B\sim\varprojlim_n B^{(n)}$ as a tilde-limit where $ B^{(n)}$ is the unit disc in the parameter $X^{1/p^n}$, then by Lemma~\ref{l:R^1nu_*bOx},
	\[\textstyle H^i_v(\wt B\times \A,\bOx)=\varinjlim_{n\in\N} H^i_{v}(B^{(n)}\times \A,\bOx)\quad \text{ for }i=0,1.\]
	The second equality  follows from the \cH-to-sheaf sequence  by the following lemma:
\end{proof}
\begin{Lemma}\label{l:cech-cover-of-XxA}
	Let $Y$ be any affinoid rigid or perfectoid space. Then for the cover $\mathfrak U=(Y\times B_s)_{s\in \N}$ of $Y\times \A$, we have
	 $\cH^i(\mathfrak U,U)=\cH^i(\mathfrak U,\O^\times)=1$ for $i\geq 1$.
\end{Lemma}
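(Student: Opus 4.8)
The idea is to read off the Čech cohomology of the increasing cover $\mathfrak U=(V_s)_{s\in\N}$, $V_s:=Y\times B_s$, directly from the tower $(V_s)_s$. Since $B_s\subseteq B_{s+1}$, every finite intersection of members of $\mathfrak U$ is again a member of $\mathfrak U$, namely $V_{s_0}\cap\dots\cap V_{s_k}=V_{\min(s_i)}$, so one readily checks that the alternating Čech complex $\cH^\bullet(\mathfrak U,\mathcal F)$ of any abelian sheaf $\mathcal F$ computes $\Rlim_s\mathcal F(V_s)$, the derived limit of the tower whose transition maps are the restrictions $\mathcal F(V_{s+1})\to\mathcal F(V_s)$. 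As a tower indexed by $\N$ has vanishing higher derived limits, this gives $\cH^i(\mathfrak U,\mathcal F)=0$ for $i\ge 2$ and $\cH^1(\mathfrak U,\mathcal F)=\varprojlim^1_s\mathcal F(V_s)$ for free. (All relevant values $U(V_s)$, $\O^\times(V_s)$ are independent of the chosen topology.) Thus the lemma reduces to $\varprojlim^1_s U(V_s)=1$ and $\varprojlim^1_s\O^\times(V_s)=1$.

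Next I would reduce $\O^\times$ to $U$. Exactly as in the proof of Lemma~\ref{l:bOx-on-B^n}, applying \cite[\S5.3.1 Proposition~1]{BGR} on fibres, every $f\in\O^\times(V_s)$ factors as $f=(f|_{T=0})\cdot u$ with $f|_{T=0}\in\O^\times(Y)$ and $u\in 1+\m\O^+(V_s)=U(V_s)$, and moreover $\O^\times(Y)\cap U(V_s)=U(Y)$. Setting $Q_s:=U(V_s)/U(Y)$, this produces short exact sequences of towers
\[1\to U(Y)\to U(V_s)\to Q_s\to 1,\qquad 1\to \O^\times(Y)\to \O^\times(V_s)\to Q_s\to 1\]
whose left-hand towers are constant, hence Mittag--Leffler with vanishing $\varprojlim^1$. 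The associated six-term exact sequences therefore identify both $\varprojlim^1_s U(V_s)$ and $\varprojlim^1_s\O^\times(V_s)$ with $\varprojlim^1_sQ_s$, so it remains to prove $\varprojlim^1_sQ_s=1$.

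This last step is the main obstacle: the transition maps $U(V_{s+1})\to U(V_s)$ are neither surjective nor of dense image, so the usual Mittag--Leffler criterion for towers of complete metrizable groups does not apply. Instead I would exploit that, \emph{after} killing the constants, the iterated transition maps become contracting. Concretely, identify $Q_s$ (up to a harmless multiplicative twist) with the complete abelian group $\{h\in\m\O^+(V_s):h|_{T=0}=0\}$, ultrametric for the supremum norm: if $h=\sum_{n\ge1}a_nT^n$ lies in the corresponding group for $V_{s+k}$, then $|a_n|\le r_{s+k}^{-n}$, where $r_s$ is the radius of $B_s$, so its restriction to $V_s$ has supremum norm at most $r_s/r_{s+k}\to 0$ as $k\to\infty$, uniformly in $h$. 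Hence the image of $\mathrm{res}^k\colon Q_{s+k}\to Q_s$ tends to $1$ uniformly, and for such a tower the map $(p_s)_s\mapsto(p_s\cdot\mathrm{res}(p_{s+1})^{-1})_s$ on $\prod_sQ_s$ is surjective: given $(q_s)_s$, the infinite product $p_s:=\prod_{k\ge0}\mathrm{res}^k(q_{s+k})$ converges in $Q_s$ and satisfies $p_s=q_s\cdot\mathrm{res}(p_{s+1})$. Therefore $\varprojlim^1_sQ_s=1$, which completes the proof. The only genuinely delicate points are this contraction estimate and the completeness of the groups $Q_s$; everything else is formal.
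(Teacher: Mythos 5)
Your proof is correct and follows essentially the same route as the paper's: identify $\cH^\bullet(\mathfrak U,-)$ with the derived limit of the tower (so only $\varprojlim^1$ matters), reduce $\O^\times$ to $U$ via the constant-term factorisation $f=(f|_{T=0})\cdot u$ (which is exactly the content of Lemma~\ref{l:bOx-on-B^n} that the paper invokes), and then kill $\varprojlim^1$ by normalising constant terms and observing that the restriction maps contract the remaining part, so the relevant infinite products converge. The paper compresses the last step into a citation of \cite[Lemma~6.3.1]{FvdP}, whereas you spell out the contraction estimate; the substance is identical.
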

\begin{proof}
	The vanishing for $i\geq 2$ follows from $\mathfrak U$ being an increasing cover indexed over $\N$. For $i\geq 1$, it suffices by Lemma~\ref{l:bOx-on-B^n} to see that for $R=\O(Y)$, the following map is surjective:
	\[\textstyle \prod_{s\geq 1}(1+\m R^\circ\langle p^{s}T\rangle)\to \prod_{s\geq 1} (1+\m R^\circ\langle p^{s}T\rangle), \quad (f_s)_{s\in \N}\mapsto (f_sf_{s+1}^{-1})_{s\in \N}.\]
	This can be seen like in \cite[Lemma~6.3.1]{FvdP}: Let  $g=(g_s)_{s\in \N}$ be an element on the right. After rescaling by an element of $(1+\m R^{\circ})_{s\in \N}$, we can assume that
	$g_s = 1+p^{s}T(\dots)$. Then for any $r\in \N$, the product
	$f_r:=\prod_{s\geq r} g_s$
	converges and defines a preimage $(f_r)_{r\in \N}$.
\end{proof}

\subsection{The overconvergent Picard group of the cylinder $B\times \A$}
In the rigid setting, the de Rham complex of the closed unit disc $B$ is not exact on global sections, the issue being convergence of primitive functions at the boundary. It is well-known that this can be resolved by considering overconvergent functions. For $B\times \A$, this means:
\begin{Lemma}\label{l:deRham-cpx-open-unit-disc}
	Recall that $B_s$ is the disc of radius $s\in |K|$. The de Rham complex of $B_s\times \A$
		\[0\to K\to \O(B_s\times \A)\xrightarrow{d} \Omega^{1}(B_s\times \A)\xrightarrow{d} \Omega^{2}(B_s\times \A)\to \dots\]
		becomes exact after applying $\varinjlim_{s>1}$.
\end{Lemma}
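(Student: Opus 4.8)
The plan is to use that $B_s\times\A$ has dimension $2$, so that $\Omega^i(B_s\times\A)=0$ for $i\ge 3$ and the complex in question is
\[0\to K\to \O(B_s\times\A)\xrightarrow{d}\Omega^1(B_s\times\A)\xrightarrow{d}\Omega^2(B_s\times\A)\to 0;\]
after applying $\varinjlim_{s>1}$ it then suffices to check injectivity of $K\to\O$, that $\ker(d\colon\O\to\Omega^1)=K$, that every closed $1$-form is exact, and that $d\colon\Omega^1\to\Omega^2$ is surjective. All of these I would deduce from two elementary integration statements, the second of which is where the overconvergence enters.

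The first input is integration in the $\A$-direction: for any affinoid $Y$ over $K$, the map $d\colon\O(Y\times\A)\to\O(Y\times\A)\,dT_2$ is surjective with kernel the functions pulled back from $Y$. Writing an element as $\sum_{j\ge 0}g_j(T_1)T_2^j$ with $g_j\in\O(Y)$, the primitive $\sum_{j\ge 0}\tfrac{1}{j+1}g_j(T_1)T_2^{j+1}$ still lies in $\O(Y\times\A)$, since $|1/(j+1)|\le j+1$ and $\O(\A)$ imposes no growth bound at infinity (concretely $|1/(j+1)|(t/\tau)^j\to 0$ for all $0<t<\tau$); no shrinking of $Y$ is needed. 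The second input is overconvergent integration in the bounded $T_1$-direction: with $D^\dagger:=\varinjlim_{s>1}\O(B_s)$, the map $d\colon D^\dagger\to\Omega^1_{D^\dagger}$ is surjective with kernel $K$. Indeed, for $f=\sum_{n\ge 0}a_nT_1^n\in\O(B_s)$ the primitive $\sum_{n\ge 0}\tfrac{a_n}{n+1}T_1^{n+1}$ lies in $\O(B_\sigma)$ for every $1<\sigma<s$, because $|a_n/(n+1)|\,\sigma^{n+1}=|a_n|s^{n}\cdot|1/(n+1)|(\sigma/s)^n\,\sigma$ and here $|a_n|s^n\to 0$ while $|1/(n+1)|(\sigma/s)^n\,\sigma\to 0$ as $|1/(n+1)|\le n+1$ and $\sigma/s<1$. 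This is exactly the point where passing to the colimit over $s>1$ is indispensable: no such estimate holds for a fixed closed disc. I would also note, using that $K$ has characteristic $0$ so every positive integer is invertible in $K$, that a power series in $\O(B_\sigma\times\A)$ annihilated by $\partial/\partial T_1$ has no $T_1$-terms and hence is pulled back from $\O(\A)$ (and symmetrically for $\partial/\partial T_2$).

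Granting these, the three assertions follow. Surjectivity at $\Omega^2$: given $u\,dT_1\wedge dT_2$, integrate $u$ in $T_2$ to get $v$ with $\partial v/\partial T_2=-u$, so $d(v\,dT_1)=u\,dT_1\wedge dT_2$; here no overconvergence is needed. Exactness at $\Omega^1$: an element of $\varinjlim_{s>1}\Omega^1(B_s\times\A)$ is represented by a closed form $\omega=f\,dT_1+g\,dT_2$ on some $B_s\times\A$, so $\partial f/\partial T_2=\partial g/\partial T_1$; integrating $f$ in $T_1$ gives $F\in\O(B_\sigma\times\A)$ with $\partial F/\partial T_1=f$, then $g-\partial F/\partial T_2$ is annihilated by $\partial/\partial T_1$, hence equals some $\psi\in\O(\A)$, and $h:=F+\int\psi\,dT_2\in\O(B_\sigma\times\A)$ satisfies $dh=\omega$. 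Exactness at $\O$: if $dh=0$ then $h$ is annihilated by both partials, hence lies in $K$; and $K$ injects into $\varinjlim_{s>1}\O(B_s\times\A)$ since it injects into each $\O(B_s\times\A)$.

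The step I expect to require the most care is not any individual estimate but the bookkeeping that lets them interact: one fixes the description $\O(B_s\times\A)=\{\sum_j f_j(T_1)T_2^j : f_j\in\O(B_s),\ \|f_j\|_s\,t^j\to 0\text{ for all }t>0\}$ and notes that an element of the colimit $\varinjlim_{s>1}\O(B_s\times\A)$ is represented by such a series with \emph{all} coefficients $f_j$ defined over a single $B_s$, so that both integrations land in $\O(B_\sigma\times\A)$ for a common $\sigma\in(1,s)$ and the ``constant in $T_i$'' statements become literal statements about these coefficient families. Alternatively one can phrase the argument as a K\"unneth isomorphism $\Omega^\bullet(B_s\times\A)\simeq\Omega^\bullet(B_s)\hotimes_K\Omega^\bullet(\A)$, combining exactness of $\varinjlim_{s>1}\Omega^\bullet(B_s)$ with that of $\Omega^\bullet(\A)$; but checking that the completed tensor product commutes with passing to cohomology reduces to the same estimates, so I would keep the direct approach.
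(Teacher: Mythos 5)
Your proof is correct. The paper in fact gives no proof of this lemma, dismissing it as well-known (``the issue being convergence of primitive functions at the boundary\dots resolved by considering overconvergent functions''), and your termwise-integration argument with the estimate $|1/(n+1)|\le n+1$ is exactly the standard argument being alluded to: integration in the unbounded $T_2$-direction costs nothing, while integration in the $T_1$-direction shrinks $B_s$ to $B_\sigma$, which is harmless in the colimit over $s>1$. The one point worth making fully explicit in a write-up is the uniformity in $j$ of your second estimate, i.e.\ that $\|F_j\|_\sigma\le C\|f_j\|_s$ with $C=\sigma\sup_n(n+1)(\sigma/s)^n$ independent of $j$, so that the integrated series $\sum_j F_j(T_1)T_2^j$ again satisfies the decay condition defining $\O(B_\sigma\times\A)$; your final paragraph sets up precisely the bookkeeping needed for this, so nothing is missing in substance.
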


The key calculation  is now that of the ``overconvergent Picard group'' of $B\times \A$:
\begin{Proposition}\label{p:Pic=closed-differentials-on-oc-unit-disc}
	The Hodge--Tate logarithm from Theorem~\ref{t:Picv-ses}.1 defines an isomorphism
	\[ \HT\log:\textstyle\varinjlim_{s>1}\Pic_v(B_s\times \A)\isomarrow \textstyle\varinjlim_{s>1}H^0(B_s\times \A,\wt\Omega^1)^{d=0}. \]
\end{Proposition}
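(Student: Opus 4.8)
The plan is to deduce this from the Cartan--Leray sequence for the explicit perfectoid $\Z_p$-Galois cover $\wt B\times \A\to B\times \A$ of Lemma~\ref{l:perfectoid-disc-Galois-cover-of-disc}, combined with Theorem~\ref{t:Picv-ses}.\ref{enum:MT1-curves} applied to the one-dimensional space $\wt B$ (or rather to an approximating tower of discs), passing to the colimit over overconvergent radii $s>1$ throughout. First I would apply Corollary~\ref{c:Cartan--Leray-for-line-bundles} to the cover $\wt B\times \A\to B\times \A$: since $\O^\times(\wt B\times \A)=\O^\times(\wt B)$ by Lemma~\ref{l:bOx-on-B^n} (the $\A$-factor contributes nothing to units), and after inverting $p$ on all terms and using Lemmas~\ref{l:Picv(wtB x A)} and \ref{l:Pic_v(wt B x B)-torfree} to replace $\O^\times$ by $U$ and to guarantee the required injectivity of specialisation maps, one gets a left-exact sequence
\[ 0\to \Pic_v(\wt B)\to \Pic_v(B\times \A)\to \Pic_v(\wt B\times \A)^{\Z_p}, \]
and analogously after $\varinjlim_{s>1}$ with $B$ replaced by $B_s$. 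Applying $\HT\log$ and using functoriality (Remark~\ref{r:pullback-of-differentials}), this maps to the corresponding sequence on differentials; the latter, after $\varinjlim_{s>1}$, is exact by Lemma~\ref{l:deRham-cpx-open-unit-disc} together with the Künneth-type splitting $\Omega^1(B_s\times \A)=\O(B_s\times\A)\otimes_{\O(B_s)}\Omega^1(B_s)\oplus \O(B_s\times\A)\otimes_{\O(\A)}\Omega^1(\A)$ used as in Corollary~\ref{c:analytic-on-products}.

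The second step is to identify the three terms with their differential counterparts via $\HT\log$. For the left term, $\Pic_v(\wt B)=\varinjlim_n \Pic_v(B^{(n)})=\varinjlim_n H^0(B^{(n)},\wt\Omega^1)$ by Theorem~\ref{t:Picv-ses}.\ref{enum:MT1-curves} (each $B^{(n)}$ is a paracompact curve and $\Pic_{\an}(B^{(n)})=1$ by \cite[Satz~1]{Lutkebohmert_Vektorraumbundel}); after $\varinjlim_{s>1}$ one gets $\varinjlim_{s>1} H^0(B_s,\wt\Omega^1)$, and since the de Rham complex of $B_s$ is overconvergently exact, this equals $\varinjlim_{s>1} H^0(B_s,\wt\Omega^1)^{d=0}$ as all one-forms on a disc are closed — so the left term is exactly the ``first'' summand of $\varinjlim_{s>1}H^0(B_s\times\A,\wt\Omega^1)^{d=0}$. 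For the right term I would compute $H^1_v(\wt B\times \A,\O)^{\Z_p}$: by the colimit description $\wt B\times\A=\varprojlim_n B^{(n)}\times\A$ and \cite[Proposition~14.9]{etale-cohomology-of-diamonds} (or rather Lemma~\ref{l:H^1_v(wt B x B, O)} adapted to the $\A$-factor via the cover by $B_s$), $H^1_v(\wt B\times\A,\O)=\O(\wt B)\hotimes_K H^0(\A,\wt\Omega^1)$, and the $\Z_p$-invariants of this (with the translation action on the $\wt B$-factor, the $\A$-differentials being fixed) are $\O(B)\hotimes_K H^0(\A,\wt\Omega^1)$; the $\HT\log$ image inside this lands in the closed differentials, which on $\A$ is everything — matching the ``second'' summand, i.e.\ $\O(B_s)\hotimes_K H^0(\A,\wt\Omega^1)$ after $\varinjlim_{s>1}$. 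The middle vertical map is then injective (left-exactness of $\HT\log$, Theorem~\ref{t:Picv-ses}.1, using $\Pic_{\an}(B_s\times\A)=1$), so by the five lemma it is an isomorphism.

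The main obstacle I expect is bookkeeping the colimits and invariants carefully: one must check that the $\Z_p$-action on $H^1_v(\wt B\times\A,\O)=\O(\wt B)\hotimes_K H^0(\A,\wt\Omega^1)$ really is ``translation on the first factor, trivial on the second'', which requires knowing that the $\HT\log$-class of a line bundle pulled back from $\A$ is genuinely in the $\A$-direction and is Galois-invariant — this is where functoriality of $\HT\log$ in $X$ and the Künneth decomposition of $\wt\Omega^1$ do the work. A secondary subtlety is that the Cartan--Leray sequence of Proposition~\ref{p:Cartan--Leray} in the non-abelian/multiplicative form only gives left-exactness, so surjectivity onto the $\A$-summand cannot come from Cartan--Leray directly; instead it comes from the exactness of the overconvergent de Rham sequence on the right and a diagram chase — one first produces, for each closed differential $g\,dT_2$ on $\A$, a $v$-line bundle on $B_s\times\A$ hitting it (e.g.\ by pulling back along $B_s\times\A\to\A$ and using Corollary~\ref{c:df-in-image} on $\A$), and then the snake/five lemma closes the argument. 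Finally one has to note that $\varinjlim_{s>1}$ is exact and commutes with everything in sight, which is routine.
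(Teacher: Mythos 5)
Your overall architecture --- Cartan--Leray for $\wt B\times \A\to B\times \A$, comparison of the $U$- and $\O$-sequences via $\log$, the curve case of Theorem~\ref{t:Picv-ses} for the left-hand term, and overconvergent de Rham exactness to finish --- is indeed the paper's strategy. But two steps, as written, would fail. First, the left-hand term of the Cartan--Leray sequence is $H^1_{\cts}(\Z_p,\O^\times(\wt B\times\A))=H^1_{\cts}(\Z_p,\O^\times(\wt B))$, the group of descent data on the trivial bundle on the cover; it is not $\Pic_v(\wt B)$, which equals $\Pic_{\an}(\wt B)$ by Theorem~\ref{t:KL-line-bundles} and is not the group you need. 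Your computation of it as $\varinjlim_n\Pic_v(B^{(n)})$ over the tower approximating $\wt B$, followed by an unrelated colimit over $s>1$, does not parse. The correct identification is that the composite $H^1_{\cts}(\Z_p,\O^\times(\wt B))\hookrightarrow\Pic_v(B)\xrightarrow{\HT\log}H^0(B,\wt\Omega^1)$ is an isomorphism by the curve case applied to $B$ itself. The conclusion you want --- that the left term contributes exactly $\varinjlim_{s>1}H^0(B_s,\wt\Omega^1)$, i.e.\ only differentials $f(T_1)\,dT_1$ with $f$ independent of the $\A$-coordinate, a \emph{proper} submodule of the full $dT_1$-component $H^0(B_s,\wt\Omega^1)\hotimes\O(\A)$ --- is correct, and this discrepancy is precisely what forces the image of $\HT\log$ to be only the closed forms; but your route to it is broken.

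Second, and more seriously, surjectivity onto the right-hand term $\varinjlim_{s>1}\O(B_s)\hotimes_K H^0(\A,\wt\Omega^1)$ cannot be obtained by pulling back closed differentials from $\A$: that only produces classes mapping to $g(T_2)\,dT_2$ with $g$ independent of $T_1$, whereas a general element is $f(T_1,T_2)\,dT_2$ with genuine $T_1$-dependence. The missing idea is the completion trick: given such an $f$, solve $\partial h/\partial T_2=-\partial f/\partial T_1$ (possible by integrating in $T_2$ on $\A$), so that $\omega=h\,dT_1+f\,dT_2$ is a closed form on $B_s\times\A$; by overconvergent exactness (Lemma~\ref{l:deRham-cpx-open-unit-disc}) one has $\omega=dg$ for some $g\in\varinjlim_{s>1}\O(B_s\times\A)$, and $dg$ lies in the image of $\HT\log$ by Corollary~\ref{c:df-in-image} while still mapping to $f\,dT_2$ in the cokernel. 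Only with this input does the right vertical map become surjective (injectivity being Lemma~\ref{l:Pic_v(wt B x B)-torfree}), and only then does the diagram chase identify $\im(\HT\log)$ with the closed differentials; without it, your five-lemma argument has nothing to feed on the right.
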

\begin{proof}
	By Lemma~\ref{l:cech-cover-of-XxA}, we have $\Pic_{\et}(B_s\times \A)=\varprojlim_{r\in\N}\Pic_{\et}(B_s\times B_r)=1$.
	It therefore follows from Theorem~\ref{t:Picv-ses}.1 that the Hodge--Tate logarithm is an injective map
	\[\HT\log:\Pic_v(B_s\times \A)\hookrightarrow H^1_v(B_s\times \A,\O)=H^0(B_s\times \A,\wt\Omega^1).\]
	We already know that the image contains all closed differentials: By Lemma~\ref{l:deRham-cpx-open-unit-disc}, \[\textstyle\varinjlim_{s>1}H^0(B_s\times\A,\Omega^1)^{d=0}=\textstyle\varinjlim_{s>1}d(\O(B_s\times \A)),\]
	which we know is in the image by Corollary~\ref{c:df-in-image}.
	
	To prove the converse, we start by considering the Cartan--Leray sequences for $U$ and $\O$ associated to
	$\wt B\times B\to B\times B$.
 Lemmas~\ref{l:H^1_v(wt B x B, O)} and \ref{l:Pic_v(wt B x B)-torfree} guarantee that the conditions of Proposition~\ref{p:Cartan--Leray}.\ref{i:CL-five-ex} are satisfied, so the logarithm induces a morphism of short exact sequences
	\[\begin{tikzcd}[row sep = 0.5cm]
			0 \arrow[r] & {H^1_{\cts}(\Z_p,U(\wt B\times B))} \arrow[d] \arrow[r] & H^1_v( B\times B,U) \arrow[d] \arrow[r] & H^1_v(\wt B\times B,U)^{\Z_p} \arrow[d,hook']\arrow[r]&0\\
			0 \arrow[r] & {H^1_{\cts}(\Z_p,\O(\wt B\times B))} \arrow[r] & {H^1_v( B\times B,\O)} \arrow[r] & {H^1_v(\wt B\times B,\O)}^{\Z_p}\arrow[r]&0.
		\end{tikzcd}\]
	The right vertical map is injective by Lemma~\ref{l:Pic_v(wt B x B)-torfree}. The bottom row can be identified with
	\begin{equation}
		0\to  H^0(B,\Omega^1)\hotimes_K \O(B)\to H^0(B\times B,\Omega^1)\to \O(\wt B)\hotimes_K H^0(B,\Omega^1)\to 0
	\end{equation}
	by Lemma~\ref{l:H^1_v(wt B x B, O)}. This expresses that any differential decomposes as $g(T_1,T_2)dT_1+f(T_1,T_2)dT_2$ where $T_1$ is the differential on the first factor of $B\times B$ and $T_2$ is that on the second.
	
	We now replace $B$ by $B_s$, then by Lemma~\ref{l:Picv(wtB x A)} we get for $s\to \infty$ a commutative diagram
	\begin{equation}\label{dg:proof-of-Picv(A^n)-2}
		\begin{tikzcd}[column sep = 0.3cm,row sep = 0.7cm]
			0 \arrow[r] & {H^1_{\cts}(\Z_p,\O^\times(\wt B\times \A))} \arrow[d,hook'] \arrow[r] & \Pic_v( B\times \A) \arrow[d,hook'] \arrow[r] & \Pic_v(\wt B\times \A)^{\Z_p} \arrow[d,hook']&\\
			0\arrow[r] &  H^0(B,\Omega^1)\hotimes \O(\A)\arrow[r] & H^0(B\times \A,\Omega^1)\arrow[r] & \O(B)\hotimes H^0(\A,\Omega^1)\arrow[r] & 0,
		\end{tikzcd}
	\end{equation}
	here the top left entry is  as described by the Cartan--Leray sequence of
	$\wt B\times \A\to B\times \A$.
	
	We now have a closer look at the left vertical map: Here $\O^\times(\wt B\times \A)=\O^\times(\wt B)$, 
	for which
	\[ \HT\log:H^1_{\cts}(\Z_p,\O^\times(\wt B))\to\Pic_v(B)\xrightarrow{}H^0(B,\Omega^1)\]
	is an isomorphism by Theorem~\ref{t:Picv-ses}.\ref{enum:MT1-curves}. We conclude that the image of the leftmost vertical map in diagram~\eqref{dg:proof-of-Picv(A^n)-2} consists precisely of the submodule $H^0(B,\Omega^1)$.
	
	Next, we replace the first factor $B$ by  $B_s$ which in the colimit $s\to 1$ results in a diagram
	\[\begin{tikzcd}[column sep = 0.2cm]
			0 \arrow[r] & \varinjlim_{s> 1}{H^0(B_s,\Omega^1)} \arrow[d,hook'] \arrow[r] & \varinjlim_{s> 1}\Pic_v( B_s\times \A) \arrow[d,hook'] \arrow[rd,dashed]\arrow[r]&\coker\arrow[d,hook']\arrow[r]&0\\
			0\arrow[r] &  \varinjlim_{s> 1}H^0(B_s,\Omega^1)\hotimes \O(\A)\arrow[r] & \varinjlim_{s> 1}H^0(B_s\times \A,\Omega^1)\arrow[r] & \varinjlim_{s> 1}\O(B_s)\hotimes H^0(\A,\Omega^1)\arrow[r] & 0
		\end{tikzcd}\]
	with exact rows. 
	We claim that the bottom sequence restricts to an exact sequence
	\[\textstyle 0\to \varinjlim_{s>1}H^0(B_s,\Omega^1)\to \varinjlim_{s>1}H^0(B_s\times \A,\Omega^1)^{d=0}\to \varinjlim_{s> 1}\O(B_s)\hotimes H^0(\A,\Omega^1)\to 0.\]
	Left-exactness is clear. For any differential $\omega_2=f(T_1,T_2)dT_2$ in $\O(B_s)\hotimes H^0(\A,\Omega^1)$, we can find $h(T_1,T_2)\in \O(B_s\times \A)$ such that $\partial h/\partial T_2=-\partial f/\partial T_1$. For any such function, $\omega:=hdT_1+fdT_2$ is a closed differential on $B_s\times \A$ which the last map sends to $\omega_2$. 
	 
	By Lemma~\ref{l:deRham-cpx-open-unit-disc}, we can even find $g\in \varinjlim_{s>1}\O(B_s\times\A)$ such that $\omega=dg$. Thus the dashed map is surjective, hence the right vertical map is an isomorphism. 
	Comparing  the image of the top row in the bottom row with the last exact sequence, using that the image of $\Pic_v(B_s\times \A)$ contains $H^0(B_s\times \A,\Omega^1)^{d=0}$, this shows that inside $H^0(B_s\times \A,\Omega^1)$, we have
	\[\textstyle\varinjlim_{s>1} \Pic_v(B_s\times \A)= \varinjlim_{s>1} H^0(B_s\times \A,\Omega^1)^{d=0}.\qedhere\]
\end{proof}
\subsection{The Picard group of $\A^n$}

\begin{proof}[Proof of Theorem~\ref{t:Pic-v-A^n}]
	Since $\Pic_{\an}(\A^n)=1$, we have by Theorem~\ref{t:Picv-ses} an injective map
	\[ \HT\log:\Pic_v(\A^n)\hookrightarrow H^0(\A^n,\wt\Omega^1).\]
	It is clear from Corollary~\ref{c:df-in-image} and exactness of the de Rham complex of $\A^n$ that 
	\[ H^0(\A^n,\wt\Omega^1)^{d=0}=d(\O(\A^n))\subseteq \im(\HT\log).\]
	To prove the other containment, we first consider the case of $n=2$: In this case, the restriction from $\A\times \A$ to $B_s\times \A$ for any $s>1$ defines a commutative diagram
	\[
	\begin{tikzcd}
		\Pic_v(\A\times \A) \arrow[d] \arrow[r,hook]& {H^0(\A\times \A,\wt\Omega^1)} \arrow[d,hook]  \\
		\varinjlim_{s>1}\Pic_v(B_s\times \A)                \arrow[r,hook] & \varinjlim_{s>1}{H^0(B_s\times \A,\wt\Omega^1)}.
	\end{tikzcd}\]
	By Proposition~\ref{p:Pic=closed-differentials-on-oc-unit-disc}, the image of the bottom map lands in the closed differentials.
	As the map on the right is injective, this also holds for the top map. This proves the case of $n=2$.
	
	The general case follows from the one of $n=2$: Let $f\in H^0(\A^n,\widetilde\Omega^1)$
	be in the image of $\HT\log$
	and suppose that $df\neq 0$. We can write this  as
	\[ df = \textstyle\sum_{i<j}g_{ij}dX_i\wedge dX_j.\]
	Then $df\neq 0$ if and only if there is some $0\neq g_{ij}\in \O(\A^n)$, and after reordering we can assume $0\neq g_{12}:\A^n(K)\to K$ is non-trivial. 
	We may thus find $z\in \A^{n-2}$ such that under
	\[ \varphi:\A^{2}\xrightarrow{(\id,z)} \A^{2}\times \A^{n-2},\]
	 $f$ pulls back to $\varphi^{\ast}f$ which still satisfies $d(\varphi^{\ast}f)=\varphi^{\ast}df\neq 0$. But the commutative diagram
	\[\begin{tikzcd}
		\Pic_v(\A^n) \arrow[r,"\varphi^\ast"] \arrow[d] & \Pic_v(\A^2) \arrow[d] \\
		{H^0(\A^n,\wt\Omega^1)} \arrow[r,"\varphi^\ast"] & {H^0(\A^2,\wt\Omega^1)}
	\end{tikzcd}\]
	shows that $\varphi^{\ast}f$ is in the image of $\Pic_v(\A^2)$, which implies $d(\varphi^{\ast}f)=0$, a contradiction.
	
	The case of $\Pic_v(\G_m^k\times \A^n)$ is analogous: Here we first note that $H^0(\G_m^k\times \A^n,\wt\Omega^1)^{d=0}$ is generated as a group by $d(\O(\G_m^k\times \A^n))$ plus the differentials $a\cdot dY_i/Y_i$ for $a\in K$ and for each of the $\G_m$-factors. The latter are in the image of $\Pic_v(\G_m^k\times \A^n)$ as we see via pullback along the projection $\G_m^k\times \A^n\to \G_m$ since $\Pic_v(\G_m)=H^0(\G_m,\wt\Omega^1)$ by Theorem~\ref{t:Picv-ses}.\ref{enum:MT1-curves}.
	The rest of the proof goes through analogously by considering any embedding $B_s\hookrightarrow \G_m$.
\end{proof}

\bibliographystyle{alphabbrv}
\bibliography{universal.bib}
\end{document}